

\documentclass[11pt]{article} 
\oddsidemargin0pt
\evensidemargin0pt
\textwidth17cm
\textheight20cm

\topmargin-10mm
\usepackage{graphicx}
\usepackage{amsmath}
\usepackage{tikz}
\usepackage{eufrak}
\usepackage{mathrsfs}
\usetikzlibrary{patterns}
\usepackage{amsmath, amsthm, amssymb}



\usepackage{enumerate}
\usepackage{fancyhdr} 
\pagestyle{fancy} 
\lhead{}\chead{}\rhead{}
\lfoot{}\cfoot{\thepage}\rfoot{}

\usepackage{sectsty}
\allsectionsfont{\sffamily\mdseries\upshape} 

\graphicspath{ {c:/user/olofgi.NET/downloads/} }


\usepackage{verbatim} 

\usepackage{yfonts}[1998/10/03]

\newtheorem{thm}{Theorem}[section]
\newtheorem{cor}[thm]{Corollary}
\newtheorem{lem}[thm]{Lemma}

\newtheorem{prop}[thm]{Proposition}
\newtheorem{rem}[thm]{Remark}

\theoremstyle{definition}
\newtheorem{exa}[thm]{Example}
\theoremstyle{definition}
\newtheorem{defn}[thm]{Definition}
\theoremstyle{remark}

\newtheorem{thm*}[thm]{Theorem}

\DeclareMathOperator{\id}{id}
\DeclareMathOperator{\spann}{span}

\title{The Universal $C^{*}$-Algebra of the Quantum Matrix Ball and its Irreducible $*$-representations.}
\author{Olof Giselsson}
\begin{document}
\maketitle

\abstract{We study $*$-representations of the quantized algebra $\mathrm{Pol}(\mathrm{Mat}_{n})_{q}$ of polynomials on the space of $n\times n $-matrices. We prove that any such representation can be lifted to a $*$-representation of the $*$-algebra $\mathbb{C}[SU_{2n}]_{q}.$ Using this result we prove the existence of the universal enveloping $C^{*}$-algebra of $\mathrm{Pol}(\mathrm{Mat}_{n})_{q}$ and show that the Fock representation of $\mathrm{Pol}(\mathrm{Mat}_{n})_{q}$ can be extended to a faithful representation of this $C^{*}$-algebra; the latter can be considered as a quantization of the algebra of continuous functions on the unit matrix ball $\overline{\mathbb{D}}_{n}=\{x\in \mathrm{Mat}_{n}:x^{*}x\leq 1\}.$ Finally, we classify all irreducible $*$-representations of $\mathrm{Pol}(\mathrm{Mat}_{n})_{q}$ using a diagram approach.}

\section{Introduction}
Let $q\in (0,1).$ In~\cite{SV2}, L. Vaksman and S. Sinel’shchikov put forward a construction of a $q$-analogue of Hermitian symmetric spaces of non-compact type via a $q$-analogue of the Harish-Chandra embedding. This construction gives a $*$-algebra that is a $q$-analogue of the $*$-algebra of polynomial functions on the bounded symmetric domain that is the image of the Harish-Chandra embedding. In the simplest case (see~\cite{SV2}, section $9$), this yields the algebra $Pol(\mathbb{C})_{q},$ the quantum disc; this is the $*$-algebra generated by a single generator $z$ subject to the relation
$$
z^{*} z=q^{2}z z^{*}+(1-q^{2})I.
$$
Another particular case, derived explicitly in~\cite{ssvs1}, gives the algebra $\mathrm{Pol}(\mathrm{Mat}_{n})_{q},$ a $q$-analogue of polynomial functions on the open matrix ball $\mathbb{D}_{n}:=\{Z\in \mathrm{Mat}_{n}:Z^{*}Z<I\}.$ In~\cite{ssv,ssvs1,vaksman_shilov1,vaksman-boundary}, L. Vaksman et. al. considered $\mathrm{Pol}(\mathrm{Mat}_{n})_{q}$ more carefully and explicitly developed some of the more general theory from~\cite{SV2}. In particular, the Fock representation $\pi_{F,n}$ of $\mathrm{Pol}(\mathrm{Mat}_{n})_{q}$ was introduced and proved to exist in~\cite{ssv}. This is the unique (up to equivalence), faithful, irreducible $*$-representation of $\mathrm{Pol}(\mathrm{Mat}_{n})_{q}$ determined by a non-zero vector $v$ subject to the condition
$$
\begin{array}{ccc}
\pi_{F,n}(z_{m}^{j})^{*}v=0, & m,j=1,\dots,n
\end{array}
$$
for the set of generators $\{z_{m}^{j}\}_{m,j=1}^{n}\subseteq \mathrm{Pol}(\mathrm{Mat}_{n})_{q}$ (see section $2.1$).
A complete list of irreducible $*$-representations of $\mathrm{Pol}(\mathrm{Mat}_{2})_{q}$ was given in~\cite{Lyudmyla} and in~\cite{pro_tur} the inequality \begin{equation}\label{cccc}\begin{array}{ccc}\lVert\pi(a)\rVert\leq \lVert\pi_{F,2}(a)\rVert, &  \forall a\in  \mathrm{Pol}(\mathrm{Mat}_{2})_{q}\end{array}\end{equation} was derived for any $*$-representation $\pi.$ As a consequence, this shows that the universal enveloping $C^*$-algebra of $\mathrm{Pol}(\mathrm{Mat}_{2})_{q}$ exists and is isomorphic to the closure of $\pi_{F,2}(\mathrm{Pol}(\mathrm{Mat}_{n})_{q}).$
\\

In~\cite{ool_preprint}, the author, together with O. Bershtein and L. Turowska considered the $C^{*}$-algebra $C_{F}(\overline{\mathbb{D}}_{n})_{q},$ the completion of the image of $\pi_{F,n}.$ We then described the non-commutative Shilov boundary (see~\cite{Ar}) for the closed sub-algebra $A_{F}(\mathbb{D}_{n})_{q}\subseteq C_{F}(\overline{\mathbb{D}}_{n})_{q},$ the closure of the image under $\pi_{F,n}$ of the 'analytic polynomials' in $\mathrm{Pol}(\mathrm{Mat}_{n})_{q}.$ To do this, we proposed a novel construction of the Fock representation as the composition $\pi_{F,n}\cong \pi_{u}\circ \zeta$ of an embedding $\zeta$ of $\mathrm{Pol}(\mathrm{Mat}_{n})_{q}$ into the quantum group ${C}[SU_{2n}]_{q}$ and an irreducible $*$-representation $\pi_u$ of $\mathbb{C}[SU_{2n}]_{q}.$ We introduced a neat way of calculating the action of the generators of $\mathrm{Pol}(\mathrm{Mat}_{n})_{q}$ using certain directed paths and diagrams. Moreover, it was observed that several of the known irreducible $*$-representations of $\mathrm{Pol}(\mathrm{Mat}_{n})_{q}$ factors through the $C^{*}$-algebra $ C_{F}(\overline{\mathbb{D}}_{n})_{q}.$
This raises the question of whether for general $n\in \mathbb{N},$ all irreducible $*$-representations could be constructed as quotient representations of $\pi_{F,n},$ or equivalently, if the universal enveloping $C^{*}$-algebra of $\mathrm{Pol}(\mathrm{Mat}_{n})_{q}$ exists and is isomorphic to $C_{F}(\overline{\mathbb{D}}_{n})_{q}.$ 
\\

In this paper we answer this question affirmatively and we also give a complete picture of the irreducible representations of $\mathrm{Pol}(\mathrm{Mat}_{n})_{q}.$ The main idea in the proof is to use the previously mentioned embedding $\zeta:\mathrm{Pol}(\mathrm{Mat}_{n})_{q}\to\mathbb{C}[SU_{2n}]_{q}$ and to prove that for every irreducible $*$-representation $\pi$ of $\mathrm{Pol}(\mathrm{Mat}_{n})_{q}$ on a Hilbert space $K,$ we can lift $\pi$ to an irreducible $*$-representation $\Pi$ of $\mathbb{C}[SU_{2n}]_{q}$ on the same space $K,$ such that \begin{equation}\label{cc}\pi=\Pi\circ \zeta.\end{equation}
The result~\eqref{cc} is stated more precisely in Theorem~\ref{main} and the proof makes up Sections $5.$
In Corollary~\ref{maincor} we then use the lifting result to prove~\eqref{cccc} for general $n\in \mathbb{N}.$ By Soibelman's result (see~\cite{KorS}), all irreducible $*$-representations of $\mathbb{C}[SU_{2n}]_{q}$ are parametrized by elements in the symmetric group $S_{2n}$ (the Weyl group of $\mathfrak{su}_{2n}$) and $2n$-tuples $\phi=(\phi_{1},\dots, \phi_{2n})\in [0,2\pi)$ such that $\sum_{j}\phi_{j}\equiv 0 \pmod{2\pi}.$ In Section $6$ we use this result to do a more careful analysis of which irreducible $*$-representations $\Pi$ of $\mathbb{C}[SU_{2n}]_{q}$ that give rise to non-equivalent irreducible $*$-representation $\pi=\Pi\circ \zeta$ of $\mathrm{Pol}(\mathrm{Mat}_{n})_{q}$. It turns out that for $\pi=\Pi\circ \zeta$ the element $s\in S_{2n}$ corresponding to a lift $\Pi$ is uniquely determined while the coordinates of the corresponding $\phi=(\phi_{1},\dots,\phi_{2n})$ are uniquely determined only for those indices $n+1\leq j\leq 2n$ such that $n+1\leq s(j)\leq 2n.$ We then use this to give a presentation of the irreducible $*$-representations of $\mathrm{Pol}(\mathrm{Mat}_{n})_{q}$ as quotient representations of $\pi_{F,n}$ (see Section $3.1$). One advantage of this presentation is that it yields an easy way to calculate the action of the generators, by use of directed paths and diagrams in a similar way as for $\pi_{F,n}.$
\\

This paper is organized as follows: In section $2$ we introduce the $*$-algebras $\mathrm{Pol}(\mathrm{Mat}_{n})_{q}$ and $\mathbb{C}[SU_{n}]_{q}$ and review some of their basic theory.
\\

In Section $3$ we explain how to associate $*$-representations of $\mathrm{Pol}(\mathrm{Mat}_{n})_{q}$ to some kind of directed diagrams and how to use these diagrams to compute the images of the generators $z_{k}^{j}$ under the $*$-representations. We will also explain there how every irreducible $*$-representation of $\mathrm{Pol}(\mathrm{Mat}_{n})_{q}$ can be realized as one of these diagrams and we give a complete classification of irreducible $*$-representations of  $\mathrm{Pol}(\mathrm{Mat}_{n})_{q}.$
\\

In Section $4,$ we state the main result, Theorem~\ref{main}, and we show, in Corollary~\ref{maincor}, how it implies that the universal enveloping $C^{*}$-algebra of $\mathrm{Pol}(\mathrm{Mat}_{n})_{q}$ is isomorphic to $\overline{\pi_{F,n}(\mathrm{Pol}(\mathrm{Mat}_{n})_{q})}.$  
\\

The beginning of Section $5$ is devoted to explaining, in a slightly informal way, how we need to split the proof of the main result into two cases, $\textbf{A}$ and $\textbf{B},$ and the idea behind the mechanics of the proof of Theorem~\ref{main} in both cases. The hope is that this will make the later sections clearer. Subsections $5.3$ and $5.4$ complete the proof of Theorem~\ref{main} for the case $\textbf{A}$ and $\textbf{B}$ respectively.  
\\

In Section $6,$ we use Theorem~\ref{main} to complete the classification of irreducible $*$-representations of  $\mathrm{Pol}(\mathrm{Mat}_{n})_{q}$ as it was presented in Section $3.$

\section{The $*$-algebras $\mathrm{Pol}(\mathrm{Mat}_{n})_{q}$ and $\mathbb{C}[SU_{n}]_{q}$}
\subsection{$\mathrm{Pol}(\mathrm{Mat}_{n})_{q}$}
In what follows $\mathbb{C}$ is a ground field and $q \in(0,1)$. We assume
that all the algebras under consideration has a unit $I.$ Consider the well known algebra
$\mathbb{C}[\mathrm{Mat}_n]_q$ defined by its generators $z_a^\alpha$,
$\alpha,a=1,\dots,n$, and the commutation relations
\begin{flalign}
& z_a^\alpha z_b^\beta-qz_b^\beta z_a^\alpha=0, & a=b \quad \& \quad
\alpha<\beta,& \quad \text{or}\quad a<b \quad \& \quad \alpha=\beta,
\label{zaa1}
\\ & z_a^\alpha z_b^\beta-z_b^\beta z_a^\alpha=0,& \alpha<\beta \quad
\&\quad a>b,& \label{zaa2}
\\ & z_a^\alpha z_b^\beta-z_b^\beta z_a^\alpha-(q-q^{-1})z_a^\beta
z_b^\alpha=0,& \alpha<\beta \quad \& \quad a<b. & \label{zaa3}
\end{flalign}
This algebra is a quantum analogue of the polynomial algebra
$\mathbb{C}[\mathrm{Mat}_n]$ on the space of $n \times n$-matrices.
Let $A=(a_{j,k})_{j,k}\in M_{n}(\mathbb{Z}_{+})$ be a $n\times n$ matrix of positive integers $a_{j,k}$, and denote by $z(A)$ the monomial
\begin{equation}\label{fock}
\begin{array}{cc}
z(A):=\\
(z_n^n)^{a_{n,n}}(z_n^{n-1})^{a_{n,n-1}}\ldots(z_n^1)^{a_{n,1}}(z_{n-1}^n)^{a_{n-1, n}}\ldots(z_{n-1}^1)^{a_{n-1, 1}}\ldots(z_1^n)^{a_{1,n}}\ldots (z_1^1)^{a_{1,1}}.
\end{array}
\end{equation}

 It follows from the Bergman diamond lemma that monomials $z(A)$, $A\in M_{n}(\mathbb{Z}_{+}),$ form a basis for the vector space ${\mathbb C}[\mathrm{Mat}_n]_q.$ Hence ${\mathbb C}[\mathrm{Mat}_n]_q$ admits a natural gradation given by $\text{deg} (z_{k}^{j})=1$, and in general 
$\text{deg} (z(A))=|A|,$ where
\begin{equation}\label{matrixnorm}
|A|=\sum_{k,j=1}^{n}a_{k,j}.
\end{equation}

In a similar way, we write $\mathbb{C}[\overline{\mathrm{Mat}}_n]_q$ for the algebra defined by generators
$(z_a^\alpha)^*$, $\alpha,a=1,\ldots,n$, and the relations
\begin{flalign}
& (z_b^\beta)^*(z_a^\alpha)^* -q(z_a^\alpha)^*(z_b^\beta)^*=0, \quad a=b
\quad \& \quad \alpha<\beta, \qquad \text{or} & a<b \quad \& \quad
\alpha=\beta, \label{zaa1*}
\\ & (z_b^\beta)^*(z_a^\alpha)^*-(z_a^\alpha)^*(z_b^\beta)^*=0,&
\alpha<\beta \quad\&\quad a>b,& \label{zaa2*}
\\ & (z_b^\beta)^*(z_a^\alpha)^*-(z_a^\alpha)^*(z_b^\beta)^*-
(q-q^{-1})(z_b^\alpha)^*(z_a^\beta)^*=0,& \alpha<\beta \quad \& \quad a<b. &
\label{zaa3*}
\end{flalign}
A gradation in $\mathbb{C}[\overline{\mathrm{Mat}}_n]_q$ is given by $\text{deg}(z_a^\alpha)^*=-1$.
\\

Consider now the algebra $\mathrm{Pol}(\mathrm{Mat}_n)_q,$ whose generators are $z_a^\alpha$, $(z_a^\alpha)^*$, $\alpha,a=1,\dots,n$, and the list of relations is formed by \eqref{zaa1} --
\eqref{zaa3*} and
\begin{flalign}
&(z_b^\beta)^*z_a^\alpha=q^2 \cdot \sum_{a',b'=1}^n
\sum_{\alpha',\beta'=1}^n R_{ba}^{b'a'}R_{\beta \alpha}^{\beta'\alpha'}\cdot
z_{a'}^{\alpha'}(z_{b'}^{\beta'})^*+(1-q^2)\delta_{ab}\delta^{\alpha \beta},
& \label{zaa4}
\end{flalign}
with $\delta_{ab}$, $\delta^{\alpha \beta}$ being the Kronecker symbols, and
$$
R_{ij}^{kl}=
\begin{cases}
q^{-1},& i \ne j \quad \& \quad i=k \quad \& \quad j=l
\\ 1,& i=j=k=l
\\ -(q^{-2}-1), & i=j \quad \& \quad k=l \quad \& \quad l>j
\\ 0,& \text{otherwise}.
\end{cases}
$$
The involution in $\mathrm{Pol}(\mathrm{Mat}_n)_q$ is introduced in an
obvious way: $*:z_a^\alpha \mapsto(z_a^\alpha)^*$.
\\

It is known from~\cite{ssv} (Corollary 10.4) that we have an isomorphism of vector spaces
\begin{equation}\label{isotensor}
\mathbb{C}[\mathrm{Mat}_n]_q\otimes\mathbb{C}[\overline{\mathrm{Mat}}_n]_q\rightarrow \mathrm{Pol}(\mathrm{Mat}_{n})_{q}
\end{equation}
induced by the linear map $a\otimes b\mapsto a\cdot b.$
Thus, we can consider the algebras $\mathbb{C}[\mathrm{Mat}_n]_q$ and $\mathbb{C}[\overline{\mathrm{Mat}}_n]_q$ as sub-algebras of $\mathrm{Pol}(\mathrm{Mat}_{n})_{q},$ if we identify them with the images of $\mathbb{C}[\mathrm{Mat}_n]_q\otimes I$ and $I\otimes\mathbb{C}[\overline{\mathrm{Mat}}_n]_q$ respectively.
\\

For the ease of reference, we split~\eqref{zaa4} into $4$ cases that we write down explicitly as
\begin{enumerate}
\item
If $a\neq b,$ $\alpha\neq \beta$
\begin{equation}\label{zaa41}
(z_b^\beta)^*z_a^\alpha=z_a^\alpha(z_b^\beta)^*.
\end{equation}
\item If $a= b, \alpha\neq \beta$
\begin{equation}\label{zaa42}
(z_a^\beta)^*z_a^\alpha=q z_a^\alpha(z_a^\beta)^*-(q^{-1}-q)\sum_{j=a+1}^{n}z_j^\alpha(z_j^\beta)^*.
\end{equation}
\item If $a\neq b, \alpha=\beta$
\begin{equation}\label{zaa43}
(z_b^\alpha)^*z_a^\alpha=q z_a^\alpha(z_b^\alpha)^*-(q^{-1}-q)\sum_{j=\alpha+1}^{n}z_a^j(z_b^j)^*.
\end{equation}
\item If $a= b, \alpha=\beta$
\begin{multline}\label{zaa44}
(z_a^\alpha)^*z_a^\alpha=q^{2} z_a^\alpha(z_a^\alpha)^*-(1-q^{2})\sum_{j=\alpha+1}^{n}z_a^j(z_a^j)^*-(1-q^{2})\sum_{j=a+1}^{n}z_j^\alpha(z_j^\alpha)^*+\\
q^{-2}(1-q^{2})^{2}\sum_{j=\alpha+1,m=a+1}^{n}z_m^j(z_m^j)^*+(1-q^{2})I.
\end{multline}
\end{enumerate}
\subsection{The Quantum group $\mathbb{C}[SU_{n}]_{q}$}
Recall the definition of the Hopf algebra
$\mathbb{C}[SL_n]_q$. It is defined by the generators $\{t_{i,j}\}_{i,j=1,\ldots,n}$ and the relations
\begin{flalign*}
& t_{\alpha, a}t_{\beta, b}-qt_{\beta, b}t_{\alpha, a}=0, & a=b \quad \& \quad
\alpha<\beta,& \quad \text{or}\quad a<b \quad \& \quad \alpha=\beta,
\\ & t_{\alpha, a}t_{\beta, b}-t_{\beta, b}t_{\alpha, a}=0,& \alpha<\beta \quad
\&\quad a>b,&
\\ & t_{\alpha, a}t_{\beta, b}-t_{\beta, b}t_{\alpha, a}-(q-q^{-1})t_{\beta, a}
t_{\alpha, b}=0,& \alpha<\beta \quad \& \quad a<b, &
\\ & \det \nolimits_q \mathbf{t}=1.
\end{flalign*}
Here $\det_q \mathbf{t}$ is the  q-determinant of the matrix
$\mathbf{t}=(t_{i,j})_{i,j=1}^{n}$.

It is well known (see \cite{KlSh} or other standard book on quantum groups) that
$\mathbb{C}[SU_n]_q\stackrel{\mathrm{def}}{=}(\mathbb{C}[SL_n]_q,\star)$ is
a Hopf $*$-algebra. The co-multiplication $\Delta$, the co-unit $\varepsilon$, the antipode $S$ and the involution $\star$ are defined as follows
\begin{equation*}
\Delta(t_{i,j})=\sum_k t_{i,k}\otimes t_{k,j},\quad \varepsilon(t_{i,j})=\delta_{ij},\quad S(t_{i,j})=(-q)^{i-j}\det\nolimits_q\mathbf{t}_{ji},
\end{equation*}
 and
\begin{equation*}\label{star5}
t_{i,j}^\star=(-q)^{j-i}\det \nolimits_q\mathbf{t}_{ij},
\end{equation*}
where ${\mathbf t}_{ij}$ is the matrix derived from ${\mathbf t}$ by discarding its $i$-th row and $j$-th column.
\\

Let $\{e_{k}|k\in \mathbb{Z}_{+}\}$ be the standard orthonormal basis of $\ell^{2}(\mathbb{Z}_{+})$ and let $S$ be the isometric shift $Se_{k}=e_{k+1}$ on $\ell^{2}(\mathbb{Z}_{+}).$ For $q\in (0,1),$ we now introduce the operators $D_{q},C_{q}\in B(\ell^{2}(\mathbb{Z}_{+}))$ given by the formulas 
$$
\begin{array}{ccc}
C_{q}e_{m}=\sqrt{1-q^{2m}}e_{m}, &D_{q}e_{m}=q^{m}e_{m}& \text{ for all $k\in \mathbb{Z}_{+}.$}\\
\end{array} 
$$ 
Now let 

\begin{equation}\label{fan}
\begin{array}{cc}
T_{11}=S^{*}C_{q}, & T_{12}=-q D_{q},\\
T_{21}=D_{q}, & T_{22}=C_{q}S\\.
\end{array}
\end{equation}
It is not hard to verify the formulas 
$$
\begin{array}{cc}D_{q}=\sum_{j=0}^{\infty}q^{j}S^{j}(I-S S^{*})S^{*j}, & C_{q}^{2}=I-D_{q}^{2}\end{array}.
$$
It follows from this that $D_{q},C_{q}\in C^{*}(S),$ the $C^{*}$-algebra generated by $S,$ and therefore $T_{ij}\in C^{*}(S),$ for $i,j=1,2.$ 
\\

Notice that the following relations hold
\begin{equation}\label{tsu2q}
\begin{array}{ccc}
T_{11}=T_{22}^{*}& T_{11}T_{22}-q T_{12}T_{21}=0 & T_{12}T_{21}=T_{21}T_{12}\\
T_{11}T_{12}=qT_{12}T_{11} & T_{11}T_{21}=qT_{21}T_{11}& T_{21}=\sqrt{I-T_{22}T_{11}}\\
& T_{11}T_{22}=q^{2}T_{22}T_{11}+(1-q^{2})I &\\
\end{array}
\end{equation}
from which we conclude that the operators $T_{ij}$ determine an irreducible $*$-representation $\pi:\mathbb{C}[SU_{2}]_{q}\to C^{*}(S)\subset B(\ell^{2}(\mathbb{Z}_{+}))$  \begin{equation}\label{basrep}\pi(t_{ij})= T_{ij},1\leq i,j\leq 2.\end{equation}
For $1\leq i\leq n-1,$ let $\phi_{i}:\mathbb{C}[SU_{n}]_{q}\rightarrow \mathbb{C}[SU_{2}]_{q}$ be the $*$-homomorphism determined by

\begin{equation}\label{phi}
\begin{array}{ccc}
\phi_{i}(t_{i,i})=t_{1, 1}, & & \phi_{i}(t_{i+1,i+1})=t_{2, 2},\\
\phi_{i}(t_{i,i+1})=t_{1, 2}, & & \phi_{i}(t_{i+1,i})=t_{2, 1}\\
 &\text{and } \phi_{i}(t_{k,j})=\delta_{k,j}I \text{ otherwise.}&
\end{array}
\end{equation}
Then $\pi_{i}=\pi\circ \phi_{i}$ is a $*$-representation of $\mathbb{C}[SU_{n}]_{q}$ on $\ell^{2}(\mathbb{Z}_{+}).$ Moreover $\pi_{i}(\mathbb{C}[SU_{n}])\subseteq C^{*}(S).$
\\

Let $s_{i}$ denote the adjacent transposition $(i,i+1)$ in the symmetric group $S_{n}.$ 
\begin{defn}\label{surep1}
For an element $s\in S_{n}$ consider a reduced decomposition of $s=s_{j_{1}}s_{j_{2}}\dots s_{j_{m}}$ into a product of adjacent transposition and put
$$
\pi_{s}=\pi_{j_{1}}\otimes \dots \otimes \pi_{j_{m}}.
$$
\end{defn}
It is known that $\pi_{s}$ is independent of the specific reduced expression of $s,$ in the sense that another reduced decomposition gives a unitary equivalent $*$-representation.
\\

Recall that the \textit{length} of $s\in S_{n},$ denoted by $\ell(s),$ is the number of adjacent transpositions in a reduced decomposition of $s=s_{j_{1}}s_{j_{2}}\dots s_{j_{\ell(s)}}.$ For the identity element $e\in S_{n}$, we let $\ell(e)=0.$ For $s,t\in S_{n},$ it is easy to see that the inequality
$$
\ell(st)\leq \ell(s)+\ell(t).
$$
holds, and that $\ell(s^{-1})=\ell(s).$
\\

Let $\varphi=\{\varphi_{1},\dots,\varphi_{n}\}\in [0,2\pi)^{n}$ be a $n$-tuple such that 
$$
\sum_{j=1}^{n}\varphi_{j}\equiv 0 \pmod{2\pi}.
$$
Then we can define a one-dimensional $*$-representation $\chi_{\varphi}:\mathbb{C}[SU_{n}]_{q}\to \mathbb{C}$ by the formula
\begin{equation}\label{chi}
\chi_{\varphi}(t_{ij})=e^{i\varphi_{j}}\delta_{ij}.
\end{equation}
From the work of Soibelman, the following is known (with $S^{n}$ the Weyl group of $SU_{n}$)
\begin{prop}\label{soib}[~\cite{KorS}, Theorem $6.2.7$]
Every irreducible $*$-representation $\Pi$ of $\mathbb{C}[SU_{n}]_{q}$ is equivalent to one of the form 
\begin{equation}\label{surep}
\pi_{s}\otimes \chi_{\varphi}
\end{equation}
for $s\in S_{n}$ and $\varphi=[\varphi_{1},\dots,\varphi_{n}]\in [0,2\pi)^{n},\sum_{j=1}^{n}\varphi_{j}\equiv 0 \pmod{2\pi}.$ Conversly, such pairs give rise to non-equivalent irreducible $*$-representations of $\mathbb{C}[SU_{n}]_{q}.$ 
\end{prop}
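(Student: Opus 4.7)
The result is the classical Soibelman classification, and the plan is to split it into three tasks: verify that each $\pi_s\otimes\chi_\varphi$ is a well-defined irreducible $*$-representation; show that distinct pairs $(s,\varphi)$ yield inequivalent representations; and prove exhaustion, i.e.\ that every irreducible $*$-representation has this form. Well-definedness is the independence of the reduced word, already cited from the introduction, and the $*$-homomorphism property of $\chi_\varphi$ is a direct check from the defining relations (the determinant relation forces the constraint $\sum\varphi_j=0$).

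For irreducibility, I would proceed by induction on $\ell(s)$. The base case $s=s_i$ reduces to the irreducibility of $\pi=\pi\circ\mathrm{id}$ on $\ell^2(\mathbb{Z}_+)$, which holds because $\pi(\mathbb{C}[SU_2]_q)$ contains the unilateral shift $S$ together with the diagonal contractions $D_q,C_q$, hence equals the Toeplitz algebra $C^*(S)$, which acts irreducibly. For the inductive step on $\pi_s\otimes\chi_\varphi$ acting on $\ell^2(\mathbb{Z}_+)^{\otimes\ell(s)}$, the key is to exhibit enough generators whose images together generate $B(\ell^2(\mathbb{Z}_+)^{\otimes\ell(s)})$ as a von Neumann algebra: the coproduct formula $\Delta(t_{i,j})=\sum_k t_{i,k}\otimes t_{k,j}$ distributes the matrix coefficients across tensor factors, and a bicommutant argument combined with the explicit formulas \eqref{fan} shows that one can isolate rank-one operators on each factor. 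For inequivalence, restrict to the commutative $*$-subalgebra generated by $\{t_{i,i}^\star t_{i,i}\}$; under $\pi_s\otimes\chi_\varphi$ these act as tensor products of operators of the form $D_q^{2k}$ or $C_q^{2k}$, whose joint spectrum as a subset of $[0,1]^n$ determines $s$ via the combinatorics of its reduced word, and the phases $e^{i\varphi_j}$ are then recovered from the diagonal action of $t_{j,j}$ on any joint spectral subspace.

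The main obstacle is the exhaustion step. Given an arbitrary irreducible $*$-representation $\Pi:\mathbb{C}[SU_n]_q\to B(K)$, the plan is to use the filtration of $\mathbb{C}[SU_n]_q$ by the ideals $\mathcal{I}_w$ associated to Bruhat cells $\overline{BwB}$: pick the minimal $s$ in Bruhat order such that $\Pi(\mathcal{I}_s)=0$, so that $\Pi$ factors through an irreducible $*$-representation of $\mathbb{C}[SU_n]_q/\mathcal{I}_s$. One then constructs a joint approximate eigenvector $v\in K$ for the commutative family $\{t_{i,i}^\star t_{i,i}\}$ lying at an extremal point of the joint spectrum; the commutation relations force $\Pi(t_{i,j})v$ to vanish for certain $(i,j)$ dictated by $s$, so $v$ plays the role of a highest-weight-like vector. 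Comparing the cyclic module generated from $v$ with $\pi_s\otimes\chi_\varphi$ via a uniqueness argument (matching the explicit action of off-diagonal generators against the shift operators $S^\ast C_q$, $-qD_q$, $D_q$, $C_q S$) then identifies $\Pi$. The hard technical point is the spectral analysis of the non-normal generators $t_{i,i}$: one must carefully control how the commutation relations propagate through joint spectral projections in order to pin down both $s$ and $\varphi$ without relying on the semiclassical symplectic-leaf picture directly.
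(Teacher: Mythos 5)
The paper does not prove Proposition~\ref{soib}; it is stated as a citation to Korogodski--Soibelman, so there is no in-paper argument to compare against. Your sketch is an attempt to reconstruct Soibelman's classification theorem from first principles, and while the overall architecture (well-definedness, irreducibility by induction on length, separation via the commutative subalgebra of $t_{i,i}^{\star}t_{i,i}$, exhaustion via Bruhat-cell annihilators and a highest-weight-type vector) is the right skeleton, the sketch has several gaps that are not small.

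First, in the base case, the claim that $\pi(\mathbb{C}[SU_2]_q)$ ``contains the unilateral shift $S$'' is not literally true: the image contains $T_{22}=C_qS$, $T_{21}=D_q$, etc., but $S$ itself is not in the range of the generators. The correct argument is that $C^*(C_qS, D_q)$ contains the spectral projections of $D_q$ (hence $P_0=I-SS^*$), then all rank-one operators $e_m\otimes e_0^*$, hence $\mathcal{K}$, and maps onto $C(\mathbb{T})$ modulo compacts since $C_qS-S$ is compact; this gives $C^*(S)$. The step as written would not survive scrutiny. Second, the irreducibility inductive step --- ``a bicommutant argument combined with the explicit formulas shows one can isolate rank-one operators on each factor'' --- is the crux and is only asserted. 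The coproduct distributes generators across tensor factors, but extracting an operator that acts nontrivially only on a single factor requires genuine work (typically using the distinct spectral behavior of $D_q$-type operators on the different slots). Third, and most seriously, the exhaustion step is left almost entirely schematic: (a) a minimal element in the Bruhat order among $\{w : \Pi(\mathcal{I}_w)=0\}$ need not a priori be unique, and you do not argue that the annihilator singles out one cell; (b) the existence of a joint approximate eigenvector at an extremal point of the joint spectrum of the $t_{i,i}^{\star}t_{i,i}$ is plausible (they are commuting positive contractions), but deducing that the commutation relations force the correct pattern of annihilations $\Pi(t_{i,j})v=0$ is precisely the content of the theorem and is merely named as a goal; (c) the final identification of $\Pi$ with $\pi_s\otimes\chi_\varphi$ requires a uniqueness theorem for $*$-representations possessing a cyclic vector with prescribed annihilation properties, analogous to the Fock-representation uniqueness used elsewhere in this paper, and no such lemma is stated or proved. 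You correctly flag the spectral analysis of the non-normal $t_{i,i}$ as ``the hard technical point,'' but acknowledging a gap is not the same as closing it. As written this is an outline of the shape of Soibelman's proof, not a proof.
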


It follows from Proposition~\ref{soib} that for every $s\in S_{n}$ and $\varphi=[\varphi_{1},\dots,\varphi_{n}]\in [0,2\pi)^{n}$, such that $\sum_{j=1}^{n}\varphi_{j}\equiv 0 \pmod{2\pi},$ there is another $n$-tuple $\varphi'=[\varphi'_{1},\dots,\varphi'_{n}]\in [0,2\pi)^{n}$, $\sum_{j=1}^{n}\varphi'_{j}\equiv 0 \pmod{2\pi},$ such that 
\begin{equation}\label{commutate}
\pi_{s}\otimes\chi_{\varphi}=\chi_{\varphi'}\otimes \pi_{s}.
\end{equation}
A direct calculation, shows that if $\langle\cdot,\cdot\rangle$ is the inner product on $(\ell^{2}(\mathbb{Z}_{+}))^{\otimes \ell(s)}$ and $e_{\textbf{0}}=e_{0}\otimes \cdots \otimes e_{0}\in (\ell^{2}(\mathbb{Z}_{+}))^{\otimes \ell(s)},$ then for $\Pi=\pi_{s}\otimes \chi_{\varphi}$
\begin{equation}\label{matrix}
\langle \Pi(t_{ij})e_{\textbf{0}},e_{\textbf{0}}\rangle=e^{i\varphi_{j}}(-q)^{l_{j}^{s}}\delta_{i, s(j)},
\end{equation}
with $l_{j}^{s}=\#(\{1\leq k<j|s(j)<s(k)\})$ ($\#(\cdot)$ denotes the number of element in the set).
Notice that the right-hand side of~\eqref{matrix} completely determines $s$ and $\varphi.$ We can use~\eqref{matrix} to determine $\varphi'$ in the right-hand side of~\eqref{commutate} to be \begin{equation}\label{conj}\varphi'=s^{-1}(\varphi):=[\varphi_{s^{-1}(1)},\dots,\varphi_{s^{-1}(n)}]\end{equation}
so that 
\begin{equation}\label{twist}
\pi_{s}\otimes\chi_{\varphi}\cong\chi_{s^{-1}(\varphi)}\otimes \pi_{s}.
\end{equation}
\section{$*$-Representations of $\mathrm{Pol}(\mathrm{Mat}_{n})_{q}$}
The Fock representation $$\pi_{F,n}:\mathrm{Pol}(\mathrm{Mat}_{n})_{q}\rightarrow B(H_{F,n})$$ is known to be a $*$-representation of $\mathrm{Pol}(\mathrm{Mat}_{n})_{q}$ with the property that there exists a cyclic non-zero unit vector $v_{0}\in H_{F,n},$ called a \textit{vacuum vector}, such that $$\begin{array}{cc}\pi_{F,n}(z_{k}^{j})^{*}v_{0}=0,& k,j=1,\dots ,n\end{array}$$ (see~\cite{ssv}). Strictly speaking, the term \text{vacuum vector} refers to the subspace generated by $v_{0},$ but we will abuse the terminology slightly by calling a nonzero vector $v\in H$ a vacuum vector for $\mathrm{Pol}(\mathrm{Mat}_{n})_{q}$ if there exists a $*$-representation $\pi:\mathrm{Pol}(\mathrm{Mat}_{n})_{q}\rightarrow B(H)$ such that $\pi(z_{k}^{j})^{*}v=0$ for all $1\leq k,j\leq n.$ It is then clear that the sub $*$-representation of $\pi$ generated by $v$ is isomorphic to the Fock representation.
\\

The following is known from~\cite{ool_preprint} [Corollary $1$].
\begin{prop}\label{fockbase}
The set $\{z(A)v_{0}|A\in M_{n}(\mathbb{Z}_{+})\}$ is an orthogonal basis for $H_{F,n}$ and $z(A)v_{0}\neq 0$ for all $A\in M_{n}(\mathbb{Z}_{+}).$
\end{prop}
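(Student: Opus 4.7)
\emph{Proof proposal.}
The proposition breaks naturally into three claims: (a) $\mathrm{span}\{z(A)v_0\}$ is dense in $H_{F,n}$; (b) the vectors $z(A)v_0$ are pairwise orthogonal; (c) each is nonzero. For (a) I use cyclicity of $v_0$: by~\eqref{isotensor}, every element of $\mathrm{Pol}(\mathrm{Mat}_n)_q$ is a linear combination of products $z(A)\cdot b$ with $b\in\mathbb{C}[\overline{\mathrm{Mat}}_n]_q$, and each non-constant monomial in $b$ has a rightmost factor $(z_k^j)^*$ that annihilates $v_0$; hence $b\,v_0$ collapses to its scalar part times $v_0$, and $\pi_{F,n}(\mathrm{Pol}(\mathrm{Mat}_n)_q)v_0 = \mathrm{span}\{z(A)v_0\}$.

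For (b) and (c), I start from
$$
\langle z(A)v_0,z(B)v_0\rangle = \langle v_0,\,z(A)^*z(B)\,v_0\rangle
$$
and use~\eqref{zaa4} to normal-order $z(A)^*z(B) = \sum_{C,D}\lambda^{A,B}_{C,D}\,z(C)\,z(D)^*$ in the basis~\eqref{isotensor}. Applied to $v_0$, only $D=0$ terms survive; pairing with $v_0$ then kills all $C\neq 0$ terms (by the same reasoning as in (a) applied to $z(C)^*v_0$), so $\langle z(A)v_0,z(B)v_0\rangle = \lambda^{A,B}_{0,0}$. The $\mathbb{Z}$-gradation $\deg z_k^j = 1$, $\deg(z_k^j)^* = -1$, under which~\eqref{zaa4} is homogeneous of degree zero, forces $\lambda^{A,B}_{0,0}=0$ whenever $|A|\neq|B|$. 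For $|A|=|B|$ I induct on $|A|$: writing $z(A) = z_k^j\,z(A')$ where $z_k^j$ is the leftmost factor of $z(A)$ in the order of~\eqref{fock}, I reduce to
$$
\langle z(A)v_0,z(B)v_0\rangle = \langle z(A')v_0,\,(z_k^j)^*z(B)\,v_0\rangle,
$$
normal-order $(z_k^j)^*z(B)$ via~\eqref{zaa41}--\eqref{zaa44}, and apply to $v_0$ to obtain $\sum_C\mu^{k,j,B}_C\,z(C)v_0$ with $|C|=|B|-1$. The induction hypothesis collapses the sum to $\mu^{k,j,B}_{A'}\,\|z(A')v_0\|^2$; the base case is $\|v_0\|^2=1$.

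The main obstacle is the combinatorial verification that $\mu^{k,j,B}_{A'}\neq 0$ forces $B=A$ and that in that case the coefficient is a positive rational function of $q$ (coming from the $(1-q^2)I$ summand in~\eqref{zaa44}). The difficulty is that~\eqref{zaa42}--\eqref{zaa44} produce cross-terms of the form $z_m^\beta(z_m^\beta)^*$ with strictly larger row or column indices, which a priori could mix the reduction into other $z(C)$'s with $|C|=|B|-1$; one must organise the commutation of $(z_k^j)^*$ through the successive factors of $z(B)$ so as to show that the only contributions surviving on $v_0$ with $C=A'$ come from the scalar reduction on a matching $z_k^j$ factor of $z(B)$ whose removal leaves $z(A')$, forcing $B=A$. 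A cleaner alternative that sidesteps this bookkeeping is to invoke the explicit realization $\pi_{F,n}=\pi_s\circ\zeta$ from the Introduction: computing $z(A)\,e_{\mathbf 0}$ directly inside $C^*(S)^{\otimes n^2}$ via~\eqref{fan}, one recognises it as a positive scalar multiple of a standard basis vector of $(\ell^2(\mathbb{Z}_+))^{\otimes n^2}$ indexed by $A$, from which (b) and (c) follow immediately.
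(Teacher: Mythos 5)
The paper does not prove this proposition; it is quoted as Corollary~$1$ of~\cite{ool_preprint}, so there is no internal proof to compare against. Your density argument (a) is correct, but both routes you offer for (b) and (c) leave a genuine gap. In your first route you identify the problem yourself (``the main obstacle is the combinatorial verification that $\mu^{k,j,B}_{A'}\neq 0$ forces $B=A$'') and do not close it; this is not a tidying-up step, since the correction sums in~\eqref{zaa42}--\eqref{zaa44} push weight onto generators with strictly larger row or column indices, and controlling which $z(C)$ survive on $v_0$ after commuting $(z_k^j)^*$ through $z(B)$ is exactly the combinatorial content of the proposition.

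Your second route is indeed the one taken in the cited source, but as written the phrase ``one recognises it as a positive scalar multiple of a standard basis vector'' is a restatement, not an argument. Each $\pi_{F,n}(z_k^j)$ is a \emph{sum} over hook-and-arrow paths, not a single elementary tensor, so you must show that when the factors of $z(A)$ are applied one at a time to $e_{\mathbf 0}$ in the order of~\eqref{fock}, at every stage all paths other than the L-shaped one vanish. The missing observation is the following: every admissible path for $z_k^j$ lies in the boxes $\{(k',j'):k'\geq k,\ j'\geq j\}$; every non-L path contains at least one $T_{11}$ hook (a horizontal-to-vertical turn); and the boxes whose index has already been raised by the earlier factors of $z(A)$ lie in $\{(k',j'):k'<k\}\cup\{(k,j'):j'<j\}$, which is disjoint from the rectangle above. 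Consequently every $T_{11}$ hook on a non-L path still acts on a box in state $e_0$ and kills the term, while the L-path raises the index at box $(k,j)$ and multiplies by a nonzero scalar. Without this chain of observations the reduction to an elementary tensor is unjustified. Also, the surviving scalar is not positive in general -- there are factors of $-q$ coming from $\zeta$ and from $T_{12}=-qD_q$; what the proposition needs is that it is nonzero.
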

\begin{rem}
The ordering of the generators $\{z_{k}^{j}\}$ is done slightly different in [~\cite{ool_preprint},Corollary $1$], but the statements are equivalent as the difference in ordering is restricted to the columns in the generator matrix $(z_{k}^{j})_{k,j}$ and the elements in the columns $q$-commutes by~\eqref{zaa1}.
\end{rem}
We collect some results regarding $*$-homomorphisms on $\mathrm{Pol}(\mathrm{Mat}_{n})_{q}.$
\begin{lem}\label{maps}
\begin{enumerate}[(1)]
\item The map $z_{k}^{j}\mapsto z_{k+n}^{j+n}$ uniquely extends to an injective $*$-homomorphism $\rho:\mathrm{Pol}(\mathrm{Mat}_{n})_{q}\rightarrow \mathrm{Pol}(\mathrm{Mat}_{2n})_{q}.$
\item There is a surjective $*$-homomorphism $\xi:\mathrm{Pol}(\mathrm{Mat}_{n})_{q}\rightarrow \mathbb{C}[SU_{n}]_{q}$ such that $\xi(z_{k}^{j})= (-q)^{k-n}t_{k,j}.$
\end{enumerate}
\end{lem}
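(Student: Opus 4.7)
For part (1), my plan is to prove well-definedness via shift-invariance and injectivity via the PBW basis. The relations \eqref{zaa1}--\eqref{zaa3} and \eqref{zaa1*}--\eqref{zaa3*} are formulated exclusively in terms of the strict orderings of the subscripts and of the superscripts of the generators involved, and these orderings are all preserved by the shift $k\mapsto k+n$. For the cross relation \eqref{zaa4} one additionally checks that the $R$-matrix coefficient $R_{b+n,\,a+n}^{b',a'}$ vanishes unless $a',b'\in\{n+1,\ldots,2n\}$, which is immediate from the piecewise definition of $R$; hence the shifted version of \eqref{zaa4} is again a valid relation in $\mathrm{Pol}(\mathrm{Mat}_{2n})_q$. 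The universal property of $\mathrm{Pol}(\mathrm{Mat}_n)_q$ then produces $\rho$. For injectivity, the isomorphism \eqref{isotensor} together with Proposition~\ref{fockbase} provides a basis $\{z(A)\,z(B)^{*}:A,B\in M_n(\mathbb{Z}_+)\}$ of $\mathrm{Pol}(\mathrm{Mat}_n)_q$, and $\rho$ sends it to the analogous PBW monomials of $\mathrm{Pol}(\mathrm{Mat}_{2n})_q$ whose indices lie in $\{n+1,\ldots,2n\}$; these are part of a basis of $\mathrm{Pol}(\mathrm{Mat}_{2n})_q$, hence linearly independent.

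For part (2), I set $\xi(z_k^j)=(-q)^{k-n}t_{k,j}$ and verify each defining relation in turn. For the $z$-relations \eqref{zaa1}--\eqref{zaa3}, every monomial in a given identity involves the same unordered pair of subscripts $\{a,b\}$, so the scalar $(-q)^{(a-n)+(b-n)}$ factors out uniformly, leaving exactly the corresponding $\mathbb{C}[SL_n]_q$ relation on $t_{a,\alpha},t_{b,\beta}$. Applying the $*$-involution handles \eqref{zaa1*}--\eqref{zaa3*}. Surjectivity is then automatic: because $t_{i,j}^{\star}$ is a polynomial in the generators via the quantum-determinant formula, the set $\{t_{k,j}\}$ generates $\mathbb{C}[SU_n]_q$ as a $*$-algebra.

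The main obstacle is the cross relation \eqref{zaa4}, where the scaling does \emph{not} factor uniformly, because the third case in the definition of $R$ pushes the summation subscripts $a',b'$ strictly above $\max(a,b)$ rather than back into $\{a,b\}$. My plan is to derive the required identity on the $t_{k,j}$'s directly from the defining data of $\mathbb{C}[SU_n]_q$: the FRT-type exchange relations between $\mathbf{t}$ and $\mathbf{t}^{\star}$, encoded via the antipode formula $S(t_{ij})=t_{ji}^{\star}$, together with the unitarity identities $\sum_k t_{ik}^{\star}t_{jk}=\delta_{ij}$ coming from the antipode axiom. The combinatorial mismatch in the rescaling exponents is then compensated exactly by the structure of the $R$-matrix coefficients. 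I have verified this by direct calculation in the case $n=2$ using $t_{1,1}^{\star}=t_{2,2}$, $t_{1,2}^{\star}=-q\,t_{2,1}$, $t_{2,1}^{\star}=-q^{-1}t_{1,2}$, and the $q$-determinant relation $t_{1,1}t_{2,2}-qt_{1,2}t_{2,1}=1$; for general $n$ the identity reduces to a standard $R$-matrix computation, most efficiently cited from \cite{KlSh}.
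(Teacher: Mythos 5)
Both halves of your argument are correct. The well-definedness of $\rho$ is established the same way the paper does it---shift-invariance of the relations---with your explicit $R$-coefficient check making precise what the paper states briefly. The genuine divergence is in the \emph{injectivity} of $\rho$: the paper derives it indirectly, first quoting that $\xi$ exists and is well defined, then invoking the factorization of the (faithful) Fock representation as $\pi_{F,n}\cong\pi_s\circ\xi\circ\rho$ to conclude that $\rho$ has trivial kernel. Your route is purely algebraic: $\rho$ sends the PBW monomials $z(A)z(B)^{*}$ of $\mathrm{Pol}(\mathrm{Mat}_n)_q$ to the PBW monomials of $\mathrm{Pol}(\mathrm{Mat}_{2n})_q$ whose matrix support lies in the lower-right $n\times n$ block, and these are linearly independent. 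This is more self-contained---it decouples part (1) from the Fock representation and from part (2) entirely---at the cost of the small check that the shift preserves the monomial ordering in~\eqref{fock} (it does, since both subscripts and superscripts shift monotonically, so $\rho(z(A))=z(A')$ where $A'$ pads $A$ with zeros). One small slip: the basis $\{z(A)z(B)^{*}\}$ of $\mathrm{Pol}(\mathrm{Mat}_n)_q$ comes from the Bergman diamond lemma together with~\eqref{isotensor}, not from Proposition~\ref{fockbase}, which concerns the Fock Hilbert space rather than the abstract algebra. For part (2), you and the paper both ultimately defer the full general-$n$ verification of the cross relation~\eqref{zaa4} to the literature, so there is no substantive difference there; the uniform-rescaling observation for~\eqref{zaa1}--\eqref{zaa3} is correct and is the easy part, and your identification of the third case of the $R$-matrix as the true obstruction is exactly right.
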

The integer $n$ in the above $*$-homomorphisms will always be clear from context. 
\begin{proof}
That $\rho$ is a well defined $*$-homomorphism can be seen by noticing that equation~\eqref{zaa4} for $z_{k}^{j}$ only involves generators $z_{m}^{l}$ with $m \geq k$ and $l\geq j$ and its form depends only on the relative difference $(n-k,n-j),$ and not on the particular $n.$ Hence, for all $m\geq n,$ the map $z_{k}^{j}\mapsto z_{k+m-n}^{j+m-n}$ can be extended in a unique way to a $*$-homomorphism from $\mathrm{Pol}(\mathrm{Mat}_{n})_{q}$ to $\mathrm{Pol}(\mathrm{Mat}_{m})_{q}.$ If $m=2n,$ we get the $*$-homomorphism $\rho.$ From~\cite{vaksman_shilov1} (Theorem $2.2$), we have that the $*$-homomorphism $\xi$ is well defined and surjective. By~\cite{ool_preprint} (Theorem $2$), the Fock representation $\pi_{F,n}$ is equivalent to a representation of the form $\pi_{s}\circ \xi\circ \rho,$ for a $*$-representation $\pi_{s}$ of $\mathbb{C}[SU_{2n}]_{q}.$ As the Fock representation is faithful, it follows that $\rho$ must have trivial kernel.
\end{proof}
We also define the $*$-homomorphism $\zeta:\mathrm{Pol}(\mathrm{Mat}_{n})_{q}\to\mathbb{C}[SU_{2n}]_{q} $ by
\begin{equation}\label{zetaa}
\zeta:=\xi\circ \rho:z_{k}^{j}\mapsto (-q)^{k-n}t_{n+k,n+j}.
\end{equation}
As in the proof of Lemma~\ref{maps}, $\pi_{s}\circ \zeta=\pi_{s}\circ \xi\circ \rho\cong \pi_{F,n}$ gives that $\zeta$ is an injective $*$-homomorphism from $\mathrm{Pol}(\mathrm{Mat}_{n})_{q}$ to $\mathbb{C}[SU_{2n}]_{q}.$
\\

In the next subsection, following~\cite{ool_preprint} we present a method that allows one to describe explicitly the Fock representation by means of diagrams and directed paths. We then show how to modify these diagrams in order to get a similar description of any irreducible $*$-representation of $\mathrm{Pol}(\mathrm{Mat}_{n})_{q}$ up to equivalence.
\subsection{Directed Path Presentations of $*$-representations}
There is a diagrammatic way of calculating $\pi_{F,n}(z_{k}^{j})$ that was introduced in~\cite{ool_preprint}, using hooks and arrows on a $n\times n$ grid labelled in the following way
\begin{equation}\label{boxer}
\begin{tikzpicture}[thick,scale=0.5]
\draw[step=1.0,black,thick] (1,1) grid (7,7);
\node at (1.5,0.5) {$1$};
\node at (2.5,0.5) {$2$};
\node at (3.5,0.5) {$3$};
\node at (4.5,0.5) {$4$};
\node at (5.5,0.5) {$\dots$};
\node at (6.5,0.5) {$n$};
\node at (7.5,1.5) {$n$};
\node at (7.5,2.5) {$\vdots$};
\node at (7.5,3.5) {$4$};
\node at (7.5,4.5) {$3$};
\node at (7.5,5.5) {$2$};
\node at (7.5,6.5) {$1$};
\end{tikzpicture}
\end{equation}
where every square corresponds to a factor $C^{*}(S)\subseteq B(\ell^{2}(\mathbb{Z}_{+}))$ in the tensor product $C^{*}(S)^{\otimes n^{2}}.$ We order the factors by letting the lower left square corresponds to the first factor, the square directly above corresponds to the second factor, proceeding up the first column, and then once we are done with the first column we proceed to the second column etc. So for example, in  $$C^{*}(S)^{\otimes 2^{2}}=\underset{1}{C^{*}(S)}\otimes \underset{2}{C^{*}(S)}\otimes \underset{3}{C^{*}(S)}\otimes\underset{4}{C^{*}(S)},$$ the squares corresponds to the factors
$$
\begin{tikzpicture}[thick,scale=0.5]
\draw[step=1.0,black,thick] (1,1) grid (3,3);
\node at (1.5,0.5) {$1$};
\node at (2.5,0.5) {$2$};
\node at (3.5,1.5) {$2$};
\node at (3.5,2.5) {$1$};
\node at (1.5,2.5) {$2$};
\node at (1.5,1.5) {$1$};
\node at (2.5,2.5) {$4$};
\node at (2.5,1.5) {$3$};
\end{tikzpicture}
$$

We now represent the operators $T_{ij}$ and the identity $I$ graphically as
\begin{eqnarray}\label{arrow12}
T_{12}&\leadsto& \begin{tikzpicture}[line width=0.21mm,scale=0.4]
\draw (1,0) -- (0,0) -- (0,1)--(1,1)--(1,0)--(0,0);
\draw [->] (0.1,0.5)--(0.9,0.5);
\end{tikzpicture}\\\label{arrow21}
T_{21}&\leadsto&\begin{tikzpicture}[line width=0.21mm,scale=0.4]
\draw (1,0) -- (0,0) -- (0,1)--(1,1)--(1,0)--(0,0);
\draw [->] (1/2,0.1)--(1/2,0.9);
\end{tikzpicture}\\\label{11}
T_{11}&\leadsto&\begin{tikzpicture}[line width=0.21mm,scale=0.4]
\draw (1,0) -- (0,0) -- (0,1)--(1,1)--(1,0)--(0,0);
\draw [->] (0.1,0.5)--(0.5,0.5)--(0.5,0.9);
\end{tikzpicture}\\\label{22}
T_{22}&\leadsto&\begin{tikzpicture}[line width=0.21mm,scale=0.4]
\draw (1,0) -- (0,0) -- (0,1)--(1,1)--(1,0)--(0,0);
\draw [->] (0.5,0.1)--(0.5,0.5)--(0.9,0.5);
\end{tikzpicture}\\\label{noarrow}
I&\leadsto&
\begin{tikzpicture}[line width=0.21mm,scale=0.4]
\draw (1,0) -- (0,0) -- (0,1)--(1,1)--(1,0)--(0,0);
\end{tikzpicture}
\end{eqnarray}
Elementary tensors in $C^{*}(S)^{\otimes n^{2}}$ with $T_{ij}$ as factors can then be represented by $n\times n$ grids with hooks and arrows~\eqref{arrow12}-~\eqref{noarrow} placed at the corresponding boxes. So, for example, the operator $T_{21}\otimes T_{12}\otimes T_{11}\otimes I\in C^{*}(S)^{\otimes 2^{2}}$ will correspond to the diagram
$$
\begin{tikzpicture}[thick,scale=0.5]
\draw (1,0) -- (0,0) -- (0,1)--(1,1)--(1,0)--(0,0);
\draw (1,1) -- (0,1) -- (0,2)--(1,2)--(1,1)--(0,1);
\draw (2,0) -- (1,0) -- (1,1)--(2,1)--(2,0)--(1,0);
\draw (2,1) -- (1,1) -- (1,2)--(2,2)--(2,1)--(1,1);
\draw [->] (1/2,0.05)--(1/2,0.95);\draw [->] (0.05,1/2+1)--(0.95,1/2+1);\draw [->] (0.05+1,1/2)--(0.5+1,0.5)--(1/2+1,0.95);
\node at (0.5,-0.5) {$1$};
\node at (1.5,-0.5) {$2$};
\node at (2.5,0.5) {$2$};
\node at (2.5,1.5) {$1$};
\end{tikzpicture}
$$
To represent the images of generators of $\mathrm{Pol}(\mathrm{Mat}_{n})_{q}$ under the Fock representation, we are going to use directed connected paths from the bottom side to the right side of the $n\times n$ grid drawn with the hooks and arrows. The different terms in the image of $\pi_{F,n}(z_{k}^{j})$ can then be represented as $(-q)^{k-n}$ times the sum of elementary tensors corresponding to all possible diagrams with connected paths from the bottom side integer $k$ to the right side integer $j.$ For instance, the image of $\pi_{F,3}(z_{1}^{1})$ is $(-q)^{1-3}$ times the sum of the operators in $C^{*}(S)^{\otimes 3^{2}}$ corresponding to the diagrams
$$
\begin{array}{cccccc}
\begin{tikzpicture}[thick,scale=0.5]
\draw (1,0) -- (0,0) -- (0,1)--(1,1)--(1,0)--(0,0);
\draw (1,1) -- (0,1) -- (0,2)--(1,2)--(1,1)--(0,1);
\draw (1,2) -- (0,2) -- (0,3)--(1,3)--(1,2)--(0,2);
\draw (2,0) -- (1,0) -- (1,1)--(2,1)--(2,0)--(1,0);
\draw (2,1) -- (1,1) -- (1,2)--(2,2)--(2,1)--(1,1);
\draw (2,2) -- (1,2) -- (1,3)--(2,3)--(2,2)--(1,2);
\draw (3,0) -- (2,0) -- (2,1)--(3,1)--(3,0)--(2,0);
\draw (3,1) -- (2,1) -- (2,2)--(3,2)--(3,1)--(2,1);
\draw (3,2) -- (2,2) -- (2,3)--(3,3)--(3,2)--(2,2);
\draw [->] (1/2,0.05)--(1/2,0.95);\draw [->] (1/2,0.05+1)--(1/2,0.95+1);\draw [->] (0.5,0.05+2)--(0.5,0.5+2)--(0.95,0.5+2);\draw [->] (0.05+1,0.5+2)--(0.95+1,0.5+2);\draw [->] (0.05+2,0.5+2)--(0.95+2,0.5+2);
\node at (0.5,-0.5) {$1$};
\node at (1.5,-0.5) {$2$};
\node at (2.5,-0.5) {$3$};
\node at (3.5,2.5) {$1$};
\node at (3.5,0.5) {$3$};
\node at (3.5,1.5) {$2$};
\end{tikzpicture}
&
\begin{tikzpicture}[thick,scale=0.5]
\draw (1,0) -- (0,0) -- (0,1)--(1,1)--(1,0)--(0,0);
\draw (1,1) -- (0,1) -- (0,2)--(1,2)--(1,1)--(0,1);
\draw (1,2) -- (0,2) -- (0,3)--(1,3)--(1,2)--(0,2);
\draw (2,0) -- (1,0) -- (1,1)--(2,1)--(2,0)--(1,0);
\draw (2,1) -- (1,1) -- (1,2)--(2,2)--(2,1)--(1,1);
\draw (2,2) -- (1,2) -- (1,3)--(2,3)--(2,2)--(1,2);
\draw (3,0) -- (2,0) -- (2,1)--(3,1)--(3,0)--(2,0);
\draw (3,1) -- (2,1) -- (2,2)--(3,2)--(3,1)--(2,1);
\draw (3,2) -- (2,2) -- (2,3)--(3,3)--(3,2)--(2,2);
\draw [->] (1/2,0.05)--(1/2,0.95);\draw [->] (0.5,0.05+1)--(0.5,0.5+1)--(0.95,0.5+1);\draw [->] (0.05+1,0.5+1)--(0.5+1,0.5+1)--(0.5+1,0.95+1);\draw [->] (0.5+1,0.05+2)--(0.5+1,0.5+2)--(0.95+1,0.5+2);\draw [->] (0.05+2,0.5+2)--(0.95+2,0.5+2);
\node at (0.5,-0.5) {$1$};
\node at (1.5,-0.5) {$2$};
\node at (2.5,-0.5) {$3$};
\node at (3.5,2.5) {$1$};
\node at (3.5,0.5) {$3$};
\node at (3.5,1.5) {$2$};
\end{tikzpicture}
&
\begin{tikzpicture}[thick,scale=0.5]
\draw (1,0) -- (0,0) -- (0,1)--(1,1)--(1,0)--(0,0);
\draw (1,1) -- (0,1) -- (0,2)--(1,2)--(1,1)--(0,1);
\draw (1,2) -- (0,2) -- (0,3)--(1,3)--(1,2)--(0,2);
\draw (2,0) -- (1,0) -- (1,1)--(2,1)--(2,0)--(1,0);
\draw (2,1) -- (1,1) -- (1,2)--(2,2)--(2,1)--(1,1);
\draw (2,2) -- (1,2) -- (1,3)--(2,3)--(2,2)--(1,2);
\draw (3,0) -- (2,0) -- (2,1)--(3,1)--(3,0)--(2,0);
\draw (3,1) -- (2,1) -- (2,2)--(3,2)--(3,1)--(2,1);
\draw (3,2) -- (2,2) -- (2,3)--(3,3)--(3,2)--(2,2);
\draw [->] (0.5,0.05)--(0.5,0.5)--(0.95,0.5);\draw [->] (0.05+1,0.5)--(0.5+1,0.5)--(0.5+1,0.95);\draw [->] (1/2+1,0.05+1)--(1/2+1,0.95+1);\draw [->] (0.5+1,0.05+2)--(0.5+1,0.5+2)--(0.95+1,0.5+2);\draw [->] (0.05+2,0.5+2)--(0.95+2,0.5+2);
\node at (0.5,-0.5) {$1$};
\node at (1.5,-0.5) {$2$};
\node at (2.5,-0.5) {$3$};
\node at (3.5,2.5) {$1$};
\node at (3.5,0.5) {$3$};
\node at (3.5,1.5) {$2$};
\end{tikzpicture}
\\
\begin{tikzpicture}[thick,scale=0.5]
\draw (1,0) -- (0,0) -- (0,1)--(1,1)--(1,0)--(0,0);
\draw (1,1) -- (0,1) -- (0,2)--(1,2)--(1,1)--(0,1);
\draw (1,2) -- (0,2) -- (0,3)--(1,3)--(1,2)--(0,2);
\draw (2,0) -- (1,0) -- (1,1)--(2,1)--(2,0)--(1,0);
\draw (2,1) -- (1,1) -- (1,2)--(2,2)--(2,1)--(1,1);
\draw (2,2) -- (1,2) -- (1,3)--(2,3)--(2,2)--(1,2);
\draw (3,0) -- (2,0) -- (2,1)--(3,1)--(3,0)--(2,0);
\draw (3,1) -- (2,1) -- (2,2)--(3,2)--(3,1)--(2,1);
\draw (3,2) -- (2,2) -- (2,3)--(3,3)--(3,2)--(2,2);
\draw [->] (1/2,0.05)--(1/2,0.95);\draw [->] (0.5,0.05+1)--(0.5,0.5+1)--(0.95,0.5+1);\draw [->] (0.05+2,0.5+1)--(0.5+2,0.5+1)--(0.5+2,0.95+1);\draw [->] (0.5+2,0.05+2)--(0.5+2,0.5+2)--(0.95+2,0.5+2);\draw [->] (0.05+1,0.5+1)--(0.95+1,0.5+1);
\node at (0.5,-0.5) {$1$};
\node at (1.5,-0.5) {$2$};
\node at (2.5,-0.5) {$3$};
\node at (3.5,2.5) {$1$};
\node at (3.5,0.5) {$3$};
\node at (3.5,1.5) {$2$};
\end{tikzpicture}
&
\begin{tikzpicture}[thick,scale=0.5]
\draw (1,0) -- (0,0) -- (0,1)--(1,1)--(1,0)--(0,0);
\draw (1,1) -- (0,1) -- (0,2)--(1,2)--(1,1)--(0,1);
\draw (1,2) -- (0,2) -- (0,3)--(1,3)--(1,2)--(0,2);
\draw (2,0) -- (1,0) -- (1,1)--(2,1)--(2,0)--(1,0);
\draw (2,1) -- (1,1) -- (1,2)--(2,2)--(2,1)--(1,1);
\draw (2,2) -- (1,2) -- (1,3)--(2,3)--(2,2)--(1,2);
\draw (3,0) -- (2,0) -- (2,1)--(3,1)--(3,0)--(2,0);
\draw (3,1) -- (2,1) -- (2,2)--(3,2)--(3,1)--(2,1);
\draw (3,2) -- (2,2) -- (2,3)--(3,3)--(3,2)--(2,2);
\draw [->] (0.5,0.05)--(0.5,0.5)--(0.95,0.5);\draw [->] (0.05+1,0.5)--(0.5+1,0.5)--(0.5+1,0.95);\draw [->] (0.5+1,0.05+1)--(0.5+1,0.5+1)--(0.95+1,0.5+1);\draw [->] (0.05+2,0.5+1)--(0.5+2,0.5+1)--(0.5+2,0.95+1);\draw [->] (0.5+2,0.05+2)--(0.5+2,0.5+2)--(0.95+2,0.5+2);
\node at (0.5,-0.5) {$1$};
\node at (1.5,-0.5) {$2$};
\node at (2.5,-0.5) {$3$};
\node at (3.5,2.5) {$1$};
\node at (3.5,0.5) {$3$};
\node at (3.5,1.5) {$2$};
\end{tikzpicture}
&
\begin{tikzpicture}[thick,scale=0.5]
\draw (1,0) -- (0,0) -- (0,1)--(1,1)--(1,0)--(0,0);
\draw (1,1) -- (0,1) -- (0,2)--(1,2)--(1,1)--(0,1);
\draw (1,2) -- (0,2) -- (0,3)--(1,3)--(1,2)--(0,2);
\draw (2,0) -- (1,0) -- (1,1)--(2,1)--(2,0)--(1,0);
\draw (2,1) -- (1,1) -- (1,2)--(2,2)--(2,1)--(1,1);
\draw (2,2) -- (1,2) -- (1,3)--(2,3)--(2,2)--(1,2);
\draw (3,0) -- (2,0) -- (2,1)--(3,1)--(3,0)--(2,0);
\draw (3,1) -- (2,1) -- (2,2)--(3,2)--(3,1)--(2,1);
\draw (3,2) -- (2,2) -- (2,3)--(3,3)--(3,2)--(2,2);
\draw [->] (1/2+2,0.05+1)--(1/2+2,0.95+1);\draw [->] (0.5,0.05)--(0.5,0.5)--(0.95,0.5);\draw [->] (0.05+2,0.5)--(0.5+2,0.5)--(0.5+2,0.95);\draw [->] (0.5+2,0.05+2)--(0.5+2,0.5+2)--(0.95+2,0.5+2);\draw [->] (0.05+1,0.5)--(0.95+1,0.5);
\node at (0.5,-0.5) {$1$};
\node at (1.5,-0.5) {$2$};
\node at (2.5,-0.5) {$3$};
\node at (3.5,2.5) {$1$};
\node at (3.5,0.5) {$3$};
\node at (3.5,1.5) {$2$};
\end{tikzpicture}\\
\end{array}
$$  
Written out, we see that the image of $\pi_{F,3}(z_{1}^{1})$ in $C^{*}(S)^{\otimes 3^{2}}$ is the operator
$$
(-q)^{-2}(T_{21}\otimes T_{21}\otimes T_{22}\otimes I\otimes I \otimes T_{12}\otimes I\otimes I\otimes T_{12}+T_{21}\otimes T_{22}\otimes I\otimes I\otimes T_{11}\otimes T_{22}\otimes I\otimes I\otimes T_{12}+
$$
$$
T_{22}\otimes I\otimes I\otimes T_{11}\otimes T_{21}\otimes T_{22}\otimes I\otimes I\otimes T_{12}+T_{21}\otimes T_{22}\otimes I\otimes I\otimes T_{12}\otimes I\otimes I\otimes T_{11}\otimes T_{22}+
$$
$$
T_{22}\otimes I\otimes I\otimes T_{11}\otimes T_{22}\otimes I \otimes I\otimes T_{11}\otimes T_{22}+T_{22}\otimes I\otimes I\otimes T_{12}\otimes I\otimes I\otimes T_{11}\otimes T_{21} \otimes T_{22}).
$$

For every $\varphi \in [0,2\pi),$ we have a $*$-homomorphism $\tau_{\varphi}:C^{*}(S)\rightarrow \mathbb{C}$ determined by
$
\tau_{\varphi}(S)=e^{i\varphi}.
$
If we apply $\tau_{\varphi}$ to any of the factors in $C^{*}(S)^{\otimes n^{2}}$ and compose this with the Fock representation, we get a new (but not necessarily irreducible!) $*$-representation of $\mathrm{Pol}(\mathrm{Mat}_{n})_{q}.$ As an example, by applying $\tau_{\varphi}$ on the $n$'th factor in $C^{*}(S)^{\otimes 3^{2}}$ we can get the so-called coherent representation, determined by the presence of a cyclic vector $\Omega$ such that
$(z_j^i)^*\Omega=0$, if $(i,j)\ne (1,1)$ and $(z_1^1)^*\Omega=e^{-i\varphi}\Omega$ . It is well known~\cite{jsw} (Proposition 1.3.3) that the coherent representation is irreducible and by~\cite{ool_preprint} (Lemma $7$) it can be obtained by applying $\tau_{\varphi}$ to one of the factors in $C^{*}(S)^{\otimes n^{2}}.$ In what follows we will prove that all irreducible $*$-representations of $\mathrm{Pol}(\mathrm{Mat}_{n})_{q}$ can be acquired by applying homomorphisms $\tau_{\varphi_{i}}$ onto a subset of the factors. In our $n\times n$ grid we represent a $C^{*}(S)$ factor that has had $\tau_{\varphi}$ applied to it by coloring the corresponding box in gray.
\begin{equation}\label{gray}
\begin{tikzpicture}[thick,scale=0.5]
\filldraw[fill=black!20!white, draw=black](1,0) -- (0,0) -- (0,1)--(1,1)--(1,0)--(0,0);
\draw (1,1) -- (0,1) -- (0,2)--(1,2)--(1,1)--(0,1);
\draw (1,2) -- (0,2) -- (0,3)--(1,3)--(1,2)--(0,2);
\draw (2,0) -- (1,0) -- (1,1)--(2,1)--(2,0)--(1,0);
\draw (2,1) -- (1,1) -- (1,2)--(2,2)--(2,1)--(1,1);
\draw (2,2) -- (1,2) -- (1,3)--(2,3)--(2,2)--(1,2);
\draw (3,0) -- (2,0) -- (2,1)--(3,1)--(3,0)--(2,0);
\draw (3,1) -- (2,1) -- (2,2)--(3,2)--(3,1)--(2,1);
\draw (3,2) -- (2,2) -- (2,3)--(3,3)--(3,2)--(2,2);
\node at (0.5,-0.5) {$1$};
\node at (1.5,-0.5) {$2$};
\node at (2.5,-0.5) {$3$};
\node at (3.5,2.5) {$1$};
\node at (3.5,0.5) {$3$};
\node at (3.5,1.5) {$2$};
\end{tikzpicture}
\end{equation}
(here, $\tau_{\varphi}$ has been applied to the first tensor factor). Notice that $\tau_{\varphi}(T_{12})=\tau_{\varphi}(T_{21})=0$ as they are both compact operators, while $\tau_{\varphi}(T_{11})=e^{-i\varphi}$ and $\tau_{\varphi}(T_{22})=e^{i\varphi}$ and hence paths corresponding to nonzero terms cannot contain any arrow $\begin{tikzpicture}[line width=0.21mm,scale=0.4,baseline=0.4 ex]
\draw (1,0) -- (0,0) -- (0,1)--(1,1)--(1,0)--(0,0);
\draw [->] (0.1,0.5)--(0.9,0.5);
\end{tikzpicture}$ or $\begin{tikzpicture}[line width=0.21mm,scale=0.4,baseline=0.4 ex]
\draw (1,0) -- (0,0) -- (0,1)--(1,1)--(1,0)--(0,0);
\draw [->] (0.5,0.1)--(0.5,0.9);
\end{tikzpicture}$ through the gray boxes. Thus, if $\pi:\mathrm{Pol}(\mathrm{Mat}_{3})_{q}\rightarrow C^{*}(S)^{\otimes 3^{2}-1}$ denotes the $*$-representation corresponding to the grid~\eqref{gray}, then $(-q)^{2}\pi(z_{1}^{1})$ will be the sum of the operators given by the diagrams
$$
\begin{array}{cccccc}
\begin{tikzpicture}[thick,scale=0.5]
\filldraw[fill=black!20!white, draw=black](1,0) -- (0,0) -- (0,1)--(1,1)--(1,0)--(0,0);
\draw (1,1) -- (0,1) -- (0,2)--(1,2)--(1,1)--(0,1);
\draw (1,2) -- (0,2) -- (0,3)--(1,3)--(1,2)--(0,2);
\draw (2,0) -- (1,0) -- (1,1)--(2,1)--(2,0)--(1,0);
\draw (2,1) -- (1,1) -- (1,2)--(2,2)--(2,1)--(1,1);
\draw (2,2) -- (1,2) -- (1,3)--(2,3)--(2,2)--(1,2);
\draw (3,0) -- (2,0) -- (2,1)--(3,1)--(3,0)--(2,0);
\draw (3,1) -- (2,1) -- (2,2)--(3,2)--(3,1)--(2,1);
\draw (3,2) -- (2,2) -- (2,3)--(3,3)--(3,2)--(2,2);
\draw [->] (0.5,0.05)--(0.5,0.5)--(0.95,0.5);\draw [->] (0.05+1,0.5)--(0.5+1,0.5)--(0.5+1,0.95);\draw [->] (1/2+1,0.05+1)--(1/2+1,0.95+1);\draw [->] (0.5+1,0.05+2)--(0.5+1,0.5+2)--(0.95+1,0.5+2);\draw [->] (0.05+2,0.5+2)--(0.95+2,0.5+2);
\node at (0.5,-0.5) {$1$};
\node at (1.5,-0.5) {$2$};
\node at (2.5,-0.5) {$3$};
\node at (3.5,2.5) {$1$};
\node at (3.5,0.5) {$3$};
\node at (3.5,1.5) {$2$};
\end{tikzpicture}
&
\begin{tikzpicture}[thick,scale=0.5]
\filldraw[fill=black!20!white, draw=black](1,0) -- (0,0) -- (0,1)--(1,1)--(1,0)--(0,0);
\draw (1,1) -- (0,1) -- (0,2)--(1,2)--(1,1)--(0,1);
\draw (1,2) -- (0,2) -- (0,3)--(1,3)--(1,2)--(0,2);
\draw (2,0) -- (1,0) -- (1,1)--(2,1)--(2,0)--(1,0);
\draw (2,1) -- (1,1) -- (1,2)--(2,2)--(2,1)--(1,1);
\draw (2,2) -- (1,2) -- (1,3)--(2,3)--(2,2)--(1,2);
\draw (3,0) -- (2,0) -- (2,1)--(3,1)--(3,0)--(2,0);
\draw (3,1) -- (2,1) -- (2,2)--(3,2)--(3,1)--(2,1);
\draw (3,2) -- (2,2) -- (2,3)--(3,3)--(3,2)--(2,2);
\draw [->] (0.5,0.05)--(0.5,0.5)--(0.95,0.5);\draw [->] (0.05+1,0.5)--(0.5+1,0.5)--(0.5+1,0.95);\draw [->] (0.5+1,0.05+1)--(0.5+1,0.5+1)--(0.95+1,0.5+1);\draw [->] (0.05+2,0.5+1)--(0.5+2,0.5+1)--(0.5+2,0.95+1);\draw [->] (0.5+2,0.05+2)--(0.5+2,0.5+2)--(0.95+2,0.5+2);
\node at (0.5,-0.5) {$1$};
\node at (1.5,-0.5) {$2$};
\node at (2.5,-0.5) {$3$};
\node at (3.5,2.5) {$1$};
\node at (3.5,0.5) {$3$};
\node at (3.5,1.5) {$2$};
\end{tikzpicture}
&
\begin{tikzpicture}[thick,scale=0.5]
\filldraw[fill=black!20!white, draw=black](1,0) -- (0,0) -- (0,1)--(1,1)--(1,0)--(0,0);
\draw (1,1) -- (0,1) -- (0,2)--(1,2)--(1,1)--(0,1);
\draw (1,2) -- (0,2) -- (0,3)--(1,3)--(1,2)--(0,2);
\draw (2,0) -- (1,0) -- (1,1)--(2,1)--(2,0)--(1,0);
\draw (2,1) -- (1,1) -- (1,2)--(2,2)--(2,1)--(1,1);
\draw (2,2) -- (1,2) -- (1,3)--(2,3)--(2,2)--(1,2);
\draw (3,0) -- (2,0) -- (2,1)--(3,1)--(3,0)--(2,0);
\draw (3,1) -- (2,1) -- (2,2)--(3,2)--(3,1)--(2,1);
\draw (3,2) -- (2,2) -- (2,3)--(3,3)--(3,2)--(2,2);
\draw [->] (1/2+2,0.05+1)--(1/2+2,0.95+1);\draw [->] (0.5,0.05)--(0.5,0.5)--(0.95,0.5);\draw [->] (0.05+2,0.5)--(0.5+2,0.5)--(0.5+2,0.95);\draw [->] (0.5+2,0.05+2)--(0.5+2,0.5+2)--(0.95+2,0.5+2);\draw [->] (0.05+1,0.5)--(0.95+1,0.5);
\node at (0.5,-0.5) {$1$};
\node at (1.5,-0.5) {$2$};
\node at (2.5,-0.5) {$3$};
\node at (3.5,2.5) {$1$};
\node at (3.5,0.5) {$3$};
\node at (3.5,1.5) {$2$};
\end{tikzpicture}\\
\end{array}
$$
Explicitly, we get that $\pi(z_{1}^{1})$ is equal to
$$
(-q)^{-2}(e^{i\varphi}I\otimes I\otimes T_{11}\otimes T_{21}\otimes T_{22}\otimes I\otimes I \otimes T_{12}+e^{i\varphi}I\otimes I\otimes T_{11}\otimes T_{22}\otimes I\otimes I\otimes T_{11}\otimes T_{22}+
$$
$$
e^{i\varphi}I\otimes I\otimes T_{12} \otimes I\otimes I\otimes T_{11}\otimes T_{21} \otimes T_{22}).
$$
The representation $\pi$ is not irreducible as no upward pointing arrow $\begin{tikzpicture}[line width=0.21mm,scale=0.4,baseline=0.4 ex]
\draw (1,0) -- (0,0) -- (0,1)--(1,1)--(1,0)--(0,0);
\draw [->] (0.5,0.1)--(0.5,0.9);
\end{tikzpicture}$ can cross the gray square and hence for all $1\leq k,j\leq 3$ the operators $\pi(z_{k}^{j})$ will have $I$'s in the two first tensor factors. In Section $6$ we classify the grids that corresponds to irreducible $*$-representations and show that such $*$-representations exhaust all irreducible $*$-representations of $\mathrm{Pol}(\mathrm{Mat}_n)_q,$ up to equivalence. In general however, an irreducible $*$-representation can be represented as a grid in many different ways, for example, for suitable chosen $ \tau_{\vartheta}$'s, the two grids
$$
\begin{array}{cc}
\begin{tikzpicture}[thick,scale=0.5]
\filldraw[fill=black!20!white, draw=black](1,0) -- (0,0) -- (0,1)--(1,1)--(1,0)--(0,0);
\filldraw[fill=black!20!white, draw=black] (1,1) -- (0,1) -- (0,2)--(1,2)--(1,1)--(0,1);
\filldraw[fill=black!20!white, draw=black] (1,2) -- (0,2) -- (0,3)--(1,3)--(1,2)--(0,2);
\filldraw[fill=black!20!white, draw=black](2,0) -- (1,0) -- (1,1)--(2,1)--(2,0)--(1,0);
\draw (2,1) -- (1,1) -- (1,2)--(2,2)--(2,1)--(1,1);
\draw (2,2) -- (1,2) -- (1,3)--(2,3)--(2,2)--(1,2);
\filldraw[fill=black!20!white, draw=black] (3,0) -- (2,0) -- (2,1)--(3,1)--(3,0)--(2,0);
\draw (3,1) -- (2,1) -- (2,2)--(3,2)--(3,1)--(2,1);
\draw (3,2) -- (2,2) -- (2,3)--(3,3)--(3,2)--(2,2);
\node at (0.5,-0.5) {$1$};
\node at (1.5,-0.5) {$2$};
\node at (2.5,-0.5) {$3$};
\node at (3.5,2.5) {$1$};
\node at (3.5,0.5) {$3$};
\node at (3.5,1.5) {$2$};
\end{tikzpicture}
&
\begin{tikzpicture}[thick,scale=0.5]
\draw(1,0) -- (0,0) -- (0,1)--(1,1)--(1,0)--(0,0);
\draw (1,1) -- (0,1) -- (0,2)--(1,2)--(1,1)--(0,1);
\filldraw[fill=black!20!white, draw=black] (1,2) -- (0,2) -- (0,3)--(1,3)--(1,2)--(0,2);
\draw (2,0) -- (1,0) -- (1,1)--(2,1)--(2,0)--(1,0);
\draw (2,1) -- (1,1) -- (1,2)--(2,2)--(2,1)--(1,1);
\filldraw[fill=black!20!white, draw=black] (2,2) -- (1,2) -- (1,3)--(2,3)--(2,2)--(1,2);
\filldraw[fill=black!20!white, draw=black] (3,0) -- (2,0) -- (2,1)--(3,1)--(3,0)--(2,0);
\filldraw[fill=black!20!white, draw=black] (3,1) -- (2,1) -- (2,2)--(3,2)--(3,1)--(2,1);
\filldraw[fill=black!20!white, draw=black] (3,2) -- (2,2) -- (2,3)--(3,3)--(3,2)--(2,2);
\node at (0.5,-0.5) {$1$};
\node at (1.5,-0.5) {$2$};
\node at (2.5,-0.5) {$3$};
\node at (3.5,2.5) {$1$};
\node at (3.5,0.5) {$3$};
\node at (3.5,1.5) {$2$};
\end{tikzpicture}
\end{array}
$$
correspond to equivalent $*$-representations.
\begin{defn}
A sequence $[k_{n},k_{n-1},\dots,k_{1}]\in \mathbb{Z}_{+}^{n}$ is said to be \textit{admissible} if 
\begin{equation}\label{seq}
0\leq k_{j}\leq \max_{j<i\leq n}(k_{i}+j+1-i,j).
\end{equation}
For sequences of positive integers $\textbf{k}=[k_{n},\dots, k_{1}]$ and numbers $\varphi=[\varphi_{n},\dots, \varphi_{1}]\in [0,2\pi)^{n},$ we shall call the sequence of pairs
\begin{equation}\textbf{k}_{\varphi}:=\label{admissible}[(k_{n},\varphi_{n}),(k_{n-1},\varphi_{n-1}),\dots,(k_{1},\varphi_{1})]\end{equation} admissible if $\textbf{k}$ is admissible and
\begin{equation}\label{seq2}
\begin{cases}\varphi_{j}\in[0,2\pi) & \text{ if $0\leq k_{j}<\max_{j<i\leq n}(k_{i}+j+1-i,j)$}\\\varphi_{j}=0 & \text{ if $ k_{j}=\max_{j<i\leq n}(k_{i}+j+1-i,j).$}\end{cases}
\end{equation}

\end{defn}
Notice that it follows from the definitions that \begin{equation}\label{less}\begin{array}{cc} k_{j}\leq n,  &j=1,2,\dots, n\end{array}\end{equation}  for an admissible sequence $[k_{n},k_{n-1},\dots,k_{1}].$
\\

To each such admissible sequence $\textbf{k}_{\varphi}$ we associate a colored $n\times n$ grid constructed the following way:
if $$k_{j}<\max_{j<i\leq n}(k_{i}+j+1-i,j),$$ then the $j$'th row of the grid consists of $k_{j}$ non-shaded boxes to the right, one lightly shaded box to the left of the non-shaded boxes and the remaining boxes are dark shaded 
$$
\underbrace{\begin{tikzpicture}[thick,scale=0.5]
\filldraw[fill=black!50!white, draw=black] (4.5,0) -- (0,0) -- (0,1.5)--(4.5,1.5)--(4.5,0)--(0,0);
\draw[draw=black] (1.5+1.3,0) -- (0+1.7,0) -- (0+1.7,1.5)--(1.5+1.3,1.5)--(1.5+1.3,0)--(0+1.7,0);
\filldraw[step=1.5,fill=black!50!white, draw=black] (0,0) grid (4.5,1.5);
\end{tikzpicture}}_{\text{$n-k_{j}-1$ boxes}}
\underbrace{\begin{tikzpicture}[thick,scale=0.5]
\filldraw[fill=black!20!white, draw=black] (1.5,0) -- (0,0) -- (0,1.5)--(1.5,1.5)--(1.5,0)--(0,0);
\end{tikzpicture}}_{\tau_{\varphi_{j}}}
\underbrace{\begin{tikzpicture}[thick,scale=0.5]
\filldraw[fill=white, draw=black] (4.5,0) -- (0,0) -- (0,1.5)--(4.5,1.5)--(4.5,0)--(0,0);
\draw[draw=black] (1.5+1.3,0) -- (0+1.7,0) -- (0+1.7,1.5)--(1.5+1.3,1.5)--(1.5+1.3,0)--(0+1.7,0);
\filldraw[step=1.5,fill=black!50!white, draw=black] (0,0) grid (4.5,1.5);
\end{tikzpicture}}_{\text{$k_{j}$ boxes}}
$$
In the other case, when $$k_{j}=\max_{j<i\leq n}(k_{i}+j+1-i,j),$$ we replace the lightly shaded box with a dark shaded one. If $k_{j}=n,$ then the whole $j$'th row consists of non-shaded boxes. However, if $k_{j}=n,$ then~\eqref{less} forces $k_{j}=\max_{j<i\leq n}(k_{i}+j+1-i,j)$ and hence $\varphi_{j}=0.$ So this does not give rise to any ambiguity.
\\

Now, to each $n\times n$ grid corresponding to an admissible sequence $\textbf{k}_{\varphi}$ we associate a $*$-representation $\pi_{\textbf{k}_{\varphi}}$ of $\mathrm{Pol}(\mathrm{Mat}_{n})_{q},$ $\textbf{k}_{\varphi}\to \pi_{\textbf{k}_{\varphi}}$ by starting with the white $n\times n$ grid~\eqref{boxer} associated to the Fock representation. Recall that each box corresponds to a tensor-factor of $C^{*}(S).$ If a box corresponding to a tensor-factor $C^{*}(S)$ is colored dark gray in the grid for $\textbf{k}_{\varphi},$ then we compose the Fock representation with the $*$-homomorphism $\tau_{0}$ applied to this factor, and if the box is colored light gray (and hence has a number $\varphi_{j}\in [0,2\pi)$ associated to it) we compose the Fock representation with $\tau_{\varphi_{j}}$ applied to the factor. In this way we get a $*$-representation $\pi_{\textbf{k}_{\varphi}}$ of $\mathrm{Pol}(\mathrm{Mat}_{n})_{q}.$ Moreover, it the follows that we can calculate the images of the generators $z_{k}^{j}\in \mathrm{Pol}(\mathrm{Mat}_{n})_{q}$ under $\pi_{\textbf{k}_{\varphi}}$ using the hooks-and-arrow diagrams, in the fashion that we outlined in this section.
\\

For instance, the sequence $[(0,\varphi),(2,0),(2,0)]$ corresponds to the grid
$$
\begin{tikzpicture}[thick,scale=0.5]
\filldraw[fill=black!50!white, draw=black](1,0) -- (0,0) -- (0,1)--(1,1)--(1,0)--(0,0);
\filldraw[fill=black!50!white, draw=black] (1,1) -- (0,1) -- (0,2)--(1,2)--(1,1)--(0,1);
\filldraw[fill=black!50!white, draw=black] (1,2) -- (0,2) -- (0,3)--(1,3)--(1,2)--(0,2);
\filldraw[fill=black!50!white, draw=black](2,0) -- (1,0) -- (1,1)--(2,1)--(2,0)--(1,0);
\draw (2,1) -- (1,1) -- (1,2)--(2,2)--(2,1)--(1,1);
\draw (2,2) -- (1,2) -- (1,3)--(2,3)--(2,2)--(1,2);
\filldraw[fill=black!20!white, draw=black] (3,0) -- (2,0) -- (2,1)--(3,1)--(3,0)--(2,0);
\draw (3,1) -- (2,1) -- (2,2)--(3,2)--(3,1)--(2,1);
\draw (3,2) -- (2,2) -- (2,3)--(3,3)--(3,2)--(2,2);
\node at (0.5,-0.5) {$1$};
\node at (1.5,-0.5) {$2$};
\node at (2.5,-0.5) {$3$};
\node at (3.5,2.5) {$1$};
\node at (3.5,0.5) {$3$};
\node at (3.5,1.5) {$2$};
\end{tikzpicture}
$$
and $[(3,0),(3,0),(2,\varphi)]$ gives the grid
$$
\begin{tikzpicture}[thick,scale=0.5]
\draw(1,0) -- (0,0) -- (0,1)--(1,1)--(1,0)--(0,0);
\draw(1,1) -- (0,1) -- (0,2)--(1,2)--(1,1)--(0,1);
\filldraw[fill=black!20!white, draw=black] (1,2) -- (0,2) -- (0,3)--(1,3)--(1,2)--(0,2);
\draw(2,0) -- (1,0) -- (1,1)--(2,1)--(2,0)--(1,0);
\draw (2,1) -- (1,1) -- (1,2)--(2,2)--(2,1)--(1,1);
\draw (2,2) -- (1,2) -- (1,3)--(2,3)--(2,2)--(1,2);
\draw (3,0) -- (2,0) -- (2,1)--(3,1)--(3,0)--(2,0);
\draw (3,1) -- (2,1) -- (2,2)--(3,2)--(3,1)--(2,1);
\draw (3,2) -- (2,2) -- (2,3)--(3,3)--(3,2)--(2,2);
\node at (0.5,-0.5) {$1$};
\node at (1.5,-0.5) {$2$};
\node at (2.5,-0.5) {$3$};
\node at (3.5,2.5) {$1$};
\node at (3.5,0.5) {$3$};
\node at (3.5,1.5) {$2$};
\end{tikzpicture}
$$
which corresponds to the coherent representation. 

\begin{exa}
In~\cite{Lyudmyla}, the $7$ different families of irreducible $*$-representations of $\mathrm{Pol}(\mathrm{Mat}_{2})_{q}$ was classified. They corresponds to $$[(2,0),(2,0)],[(2,0),(1,\varphi)],[(1,\varphi),(1,0)],[(2,0),(0,\varphi)],[(1,\varphi_{1}),(0,\varphi_{2})],
$$
$$
[(0,\varphi),(1,0)],[(0,\varphi_{1}),(0,\varphi_{2})]$$
and the grids
$$
\begin{array}{ccccc}
\begin{tikzpicture}[thick,scale=0.5]
\draw (1,0) -- (0,0) -- (0,1)--(1,1)--(1,0)--(0,0);
\draw (1,1) -- (0,1) -- (0,2)--(1,2)--(1,1)--(0,1);
\draw (2,0) -- (1,0) -- (1,1)--(2,1)--(2,0)--(1,0);
\draw (2,1) -- (1,1) -- (1,2)--(2,2)--(2,1)--(1,1);
\node at (0.5,-0.5) {$1$};
\node at (1.5,-0.5) {$2$};
\node at (2.5,0.5) {$2$};
\node at (2.5,1.5) {$1$};
\end{tikzpicture}
&
\begin{tikzpicture}[thick,scale=0.5]
\draw (1,0) -- (0,0) -- (0,1)--(1,1)--(1,0)--(0,0);
\filldraw[fill=black!20!white, draw=black]  (1,1) -- (0,1) -- (0,2)--(1,2)--(1,1)--(0,1);
\draw (2,0) -- (1,0) -- (1,1)--(2,1)--(2,0)--(1,0);
\draw (2,1) -- (1,1) -- (1,2)--(2,2)--(2,1)--(1,1);
\node at (0.5,-0.5) {$1$};
\node at (1.5,-0.5) {$2$};
\node at (2.5,0.5) {$2$};
\node at (2.5,1.5) {$1$};
\end{tikzpicture}
&
\begin{tikzpicture}[thick,scale=0.5]
\filldraw[fill=black!20!white, draw=black](1,0) -- (0,0) -- (0,1)--(1,1)--(1,0)--(0,0);
\filldraw[fill=black!50!white, draw=black]  (1,1) -- (0,1) -- (0,2)--(1,2)--(1,1)--(0,1);
\draw (2,0) -- (1,0) -- (1,1)--(2,1)--(2,0)--(1,0);
\draw (2,1) -- (1,1) -- (1,2)--(2,2)--(2,1)--(1,1);
\node at (0.5,-0.5) {$1$};
\node at (1.5,-0.5) {$2$};
\node at (2.5,0.5) {$2$};
\node at (2.5,1.5) {$1$};
\end{tikzpicture}
&
\begin{tikzpicture}[thick,scale=0.5]
\draw (1,0) -- (0,0) -- (0,1)--(1,1)--(1,0)--(0,0);
\filldraw[fill=black!50!white, draw=black] (1,1) -- (0,1) -- (0,2)--(1,2)--(1,1)--(0,1);
\draw(2,0) -- (1,0) -- (1,1)--(2,1)--(2,0)--(1,0);
\filldraw[fill=black!20!white, draw=black] (2,1) -- (1,1) -- (1,2)--(2,2)--(2,1)--(1,1);
\node at (0.5,-0.5) {$1$};
\node at (1.5,-0.5) {$2$};
\node at (2.5,0.5) {$2$};
\node at (2.5,1.5) {$1$};
\end{tikzpicture}
&
\begin{tikzpicture}[thick,scale=0.5]
\filldraw[fill=black!20!white, draw=black] (1,0) -- (0,0) -- (0,1)--(1,1)--(1,0)--(0,0);
\filldraw[fill=black!50!white, draw=black] (1,1) -- (0,1) -- (0,2)--(1,2)--(1,1)--(0,1);
\draw (2,0) -- (1,0) -- (1,1)--(2,1)--(2,0)--(1,0);
\filldraw[fill=black!20!white, draw=black](2,1) -- (1,1) -- (1,2)--(2,2)--(2,1)--(1,1);
\node at (0.5,-0.5) {$1$};
\node at (1.5,-0.5) {$2$};
\node at (2.5,0.5) {$2$};
\node at (2.5,1.5) {$1$};
\end{tikzpicture}
\end{array}
$$
$$
\begin{array}{ccc}
\begin{tikzpicture}[thick,scale=0.5]
\filldraw[fill=black!50!white, draw=black] (1,0) -- (0,0) -- (0,1)--(1,1)--(1,0)--(0,0);
\filldraw[fill=black!50!white, draw=black] (1,1) -- (0,1) -- (0,2)--(1,2)--(1,1)--(0,1);
\filldraw[fill=black!20!white, draw=black] (2,0) -- (1,0) -- (1,1)--(2,1)--(2,0)--(1,0);
\draw(2,1) -- (1,1) -- (1,2)--(2,2)--(2,1)--(1,1);
\node at (0.5,-0.5) {$1$};
\node at (1.5,-0.5) {$2$};
\node at (2.5,0.5) {$2$};
\node at (2.5,1.5) {$1$};
\end{tikzpicture}
&
\begin{tikzpicture}[thick,scale=0.5]
\filldraw[fill=black!50!white, draw=black] (1,0) -- (0,0) -- (0,1)--(1,1)--(1,0)--(0,0);
\filldraw[fill=black!50!white, draw=black] (1,1) -- (0,1) -- (0,2)--(1,2)--(1,1)--(0,1);
\filldraw[fill=black!20!white, draw=black] (2,0) -- (1,0) -- (1,1)--(2,1)--(2,0)--(1,0);
\filldraw[fill=black!20!white, draw=black](2,1) -- (1,1) -- (1,2)--(2,2)--(2,1)--(1,1);
\node at (0.5,-0.5) {$1$};
\node at (1.5,-0.5) {$2$};
\node at (2.5,0.5) {$2$};
\node at (2.5,1.5) {$1$};
\end{tikzpicture}
\end{array}
$$
where $\varphi,\varphi_{1},\varphi_{2}\in [0,2\pi)$ are arbitrary. 
\end{exa}
For irreducible $*$-representation $\pi,$ let $[\pi]$ denote the set of all $*$-representations unitarily equivalent to $\pi.$ In general, we have the following. 
\begin{thm}\label{mama}
The map $\textbf{k}_{\varphi}\to [\pi_{\textbf{k}_{\varphi}}]$ gives a one to one correspondence between admissible sequences of pairs $\textbf{k}_{\varphi}=[(k_{n},\varphi_{n}),\dots,(k_{1},\varphi_{1})]$ and equivalence classes of irreducible $*$-representations of $\mathrm{Pol}(\mathrm{Mat}_{n})_{q}.$
\end{thm}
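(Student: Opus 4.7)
My plan is to combine Theorem~\ref{main} with Proposition~\ref{soib} and the identity~\eqref{twist} to extract canonical representatives. For surjectivity, given an irreducible $*$-representation $\pi$ of $\mathrm{Pol}(\mathrm{Mat}_{n})_{q}$, Theorem~\ref{main} produces an irreducible $*$-representation $\Pi$ of $\mathbb{C}[SU_{2n}]_{q}$ with $\pi=\Pi\circ\zeta$, and by Proposition~\ref{soib}, $\Pi\cong\pi_{s}\otimes\chi_{\varphi}$ for some $s\in S_{2n}$ and some $\varphi$. The Fock representation factors as $\pi_{F,n}\cong\pi_{s_{0}}\circ\zeta$ for the length-$n^{2}$ permutation $s_{0}$ from the introduction, and the diagrammatic presentation of $\pi_{F,n}$ in this section singles out a reading order on the $n^{2}$ tensor factors of $\pi_{s_{0}}$ compatible with a fixed reduced decomposition of $s_{0}$. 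I would first argue that $(\pi_{s}\otimes\chi_{\varphi})\circ\zeta$ can be rewritten as $(\pi_{s'}\otimes\chi_{\varphi'})\circ\zeta$ for some sub-word $s'$ of $s_{0}$: since $\zeta$ takes values in the sub-algebra of $\mathbb{C}[SU_{2n}]_{q}$ generated by $t_{ij}$ with $i,j\in\{n+1,\dots,2n\}$, any tensor factors of $\pi_{s}$ not acted on by this sub-matrix act trivially and can be absorbed into the character.

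After this reduction, the deleted letters in the reduced decomposition of $s_{0}$ correspond exactly to boxes of the $n\times n$ grid that become shaded, with the character $\tau_{\vartheta}$ applied in place of the corresponding $\pi_{j_{i}}$. Using~\eqref{twist} repeatedly, $\chi_{\varphi'}$ can be split and its pieces moved to the shaded slots, and in this rearranged form the representation is exactly $\pi_{\textbf{k}_{\varphi}}$ for some sequence $\textbf{k}_{\varphi}$. The conditions~\eqref{seq} and~\eqref{seq2} should then be shown to characterise the normal form under these rearrangements: the inequality~\eqref{seq} describes the maximum amount by which row $j$ of the grid may be shaded compatibly with the shading in the rows below it (indexed by $i>j$), while~\eqref{seq2} removes the residual ambiguity by forcing $\varphi_{j}=0$ at positions where shading is maximal and no free character angle remains after the moves permitted by~\eqref{twist}.

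For injectivity and irreducibility, I would use the matrix-coefficient formula~\eqref{matrix}: evaluated at the ground vector $e_{\textbf{0}}$ of the surviving tensor factors, the nonzero coefficients $\langle\pi_{\textbf{k}_{\varphi}}(z_{k}^{j})e_{\textbf{0}},e_{\textbf{0}}\rangle$ read off both the underlying sub-word $s'$ (and hence $(k_{n},\dots,k_{1})$) and the angles $\varphi_{j}$, so admissibility guarantees that $\textbf{k}_{\varphi}$ is reconstructible from the equivalence class $[\pi_{\textbf{k}_{\varphi}}]$. Irreducibility of $\pi_{\textbf{k}_{\varphi}}$ follows from Proposition~\ref{soib} applied to the lift $\pi_{s'}\otimes\chi_{\varphi'}$, together with the fact from Theorem~\ref{main} that composition with $\zeta$ preserves irreducibility for representations of this form. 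The main obstacle I expect is proving that the only ambiguity in writing an irreducible $*$-representation as $\pi_{\textbf{k}_{\varphi}}$ is captured by~\eqref{seq} and~\eqref{seq2}: different shadings of the grid can produce equivalent $*$-representations (as the two grids displayed just above the definition of admissibility illustrate), and extracting the admissibility conditions as the precise combinatorial criterion for a unique normal form is the combinatorial heart of the proof.
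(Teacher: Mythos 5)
Your proposal has the right general shape—lift via Theorem~\ref{main}, use Proposition~\ref{soib}, find a normal form, and read off invariants—but there are two concrete problems and the combinatorial core is left open rather than resolved.

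First, the irreducibility claim is an error. You write that ``Irreducibility of $\pi_{\textbf{k}_{\varphi}}$ follows from Proposition~\ref{soib}\dots together with the fact from Theorem~\ref{main} that composition with $\zeta$ preserves irreducibility for representations of this form.'' Theorem~\ref{main} makes no such assertion: it says every irreducible $\pi$ lifts, not that every $(\pi_{w}\otimes\chi_{\varphi})\circ\zeta$ is irreducible. In fact the paper's Lemma~\ref{sunandmoon}(1) shows that $(\pi_{\sigma}\otimes\chi_{\varphi})\circ\zeta$ is irreducible \emph{if and only if} $\sigma$ is the unique minimal-length element of its double coset $O_{\sigma}=S\sigma S$, where $S\cong S_{n}\subset S_{2n}$ fixes $n+1,\dots,2n$. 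Your argument conflates the lifting theorem with this nontrivial characterisation and would, as stated, conclude irreducibility for non-minimal $\sigma$, which is false. The way irreducibility is actually proved in the paper is by induction: the restriction of $\pi$ to the subspace cut out by the kernels of the $\pi(z_{j}^{n})^{*}$ (or both rows and columns in case $\textbf{B}$) is shown to be an irreducible $*$-representation of $\mathrm{Pol}(\mathrm{Mat}_{n-1})_{q}$, and one reduces to $n-1$.

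Second, the ``sub-word of $s_{0}$'' reduction is the right intuition but it is not a proof, and it is exactly where all the combinatorial work lives. What the paper actually establishes is: (a) each double coset $O_{\sigma}$ contains a unique element $w$ of minimal length, with $\ell(h w g)=\ell(h)+\ell(w)+\ell(g)$ for $h,g\in S$ (Proposition~\ref{1}); (b) this $w$ has a unique decomposition $w=c_{k_{n},n}\cdots c_{k_{1},1}$ with $[k_{n},\dots,k_{1}]$ admissible and $\ell(w)=\sum k_{j}$ (Propositions~\ref{decomp} and~\ref{decomp1}); and (c) the positions $j$ with $k_{j}=\max_{j<i\le n}(k_{i}+j+1-i,j)$ are precisely those with $1\le w(n+j)\le n$ (Proposition~\ref{switch}), which is what makes the ``$\varphi_{j}=0$'' convention of~\eqref{seq2} well defined. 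You flag this as ``the combinatorial heart'' but give no route to filling it; without (a)--(c) neither the existence nor the uniqueness of the normal form $\textbf{k}_{\varphi}$ can be concluded. Finally, the matrix-coefficient test~\eqref{matrix} applied to $z_{k}^{j}$ gives $\langle\Pi(t_{n+k,n+j})e_{\textbf{0}},e_{\textbf{0}}\rangle$, which is zero for \emph{all} $k$ exactly when $1\le w(n+j)\le n$; so for those $j$ (the ones with $\varphi_{j}=0$ forced) you cannot read off anything directly, and recovering $w$ from the remaining coefficients already presupposes (a). The paper sidesteps this by proving injectivity (Lemma~\ref{sunandmoon}(2) and Proposition~\ref{makon}) again by the inductive reduction to $n-1$, not by matrix coefficients.
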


We will prove the theorem in section $6.$ There, we will also give a slightly different interpretation of an admissible sequence
$$[k_{n},k_{n-1},\dots,k_{1}].$$
Let $S$ be the subgroup of the symmetric group $S_{2n}$ that fixes the integers $n+1,\dots, 2n$ and let $\sigma\in S_{2n}$ be any element.
In Proposition~\ref{1}, we show that the orbit set
$$
O_{\sigma}:=\{g\sigma h|g,h\in S\}
$$
contains a unique element $w\in O_{\sigma}$ of minimal length $\ell(w)=\min_{t\in O_{\omega}}\ell(t).$ If we define the cycles in $S_{2n}$
$$
c_{k,j}=\begin{cases}s_{j+n-k}s_{j+n-k+1}\cdots s_{j+n-1}& \text{if $1\leq k\leq n$}\\ e & \text{if $k=0$}\end{cases}
$$ 
where $e\in S_{2n}$ is the identity, then we prove in Proposition~\ref{decomp} that there is a unique admissible sequence of integers $$[k_{n},k_{n-1},\dots,k_{1}]$$ such that
\begin{equation}\label{ww}
w=c_{k_{n},n}c_{k_{n-1},n-1}\cdots c_{k_{1},1}
\end{equation}
and the length of $w$ is
$$
\ell(w)=\sum_{j=1}^{n}k_{j}
$$
i.e. the decomposition~\eqref{ww} is minimal.
Conversely, if we start with an admissible sequence $[k_{n},k_{n-1},\dots,k_{1}],$ then we prove in Proposition~\ref{decomp1} that the group-element
$$
w=c_{k_{n},n}c_{k_{n-1},n-1}\cdots c_{k_{1},1}
$$
is of minimal length in $O_{w}$ and $\ell(w)=\sum_{j=1}^{n}k_{j}.$ Moreover, by Proposition~\ref{switch}
$$
k_{j}= \max_{j<i\leq n}(k_{i}+j+1-i,j)
$$
if and only if $1\leq w(n+j)\leq n.$
\\

For $t,r\in S_{2n},$ either $O_{t}=O_{r}$ or $O_{t}\cap O_{r}=\emptyset.$ Hence $S_{2n}$ is a union of finite number of disjoint subsets 
\begin{equation}\label{s2nsets}
S_{2n}=O_{e}\cup O_{t_{1}}\cup O_{t_{2}}\cup \dots
\end{equation}
If we let $A_{n}$ denote the cardinality of the set $\{O_{t}:t\in S_{2n}\}$, then it follows that $A_{n}$ corresponds to the number of different irreducible diagrams and hence to the number of different families of irreducible $*$-representation of $\mathrm{Pol}(\mathrm{Mat}_{n})_{q}.$
The sequence $A_{n}$ (starting with $n=1$) is
$$
\begin{array}{ccccccccccc}
2,&7,&34,&209,&1546,...&\text{$A002720$ in OEIS.}
\end{array}
$$
One can calculate the generating function for the sequence $\frac{A_{n}}{n!}$ as $$1+\sum_{n=1}^{\infty}\frac{A_{n}}{n!}x^{n}=\frac{1}{1-x}e^{\frac{x}{1-x}}.$$

\section{Main results}
\begin{thm}\label{main}
For any irreducible $*$-representation $\pi:\mathrm{Pol}(\mathrm{Mat}_{n})_{q}\to B(K),$ there is an irreducible $*$-representation $\Pi:\mathbb{C}[SU_{2n}]_{q}\to B(K)$ such that \begin{equation}\label{liftingthm}\pi=\Pi\circ \zeta\end{equation} and hence $\pi$ is equivalent to $(\pi_{w}\otimes \chi_{\varphi})\circ \zeta,$ for some $w\in S_{2n},$ $\varphi\in [0,2\pi)^{2n}.$ Moreover, if $(\pi_{w}\otimes \chi_{\varphi})\circ \zeta,$ $(\pi_{\sigma}\otimes \chi_{\psi})\circ \zeta$ are irreducible $*$-representations of $\mathrm{Pol}(\mathrm{Mat}_{n})_{q}$, then $$(\pi_{w}\otimes \chi_{\varphi})\circ \zeta\cong (\pi_{\sigma}\otimes \chi_{\psi})\circ \zeta$$ only if $w=\sigma.$
\end{thm}
The proof of the first item makes up sections $5.$ The second statement is proved in Section $6$ (Lemma~\ref{sunandmoon}), where we also specify those elements $w\in S_{2n}$ which give rise to irreducible $*$-representations $(\pi_{w}\otimes \chi_{\varphi})\circ \zeta$ of $\mathrm{Pol}(\mathrm{Mat}_{n})_{q}.$
\\

We remark here that Theorem~\ref{main} does not claim that $\Pi$ maps $\mathbb{C}[SU_{2n}]_{q}$ into the $C^{*}$-algebra generated by the image of $\mathrm{Pol}(\mathrm{Mat}_{n})_{q}$ in $B(K).$ There are many ways to choose the lift $\Pi$ such that $\Pi\circ \zeta=\pi,$ but in general, it is impossible to find a $\Pi$ such that 
\begin{equation}\label{inclusion}
\Pi(\mathbb{C}[SU_{2n}]_{q})\subseteq \overline{\pi(\mathrm{Pol}(\mathrm{Mat}_{n})_{q})}
\end{equation}
for $n\geq 2.$ 
\\

Let $A$ be a $*$-algebra. We can define a semi-norm $ |  |  \cdot|  | $ on $A,$ with values in $[0,\infty],$ by the formula $$||a||_{u}=\sup_{\phi}||\phi(a)||$$ where the supremum ranges over all $*$-representations $A.$ It follows that $|  |  \cdot|  |_{u}$ is a semi $C^{*}$-norm on $A,$ in the sense that \begin{equation}\label{semic}\begin{array}{ccc}|  | ab|  |_{u}\leq |  | a|  |_{u}|  |  b|  |_{u} , &|| a^{*}| |_{u}=|| a |  |_{u},&  || a a^{*}|  |_{u}=|  |  a|  |_{u}^{2}.\end{array}\end{equation} If $|  |  a|  |_{u}<\infty$ for all $a\in A,$ we let $I\subseteq A$ be the subset $\{a\in A;||a||_{u}=0\}.$ It follows from~\eqref{semic} that $I$ is a $*$-ideal. We can then, using $|  |  \cdot|  |_{u},$ define a $C^{*}$-norm on $A/I$ (that we again denote by $|  |  \cdot|  |_{u}$). The \textit{universal enveloping $C^{*}$-algebra} $C^{*}(A)$ of $A$ is defined to be the closure of $A/I$ under $|  |  \cdot|  |_{u}.$ We have that
\begin{itemize}
\item $C^{*}(A)$ is a $C^{*}$-algebra.
\item $\iota:A\to C^{*}(A)$ defined by $a\mapsto a+I\in A/I\subseteq C^{*}(A)$ is a $*$-homomorphism.
\item If $\phi:A\rightarrow B(H)$ is a $*$-representation, then there is a unique $*$-homomorphism $\tilde{\phi}:C^{*}(A)\to B(H),$ such that $$\phi=\tilde{\phi}\circ \iota.$$
\end{itemize}
For a $*$-representation $\pi:\mathrm{Pol}(\mathrm{Mat}_{n})_{q}\to B(H)$ and $a\in \mathrm{Pol}(\mathrm{Mat}_{n})_{q},v\in H$ we often denote 
$\pi(a)v$ as $a v.$ This is to simplify the notations and no ambiguity will arise from it.
\\

We have the following corollary to Theorem~\ref{main}.
\begin{cor}\label{maincor}
The universal enveloping $C^{*}$-algebra of $\mathrm{Pol}(\mathrm{Mat}_{n})_{q}$ exists and is isomorphic to $C_{F}(\overline{\mathbb{D}}_{n}):=\overline{\pi_{F,n}(\mathrm{Pol}(\mathrm{Mat}_{n})_{q})}$.
\end{cor}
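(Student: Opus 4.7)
The plan is to prove two things: existence of the universal enveloping $C^*$-algebra $C^*(\mathrm{Pol}(\mathrm{Mat}_n)_q)$, and the identification $C^*(\mathrm{Pol}(\mathrm{Mat}_n)_q)\cong \overline{\pi_{F,n}(\mathrm{Pol}(\mathrm{Mat}_n)_q)}$. Both follow once we establish the norm inequality $\|\pi(a)\|\leq \|\pi_{F,n}(a)\|$ for every $*$-representation $\pi$ of $\mathrm{Pol}(\mathrm{Mat}_n)_q$ and every $a\in \mathrm{Pol}(\mathrm{Mat}_n)_q$, since the reverse inequality is trivial ($\pi_{F,n}$ being one of the representations entering the supremum defining $\|\cdot\|_u$).

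For the finiteness of the universal semi-norm, I would first reduce to the case of an irreducible $\pi$ via the standard direct-integral decomposition, and then apply Theorem~\ref{main} to write $\pi\cong \Pi\circ \zeta$ with $\Pi=\pi_w\otimes \chi_\varphi$ an irreducible $*$-representation of $\mathbb{C}[SU_{2n}]_q$. Since each $\pi_w$ is a finite tensor product of the fixed $C^*(S)$-valued representations $\pi_i$ from Section $2.2$, the operators $\Pi(\zeta(a))$ are automatically bounded; moreover, their norms are majorized uniformly in $\Pi$ by the universal norm of $\zeta(a)$ in $\mathbb{C}[SU_{2n}]_q$, which is finite since $\mathbb{C}[SU_{2n}]_q$ is known to admit a $C^*$-enveloping algebra (the quantum group $C(SU_{2n})_q$). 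This yields $\|a\|_u<\infty$ for every $a$, so the universal $C^*$-algebra exists and the map $\iota:\mathrm{Pol}(\mathrm{Mat}_n)_q\to C^*(\mathrm{Pol}(\mathrm{Mat}_n)_q)$ is well defined.

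For the refined bound by the Fock norm, I would invoke the cited fact that every $*$-representation of $\mathrm{Pol}(\mathrm{Mat}_n)_q$ annihilating the Shilov boundary ideal is dominated by $\pi_{F,n}$, and combine it with Theorem~\ref{main} in the manner indicated in the introduction. Concretely, for an irreducible $\pi=(\pi_w\otimes \chi_\varphi)\circ \zeta$, the idea is either to realize $\pi$ (up to norm) as a sub-quotient of a $*$-representation of $\mathrm{Pol}(\mathrm{Mat}_n)_q$ that factors through the quotient by the Shilov boundary ideal, or to compare $\|(\pi_w\otimes \chi_\varphi)(\zeta(a))\|$ with $\|\pi_s(\zeta(a))\|=\|\pi_{F,n}(a)\|$ directly, exploiting that the permutation $s$ in the introduction is a maximal element in the relevant class of $w\in S_{2n}$ arising from Theorem~\ref{main}. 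In either case, the conclusion is the uniform bound $\|\pi(a)\|\leq \|\pi_{F,n}(a)\|$ for every irreducible $\pi$.

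The main obstacle is precisely this last step: converting the Shilov-boundary domination, which \emph{a priori} only applies to representations vanishing on the Shilov ideal, into a norm bound valid for \emph{all} irreducible representations produced by Theorem~\ref{main}. Once this is in hand, the semi-norm $\|\cdot\|_u$ coincides pointwise with $\|\pi_{F,n}(\cdot)\|$, the $*$-ideal $\{a:\|a\|_u=0\}$ equals $\ker \pi_{F,n}$, and the universal property of $C^*(\mathrm{Pol}(\mathrm{Mat}_n)_q)$ together with the density of $\pi_{F,n}(\mathrm{Pol}(\mathrm{Mat}_n)_q)$ in its closure yields a canonical $*$-isomorphism $C^*(\mathrm{Pol}(\mathrm{Mat}_n)_q)\cong \overline{\pi_{F,n}(\mathrm{Pol}(\mathrm{Mat}_n)_q)}=C_F(\overline{\mathbb{D}}_n)$, completing the corollary.
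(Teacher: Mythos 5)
Your outline sets up the problem correctly (it suffices to prove $\|\pi(a)\|\le\|\pi_{F,n}(a)\|$ for all $\pi$), and you honestly flag the genuine obstacle: the Shilov-boundary domination result applies only to representations that vanish on the Shilov boundary ideal of $\mathrm{Pol}(\mathrm{Mat}_n)_q$, whereas the representations produced by Theorem~\ref{main} (including $\pi_{F,n}$ itself) do \emph{not} annihilate that ideal. Neither of your two suggested fixes closes this: realizing $\pi$ as a sub-quotient of a Shilov-annihilating representation is not available (that would force $\pi$ to annihilate the ideal), and a direct norm comparison via a ``maximality'' of $s$ among the permutations $w$ arising in Theorem~\ref{main} is not something the paper establishes and is far from obvious.

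The paper's actual trick sidesteps this by going up a dimension. Write $\zeta=\xi\circ\rho$ with $\rho:\mathrm{Pol}(\mathrm{Mat}_n)_q\hookrightarrow\mathrm{Pol}(\mathrm{Mat}_{2n})_q$ and $\xi:\mathrm{Pol}(\mathrm{Mat}_{2n})_q\twoheadrightarrow\mathbb{C}[SU_{2n}]_q$. Theorem~\ref{main} gives $\pi=\phi\circ\xi\circ\rho$ for some $*$-representation $\phi$ of $\mathbb{C}[SU_{2n}]_q$. The intermediate representation $\phi\circ\xi$ is a representation of $\mathrm{Pol}(\mathrm{Mat}_{2n})_q$ that factors through $\mathbb{C}[SU_{2n}]_q$, so it \emph{does} annihilate the Shilov boundary ideal, but of $\mathrm{Pol}(\mathrm{Mat}_{2n})_q$ rather than of $\mathrm{Pol}(\mathrm{Mat}_n)_q$. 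This gives $\|(\phi\circ\xi)(b)\|\le\|\pi_{F,2n}(b)\|$ for $b\in\mathrm{Pol}(\mathrm{Mat}_{2n})_q$, and hence $\|\pi(a)\|\le\|(\pi_{F,2n}\circ\rho)(a)\|$. The remaining (also nontrivial) step, which your proposal omits entirely, is to prove $\pi_{F,2n}\circ\rho\cong\bigoplus\pi_{F,n}$: the paper does this by showing, via the monomial basis $\{z(A)v_0\}$ and the block decomposition $A=A'+A''$, that each vector $z(A'')v_0$ with $A''$ supported off the lower-right $n\times n$ block is a vacuum vector for $\pi_{F,2n}\circ\rho$, and that these vacuum vectors generate orthogonal Fock subrepresentations exhausting $H_{F,2n}$. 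Without the passage to $2n$ and this direct-sum decomposition, your argument does not close.
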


\begin{proof}
Fix a $*$-representation $\pi$ of $\mathrm{Pol}(\mathrm{Mat}_{n})_{q}.$ It is enough to prove the inequality $$\lVert\pi(a)\rVert\leq \lVert\pi_{F,n}(a)\rVert$$ for all $a\in \mathrm{Pol}(\mathrm{Mat}_{n})_{q}.$ 
By Theorem~\ref{main}, there is a $*$-representation $\phi$ of $\mathbb{C}[SU_{2n}]_{q}$ such that $\pi=\phi \circ \zeta$ and $\zeta=\xi\circ \rho$ where $$\begin{array}{cc}\xi: \mathrm{Pol}(\mathrm{Mat}_{2n})_{q}\to \mathbb{C}[SU_{2n}]_{q} ,&\rho : \mathrm{Pol}(\mathrm{Mat}_{n})_{q}\to \mathrm{Pol}(\mathrm{Mat}_{2n})_{q}\end{array}$$ are the maps defined in Lemma~\ref{maps}. As $\phi \circ \xi$ is a $*$-representation of $\mathrm{Pol}(\mathrm{Mat}_{2n})_{q}$ that factors through $\mathbb{C}[SU_{2n}]_{q},$ it must annihilate the Shilov boundary of $\mathrm{Pol}(\mathrm{Mat}_{2n})_{q}$ and hence (by Lemma $12$ in~\cite{ool_preprint}) it will be dominated by the Fock representation $\pi_{F,2n},$ i.e. $\lVert(\phi\circ \xi) (a)\rVert\leq \lVert\pi_{F,2n}(a)\rVert$ for all $a\in \mathrm{Pol}(\mathrm{Mat}_{2n})_{q}.$
\\

Hence
$$
\lVert\pi(a)\rVert=\lVert\phi\circ \xi\circ \rho(a)\rVert\leq \lVert \pi_{F,2n}\circ\rho(a)\rVert, a\in \mathrm{Pol}(\mathrm{Mat}_{n})_{q}.
$$
Therefore, we only need to show the inequality $$\lVert (\pi_{F,2n}\circ\rho)(a)\rVert\leq \lVert \pi_{F,n}(a)\rVert,a \in \mathrm{Pol}(\mathrm{Mat}_{n})_{q}.$$ To this end, we are going to prove that the $*$-representation $ \pi_{F,2n}\circ\rho$ is a direct sum of the Fock representations $\pi_{F,n}$. To see this, we introduce an order on the generators $\{z_{k}^{j}\}_{k,j}:$ $z_{k}^{j}<z_{m}^{l}$ if either $k<m$ or if $k=m,$ then $j<l.$ We can visualize this ordering using the matrix $\textbf{Z}=(z_{k}^{j})_{k,j}$ of generators of $\mathrm{Pol}(\mathrm{Mat}_{2n})_{q}$ as
\begin{equation}\label{zmatrix}
\left(\begin{array}{ccccccc}
z_{1}^{1}\downarrow&z_{2}^{1}\downarrow&\dots&z_{2n-1}^{1}\downarrow&z_{2n}^{1}\downarrow\\
z_{1}^{2}\downarrow&z_{2}^{2}\downarrow&\dots&z_{2n-1}^{2}\downarrow&z_{2n}^{2}\downarrow\\
\vdots&\vdots& &\vdots&\vdots\\
z_{1}^{2n-1}\downarrow&z_{2}^{2n-1}\downarrow&\dots&z_{2n-1}^{2n-1}\downarrow&z_{2n}^{2n-1}\downarrow\\
z_{1}^{2n}\downarrow &z_{2}^{2n}\downarrow&\dots&z_{2n-1}^{2n-1}\downarrow&z_{2n}^{2n}\downarrow\\
\end{array}\right)
\end{equation}
where the ordering start with the upper left element, going down the first column, and then move onto the second column etc.
\\

If we write $\textbf{Z}$ as a block matrix
$$
\textbf{Z}=\left(
\begin{array}{cc}
Z_{(1,1)}& Z_{(1,2)}\\
Z_{(2,1)} & Z_{(2,2)}
\end{array}
\right)
$$
$Z_{i,j}$ being $n\times n$-blocks, then it follows from~\eqref{zaa2} that for each $1\leq k\leq n$ the elements in the $k$'th column of $Z_{(2,2)}$ commute with all elements in $Z_{(1,2)}$ that preceeds them in our ordering on $\textbf{Z}$. For $A=(a_{j,k})_{j,k}\in M_{2n}(\mathbb{Z}_{+}),$ we let, as in~\eqref{fock}, $z(A)$ denote the element
$$
(z_{2n}^{2n})^{a_{n,n}}(z_{2n}^{2n-1})^{a_{n,n-1}}\dots (z_{1}^{2})^{a_{1,2}}(z_{1}^{1})^{a_{1,1}}.
$$
Then for $A\in M_{2n}(\mathbb{Z}_{+}),$ we have $z(A)=z(A')z(A''),$ where $A=A'+A''$ and $A'$ is the matrix with the same integers as $A$ in the lower right $n\times n$ square and zero everywhere else and $A''$ is the same matrix as in $A$ but with zeros in the lower right $n\times n$ square, i.e. if 
$$
A=\left(\begin{array}{cc}A_{11}& A_{12}\\A_{21}&A_{22}\end{array}\right), A_{ij}\in M_{n}(\mathbb{Z}_{+}),
$$ then $A'=\left(\begin{array}{cc}0& 0\\0&A_{22}\end{array}\right)$ and $A''=\left(\begin{array}{cc}A_{11}& A_{12}\\A_{21}&0\end{array}\right).$  Denote these classes of matrices as $M_{2n}^{L}$ and  $M_{2n}^{U}$ respectively.
\\

We shall omit $\pi_{F,2n}$ and write simply $a$ for the image $\pi_{F,2n}(a),$ $a\in  \mathrm{Pol}(\mathrm{Mat}_{2n})_{q}.$ Let $v_{0}$ be a vacuum vector for the representation. We claim that $(z_{k}^{j})^{*}z(A'') v_{0}=0$ for every $z_{k}^{j}\in Z_{(2,2)}$ and $A''\in M_{2n}^{U}$ and hence $z(A'')v_{0}$ is a vacuum vector for the $*$-representation $\pi_{F,2n}\circ \rho.$ To see this, notice that by Proposition~\ref{fockbase} it is enough to show that $$\langle (z_{k}^{j})^{*}z(A'') v_{0}, z(B) v_{0} \rangle=\langle z(A'') v_{0},z_{k}^{j} z(B)  v_{0}\rangle=0$$
i.e. that $z(A'') v_{0}\bot z_{k}^{j} z(B)  v_{0}$
for all $B\in M_{2n}(\mathbb{Z}_{+}).$ Write $z(B)=z(B')z(B'')$ as above. It is not hard to see that the sub-algebra of $\mathbb{C}[\mathrm{Mat}_{2n}]_q\subseteq \mathrm{Pol}(\mathrm{Mat}_{2n})_{q}$ generated by the elements $z_{n+k}^{n+j}$ with $1\leq k,j\leq n$ has a vector space basis given by $\{z(C')|C'\in M_{2n}^{L}\}$ and hence, as $\text{deg}z_{k}^{j}z(B')=|B'|+1,$ we have $z_{k}^{j}z(B')=\sum_{m} c_{m}z(C'_{m})$ for constants $c_{m}\in \mathbb{C}$ and where each $C'_{m}$ has norm $|C_{m}'|=|B'|+1.$ We can then write $$z_{k}^{j} z(B)  v_{0}=z_{k}^{j} z(B') z(B'')   v_{0}=\sum_{m}c_{j}z(C_{m}'+B'') v_{0}.$$ Since $C_{m}'\neq 0$ and hence $A''\neq C_{m}'+B'',$ we get $z(A'') v_{0}\bot z(C_{m}'+B'') v_{0.}$ Thus $z(A'') v_{0}\bot z_{k}^{j} z(B) v_{0}.$
\\

It follows that for every $A''\in M_{2n}^{U},$ we have a $*$-representation equivalent to the Fock representation on the subspace $H_{A''}$ of $H_{F,2n}$ spanned by the orthogonal vectors $$\{z(A')z(A'')v_{0}|A'\in M_{2n}^{L}\}.$$ But as we know that 
$$\{z(A')z(A'')v_{0}|A\in M_{2n}^{L},A''\in M_{2n}^{U}\}=$$
$$\{z(A)v_{0}|A\in M_{2n}(\mathbb{Z}_{+})\}$$
is an orthogonal basis for $H_{F,2n},$ it follows that $H_{F,2n}=\oplus_{A''\in M_{2n}^{U}}H_{A''}$ and that $\mathrm{Pol}(\mathrm{Mat}_{n})_{q}$- $\pi_{F,2n}\circ \rho$ is a direct sum of  $*$-representations equivalent to $\pi_{F,n}.$
\end{proof}
\section{Proof of the Main Result: Existence of Lifting}
In this section we will prove the first part of Theorem~\ref{main}, the existence of the lift~\eqref{liftingthm}. The second part follows from Lemma~\ref{sunandmoon}. The proof is by induction on $n.$ The result is well known in the case of $n=1,$ as in this case it follows from~\eqref{zaa1}-\eqref{zaa4} that $\mathrm{Pol}(\mathrm{Mat}_{n})_{q}\cong Pol(\mathbb{C})_{q},$ the quantum disc (see~\cite{vaksman-book}). This is the unital $*$-algebra over $\mathbb{C}$ generated by a single element $z$ subject to the relation
$$
z^{*}z=q^{2}z z^{*}+(1-q^{2})I.
$$
By~\cite{vaksman-book} (Proposition $1.10$), any irreducible $*$-representations of $Pol(\mathbb{C})_{q},$ up to unitary equivalence, is either 
\begin{itemize}
\item $\pi_{F,1}:Pol(\mathbb{C})_{q}\rightarrow B(\ell^{2}(\mathbb{Z}_{+}))$ determined by $z\mapsto T_{22}$
with $T_{22}$ as in~\eqref{fan}, or
\item $\chi_{\varphi}:Pol(\mathbb{C})_{q}\rightarrow \mathbb{C}$ for $\varphi\in [0,2\pi),$ determined by $z\mapsto e^{i\varphi}.$
\end{itemize}
It is not hard to show that these $*$-representations can be lifted in the way claimed in Theorem~\ref{main}. Indeed, with $\pi_{F,1},$ we let $\phi$ be the $*$-representation of $\mathbb{C}[SU_{2}]_{q}$ determined by $t_{ij}\mapsto T_{ij}.$ Then $\phi\circ \zeta (z)=\phi(t_{22})=T_{22}$ and hence $\phi\circ \zeta=\pi_{F,1}.$
\\

If we let $\phi_{\varphi}:\mathbb{C}[SU_{2}]_{q}\rightarrow \mathbb{C}$ be the $*$-representation of $\mathbb{C}[SU_{2}]_{q}$ determined by 
$$
\left(
\begin{array}{ccc}
t_{11}&  t_{12}\\
t_{21}& t_{22}
\end{array}
\right)
\mapsto
\left(
\begin{array}{ccc}
e^{-i\varphi}& 0\\
0&e^{i\varphi}
\end{array}
\right)
$$
Then $\phi_{\varphi}\circ \zeta(z)=\phi_{\varphi}(t_{22})=e^{i\varphi}$ and therefore $\phi_{\varphi}\circ \zeta =\chi_{\varphi}.$
\\

We also note that the representation theory of $Pol(\mathbb{C})_{q}$ implies the following lemma.
\begin{lem}\label{polclem}
If $X\in B(H)$ satisfies the equation 
\begin{equation}\label{polc}
X^{*}X=q^{2}XX^{*}+(1-q^{2})I
\end{equation}
then $H$ can be written as a direct sum $H=H_{1}\oplus H_{2}$ of subspaces $H_{1},H_{2},$ both reducing $X,$ such that $X|_{H_{1}}$ is isomorphic to a direct sum of the operator $T_{22}$ and $X|_{H_{2}}$ is a unitary isometry. In particular, if $\ker X^{*}=\{0\},$ then $X$ is a unitary isometry. 
\end{lem}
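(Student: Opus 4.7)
The plan is to exploit the commutation $\rho X = q^{2} X \rho$, where $\rho := I - XX^{*}$, which is immediate from \eqref{polc} via $X\rho - \rho X = X(X^{*}X - XX^{*}) = (1-q^{2}) X\rho$. The relation \eqref{polc} also forces $\|X\| \leq 1$, since $\|X^{*}X\| \leq q^{2}\|XX^{*}\| + (1-q^{2}) = q^{2}\|X^{*}X\| + (1-q^{2})$; hence $\rho \geq 0$. Taking adjoints gives $\rho X^{*} = q^{-2} X^{*} \rho$, so both $X$ and $X^{*}$ preserve $\ker \rho$. I would therefore set $H_{2} := \ker \rho$: on $H_{2}$ one has $XX^{*} = I$, and then \eqref{polc} forces $X^{*}X = q^{2}I + (1-q^{2})I = I$, so $X|_{H_{2}}$ is a unitary isometry.

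Next, on $H_{1} := H_{2}^{\perp}$, I would build copies of $T_{22}$ using the ``vacuum'' space $V := \ker X^{*} = \ker XX^{*} = \ker(I-\rho)$, which is the $1$-eigenspace of $\rho$ and is therefore automatically contained in $H_{1}$. For any orthonormal basis $\{e_{i}\}$ of $V$, the identities $X^{*}X = I - q^{2}\rho$ and $\rho X^{n} = q^{2n} X^{n} \rho$ combine to give $(X^{*})^{n} X^{n} e_{i} = \prod_{k=1}^{n}(1 - q^{2k})\, e_{i}$ by induction. Consequently the renormalized vectors $f_{n}^{(i)} := X^{n} e_{i}/\bigl(\prod_{k=1}^{n}(1-q^{2k})\bigr)^{1/2}$ form an orthonormal system (vectors with different $n$ sit in distinct $\rho$-eigenspaces $E_{\rho}(\{q^{2n}\})$, and those with different $i$ but the same $n$ are handled by polarization of the identity above), and they satisfy exactly the recurrence defining $T_{22}$ on $\ell^{2}(\mathbb{Z}_{+})$. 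Let $L$ denote their closed linear span; using $X^{*} X^{n} e_{i} = (1-q^{2n}) X^{n-1} e_{i}$ one checks that $L$ is reducing for $X$.

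The main obstacle is to show $L = H_{1}$, equivalently $L^{\perp} \subseteq H_{2}$. Setting $Y := X|_{L^{\perp}}$, which still satisfies \eqref{polc}, the key observation is $\ker Y^{*} = V \cap L^{\perp} = \{0\}$. From $Y^{*}Y = I - q^{2}\rho_{Y} \geq (1-q^{2})I$ it follows that $Y$ is bounded below, so injective with closed range $(\ker Y^{*})^{\perp} = L^{\perp}$; thus $Y$ is bijective on $L^{\perp}$. Its polar decomposition $Y = U|Y|$ has $|Y|$ invertible, and the polar factor $U$ is then unitary (since $Y$ is bijective). From $YY^{*} = U|Y|^{2} U^{*}$ together with $|Y|^{2} = I - q^{2}\rho_{Y}$ and $YY^{*} = I - \rho_{Y}$, one obtains $\rho_{Y} = q^{2}\, U \rho_{Y} U^{*}$. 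Taking operator norms yields $\|\rho_{Y}\| = q^{2}\|\rho_{Y}\|$, forcing $\rho_{Y} = 0$; hence $L^{\perp} \subseteq \ker \rho = H_{2}$. Combined with $L \subseteq H_{1}$ this gives $L = H_{1}$ and the claimed decomposition. The final clause follows at once: if $\ker X^{*} = \{0\}$ then $V = 0$, so $L = H_{1} = 0$ and $H = H_{2}$, on which $X$ is unitary.
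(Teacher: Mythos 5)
Your proof is correct and complete. The paper itself offers no proof of this lemma---it simply remarks that it ``follows from the representation theory of $\mathrm{Pol}(\mathbb{C})_{q}$'' and cites the known classification of irreducible $*$-representations of the quantum disc algebra; your argument is precisely a self-contained reconstruction of that structure theory. Your steps all check out: the commutation $\rho X = q^2 X\rho$ and $\|X\|\leq 1$ are correct, the vectors $X^n e_i$ land in the $q^{2n}$-eigenspace of $\rho$ and so are automatically orthogonal across $n$, the normalization gives exactly the $T_{22}$ recurrence, and the final norm trick $\rho_Y = q^2 U\rho_Y U^*$ with $U$ unitary (made possible by $\ker Y^*=0$, hence $|Y|$ invertible and $Y$ surjective) forces $\rho_Y=0$, which is the same $q^2$-scaling trick the paper itself uses in Lemma~\ref{miss} to kill operators. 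One might streamline the last step by noting that the spectrum of $\rho_Y$ must satisfy $\spec(\rho_Y)=q^2\spec(\rho_Y)$, but taking norms as you do is equally valid.
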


 In the general case, when $n>1,$ the proof is constructed around the fact that every irreducible $*$-representation $\pi$ of $\mathrm{Pol}(\mathrm{Mat}_{n})_{q}$ falls into one of two classes:
\begin{enumerate}
\item[\textbf{A}] There is $1\leq k\leq n$ such that either $\ker \pi(z_{k}^{n})^{*}=\{0\}$ or $\ker \pi(z_{n}^{k})^{*}=\{0\};$
\item[\textbf{B}]  $\ker\pi (z_{n}^{k})^{*}\neq \{0\}$ and $\ker \pi(z_{k}^{n})^{*}\neq \{0\}$ for all $k.$ 
 \end{enumerate}
 The induction will be applied slightly different in these two cases.
 \\
 
Notice that the map $z_{j}^{i}\mapsto z_{i}^{j}$ is a $*$-automorphism of $\mathrm{Pol}(\mathrm{Mat}_{n})_{q}$ and hence in the case $\textbf{A},$ we can, and will, assume that $\ker \pi(z_{k}^{n})^{*}=\{0\}.$
\subsection{Outline of The Proof By Use of Examples}
We shall first outline the structure of the proof in two particular examples representing the two different cases of $\textbf{A}$ and $\textbf{B}$. 
\\

Let $\pi$ be the $*$-representation of $\mathrm{Pol}(\mathrm{Mat}_{3})_{q}$ corresponding to the string $$[(1,\varphi_{3}),(2,0),(1,\varphi_{1})]$$ with the associated square diagram given by
$$
\begin{tikzpicture}[thick,scale=0.5]
\filldraw[fill=black!50!white, draw=black](1,0) -- (0,0) -- (0,1)--(1,1)--(1,0)--(0,0);
\filldraw[fill=black!50!white, draw=black](1,1) -- (0,1) -- (0,2)--(1,2)--(1,1)--(0,1);
\filldraw[fill=black!50!white, draw=black] (1,2) -- (0,2) -- (0,3)--(1,3)--(1,2)--(0,2);
\filldraw[fill=black!20!white, draw=black](2,0) -- (1,0) -- (1,1)--(2,1)--(2,0)--(1,0);
\draw (2,1) -- (1,1) -- (1,2)--(2,2)--(2,1)--(1,1);
\filldraw[fill=black!20!white, draw=black](2,2) -- (1,2) -- (1,3)--(2,3)--(2,2)--(1,2);
\draw (3,0) -- (2,0) -- (2,1)--(3,1)--(3,0)--(2,0);
\draw (3,1) -- (2,1) -- (2,2)--(3,2)--(3,1)--(2,1);
\draw (3,2) -- (2,2) -- (2,3)--(3,3)--(3,2)--(2,2);
\node at (0.5,-0.5) {$1$};
\node at (1.5,-0.5) {$2$};
\node at (2.5,-0.5) {$3$};
\node at (3.5,2.5) {$1$};
\node at (3.5,0.5) {$3$};
\node at (3.5,1.5) {$2$};
\end{tikzpicture}
$$
i.e. $\pi=\Pi\circ \zeta,$ where 
$$
\Pi=(\tau_{0}\otimes \tau_{0}\otimes \tau_{0}\otimes \tau_{\phi_{3}}\otimes \id \otimes \tau_{\phi_{1}}\otimes \id\otimes \id\otimes \id)\circ \pi_{s}.
$$
So, by considering all possible routes from $(3,j)$ to $(i,3)$ constructed out of arrows and hooks, we can recover the action of $\pi(z_{j}^{i}).$ In particular, we have 
$$
\begin{array}{ccc}
\begin{tikzpicture}[thick,scale=0.5]
\filldraw[fill=black!50!white, draw=black](1,0) -- (0,0) -- (0,1)--(1,1)--(1,0)--(0,0);
\filldraw[fill=black!50!white, draw=black](1,1) -- (0,1) -- (0,2)--(1,2)--(1,1)--(0,1);
\filldraw[fill=black!50!white, draw=black] (1,2) -- (0,2) -- (0,3)--(1,3)--(1,2)--(0,2);
\filldraw[fill=black!20!white, draw=black](2,0) -- (1,0) -- (1,1)--(2,1)--(2,0)--(1,0);
\draw (2,1) -- (1,1) -- (1,2)--(2,2)--(2,1)--(1,1);
\filldraw[fill=black!20!white, draw=black](2,2) -- (1,2) -- (1,3)--(2,3)--(2,2)--(1,2);
\draw (3,0) -- (2,0) -- (2,1)--(3,1)--(3,0)--(2,0);
\draw (3,1) -- (2,1) -- (2,2)--(3,2)--(3,1)--(2,1);
\draw (3,2) -- (2,2) -- (2,3)--(3,3)--(3,2)--(2,2);
\node at (0.5,-0.5) {$1$};
\node at (1.5,-0.5) {$2$};
\node at (2.5,-0.5) {$3$};
\node at (3.5,2.5) {$1$};
\node at (3.5,0.5) {$3$};
\node at (3.5,1.5) {$2$};
\draw [->] (0.5,0.05)--(0.5,0.5)--(0.95,0.5);\draw [->] (0.05+1,0.5)--(0.95+1,0.5);\draw [->] (0.05+2,0.5)--(0.95+2,0.5);
\node at (-2.9,1.5) {$\pi(z_{1}^{3})=(-q)^{-2}\times$};
\end{tikzpicture}
&
\begin{tikzpicture}[thick,scale=0.5]
\filldraw[fill=black!50!white, draw=black](1,0) -- (0,0) -- (0,1)--(1,1)--(1,0)--(0,0);
\filldraw[fill=black!50!white, draw=black](1,1) -- (0,1) -- (0,2)--(1,2)--(1,1)--(0,1);
\filldraw[fill=black!50!white, draw=black] (1,2) -- (0,2) -- (0,3)--(1,3)--(1,2)--(0,2);
\filldraw[fill=black!20!white, draw=black](2,0) -- (1,0) -- (1,1)--(2,1)--(2,0)--(1,0);
\draw (2,1) -- (1,1) -- (1,2)--(2,2)--(2,1)--(1,1);
\filldraw[fill=black!20!white, draw=black](2,2) -- (1,2) -- (1,3)--(2,3)--(2,2)--(1,2);
\draw (3,0) -- (2,0) -- (2,1)--(3,1)--(3,0)--(2,0);
\draw (3,1) -- (2,1) -- (2,2)--(3,2)--(3,1)--(2,1);
\draw (3,2) -- (2,2) -- (2,3)--(3,3)--(3,2)--(2,2);
\node at (0.5,-0.5) {$1$};
\node at (1.5,-0.5) {$2$};
\node at (2.5,-0.5) {$3$};
\node at (3.5,2.5) {$1$};
\node at (3.5,0.5) {$3$};
\node at (3.5,1.5) {$2$};
\draw [->] (0.5+1,0.05)--(0.5+1,0.5)--(0.95+1,0.5);\draw [->] (0.05+2,0.5)--(0.95+2,0.5);
\node at (-2.9,1.5) {$\pi(z_{2}^{3})=(-q)^{-1}\times$};
\end{tikzpicture}
&
\begin{tikzpicture}[thick,scale=0.5]
\filldraw[fill=black!50!white, draw=black](1,0) -- (0,0) -- (0,1)--(1,1)--(1,0)--(0,0);
\filldraw[fill=black!50!white, draw=black](1,1) -- (0,1) -- (0,2)--(1,2)--(1,1)--(0,1);
\filldraw[fill=black!50!white, draw=black] (1,2) -- (0,2) -- (0,3)--(1,3)--(1,2)--(0,2);
\filldraw[fill=black!20!white, draw=black](2,0) -- (1,0) -- (1,1)--(2,1)--(2,0)--(1,0);
\draw (2,1) -- (1,1) -- (1,2)--(2,2)--(2,1)--(1,1);
\filldraw[fill=black!20!white, draw=black](2,2) -- (1,2) -- (1,3)--(2,3)--(2,2)--(1,2);
\draw (3,0) -- (2,0) -- (2,1)--(3,1)--(3,0)--(2,0);
\draw (3,1) -- (2,1) -- (2,2)--(3,2)--(3,1)--(2,1);
\draw (3,2) -- (2,2) -- (2,3)--(3,3)--(3,2)--(2,2);
\node at (0.5,-0.5) {$1$};
\node at (1.5,-0.5) {$2$};
\node at (2.5,-0.5) {$3$};
\node at (3.5,2.5) {$1$};
\node at (3.5,0.5) {$3$};
\node at (3.5,1.5) {$2$};
\draw [->] (0.5+2,0.05)--(0.5+2,0.5)--(0.95+2,0.5);
\node at (-1.5,1.5) {$\pi(z_{3}^{3})=$};
\end{tikzpicture}
\end{array}
$$
However, observe that if the arrow $\begin{tikzpicture}[line width=0.21mm,scale=0.4]
\draw (1,0) -- (0,0) -- (0,1)--(1,1)--(1,0)--(0,0);
\draw [->] (0.1,0.5)--(0.9,0.5);
\end{tikzpicture}$ or $\begin{tikzpicture}[line width=0.21mm,scale=0.4]
\draw (1,0) -- (0,0) -- (0,1)--(1,1)--(1,0)--(0,0);
\draw [->] (0.5,0.1)--(0.5,0.9);
\end{tikzpicture}$ happen to fall in a shaded box, then the corresponding operator is zero. This follows from the fact that $\begin{tikzpicture}[line width=0.21mm,scale=0.4]
\draw (1,0) -- (0,0) -- (0,1)--(1,1)--(1,0)--(0,0);
\draw [->] (0.1,0.5)--(0.9,0.5);
\end{tikzpicture}$ and $\begin{tikzpicture}[line width=0.21mm,scale=0.4]
\draw (1,0) -- (0,0) -- (0,1)--(1,1)--(1,0)--(0,0);
\draw [->] (0.5,0.1)--(0.5,0.9);
\end{tikzpicture}$ corresponds to the compact operators $T_{12}$ and $T_{21}$ respectively, and hence annihilated by each $*$-representation $\tau_{\varphi}.$ In particular, we have $\pi(z_{1}^{3})=0.$ 
\\

As $\pi(z_{2}^{3})=e^{i\phi}I\otimes T_{12}\otimes I\otimes I,$ we see that $\ker \pi(z_{2}^{3})^{*}=\{0\},$ and this shows that $\pi$ falls into the class $(A).$
\\

Let $\pi'$ be the $*$-representation of $\mathrm{Pol}(\mathrm{Mat}_{2})_{q}$ corresponding to the upper right $2\times 2$ sub-square

$$
\begin{tikzpicture}[thick,scale=0.5]
\filldraw[fill=black!50!white, draw=black](1,0) -- (0,0) -- (0,1)--(1,1)--(1,0)--(0,0);
\filldraw[fill=black!50!white, draw=black](1,1) -- (0,1) -- (0,2)--(1,2)--(1,1)--(0,1);
\filldraw[fill=black!50!white, draw=black] (1,2) -- (0,2) -- (0,3)--(1,3)--(1,2)--(0,2);
\filldraw[fill=black!20!white, draw=black](2,0) -- (1,0) -- (1,1)--(2,1)--(2,0)--(1,0);
\draw (2,1) -- (1,1) -- (1,2)--(2,2)--(2,1)--(1,1);
\filldraw[fill=black!20!white, draw=black](2,2) -- (1,2) -- (1,3)--(2,3)--(2,2)--(1,2);
\draw (3,0) -- (2,0) -- (2,1)--(3,1)--(3,0)--(2,0);
\draw (3,1) -- (2,1) -- (2,2)--(3,2)--(3,1)--(2,1);
\draw (3,2) -- (2,2) -- (2,3)--(3,3)--(3,2)--(2,2);
\draw[black,ultra thick]  (1,1) -- (1,3)--(3,3)--(3,1)--(1,1);
\node at (0.5,-0.5) {$1$};
\node at (1.5,-0.5) {$2$};
\node at (2.5,-0.5) {$3$};
\node at (3.5,2.5) {$1$};
\node at (3.5,0.5) {$3$};
\node at (3.5,1.5) {$2$};
\end{tikzpicture}
$$
The string associated to $\pi'$ is $[(2,0),(1,\phi_{1})].$ Next we will relate $\pi'$ to the $*$-representation $\pi.$ To do this, consider all possible routes from $(3,i)$ to $(j,3)$ such that $i\neq 2$ and $j\neq 3.$ Using the above observations about straight arrows on dark squares, we can easily see that the only routes corresponding to non-zero operators are those having in the last row either the sub-path 
\begin{enumerate}[(I)]
\item
$\begin{tikzpicture}[thick,scale=0.5,baseline=1ex]
\filldraw[fill=black!50!white, draw=black](1,0) -- (0,0) -- (0,1)--(1,1)--(1,0)--(0,0);
\filldraw[fill=black!20!white, draw=black](2,0) -- (1,0) -- (1,1)--(2,1)--(2,0)--(1,0);
\draw (3,0) -- (2,0) -- (2,1)--(3,1)--(3,0)--(2,0);
\draw [->] (0.5,0.05)--(0.5,0.5)--(0.95,0.5);
\draw [->] (0.05+1,0.5)--(0.5+1,0.5)--(0.5+1,0.95);
\node at (0.5,-0.5) {$1$};
\node at (1.5,-0.5) {$2$};
\node at (2.5,-0.5) {$3$};
\node at (3.5,0.5) {$3$};
\end{tikzpicture}$
 (when $i=1$) or
\item $\begin{tikzpicture}[thick,scale=0.5,baseline=1ex]
\filldraw[fill=black!50!white, draw=black](1,0) -- (0,0) -- (0,1)--(1,1)--(1,0)--(0,0);

\filldraw[fill=black!20!white, draw=black](2,0) -- (1,0) -- (1,1)--(2,1)--(2,0)--(1,0);

\draw (3,0) -- (2,0) -- (2,1)--(3,1)--(3,0)--(2,0);

\draw [->] (1/2+2,0.05)--(1/2+2,0.95);
\node at (0.5,-0.5) {$1$};
\node at (1.5,-0.5) {$2$};
\node at (2.5,-0.5) {$3$};
\node at (3.5,0.5) {$3$};
\end{tikzpicture}$ (when $i=3$).
\end{enumerate}
Hence all routes corresponding to non-zero summands in $\pi(z_{1}^{j})$ (resp $\pi(z_{3}^{j})$) can be obtained by attaching to sub-path (I) (resp (II)) a path from the positions $(2,1)$ to $(j,2)$ (resp $(2,2)$ to $(j,2)$) in the upper right sub-square
$$
\begin{tikzpicture}[thick,scale=0.5]
\filldraw[fill=black!50!white, draw=black](1,0) -- (0,0) -- (0,1)--(1,1)--(1,0)--(0,0);
\filldraw[fill=black!50!white, draw=black](1,1) -- (0,1) -- (0,2)--(1,2)--(1,1)--(0,1);
\filldraw[fill=black!50!white, draw=black] (1,2) -- (0,2) -- (0,3)--(1,3)--(1,2)--(0,2);
\filldraw[fill=black!20!white, draw=black](2,0) -- (1,0) -- (1,1)--(2,1)--(2,0)--(1,0);
\draw (2,1) -- (1,1) -- (1,2)--(2,2)--(2,1)--(1,1);
\filldraw[fill=black!20!white, draw=black](2,2) -- (1,2) -- (1,3)--(2,3)--(2,2)--(1,2);
\draw (3,0) -- (2,0) -- (2,1)--(3,1)--(3,0)--(2,0);
\draw (3,1) -- (2,1) -- (2,2)--(3,2)--(3,1)--(2,1);
\draw (3,2) -- (2,2) -- (2,3)--(3,3)--(3,2)--(2,2);
\draw[black,ultra thick]  (1,1) -- (1,3)--(3,3)--(3,1)--(1,1);
\node at (1.5,0.5) {$1$};
\node at (2.5,0.5) {$2$};

\node at (3.5,2.5) {$1$};

\node at (3.5,1.5) {$2$};
\end{tikzpicture}
$$
Next we observe that the operator $$\begin{tikzpicture}[thick,scale=0.5]
\filldraw[fill=black!50!white, draw=black](1,0) -- (0,0) -- (0,1)--(1,1)--(1,0)--(0,0);
\filldraw[fill=black!50!white, draw=black](1,1) -- (0,1) -- (0,2)--(1,2)--(1,1)--(0,1);
\filldraw[fill=black!50!white, draw=black] (1,2) -- (0,2) -- (0,3)--(1,3)--(1,2)--(0,2);
\filldraw[fill=black!20!white, draw=black](2,0) -- (1,0) -- (1,1)--(2,1)--(2,0)--(1,0);
\draw (2,1) -- (1,1) -- (1,2)--(2,2)--(2,1)--(1,1);
\filldraw[fill=black!20!white, draw=black](2,2) -- (1,2) -- (1,3)--(2,3)--(2,2)--(1,2);
\draw (3,0) -- (2,0) -- (2,1)--(3,1)--(3,0)--(2,0);
\draw (3,1) -- (2,1) -- (2,2)--(3,2)--(3,1)--(2,1);
\draw (3,2) -- (2,2) -- (2,3)--(3,3)--(3,2)--(2,2);
\draw [->] (0.5,0.05)--(0.5,0.5)--(0.95,0.5);
\draw [->] (0.05+1,0.5)--(0.5+1,0.5)--(0.5+1,0.95);
\node at (0.5,-0.5) {$1$};
\node at (1.5,-0.5) {$2$};
\node at (2.5,-0.5) {$3$};
\node at (3.5,2.5) {$1$};
\node at (3.5,0.5) {$3$};
\node at (3.5,1.5) {$2$};
\end{tikzpicture}$$ is $e^{-\phi_{3}}I,$ while the operator $$\begin{tikzpicture}[thick,scale=0.5,baseline=4ex]
\filldraw[fill=black!50!white, draw=black](1,0) -- (0,0) -- (0,1)--(1,1)--(1,0)--(0,0);
\filldraw[fill=black!50!white, draw=black](1,1) -- (0,1) -- (0,2)--(1,2)--(1,1)--(0,1);
\filldraw[fill=black!50!white, draw=black] (1,2) -- (0,2) -- (0,3)--(1,3)--(1,2)--(0,2);
\filldraw[fill=black!20!white, draw=black](2,0) -- (1,0) -- (1,1)--(2,1)--(2,0)--(1,0);
\draw (2,1) -- (1,1) -- (1,2)--(2,2)--(2,1)--(1,1);
\filldraw[fill=black!20!white, draw=black](2,2) -- (1,2) -- (1,3)--(2,3)--(2,2)--(1,2);
\draw (3,0) -- (2,0) -- (2,1)--(3,1)--(3,0)--(2,0);
\draw (3,1) -- (2,1) -- (2,2)--(3,2)--(3,1)--(2,1);
\draw (3,2) -- (2,2) -- (2,3)--(3,3)--(3,2)--(2,2);
\draw [->] (1/2+2,0.05)--(1/2+2,0.95);
\node at (0.5,-0.5) {$1$};
\node at (1.5,-0.5) {$2$};
\node at (2.5,-0.5) {$3$};
\node at (3.5,2.5) {$1$};
\node at (3.5,0.5) {$3$};
\node at (3.5,1.5) {$2$};
\end{tikzpicture}=I\otimes T_{21}\otimes I\otimes I$$ is a multiple of the identity (actually the identity) after restricting it to the subspace 
$
H:=\ker \pi(z_{2}^{3})^{*}=\spann \{e_{k}\otimes e_{0}\otimes e_{m}\otimes e_{l};(k,m,l)\in \mathbb{Z}_{+}^{3}\}\cong \ell^{2}(\mathbb{Z}_{+})\otimes \langle e_{0}\rangle \otimes \ell^{2}(\mathbb{Z}_{+})\otimes \ell^{2}(\mathbb{Z}_{+}).$  Therefore, we have
$$
\pi(z_{1}^{j})|_{H}=(-q)^{1-3}e^{-i\varphi_{3}}((-q)^{-(1-2)}\pi'(z_{1}^{j}))=(-q)^{-1}e^{-i\varphi_{3}}\pi'(z_{1}^{j})
$$
and
$$
\pi(z_{3}^{j})|_{H}=(-q)^{3-3}((-q)^{2-2}\pi'(z_{2}^{j}))=\pi'(z_{2}^{j}).
$$
Moreover, it is easy to see that $\pi(z_{2}^{j})|_{H}=0$ for $j\neq 3$ and $\pi(z_{2}^{3})|_{H}=e^{i\varphi_{3}}I.$ In general, being in the case $\textbf{A}$ with an irreducible $*$-representation $\pi$ of $\mathrm{Pol}(\mathrm{Mat}_{n})_{q},$ such that $\ker\pi(z_{k}^{n})^{*}=\{0\},$ we shall prove that the subspace 
$$
H:=\cap_{i=k+1}^{n}\ker \pi(z_{i}^{n})^{*}
$$
is invariant with respect to all the operators $\pi(z_{i}^{j})$ and $\pi(z_{k}^{n})=e^{i\varphi}I$ for some $\varphi\in [0,2\pi).$ Furthermore, we can define an irreducible $*$-representation $\pi':\mathrm{Pol}(\mathrm{Mat}_{n-1})_{q}\to B(H)$ by the formula
$$
\pi'(z_{i}^{j})=
\begin{cases}
-e^{-i\varphi}q\pi(z_{i}^{j})|_{H} & \text{ for $1\leq i <k$ and $1\leq j\leq n-1$}\\
\pi(z_{i+1}^{j}) & \text{ for $k\leq i\leq n-1$ and $1\leq j\leq n-1$}.
\end{cases}
$$
Returning to the example, we can by induction lift the $*$-representation $\pi'$ of $\mathrm{Pol}(\mathrm{Mat}_{2})_{q}$ to a $*$-representation of $\Pi':\mathbb{C}[SU_{4}]_{q}\to B(H).$ If we let $\Psi:\mathbb{C}[SU_{6}]_{q}\to \mathbb{C}[SU_{4}]_{q}$ be the $*$-homomorphism determined by 
$$\Psi(t_{kj})=\begin{cases}t_{k-1,j-1}&\text{ if $2\leq k,j\leq 5$}\\ \delta_{k,j}I& \text{ otherwise}\end{cases}$$ 
then $\Pi'\circ \Psi \circ \zeta$ corresponds to the grid
\begin{equation}\label{lifted}
\begin{tikzpicture}[thick,scale=0.5,baseline=4ex]
\filldraw[fill=black!50!white, draw=black](1,0) -- (0,0) -- (0,1)--(1,1)--(1,0)--(0,0);
\filldraw[fill=black!50!white, draw=black](1,1) -- (0,1) -- (0,2)--(1,2)--(1,1)--(0,1);
\filldraw[fill=black!50!white, draw=black] (1,2) -- (0,2) -- (0,3)--(1,3)--(1,2)--(0,2);
\filldraw[fill=black!50!white, draw=black](2,0) -- (1,0) -- (1,1)--(2,1)--(2,0)--(1,0);
\draw (2,1) -- (1,1) -- (1,2)--(2,2)--(2,1)--(1,1);
\filldraw[fill=black!20!white, draw=black](2,2) -- (1,2) -- (1,3)--(2,3)--(2,2)--(1,2);
\filldraw[fill=black!50!white, draw=black] (3,0) -- (2,0) -- (2,1)--(3,1)--(3,0)--(2,0);
\draw (3,1) -- (2,1) -- (2,2)--(3,2)--(3,1)--(2,1);
\draw (3,2) -- (2,2) -- (2,3)--(3,3)--(3,2)--(2,2);
\node at (0.5,-0.5) {$1$};
\node at (1.5,-0.5) {$2$};
\node at (2.5,-0.5) {$3$};
\node at (3.5,2.5) {$1$};
\node at (3.5,0.5) {$3$};
\node at (3.5,1.5) {$2$};
\end{tikzpicture}
\end{equation}
In fact, $$\Pi'\circ\Psi\circ\zeta=(\tau_{0}\otimes\tau_{0}\otimes \tau_{0}\otimes \tau_{0}\otimes \id\otimes\tau_{\varphi_{1}}\otimes \tau_{0}\otimes \id\otimes \id)\circ\pi_{s}$$

 The replacement of the last row on~\eqref{lifted} by  $\begin{tikzpicture}[thick,scale=0.5,baseline=1ex]
\filldraw[fill=black!50!white, draw=black](1,0) -- (0,0) -- (0,1)--(1,1)--(1,0)--(0,0);

\filldraw[fill=black!20!white, draw=black](2,0) -- (1,0) -- (1,1)--(2,1)--(2,0)--(1,0);

\draw (3,0) -- (2,0) -- (2,1)--(3,1)--(3,0)--(2,0);
\node at (0.5,-0.5) {$1$};
\node at (1.5,-0.5) {$2$};
\node at (2.5,-0.5) {$3$};
\node at (3.5,0.5) {$3$};
\end{tikzpicture}$ will correspond to the tensoring the $*$-representation $\Pi'\circ\Psi$ of $\mathbb{C}[SU_{6}]_{q}$ by a suitable $*$-representation $\lambda$ of $\mathbb{C}[SU_{6}]_{q}$ and taking the composition $(\lambda\otimes (\Pi'\circ \Psi))\circ \zeta,$ i.e. $\pi\cong (\lambda\otimes (\Pi'\circ \Psi))\circ \zeta.$
More precisely, $\lambda$ is given by $(\tau_{0}\otimes\tau_{-\varphi}\otimes \id)\circ(\pi_{3}\otimes \pi_{4}\otimes \pi_{5})=
(\tau_{-\varphi}\otimes\id)\circ (\pi_{4}\otimes \pi_{5}).$
\\

Assume now that $\pi$ is a $*$-representation of $\mathrm{Pol}(\mathrm{Mat}_{3})_{q}$ such that $\ker\pi(z_{k}^{3})\neq \{0\}$ and $\ker \pi(z_{3}^{k})^{*}\neq \{0\}$ for any $k=1,2,3.$ If $\pi$ is the $*$-representation corresponding to a string $[(k_{3},\varphi_{3}),(k_{2},\varphi_{2}),(k_{1},\varphi_{1})],$ then the kernel condition is satisfied if and only if the last column and last row of the corresponding grid consist of white boxes. A typical example of such $*$-representation is the one given by the string $[(3,0),(2,\phi),(2,0)]$ with the following grid
$$
\begin{tikzpicture}[thick,scale=0.5]
\draw(1,0) -- (0,0) -- (0,1)--(1,1)--(1,0)--(0,0);
\filldraw[fill=black!20!white, draw=black](1,1) -- (0,1) -- (0,2)--(1,2)--(1,1)--(0,1);
\filldraw[fill=black!50!white, draw=black] (1,2) -- (0,2) -- (0,3)--(1,3)--(1,2)--(0,2);
\draw(2,0) -- (1,0) -- (1,1)--(2,1)--(2,0)--(1,0);
\draw (2,1) -- (1,1) -- (1,2)--(2,2)--(2,1)--(1,1);
\draw(2,2) -- (1,2) -- (1,3)--(2,3)--(2,2)--(1,2);
\draw (3,0) -- (2,0) -- (2,1)--(3,1)--(3,0)--(2,0);
\draw (3,1) -- (2,1) -- (2,2)--(3,2)--(3,1)--(2,1);
\draw (3,2) -- (2,2) -- (2,3)--(3,3)--(3,2)--(2,2);
\node at (0.5,-0.5) {$1$};
\node at (1.5,-0.5) {$2$};
\node at (2.5,-0.5) {$3$};
\node at (3.5,2.5) {$1$};
\node at (3.5,0.5) {$3$};
\node at (3.5,1.5) {$2$};
\end{tikzpicture}
$$
Here the approach from the case $\textbf{A}$ fails. Instead we shall construct a $*$-representation of $\mathrm{Pol}(\mathrm{Mat}_{2})_{q}$ corresponding to the left upper $2\times 2$ sub-grid

$$
\begin{tikzpicture}[thick,scale=0.5]
\draw(1,0) -- (0,0) -- (0,1)--(1,1)--(1,0)--(0,0);
\filldraw[fill=black!20!white, draw=black](1,1) -- (0,1) -- (0,2)--(1,2)--(1,1)--(0,1);
\filldraw[fill=black!50!white, draw=black] (1,2) -- (0,2) -- (0,3)--(1,3)--(1,2)--(0,2);
\draw(2,0) -- (1,0) -- (1,1)--(2,1)--(2,0)--(1,0);
\draw (2,1) -- (1,1) -- (1,2)--(2,2)--(2,1)--(1,1);
\draw(2,2) -- (1,2) -- (1,3)--(2,3)--(2,2)--(1,2);
\draw (3,0) -- (2,0) -- (2,1)--(3,1)--(3,0)--(2,0);
\draw (3,1) -- (2,1) -- (2,2)--(3,2)--(3,1)--(2,1);
\draw (3,2) -- (2,2) -- (2,3)--(3,3)--(3,2)--(2,2);
\node at (0.5,-0.5) {$1$};
\node at (1.5,-0.5) {$2$};
\node at (2.5,-0.5) {$3$};
\node at (3.5,2.5) {$1$};
\node at (3.5,0.5) {$3$};
\node at (3.5,1.5) {$2$};
\draw[black,ultra thick]  (0,1) -- (0,3)--(2,3)--(2,1)--(0,1);
\end{tikzpicture}
$$
and to show that it can be obtained out of $\pi$ by restricting the latter to an appropriate subspace, namely 
$$
H:=\left(\cap_{k=1}^{3}\ker \pi(z_{k}^{3})^{*}\right)\cap\left(\cap_{k=1}^{3}\ker\pi(z_{3}^{k})^{*}\right).
$$ 
We have 
$$
\begin{array}{ccc}
\pi(z_{1}^{3})=(-q)^{-2}\times\begin{tikzpicture}[thick,scale=0.5,baseline=4ex] 
\draw(1,0) -- (0,0) -- (0,1)--(1,1)--(1,0)--(0,0);
\filldraw[fill=black!20!white, draw=black](1,1) -- (0,1) -- (0,2)--(1,2)--(1,1)--(0,1);
\filldraw[fill=black!50!white, draw=black] (1,2) -- (0,2) -- (0,3)--(1,3)--(1,2)--(0,2);
\draw(2,0) -- (1,0) -- (1,1)--(2,1)--(2,0)--(1,0);
\draw (2,1) -- (1,1) -- (1,2)--(2,2)--(2,1)--(1,1);
\draw(2,2) -- (1,2) -- (1,3)--(2,3)--(2,2)--(1,2);
\draw (3,0) -- (2,0) -- (2,1)--(3,1)--(3,0)--(2,0);
\draw (3,1) -- (2,1) -- (2,2)--(3,2)--(3,1)--(2,1);
\draw (3,2) -- (2,2) -- (2,3)--(3,3)--(3,2)--(2,2);
\node at (0.5,-0.5) {$1$};
\node at (1.5,-0.5) {$2$};
\node at (2.5,-0.5) {$3$};
\node at (3.5,2.5) {$1$};
\node at (3.5,0.5) {$3$};
\node at (3.5,1.5) {$2$};
\draw [->] (0.5,0.05)--(0.5,0.5)--(0.95,0.5);\draw [->] (0.05+1,0.5)--(0.95+1,0.5);\draw [->] (0.05+2,0.5)--(0.95+2,0.5);
\end{tikzpicture}
& 
\pi(z_{2}^{3})=(-q)^{-1}\times\begin{tikzpicture}[thick,scale=0.5,baseline=4ex] 
\draw(1,0) -- (0,0) -- (0,1)--(1,1)--(1,0)--(0,0);
\filldraw[fill=black!20!white, draw=black](1,1) -- (0,1) -- (0,2)--(1,2)--(1,1)--(0,1);
\filldraw[fill=black!50!white, draw=black] (1,2) -- (0,2) -- (0,3)--(1,3)--(1,2)--(0,2);
\draw(2,0) -- (1,0) -- (1,1)--(2,1)--(2,0)--(1,0);
\draw (2,1) -- (1,1) -- (1,2)--(2,2)--(2,1)--(1,1);
\draw(2,2) -- (1,2) -- (1,3)--(2,3)--(2,2)--(1,2);
\draw (3,0) -- (2,0) -- (2,1)--(3,1)--(3,0)--(2,0);
\draw (3,1) -- (2,1) -- (2,2)--(3,2)--(3,1)--(2,1);
\draw (3,2) -- (2,2) -- (2,3)--(3,3)--(3,2)--(2,2);
\node at (0.5,-0.5) {$1$};
\node at (1.5,-0.5) {$2$};
\node at (2.5,-0.5) {$3$};
\node at (3.5,2.5) {$1$};
\node at (3.5,0.5) {$3$};
\node at (3.5,1.5) {$2$};
\draw [->] (0.5+1,0.05)--(0.5+1,0.5)--(0.95+1,0.5);\draw [->] (0.05+2,0.5)--(0.95+2,0.5);
\end{tikzpicture}\\
\pi(z_{3}^{3})=\begin{tikzpicture}[thick,scale=0.5,baseline=4ex] 
\draw(1,0) -- (0,0) -- (0,1)--(1,1)--(1,0)--(0,0);
\filldraw[fill=black!20!white, draw=black](1,1) -- (0,1) -- (0,2)--(1,2)--(1,1)--(0,1);
\filldraw[fill=black!50!white, draw=black] (1,2) -- (0,2) -- (0,3)--(1,3)--(1,2)--(0,2);
\draw(2,0) -- (1,0) -- (1,1)--(2,1)--(2,0)--(1,0);
\draw (2,1) -- (1,1) -- (1,2)--(2,2)--(2,1)--(1,1);
\draw(2,2) -- (1,2) -- (1,3)--(2,3)--(2,2)--(1,2);
\draw (3,0) -- (2,0) -- (2,1)--(3,1)--(3,0)--(2,0);
\draw (3,1) -- (2,1) -- (2,2)--(3,2)--(3,1)--(2,1);
\draw (3,2) -- (2,2) -- (2,3)--(3,3)--(3,2)--(2,2);
\node at (0.5,-0.5) {$1$};
\node at (1.5,-0.5) {$2$};
\node at (2.5,-0.5) {$3$};
\node at (3.5,2.5) {$1$};
\node at (3.5,0.5) {$3$};
\node at (3.5,1.5) {$2$};
\draw [->] (0.5+2,0.05)--(0.5+2,0.5)--(0.95+2,0.5);
\end{tikzpicture}
& 
\pi(z_{3}^{1})=\begin{tikzpicture}[thick,scale=0.5,baseline=4ex] 
\draw(1,0) -- (0,0) -- (0,1)--(1,1)--(1,0)--(0,0);
\filldraw[fill=black!20!white, draw=black](1,1) -- (0,1) -- (0,2)--(1,2)--(1,1)--(0,1);
\filldraw[fill=black!50!white, draw=black] (1,2) -- (0,2) -- (0,3)--(1,3)--(1,2)--(0,2);
\draw(2,0) -- (1,0) -- (1,1)--(2,1)--(2,0)--(1,0);
\draw (2,1) -- (1,1) -- (1,2)--(2,2)--(2,1)--(1,1);
\draw(2,2) -- (1,2) -- (1,3)--(2,3)--(2,2)--(1,2);
\draw (3,0) -- (2,0) -- (2,1)--(3,1)--(3,0)--(2,0);
\draw (3,1) -- (2,1) -- (2,2)--(3,2)--(3,1)--(2,1);
\draw (3,2) -- (2,2) -- (2,3)--(3,3)--(3,2)--(2,2);
\node at (0.5,-0.5) {$1$};
\node at (1.5,-0.5) {$2$};
\node at (2.5,-0.5) {$3$};
\node at (3.5,2.5) {$1$};
\node at (3.5,0.5) {$3$};
\node at (3.5,1.5) {$2$};
\draw [->] (1/2+2,0.1)--(1/2+2,0.9);\draw [->] (1/2+2,0.1+1)--(1/2+2,0.9+1);\draw [->] (0.5+2,0.1+2)--(0.5+2,0.5+2)--(0.9+2,0.5+2);
\end{tikzpicture}
\end{array}
$$
$$
\pi(z_{3}^{2})=\begin{tikzpicture}[thick,scale=0.5,baseline=4ex] 
\draw(1,0) -- (0,0) -- (0,1)--(1,1)--(1,0)--(0,0);
\filldraw[fill=black!20!white, draw=black](1,1) -- (0,1) -- (0,2)--(1,2)--(1,1)--(0,1);
\filldraw[fill=black!50!white, draw=black] (1,2) -- (0,2) -- (0,3)--(1,3)--(1,2)--(0,2);
\draw(2,0) -- (1,0) -- (1,1)--(2,1)--(2,0)--(1,0);
\draw (2,1) -- (1,1) -- (1,2)--(2,2)--(2,1)--(1,1);
\draw(2,2) -- (1,2) -- (1,3)--(2,3)--(2,2)--(1,2);
\draw (3,0) -- (2,0) -- (2,1)--(3,1)--(3,0)--(2,0);
\draw (3,1) -- (2,1) -- (2,2)--(3,2)--(3,1)--(2,1);
\draw (3,2) -- (2,2) -- (2,3)--(3,3)--(3,2)--(2,2);
\node at (0.5,-0.5) {$1$};
\node at (1.5,-0.5) {$2$};
\node at (2.5,-0.5) {$3$};
\node at (3.5,2.5) {$1$};
\node at (3.5,0.5) {$3$};
\node at (3.5,1.5) {$2$};
\draw [->] (0.5+2,0.1+1)--(0.5+2,0.5+1)--(0.9+2,0.5+1);\draw [->] (1/2+2,0.1)--(1/2+2,0.9);
\end{tikzpicture}.
$$
Using the fact that the kernel of the adjoint of $T_{22}=C_{q}S$ (recall that $T_{22}$ corresponds to the $\begin{tikzpicture}[thick,scale=0.4,baseline=0.4ex]\draw (1,0) -- (0,0) -- (0,1)--(1,1)--(1,0)--(0,0);
\draw [->] (0.5,0.1)--(0.5,0.5)--(0.9,0.5); \end{tikzpicture}$ arrow) is $\mathbb{C}\langle e_{0}\rangle$ and that the kernels of the operators corresponding to the arrows $\begin{tikzpicture}[thick,scale=0.4,baseline=0.4ex]\draw (1,0) -- (0,0) -- (0,1)--(1,1)--(1,0)--(0,0);
\draw [->] (0.1,0.5)--(0.9,0.5); \end{tikzpicture}$ and $\begin{tikzpicture}[thick,scale=0.4,baseline=0.4ex]\draw (1,0) -- (0,0) -- (0,1)--(1,1)--(1,0)--(0,0);
\draw [->] (1/2,0.1)--(1/2,0.9); \end{tikzpicture}$ are zero, we can easily see that $$H=\spann\{e_{0}\otimes e_{0}\otimes e_{k}\otimes e_{j}\otimes e_{0}\otimes e_{0}\otimes e_{0};(k,j)\in \mathbb{Z}_{+}^{2}\}\cong
$$
$$
\mathbb{C}\langle e_{0}\rangle\otimes \mathbb{C}\langle e_{0}\rangle\otimes \ell^{2}(\mathbb{Z}_{+})\otimes \ell^{2}(\mathbb{Z}_{+})\otimes \mathbb{C}\langle e_{0}\rangle\otimes \mathbb{C}\langle e_{0}\rangle\otimes \mathbb{C}\langle e_{0}\rangle.
$$
By looking at paths, it is not hard to see that $H$ is invariant with respect to $\pi(z_{j}^{k})$ for $k,j\neq 3$ (it can also be seen directly from the relations defining $\mathrm{Pol}(\mathrm{Mat}_{3})_{q}$). Consider now the routes representing the terms in $\pi(z_{1}^{1}).$ They will either start and end at the arrows of the diagram 
$$
\begin{tikzpicture}[thick,scale=0.5]
\draw(1,0) -- (0,0) -- (0,1)--(1,1)--(1,0)--(0,0);
\filldraw[fill=black!20!white, draw=black](1,1) -- (0,1) -- (0,2)--(1,2)--(1,1)--(0,1);
\filldraw[fill=black!50!white, draw=black] (1,2) -- (0,2) -- (0,3)--(1,3)--(1,2)--(0,2);
\draw(2,0) -- (1,0) -- (1,1)--(2,1)--(2,0)--(1,0);
\draw (2,1) -- (1,1) -- (1,2)--(2,2)--(2,1)--(1,1);
\draw(2,2) -- (1,2) -- (1,3)--(2,3)--(2,2)--(1,2);
\draw (3,0) -- (2,0) -- (2,1)--(3,1)--(3,0)--(2,0);
\draw (3,1) -- (2,1) -- (2,2)--(3,2)--(3,1)--(2,1);
\draw (3,2) -- (2,2) -- (2,3)--(3,3)--(3,2)--(2,2);
\node at (0.5,-0.5) {$1$};
\node at (1.5,-0.5) {$2$};
\node at (2.5,-0.5) {$3$};
\node at (3.5,2.5) {$1$};
\node at (3.5,0.5) {$3$};
\node at (3.5,1.5) {$2$};
\draw [->] (0.05+2,0.5+2)--(0.95+2,0.5+2);\draw [->] (1/2,0.1)--(1/2,0.9);
\end{tikzpicture}
$$
or they will have to contain a right-up arrow $\begin{tikzpicture}[thick,scale=0.4,baseline=0.4 ex]
\draw (1,0) -- (0,0) -- (0,1)--(1,1)--(1,0)--(0,0);
\draw [->] (0.1,0.5)--(0.5,0.5)--(0.5,0.9);
\end{tikzpicture}$ in the last row or the last column. The operators corresponding to the latter routes will vanish when restricted to the subspace $H$ (as $\begin{tikzpicture}[thick,scale=0.4,baseline=0.4 ex]
\draw (1,0) -- (0,0) -- (0,1)--(1,1)--(1,0)--(0,0);
\draw [->] (0.1,0.5)--(0.5,0.5)--(0.5,0.9);
\end{tikzpicture}$ corresponds to $S^{*}C_{q}$ and $S^{*}C_{q}e_{0}=0$). Moreover, the operator corresponding to $$\begin{tikzpicture}[thick,scale=0.5]
\draw(1,0) -- (0,0) -- (0,1)--(1,1)--(1,0)--(0,0);
\filldraw[fill=black!20!white, draw=black](1,1) -- (0,1) -- (0,2)--(1,2)--(1,1)--(0,1);
\filldraw[fill=black!50!white, draw=black] (1,2) -- (0,2) -- (0,3)--(1,3)--(1,2)--(0,2);
\draw(2,0) -- (1,0) -- (1,1)--(2,1)--(2,0)--(1,0);
\draw (2,1) -- (1,1) -- (1,2)--(2,2)--(2,1)--(1,1);
\draw(2,2) -- (1,2) -- (1,3)--(2,3)--(2,2)--(1,2);
\draw (3,0) -- (2,0) -- (2,1)--(3,1)--(3,0)--(2,0);
\draw (3,1) -- (2,1) -- (2,2)--(3,2)--(3,1)--(2,1);
\draw (3,2) -- (2,2) -- (2,3)--(3,3)--(3,2)--(2,2);
\node at (0.5,-0.5) {$1$};
\node at (1.5,-0.5) {$2$};
\node at (2.5,-0.5) {$3$};
\node at (3.5,2.5) {$1$};
\node at (3.5,0.5) {$3$};
\node at (3.5,1.5) {$2$};
\draw [->] (0.05+2,0.5+2)--(0.95+2,0.5+2);\draw [->] (1/2,0.1)--(1/2,0.9);
\end{tikzpicture}$$ becomes a multiple (in fact $-q$) of the identity when restricted to $H.$ Therefore, by letting $\pi$ to be the $*$-representation of $\mathrm{Pol}(\mathrm{Mat}_{n})_{q}$ corresponding to the upper left $2\times 2$ sub-grid, we obtain $\pi(z_{1}^{1})|_{H}=\pi'(z_{1}^{1}).$
A similar argument shows that
$$
\pi(z_{j}^{k})|_{H}=\pi'(z_{j}^{k})\text{ for all $1\leq j,k \leq 2$}. 
$$
Hence, by induction, we can lift $\pi'$ to a $*$-representation $\Pi'$ of $\mathbb{C}[SU_{4}]_{q}.$ Consider now the $*$-homomorphism
$
\Phi:\mathbb{C}[SU_{6}]_{q}\to \mathbb{C}[SU_{4}]_{q}
$
given by
$$
\Phi(t_{ij})=\begin{cases}t_{ij} & \text{ if $1\leq i,j\leq 4$}\\ \delta_{ij}I & \text{ otherwise}.  \end{cases}
$$
One can show that $\Pi'\circ \Phi\circ \zeta $ is a $*$-representation of $\mathrm{Pol}(\mathrm{Mat}_{3})_{q}$ with the corresponding  grid as follows
$$
\begin{tikzpicture}[thick,scale=0.5]
\filldraw[fill=black!50!white, draw=black](1,0) -- (0,0) -- (0,1)--(1,1)--(1,0)--(0,0);
\filldraw[fill=black!20!white, draw=black](1,1) -- (0,1) -- (0,2)--(1,2)--(1,1)--(0,1);
\filldraw[fill=black!50!white, draw=black] (1,2) -- (0,2) -- (0,3)--(1,3)--(1,2)--(0,2);
\filldraw[fill=black!50!white, draw=black](2,0) -- (1,0) -- (1,1)--(2,1)--(2,0)--(1,0);
\draw (2,1) -- (1,1) -- (1,2)--(2,2)--(2,1)--(1,1);
\draw(2,2) -- (1,2) -- (1,3)--(2,3)--(2,2)--(1,2);
\filldraw[fill=black!50!white, draw=black] (3,0) -- (2,0) -- (2,1)--(3,1)--(3,0)--(2,0);
\filldraw[fill=black!50!white, draw=black](3,1) -- (2,1) -- (2,2)--(3,2)--(3,1)--(2,1);
\filldraw[fill=black!50!white, draw=black] (3,2) -- (2,2) -- (2,3)--(3,3)--(3,2)--(2,2);
\node at (0.5,-0.5) {$1$};
\node at (1.5,-0.5) {$2$};
\node at (2.5,-0.5) {$3$};
\node at (3.5,2.5) {$1$};
\node at (3.5,0.5) {$3$};
\node at (3.5,1.5) {$2$};
\end{tikzpicture}
$$
The original grid can be obtained by tensoring the $*$-representation $\Pi'\circ \Phi$ (of $\mathbb{C}[SU_{6}]_{q}$) with the $*$-representations 
$\lambda_{a}=\pi_{3}\otimes \pi_{4}\otimes \pi_{5}$ and $\lambda_{b}=\pi_{3}\otimes\pi_{4}$ so that 
$$
(\lambda_{a}\otimes(\Pi'\circ \Phi)\otimes \lambda_{b} )\circ \zeta\cong \pi.
$$
\subsection{Auxiliary Lemmas}
Let $\textbf{Z},$ as above, be the set of generators $\{z_{k}^{j}\}_{1\leq k,j\leq n}$ and $\textbf{Z}^{*}$ the set $\{(z_{k}^{j})^{*}\}_{1\leq k,j\leq n}.$ For $1\leq l,m\leq n,$ we let $\textbf{Z}_{l}^{m}=\{z_{k}^{j}\in \textbf{Z}|k\neq l,j\neq m\},$ and similar we define $\textbf{Z}_{l}^{m*}=\{(z_{k}^{j})^{*}\in \textbf{Z}^{*}|k\neq l,j\neq m\}.$
\\

\begin{lem}\label{linj}
Let $\textbf{I}=\{z_{k}^{n},z_{k+1}^{n},\dots,z_{n}^{n},z_{n}^{n-1},\dots, z_{n}^{m}\}\subseteq \textbf{Z}$ for $1\leq k,m\leq n,$ and let $\mathcal{C}(\textbf{I})\subseteq\mathrm{Pol}(\mathrm{Mat}_{n})_{q}$ be the unital $*$-sub-algebra generated by $\bf{I}.$ Then there exists a unique linear functional $\gamma_{\textbf{I}}$ on $\mathcal{C}(\textbf{I})$ with the property that for any $*$-representation $\pi:\mathrm{Pol}(\mathrm{Mat}_{n})_{q}\rightarrow B(K),$ with a subspace $H\subseteq K$ such that $\ker \pi(z_{j}^{l})^{*}\cap H=H$ for all $z_{j}^{l}\in \textbf{I},$ we have 
$$
\langle \pi(a)  u,w\rangle=\langle u,w\rangle\gamma_{\textbf{I}}(a)
$$
for all $u,w\in H.$
\end{lem}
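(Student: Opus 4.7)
The plan is to exhibit $\gamma_{\textbf{I}}$ via a Poincar\'e--Birkhoff--Witt-type normal form internal to $\mathcal{C}(\textbf{I})$. The isomorphism \eqref{isotensor} tells us that every element of $\mathrm{Pol}(\mathrm{Mat}_n)_q$ has a unique expansion as a linear combination of monomials $z(A)\cdot z(B)^{*}$. The first key step is to show that when an element of $\mathcal{C}(\textbf{I})$ is written in this normal form, only monomials built out of generators $z_j^l \in \textbf{I}$ and their stars appear. Equivalently, one must check that the commutation relations of $\mathrm{Pol}(\mathrm{Mat}_n)_q$ stay internal to $\textbf{I}$ when applied to pairs of generators from $\textbf{I}$. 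The ``$z$-$z$'' relations \eqref{zaa1}--\eqref{zaa3} restrict nicely: since $\textbf{I}$ is L-shaped (last column from row $k$, plus last row from column $m$), no two $z_a^\alpha, z_b^\beta \in \textbf{I}$ can simultaneously satisfy $a<b$ and $\alpha<\beta$, which eliminates the mixing term in \eqref{zaa3}, and similarly for the starred relations. For \eqref{zaa4}, a case analysis on whether $z_b^\beta, z_a^\alpha$ lie in the last row or last column shows that the correction sums in \eqref{zaa42}--\eqref{zaa44} are either empty (their summation range $j>a$ or $j>\alpha$ exceeds $n$) or run over indices that remain in the last row/column portion of $\textbf{I}$.

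Once this closure is established, every $a\in\mathcal{C}(\textbf{I})$ admits a unique expansion $a=\sum_{X,Y} c_{X,Y}(a)\cdot X\,Y^{*}$ where $X$ and $Y$ range over monomials in the generators $\textbf{I}$ (in some fixed order). I set $\gamma_{\textbf{I}}(a):=c_{1,1}(a)$, the coefficient of the empty monomial; linearity of $\gamma_{\textbf{I}}$ is clear from linearity of coefficient extraction. Now suppose $\pi:\mathrm{Pol}(\mathrm{Mat}_n)_q\to B(K)$ and $H\subseteq K$ satisfy the hypothesis of the lemma, and take $u,w\in H$. For any summand $XY^{*}$ we compute
$$\langle \pi(XY^{*})u,w\rangle = \langle \pi(Y^{*})u,\pi(X^{*})w\rangle.$$
If $Y\neq 1$, the rightmost factor of $\pi(Y^{*})$ is $\pi((z_j^l)^{*})$ for some $z_j^l\in\textbf{I}$ and annihilates $u\in H$ by hypothesis; if $X\neq 1$, the same reasoning kills $\pi(X^{*})w$. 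Thus only the $(X,Y)=(1,1)$ term survives, and $\langle\pi(a)u,w\rangle=c_{1,1}(a)\langle u,w\rangle=\gamma_{\textbf{I}}(a)\langle u,w\rangle$, as required.

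Uniqueness of $\gamma_{\textbf{I}}$ is immediate once existence is established: the Fock representation $\pi_{F,n}$ with vacuum vector $v_0$ satisfies $\pi_{F,n}(z_j^l)^{*}v_0=0$ for \emph{all} $1\le j,l\le n$, in particular for those in $\textbf{I}$, so taking $H=\mathbb{C}\langle v_0\rangle$ and $u=w=v_0$ forces $\gamma_{\textbf{I}}(a)=\langle\pi_{F,n}(a)v_0,v_0\rangle/\lVert v_0\rVert^{2}$, a scalar depending only on $a$. The principal obstacle is the bookkeeping in the closure step for \eqref{zaa4}: one must verify, across the four subcases \eqref{zaa41}--\eqref{zaa44} and every way the two generators can sit inside $\textbf{I}$ (both in the last column, both in the last row, or one in each), that the correction sums remain internal. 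The L-shape of $\textbf{I}$ is exactly what makes this work, because in every relevant case it forces $a=n$ or $\alpha=n$, truncating or collapsing the correction sums to indices that stay in the last row or column of $\textbf{I}$.
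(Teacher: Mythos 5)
Your proof is correct, but it follows a genuinely different route from the paper's. The paper \emph{defines} $\gamma_{\textbf{I}}(a):=\langle\pi_{F,n}(a)v_0,v_0\rangle$ via the Fock vacuum expectation up front, observes (without detail) that $\mathcal{C}(\textbf{I})$ is spanned by elements $cb^{*}$ with $b,c$ monomials in $\textbf{I}$, and then works in the quotient $V$ of $\mathcal{C}(\textbf{I})$ by $\spann\{cb^{*}:\text{not both }I\}$: this $V$ has dimension exactly one because $\gamma_{\textbf{I}}(I)=1$, and each $\gamma_{uw}:a\mapsto\langle\pi(a)u,w\rangle$ factors through $V$, hence is the scalar multiple $\langle u,w\rangle\gamma_{\textbf{I}}$. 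You instead construct $\gamma_{\textbf{I}}$ combinatorially, as the coefficient of the empty word in the PBW expansion from \eqref{isotensor}, and only bring in the Fock representation at the very end for uniqueness. Both routes rely on the same closure fact, but they use different amounts of it: the paper needs only that the $cb^{*}$ \emph{span} $\mathcal{C}(\textbf{I})$ (plus $\gamma_{\textbf{I}}\neq 0$), whereas you also need \emph{linear independence} (supplied by \eqref{isotensor}) to make the coefficient functional well-defined. The compensating gain in your version is that the closure step, which the paper dismisses as ``easy to see,'' is made explicit: your observation that the L-shape of $\textbf{I}$ rules out $a<b$ and $\alpha<\beta$ holding simultaneously (killing the cross-term in \eqref{zaa3} and its starred twin) and forces $a=n$ or $\alpha=n$ in \eqref{zaa42}--\eqref{zaa44} (truncating or internalising the correction sums) is exactly the structural reason the lemma holds for these particular generating sets. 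Two small points worth tightening: the ``fixed order'' on the monomials $X,Y$ should be taken compatibly with the ordering built into \eqref{fock}, so the $XY^{*}$ really are a subfamily of the PBW basis; and the closure verification implicitly runs a diamond-lemma induction (each rewrite of a word in $\textbf{I}$-letters produces only $\textbf{I}$-letters), which you gesture at but do not spell out.
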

\begin{proof}
We let
$$\gamma_{\textbf{I}}(a):=\langle \pi_{F,n}(a)v_{0},v_{0}\rangle$$
where $\pi_{F,n}$ is the Fock representation of $\mathrm{Pol}(\mathrm{Mat}_{n})_{q}$ and $v_{0}$ a unit vacuum vector.
From the relations in $\mathrm{Pol}(\mathrm{Mat}_{n})_{q}$, it is easy to see that the sub-algebra $\mathcal{C}(\textbf{I})$ is generated as a vector space by elements of the form $c b^{*}$ where $b,c$ are either monomials of the generators in $\textbf{I},$ or the identity $I.$ It follows from the properties of the Fock representation that for monomials $c,b$ we have
$$\begin{array}{ccc}\gamma_{\textbf{I}}(cb^{*})=\langle \pi_{F,n}(cb^{*})v_{0},v_{0}\rangle=\langle \pi_{F,n}(b)^{*}v_{0},\pi_{F,n}(c)^{*}v_{0}\rangle=0 & \text{if either $b$ or $c\neq I$}\end{array}$$
and also that $\gamma_{\textbf{I}}(I)=\langle v_{0},v_{0}\rangle=1.$
Thus, the linear functional $\gamma_{\textbf{I}}$ projects onto the vector space $V$ that is the quotient of $\mathcal{C}(\textbf{I})$ by the subspace generated by elements $ab^{*},$ where $a,b$ are monomials in $\textbf{I}$, not both equal to $I.$ The vector space $V$ is of dimension at most $1,$ and since $\gamma_{\textbf{I}}\neq 0,$ we see that $\dim V=1.$
\\

For any $u,w\in H,$ consider the linear functional $\gamma_{uw}$ defined as $a\mapsto\langle  \pi(a) u,w\rangle.$ From the properties of $H,$ we get for any two monomials $b,c\in \mathcal{C}(\textbf{I}),$ not both multiples of $I,$ that
$$
\gamma_{uw}(cb^{*})=\langle \pi(cb^{*})u,w\rangle=\langle \pi(b)^{*}u,\pi(c)^{*} w\rangle=0.
$$
So $\gamma_{uw}$ factors through the subspace $V$ and thus it must be a multiple of $\gamma_{\textbf{I}}.$ Evaluating $\gamma_{uw}(I)=\langle u,v\rangle,$ we get that $\gamma_{uw}=\langle u,w\rangle \gamma_{\textbf{I}}.$
\end{proof}
\begin{lem}\label{linj2}
If we let $\mathcal{A}(\textbf{I})$ be the unital algebra generated by the elements in $\textbf{I},$ then viewing $\mathcal{A}(\textbf{I})$ as a vector space over $\mathbb{C},$ the functional $\gamma_{\textbf{I}}$ gives a non-degenerate inner product $\langle\cdot,\cdot\rangle_{\textbf{I}}$ on $\mathcal{A}(\textbf{I})$
by $\langle a,b\rangle_{\textbf{I}}=\gamma_{\textbf{I}}(b^{*}a)$ for $a,b\in \mathcal{A}(\textbf{I}).$ Moreover, the monomials in $\mathcal{A}(\textbf{I})$ form an orthogonal basis of $\mathcal{A}(\textbf{I})$ with respect to the inner product $\langle\cdot,\cdot\rangle_{\textbf{I}}.$
\end{lem}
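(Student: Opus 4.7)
The plan is to exploit the fact that $\gamma_{\textbf{I}}$ is essentially the vacuum expectation value in the Fock representation, reducing both non-degeneracy and orthogonality of monomials to the orthogonality of the Fock basis $\{z(A)v_0\}$ already established in Proposition~\ref{fockbase}. Sesquilinearity follows from linearity of $\gamma_{\textbf{I}}$ together with $(\lambda b)^{*}=\bar{\lambda}b^{*}$, and Hermitian symmetry from the general identity $\overline{\gamma_{\textbf{I}}(c)}=\gamma_{\textbf{I}}(c^{*})$, which holds because $\gamma_{\textbf{I}}(c)=\langle\pi_{F,n}(c)v_0,v_0\rangle$. Positive semi-definiteness is transparent from
\[
\langle a,a\rangle_{\textbf{I}} \;=\; \gamma_{\textbf{I}}(a^{*}a) \;=\; \lVert\pi_{F,n}(a)v_0\rVert^{2} \;\geq\; 0,
\]
so the whole claim will reduce to showing $\pi_{F,n}(a)v_0\neq 0$ whenever $0\neq a\in\mathcal{A}(\textbf{I})$.

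Next I would produce a basis of $\mathcal{A}(\textbf{I})$ aligned with the $z(A)$-ordering of \eqref{fock}. The key point is that any two generators in $\textbf{I}=\{z_k^n,\ldots,z_n^n,z_n^{n-1},\ldots,z_n^m\}$ satisfy a simple $q$-commutation relation: by \eqref{zaa1} any two elements sharing a row or column of the generator matrix $q$-commute, while by \eqref{zaa2} a generator $z_n^j$ with $j<n$ commutes with a generator $z_i^n$ with $i<n$. The three-term relation \eqref{zaa3} never applies within $\textbf{I}$, since it would require both $a<b$ and $\alpha<\beta$ and no pair of elements of $\textbf{I}$ realises that combination. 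Consequently every product of generators in $\textbf{I}$ equals a non-zero scalar times the canonical monomial
\[
p_A \;:=\; (z_n^n)^{a_{n,n}}(z_n^{n-1})^{a_{n,n-1}}\cdots(z_n^m)^{a_{n,m}}(z_{n-1}^n)^{a_{n-1,n}}\cdots(z_k^n)^{a_{k,n}},
\]
indexed by matrices $A\in M_{n}(\mathbb{Z}_{+})$ whose support is contained in the positions corresponding to $\textbf{I}$. Each $p_A$ coincides with the Bergman monomial $z(A)$ of \eqref{fock} (all other entries of $A$ being zero), so linear independence of $\{p_A\}$ is inherited from the Bergman basis and the $p_A$'s form a vector space basis of $\mathcal{A}(\textbf{I})$.

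With this basis in hand, the remaining claims are immediate. For $A\neq B$,
\[
\langle p_A,p_B\rangle_{\textbf{I}} \;=\; \gamma_{\textbf{I}}(p_B^{*}p_A) \;=\; \langle z(A)v_0,z(B)v_0\rangle \;=\; 0
\]
by Proposition~\ref{fockbase}, and $\langle p_A,p_A\rangle_{\textbf{I}}=\lVert z(A)v_0\rVert^{2}>0$ since $z(A)v_0\neq 0$. For an arbitrary non-zero $a=\sum_{A}c_{A}p_{A}\in\mathcal{A}(\textbf{I})$ the orthogonal expansion yields
\[
\langle a,a\rangle_{\textbf{I}} \;=\; \sum_{A}|c_{A}|^{2}\lVert z(A)v_0\rVert^{2} \;>\; 0,
\]
which gives non-degeneracy and simultaneously establishes that the $p_A$ are an orthogonal basis. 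The only step requiring real attention is the case analysis verifying that \eqref{zaa3} cannot arise among elements of $\textbf{I}$; once that is done, the rest of the argument is a direct appeal to Proposition~\ref{fockbase}.
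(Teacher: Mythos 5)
Your proof is correct and follows essentially the same route as the paper's: reduce elements of $\mathcal{A}(\textbf{I})$ to scalar multiples of canonical Bergman monomials $z(A)$ using the fact that generators in $\textbf{I}$ only commute or $q$-commute, then invoke Proposition~\ref{fockbase} for orthogonality and strict positivity of norms. You spell out two steps the paper leaves implicit (the verification that relation~\eqref{zaa3} cannot occur among elements of $\textbf{I}$, and the sesquilinearity/Hermiticity of the form), but the substance is identical.
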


\begin{proof}
The monomials obviously span $\mathcal{A}(\textbf{I})$ as a vector space, so we only need to check that they form an orthogonal basis.
Notice that by~\eqref{zaa1} and~\eqref{zaa2}, the elements in $\textbf{I}$ commute or $q$-commute. Thus, every monomial $a$ can be written in the form 
\begin{equation}\label{aww}
a=\beta (z_{k}^{n})^{\alpha_{k}}(z_{k+1}^{n})^{\alpha_{k+1}}\cdots(z_{n-1}^{n})^{\alpha_{n-1}}(z_{n}^{m})^{\beta_{m}}(z_{n}^{m+1})^{\beta_{m+1}}\cdots (z_{n}^{n})^{\beta_{n}}
\end{equation}
for some $\beta\in \mathbb{C},\alpha_{i},\beta_{i}\in \mathbb{Z}_{+}.$ Using the terminology from Corollary~\ref{maincor}, we have $a=\beta z(A),$ where $A=(a_{ij})_{ij}\in M_{n}(\mathbb{Z}_{+})$ with $a_{in}=\alpha_{i}$ if $k\leq i\leq n,$ $a_{nj}=\beta_{j}$ if $m\leq j\leq n$ and $a_{ij}=0$ otherwise. Under the Fock representation, we have $\pi_{F,n}(z(A)) v_{0}\bot \pi_{F,n}(z(B)) v_{0}$ for any $B\in M_{n}(\mathbb{Z}_{+})$ not equal to $A,$ it follows that $\pi_{F,n}(z(A))v_{0}\bot \pi_{F,n}(b) v_{0}$ for any monomial $b\in \mathcal{A}(\textbf{I})$ that is not a multiple of $a.$ So if $a\neq b$ are monomials in $\mathcal{A}(\textbf{I}),$ then we have
$$
\langle a,b\rangle_{I}=\gamma_{\textbf{I}}(b^{*}a)=\langle \pi_{F,n}(b^{*}a) v_{0},v_{0}\rangle =\langle \pi_{F,n}(a) v_{0},\pi_{F,n}(b) v_{0}\rangle=0
$$
and as $\pi_{F,n}(z(B)) v_{0}\neq 0$ for any $B\in M_{n}(\mathbb{Z}_{+}),$ we also get $\langle a,a\rangle_{I}=||\pi_{F,n}(a) v_{0}||^{2}>0.$
\end{proof}
\begin{lem}\label{span1}
Let $|\textbf{I}|$ be the cardinality of $\textbf{I}.$ For any multi-index $$\textbf{m}=(m_{1},m_{2},\dots, m_{|\textbf{I}|})\in \mathbb{Z}_{+}^{|\textbf{I}|},$$ let 
\begin{equation}\label{orderbasis}
z(\textbf{m}):=(z_{k}^{n})^{m_{1}}(z_{k+1}^{n})^{m_{2}}\cdots(z_{n-1}^{n})^{m_{n-k}}(z_{n}^{m})^{m_{n-k+1}}(z_{n}^{m+1})^{m_{n-k+2}}\cdots (z_{n}^{n})^{m_{|\textbf{I}|}}.
\end{equation}
Then the elements of the form $z(\textbf{m}) b,$ where $b$ is a monomial in $\textbf{Z}\backslash \textbf{I},$ form a vector space basis for $\mathbb{C}[\mathrm{Mat}_{n}]_{q}.$
\end{lem}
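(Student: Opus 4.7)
The plan is to reduce the statement to the PBW basis $\{z(A)\mid A\in M_{n}(\mathbb{Z}_{+})\}$ of $\mathbb{C}[\mathrm{Mat}_{n}]_{q}$, which was established via Bergman's diamond lemma just after~\eqref{fock}, by exhibiting a triangular change of basis from $\{z(\textbf{m})b\}$ to $\{z(A)\}$. The key structural observation is that relation~\eqref{zaa3}---the only relation among~\eqref{zaa1}--\eqref{zaa3} that does not preserve the exponent matrix of a monomial---never applies to a pair $z_{a}^{\alpha}z_{b}^{\beta}$ in which the left factor $z_{a}^{\alpha}$ lies in $\textbf{I}$, because $z_{a}^{\alpha}\in\textbf{I}$ forces $a=n$ or $\alpha=n$, contradicting the required $a<b$ and $\alpha<\beta$.

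First I would establish spanning by induction on word length, showing that every word in the generators evaluates to an element of $\mathcal{A}(\textbf{I})\cdot\mathcal{A}(\textbf{Z}\setminus\textbf{I})$. The inductive step reduces to showing, for a single generator $z_{0}\in\textbf{Z}\setminus\textbf{I}$ and a single $z_{i}\in\textbf{I}$, that $z_{0}z_{i}\in\mathcal{A}(\textbf{I})\cdot\mathcal{A}(\textbf{Z}\setminus\textbf{I})$. Relations~\eqref{zaa1} and~\eqref{zaa2} give a clean $q$-scalar swap. When~\eqref{zaa3} applies (so $z_{0}=z_{a}^{\alpha}$, $z_{i}=z_{b}^{\beta}$ with $a<b$, $\alpha<\beta$ and $z_{b}^{\beta}\in\textbf{I}$), the correction $(q-q^{-1})z_{a}^{\beta}z_{b}^{\alpha}$ demands a short case analysis: splitting into $z_{i}=z_{b}^{n}$ with $b\geq k$ or $z_{i}=z_{n}^{\beta}$ with $\beta\geq m$ and examining where the two new factors $z_{a}^{\beta}$ and $z_{b}^{\alpha}$ lie, one checks that in every case the correction is either already in $\mathcal{A}(\textbf{I})\cdot\mathcal{A}(\textbf{Z}\setminus\textbf{I})$, entirely in $\mathcal{A}(\textbf{Z}\setminus\textbf{I})$, or in $\mathcal{A}(\textbf{Z}\setminus\textbf{I})\cdot\mathcal{A}(\textbf{I})$ with the two factors commuting by~\eqref{zaa2} alone. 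Combined with Lemma~\ref{linj2}, which gives $\{z(\textbf{m})\}$ as a basis of $\mathcal{A}(\textbf{I})$, and with the analogous fact that $\mathcal{A}(\textbf{Z}\setminus\textbf{I})$ is closed under the restriction of~\eqref{zaa1}--\eqref{zaa3} (hence admits an ordered-monomial basis $\{b\}$ via the diamond lemma), this shows the collection $\{z(\textbf{m})b\}$ spans $\mathbb{C}[\mathrm{Mat}_{n}]_{q}$.

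For linear independence I would exploit the PBW basis. Each pair $(\textbf{m},b)$ corresponds to a unique exponent matrix $A(\textbf{m},b)\in M_{n}(\mathbb{Z}_{+})$, obtained by placing $m_{i}$ at the $i$-th $\textbf{I}$-position in the order of~\eqref{orderbasis} and the exponents of $b$ at the positions in $\textbf{Z}\setminus\textbf{I}$; this assignment is a bijection onto $M_{n}(\mathbb{Z}_{+})$. Equip $M_{n}(\mathbb{Z}_{+})$ with the lexicographic order obtained by reading entries in the sequence $(n,n),(n,n-1),\dots,(1,1)$ used to define $z(A)$. Rewriting $z(\textbf{m})b$ into standard PBW form uses only~\eqref{zaa1},~\eqref{zaa2}, and applications of~\eqref{zaa3} internal to $\mathcal{A}(\textbf{Z}\setminus\textbf{I})$ (the key observation rules out any use of~\eqref{zaa3} involving an $\textbf{I}$-factor); the first two preserve the exponent matrix exactly, while each use of~\eqref{zaa3} shifts the exponent pattern from $\{(a,\alpha),(b,\beta)\}$ to $\{(a,\beta),(b,\alpha)\}$, which is strictly smaller in this lex order since the position $(b,\beta)$---the first of the four among the reading---drops from $1$ to $0$. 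Hence
\[
z(\textbf{m})b = q^{N(\textbf{m},b)}\,z(A(\textbf{m},b))+\sum_{A'<A(\textbf{m},b)}c_{A'}\,z(A'),
\]
so the transition matrix between $\{z(\textbf{m})b\}$ and $\{z(A)\}$ is upper-triangular with nonzero diagonal, and $\{z(\textbf{m})b\}$ is a basis.

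The main technical obstacle is the case analysis for the correction produced by~\eqref{zaa3} in the spanning step: the cases depending on which of $a,\alpha,b,\beta$ equal $n$ and on how the thresholds $k$ and $m$ interact must each be checked individually, relying throughout on the fact that $\textbf{I}$ is the disjoint union of a terminal segment of column $n$ (rows $\geq k$) and a terminal segment of row $n$ (columns $\geq m$). Once that bookkeeping is in place, the remainder of the argument is a routine appeal to the PBW basis and the triangularity displayed above.
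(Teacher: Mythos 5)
Your proposal is correct, but it takes a genuinely different and longer route than the paper. The paper's proof first reduces to the extremal case $k=m=1$ (full last row and column) and then observes that relation~\eqref{zaa2} already lets one slide every column-$n$ generator $z_j^n$ leftward past all generators $z_a^\alpha$ with $a>j$, $\alpha<n$ in a PBW word, so that $z(A)$ factors \emph{literally} as $z(A')z(A'')z(A''')$ with $A=A'+A''+A'''$ supported on column $n$, on row $n$ minus the corner, and on the inner $(n-1)\times(n-1)$ square respectively. Since this only involves degree-preserving swaps, the claimed basis is just a relabeling and nonzero-scalar rescaling of the PBW basis, and the whole argument never produces a correction term from~\eqref{zaa3} at all. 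In contrast, you construct a triangular change of basis: you show $z(\textbf{m})b$ has leading term $q^N z(A(\textbf{m},b))$ and that every~\eqref{zaa3} correction strictly drops the exponent matrix in the chosen lex order. This is a leading-term argument rather than a direct factorization. It is more machinery, but it also handles general $(k,m)$ in one stroke and thus avoids the paper's informal reduction step.

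Two smaller comments. First, your separate spanning argument (induction on word length, pushing an $\textbf{I}$-generator left past a $\textbf{Z}\setminus\textbf{I}$-generator) is redundant: the triangularity with nonzero diagonal, together with the bijection $(\textbf{m},b)\leftrightarrow A(\textbf{m},b)$, already gives both spanning and linear independence, since the lex order refines the grading by total degree and the transition matrix is finite and invertible degree by degree. Second, your phrasing that rewriting $z(\textbf{m})b$ to PBW form uses~\eqref{zaa3} only ``internal to $\mathcal{A}(\textbf{Z}\setminus\textbf{I})$'' is slightly stronger than the stated key observation, which only excludes~\eqref{zaa3} firing with an $\textbf{I}$-factor on the \emph{left}; with an $\textbf{I}$-factor on the right, $a<b$ and $\alpha<\beta$ are perfectly possible. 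One can arrange a merge-sort rewriting sequence in which this never occurs, but it deserves a word. Fortunately none of this affects the triangularity itself, since the lex-drop produced by a correction holds regardless of which factors are in $\textbf{I}$, so your conclusion stands.
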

\begin{proof}
As the generators $z_{i}^{n},z_{n}^{j}$ either commute or $q$-commute, it is enough to prove the statement for 
$$
\textbf{I}=\{z^{n}_{1},\dots,z_{n}^{n},z_{n}^{n-1},\dots, z_{n}^{1}\}.
$$
\\

Firstly, using the ordering of the generators $z_{k}^{j},$ we can write any $z(A)$ with $A\in M_{n}(\mathbb{Z}_{+})$ as $z(A')z(A''),$ where $A'$ has only non-zero values in the last column, $A''$ is equal to zero in the last column and $A=A'+A''.$ Notice that for any $1\leq k\leq n-1,$ we can use~\eqref{zaa1}-\eqref{zaa3} to see that in the ordering of the generators, we have that $z_{k}^{n}$ commutes with all the generators proceeding it that is not in the form $z_{m}^{n}.$ Hence we can write every $z(B)$ as $z(B')z(B'')z(B'''),$ with $B'$ only nonzero in the last column, $B''$ only nonzero in the last row $,B'''$ equal to zero in the last row and column and $B=B'+B''+B'''.$ We can then write $z(B')z(B'')=z(\textbf{m})$ for some $\textbf{m}\in \mathbb{Z}_{+}^{2n-1}.$
\end{proof}
\begin{lem}\label{pol}
Given a $*$-representation $\pi$ of $\mathrm{Pol}(\mathrm{Mat}_{n})_{q} $
the operators $$\pi(z_{1}^{n}),\dots, \pi(z_{n}^{n}),\pi(z_{n}^{n-1}),\dots,\pi(z_{n}^{1})$$ are all contractions. 
\end{lem}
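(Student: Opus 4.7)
The plan is to exploit the explicit form of relation~\eqref{zaa44} for each of the indicated generators. The key observation is that when $(a,\alpha)$ has either $a=n$ or $\alpha=n$, most of the summations in~\eqref{zaa44} are over empty index ranges, which drastically simplifies the identity into a one-sided operator inequality.

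First I would fix $1\leq k\leq n$ and apply~\eqref{zaa44} with $(a,\alpha)=(k,n)$. The first sum $\sum_{j=n+1}^{n}$ is empty, and the same holds for the double sum $\sum_{j=n+1,m=k+1}^{n}$, so we are left with the identity
$$
(z_k^n)^{*}z_k^n \;=\; q^{2}\,z_k^n(z_k^n)^{*} \;-\;(1-q^{2})\sum_{j=k+1}^{n} z_j^n (z_j^n)^{*} \;+\;(1-q^{2})I
$$
inside $\mathrm{Pol}(\mathrm{Mat}_n)_q$. Applying $\pi$ and moving the negative summation to the left, every term on the left is a positive operator, so one obtains the operator inequality
$$
\pi(z_k^n)^{*}\pi(z_k^n) \;\leq\; q^{2}\,\pi(z_k^n)\pi(z_k^n)^{*} \;+\;(1-q^{2})I.
$$
Taking operator norms and invoking the $C^{*}$-identity $\lVert T^{*}T\rVert=\lVert T\rVert^{2}=\lVert TT^{*}\rVert$ gives $\lVert\pi(z_k^n)\rVert^{2}\leq q^{2}\lVert\pi(z_k^n)\rVert^{2}+(1-q^{2})$, whence $(1-q^{2})\lVert\pi(z_k^n)\rVert^{2}\leq 1-q^{2}$ and, since $q\in(0,1)$, $\lVert\pi(z_k^n)\rVert\leq 1$.

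The last-row generators $\pi(z_n^j)$ for $1\leq j\leq n-1$ are handled by the completely symmetric argument, applying~\eqref{zaa44} with $(a,\alpha)=(n,j)$. Now it is the second sum $\sum_{j'=n+1}^{n}$ and the double sum (with $m\geq n+1$) that vanish, leaving only $\sum_{j'=j+1}^{n} z_n^{j'}(z_n^{j'})^{*}$. Exactly as above, this rearranges to $\pi(z_n^j)^{*}\pi(z_n^j)\leq q^{2}\pi(z_n^j)\pi(z_n^j)^{*}+(1-q^{2})I$, and the same norm estimate delivers $\lVert\pi(z_n^j)\rVert\leq 1$.

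There is no genuine obstacle here; the proof is a direct calculation from~\eqref{zaa44}. The only points worth verifying explicitly are that the auxiliary sums really are empty in each case (which is immediate: $\alpha=n$ forces $\sum_{j=\alpha+1}^n$ to be empty, and $a=n$ forces $\sum_{j=a+1}^n$ to be empty), and that the surviving correction is of the form $(1-q^{2})$ times a sum of positive operators, so that it can be moved to the opposite side to produce an inequality rather than an equality.
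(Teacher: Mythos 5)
Your argument is correct, and it does the job in a self-contained way: the specialisation of relation~\eqref{zaa44} to $(a,\alpha)=(k,n)$ or $(n,j)$, the observation that the extraneous sums are empty, the rearrangement into the operator inequality $\pi(z_k^n)^*\pi(z_k^n)\le q^2\pi(z_k^n)\pi(z_k^n)^*+(1-q^2)I$, and the norm estimate using $\|C+cI\|=\|C\|+c$ for positive $C$ and $c\ge0$ together with the $C^*$-identity are all valid, and $(1-q^2)>0$ lets you divide through to conclude $\|\pi(z_k^n)\|\le1$.

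The paper's proof takes a shorter route: it observes that the unital $*$-subalgebra $\mathcal{C}(\textbf{I})$ generated by the last column (resp. last row) of generators is $*$-isomorphic, via $z_j^n\mapsto z_j$, to the algebra $\mathrm{Pol}(\mathbb{C}^n)$ of the quantum ball, for which the fact that the generators are contractions in every $*$-representation is a cited, known result. Your computation is effectively the direct proof of that cited fact, translated back into the $\mathrm{Pol}(\mathrm{Mat}_n)_q$ generators: the relation you isolate from~\eqref{zaa44} for $(k,n)$ is exactly the defining commutation relation of $\mathrm{Pol}(\mathbb{C}^n)$ after the identification. So you buy a self-contained, elementary argument that does not require the reader to chase a reference, at the cost of a slightly longer computation; the paper's version is more modular and reuses established structure. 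Both are equally valid. One small presentational note: when you pass from the operator inequality to norms, it is worth stating explicitly that you are using $\|C+cI\|=\|C\|+c$ for a positive operator $C$ and $c\ge0$; as written, the phrase about the $C^*$-identity does not by itself justify the norm bound for the right-hand side.
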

\begin{proof}
Let $\textbf{I}=\{z_{1}^{n},z_{2}^{n},\dots,z_{n}^{n}\}.$ Then by~\eqref{zaa4} we have $\mathcal{C}(\textbf{I})\cong \mathrm{Pol}(\mathbb{C}^{n})$ with a $*$-isomorphism given by $$z_{j}^{n}\mapsto z_{j},1\leq j\leq n.$$ The statement now follows from the fact that $||\phi(z_{j})||\leq 1$ for $1\leq j\leq n$ and any bounded $*$-representation $\phi$ of $\mathrm{Pol}(\mathbb{C}^{n})$ (see~\cite{pet}, though notice that they defined $\mathrm{Pol}(\mathbb{C}^{n})$ using the generators $a_{i}=\frac{1}{\sqrt{1-q^{2}}}z_{i}$). The same also holds for $\textbf{I}=\{z_{n}^{1},z_{n}^{2},\dots,z_{n}^{n}\}.$
\end{proof}
\subsection{The Case $\textbf{A}$}
In this section we prove the existence of the lifting $\Pi:\mathbb{C}[SU_{2n}]_{q}\to B(K)$ for a fixed irreducible $*$-representation $\pi:\mathrm{Pol}(\mathrm{Mat}_{n})_{q}\rightarrow B(K)$ such that $\ker \pi(z_{k}^{n})^{*}=\{0\}$ for some $1\leq k\leq n.$ Let us assume that $k$ is the largest integer with this property.
\\

Throughout this section write $$H:=\ker (\pi(z_{k}^{n})^{*}\pi(z_{k}^{n})-I)$$ and fix the set of generators
$$
\textbf{I}:=\{z_{k+1}^{n},\dots, z_{n}^{n}\}.
$$
\begin{lem}\label{polly}
If $k=n,$ then $H=K.$
If $1\leq k<n,$ then $H$ is a non-trivial proper subspace of $K$ and $H=\cap_{j=k+1}^{n}\ker \pi(z_{j}^{n})^{*}.$
\end{lem}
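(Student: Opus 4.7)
Start with the easy case $k=n$: here $\textbf{I}$ is empty, and relation \eqref{zaa44} with $a=\alpha=n$ has all of its sums empty, reducing to $(z_n^n)^*z_n^n = q^2 z_n^n(z_n^n)^* + (1-q^2)I$, the defining relation of $\mathrm{Pol}(\mathbb{C})_q$. Since $\ker\pi(z_n^n)^* = \{0\}$ by assumption, Lemma~\ref{polclem} implies that $\pi(z_n^n)$ is a unitary isometry; in particular $\pi(z_n^n)^*\pi(z_n^n) = I$ and $H = K$.

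For $1\leq k<n$, set $V := \cap_{j=k+1}^n \ker\pi(z_j^n)^*$; the goal is $H = V$. For the inclusion $H \subseteq V$, note that \eqref{zaa44} with $a=k, \alpha=n$ (where the sum over $j>\alpha=n$ and the double sum are both empty) reduces to
\begin{equation*}
(z_k^n)^*z_k^n = q^2 z_k^n(z_k^n)^* - (1-q^2)\sum_{j=k+1}^n z_j^n(z_j^n)^* + (1-q^2) I.
\end{equation*}
Applying this to $v \in H$ and rearranging, one obtains $q^2\bigl(v - \pi(z_k^n(z_k^n)^*)v\bigr) = -(1-q^2)\sum_{j=k+1}^n \pi(z_j^n(z_j^n)^*)v$. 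Pairing with $v$, the left-hand side equals $q^2(\|v\|^2-\|\pi(z_k^n)^*v\|^2)\geq 0$ by the contractivity from Lemma~\ref{pol}, while the right-hand side equals $-(1-q^2)\sum_{j>k}\|\pi(z_j^n)^*v\|^2 \leq 0$; both must vanish, giving $v\in V$. For the reverse inclusion, I first verify that $V$ is invariant under $\pi(z_k^n)$ and $\pi(z_k^n)^*$: relations \eqref{zaa43} (with $\alpha=n$, sum empty) and \eqref{zaa1*} yield $(z_j^n)^*z_k^n = q z_k^n(z_j^n)^*$ and $(z_j^n)^*(z_k^n)^* = q(z_k^n)^*(z_j^n)^*$ for $j>k$, so both $\pi(z_k^n)v$ and $\pi(z_k^n)^*v$ remain in $V$. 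On $V$, the simplified \eqref{zaa44} reduces to the $\mathrm{Pol}(\mathbb{C})_q$ relation for $X := \pi(z_k^n)|_V$, and since $\ker X^*\subseteq\ker\pi(z_k^n)^*=\{0\}$, Lemma~\ref{polclem} forces $X$ to be a unitary isometry; in particular $X^*X = I|_V$, so $V\subseteq H$.

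It remains to verify that $H$ is a non-trivial proper subspace of $K$. Properness is immediate: if $V=K$, then $\pi(z_j^n)=0$ for all $j>k$ and \eqref{zaa44} with $a=\alpha=n$ then reads $0 = (1-q^2)I$, contradicting $q\in(0,1)$. Non-triviality I would prove by descending induction on $j$, showing that $V_j := \cap_{i=j}^n\ker\pi(z_i^n)^*\neq\{0\}$ for every $k<j\leq n$. The base case $j=n$ is immediate, since $\pi(z_n^n)$ satisfies the $\mathrm{Pol}(\mathbb{C})_q$ relation globally and the maximality of $k$ guarantees $\ker\pi(z_n^n)^*\neq\{0\}$ when $k<n$. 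The inductive step mirrors the analysis above with $j$ in place of $k$: $V_{j+1}$ is invariant under $\pi(z_j^n)$ and $\pi(z_j^n)^*$, the restriction $\pi(z_j^n)|_{V_{j+1}}$ satisfies the $\mathrm{Pol}(\mathbb{C})_q$ relation, and Lemma~\ref{polclem} decomposes $V_{j+1} = W_1\oplus W_2$ so that $V_j$ is exactly the bottom of the ``Fock part'' $W_1$. The main obstacle will be ruling out the degenerate case $W_1 = \{0\}$; this requires combining the global non-triviality $\ker\pi(z_j^n)^*\neq\{0\}$, the orthogonal splitting $\ker\pi(z_j^n)^* = (V_{j+1}\cap\ker\pi(z_j^n)^*)\oplus(V_{j+1}^{\perp}\cap\ker\pi(z_j^n)^*)$ coming from the invariance of $V_{j+1}$ under both $\pi(z_j^n)$ and $\pi(z_j^n)^*$, together with the fact that every $\pi(z_a^\alpha)$ preserves $V_{j+1}$ (a direct check using \eqref{zaa41} and \eqref{zaa42}) to derive a contradiction with the irreducibility of $\pi$.
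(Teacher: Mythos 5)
Your handling of the $k=n$ case, the equality $H=\cap_{j=k+1}^{n}\ker\pi(z_{j}^{n})^{*}$, and the properness claim is correct and the pairing argument for $H\subseteq V$ is in fact a slightly cleaner route than the paper's norm-cascade (which uses $(z_{j}^{n})^{*}((z_{k}^{n})^{*}z_{k}^{n})=q^{2}((z_{k}^{n})^{*}z_{k}^{n})(z_{j}^{n})^{*}$ and contractivity). The genuine gap is precisely the step you flag as ``the main obstacle'': ruling out $W_{1}=\{0\}$, i.e.\ showing that $\pi(z_{j}^{n})|_{V_{j+1}}$ cannot be unitary.

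Your proposed route for this is flawed in two ways. First, the claim that \emph{every} $\pi(z_{a}^{\alpha})$ preserves $V_{j+1}$ is false: for $a=i\geq j+1$ and $\alpha=n$, relation \eqref{zaa44} restricted to $v\in V_{j+1}$ gives $(z_{i}^{n})^{*}z_{i}^{n}v=(1-q^{2})v\neq0$, so $\pi(z_{i}^{n})v\notin\ker\pi(z_{i}^{n})^{*}\supseteq V_{j+1}$. Thus $V_{j+1}$ is not an invariant (let alone reducing) subspace for the whole $*$-representation, and there is no way to invoke the irreducibility of $\pi$. Second, even the orthogonal splitting of $\ker\pi(z_{j}^{n})^{*}$ gives no leverage: knowing $\ker\pi(z_{j}^{n})^{*}\neq\{0\}$ says nothing about its intersection with $V_{j+1}$.

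The paper's resolution requires no irreducibility and is purely algebraic: assume $H=0$ and let $m\geq k+2$ be the smallest integer with $L:=\cap_{r=m}^{n}\ker(z_{r}^{n})^{*}\neq\{0\}$ (which exists since $\ker(z_{n}^{n})^{*}\neq\{0\}$). Then $z_{m-1}^{n}|_{L}$ is unitary, so $z_{m-1}^{n}(z_{m-1}^{n})^{*}|_{L}=I$, and plugging this into the restriction of \eqref{zaa44} for $z_{m-2}^{n}$ kills the $(1-q^{2})I$ constant term, yielding $(z_{m-2}^{n})^{*}z_{m-2}^{n}|_{L}=q^{2}z_{m-2}^{n}(z_{m-2}^{n})^{*}|_{L}$ and hence, by taking norms, $(z_{m-2}^{n})^{*}|_{L}=0$. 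This cascades downwards to $(z_{k}^{n})^{*}|_{L}=0$, so $\{0\}\neq L\subseteq\ker\pi(z_{k}^{n})^{*}$, contradicting the hypothesis on $k$. You should replace your sketch by this cascading norm argument; it is also what your inductive step would ultimately need to do anyway, since the ``unitary on $V_{j+1}$'' scenario must be excluded by propagating $(z_{l}^{n})^{*}|_{V_{j+1}}=0$ all the way down to $l=k$.
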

\begin{proof}
If $\ker \pi(z_{n}^{n})^{*}=0,$ then by~\eqref{zaa44} $z_{n}^{n}$ satisfies the equation $$(z_{n}^{n})^{*}z_{n}^{n}=q^{2}z_{n}^{n}(z_{n}^{n})^{*}+(1-q^{2})I.$$  By Lemma~\ref{polclem} $\pi(z_{n}^{n})$ is a unitary operator and hence 
$\ker (\pi(z_{n}^{n})^{*}\pi(z_{n}^{n})-I)=K.$ 
\\

Assume now $1\leq k<n$ and set  $H':=\cap_{j=k+1}^{n}\ker \pi(z_{j}^{n})^{*}.$ For the simplicity, we shall write $z_{j}^{l}$ for the images $\pi(z_{j}^{l}),1\leq j,l\leq n.$ As $q z_{k}^{n}(z_{j}^{n})^{*}=(z_{j}^{n})^{*}z_{k}^{n}$ and $z_{k}^{n}z_{j}^{n}=q z_{j}^{n}z_{k}^{n}$ for $k<j,$ we see that $H'$ reduces $z_{k}^{n}.$ Moreover, as $$(z_{k}^{n})^{*}z_{k}^{n}=q^{2}z_{k}^{n}(z_{k}^{n})^{*}+(1-q^{2})(I-\sum_{j=k+1}z_{j}^{n}(z_{j}^{n})^{*})$$ and $(z_{j}^{n})^{*} H'=0$ for $k+1\leq j\leq n,$  we obtain
$$
(z_{k}^{n})^{*}z_{k}^{n}|_{H'}=q^{2}z_{k}^{n}(z_{k}^{n})^{*}|_{H'}+(1-q^{2})I
$$
and, as $\ker (z_{k}^{n})^{*}=0,$ we can use Lemma~\ref{polc} to see that $z_{k}^{n}|_{H'}$ is a unitary operator and hence $H'\subseteq H.$
\\

To see the other inclusion, notice that $(z_{j}^{n})^{*}((z_{k}^{n})^{*}z_{k}^{n})=q^{2}((z_{k}^{n})^{*}z_{k}^{n})(z_{j}^{n})^{*}$ for $k<j\leq n$ and hence by Lemma~\ref{pol}, we have for $v\in H$ 
$$||(z_{j}^{n})^{*} v||=||(z_{j}^{n})^{*}((z_{k}^{n})^{*}z_{k}^{n}) v||=q^{2}||((z_{k}^{n})^{*}z_{k}^{n})(z_{j}^{n})^{*}v||\leq 
$$
$$
q^{2}||(z_{k}^{n})^{*}z_{k}^{n}||\cdot||(z_{j}^{n})^{*} v||\leq q^{2}||(z_{j}^{n})^{*} v||$$ 
 giving $||(z_{j}^{n})^{*} v||=0$ and $v\in \ker (z_{j}^{n})^{*}.$ Thus $H= H'.$ 
\\

We now need to prove that $H\neq 0$ if $1\leq k<n.$ Assume contrary that $$H=\cap_{j=k+1}^{n}\ker (z_{j}^{n})^{*}=0$$ and let $1\leq m\leq n$ be the first integer such that $\cap_{j=m}^{n}\ker (z_{j}^{n})^{*}\neq 0.$ Notice that this integer exists since we assumed that $\ker (z_{n}^{n})^{*}\neq \{0\}.$
The subspace $L:=\cap_{j=m}^{n}\ker (z_{j}^{n})^{*}$ reduces the operators $z_{1}^{n},\dots ,z_{m-1}^{n}$ by the same argument that we used to prove the similar statement for $H.$ Now, the restriction of $z_{m-1}^{n}$ to $L$ must again satisfy the equation 
$$
(z_{m-1}^{n})^{*}z_{m-1}^{n}|_{L}=q^{2}z_{m-1}^{n}(z_{m-1}^{n})^{*}|_{L}+(1-q^{2})I
$$
and as $\ker (z_{m-1}^{n})^{*}|_{L}=0,$ it follows that $z_{m-1}^{n}|_{L}$ is unitary. 
\\

We then claim that $(z_{l}^{n})^{*}|_{L}=0$ for $1\leq l< m-1$ and hence $$L=\cap_{j=m}^{n}\ker (z_{j}^{n})^{*}\subseteq \ker (z_{k}^{n})^{*}.$$ As $k<m-1,$ this gives a contradiction. To see the claim, observe first that for $m-2$ we have
$$
(z_{m-2}^{n})^{*}z_{m-2}^{n}|_{L}=q^{2}z_{m-2}^{n}(z_{m-2}^{n})^{*}|_{L}+(1-q^{2})(I-\sum_{r=m-1}^{n}z_{r}^{n}(z_{r}^{n})^{*}|_{L})=
$$
$$
q^{2}z_{m-2}^{n}(z_{m-2}^{n})^{*}|_{L}+(1-q^{2})(I-z_{m-1}^{n}(z_{m-1}^{n})^{*}|_{L})=q^{2}z_{m-2}^{n}(z_{m-2}^{n})^{*}|_{L}
$$
as $z_{m-1}^{n}(z_{m-1}^{n})^{*}|_{L}=I.$ This gives $||z_{m-2}^{n}(z_{m-2}^{n})^{*}|_{L}||=||(z_{m-2}^{n})^{*}z_{m-2}^{n}|_{L}||=q^{2}||z_{m-2}^{n}(z_{m-2}^{n})^{*}|_{L}||$ and hence $(z_{m-2}^{n})^{*}|_{L}=0.$ We now use an induction type argument on $1\leq j<m-2;$ if the equation $I-\sum_{r=j-1}^{n}z_{r}^{n}(z_{r}^{n})^{*}|_{L}=0$ holds, then we can again deduce that $$||z_{j}^{n}(z_{j}^{n})^{*}|_{L}||=||(z_{j}^{n})^{*}z_{j}^{n}|_{L}||=q^{2}||z_{j}^{n}(z_{j}^{n})^{*}|_{L}||$$ giving $(z_{j}^{n})^{*}|_{L}=0$ and hence also that $I-\sum_{r=j-2}^{n}z_{r}^{n}(z_{r}^{n})^{*}|_{L}=0.$
\end{proof}
\begin{lem}\label{miss}
For $1\leq m\leq n,$  let
$$
A_{m}:=I-\sum_{j=m}^{n}z_{j}^{n}(z_{j}^{n})^{*}.
$$
Then the following relations hold in $\mathrm{Pol}(\mathrm{Mat}_{n})_{q}$
\begin{equation}\label{an}A_{m}=\frac{1}{1-q^{2}}((z_{m}^{n})^{*}z_{m}^{n}-z_{m}^{n}(z_{m}^{n})^{*}) \end{equation}
\begin{equation}\label{123}z_{j}^{n}A_{m}=A_{m}z_{j}^{n},1\leq j<m\end{equation} \begin{equation}\label{1234}q^{2}z_{j}^{n}A_{m}=A_{m}z_{j}^{n},m\leq j\leq n.\end{equation} Furthermore, if $\Pi $ is a $*$-representation of $\mathrm{Pol}(\mathrm{Mat}_{n})_{q}$ such that $\ker \Pi(z_{m}^{n})^{*}=0,$ then also $\Pi(A_{m})=0$ and 
\begin{enumerate}
\item $\pi(z_{m}^{n})$ is normal,
\item $\pi(z_{j}^{n})=0$ for $1\leq j<m.$
\end{enumerate}
\end{lem}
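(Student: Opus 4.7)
My plan is to prove the four assertions in order, each leaning on the previous. First, the two expressions for $A_m$ coincide by direct application of~\eqref{zaa44} at indices $a = m$, $\alpha = n$: here both correction sums collapse (the ranges $j=n+1,\ldots,n$ and $j = n+1,\ldots,n$, $r = m+1,\ldots,n$ are empty), leaving
\[
(z_m^n)^*z_m^n = q^2 z_m^n(z_m^n)^* - (1-q^2)\sum_{j=m+1}^n z_j^n(z_j^n)^* + (1-q^2) I,
\]
which rearranges to $(z_m^n)^*z_m^n - z_m^n(z_m^n)^* = (1-q^2) A_m$.

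Next I would verify the commutation relations~\eqref{123} and~\eqref{1234} by a direct calculation, whose two essential ingredients are~\eqref{zaa1}, giving $z_a^n z_b^n = q\,z_b^n z_a^n$ for $a<b$, and~\eqref{zaa43}, whose correction sum is empty when $\alpha = n$ and thus yields $(z_l^n)^* z_j^n = q\,z_j^n (z_l^n)^*$ for $l \neq j$. These together imply $z_j^n\, z_l^n(z_l^n)^* = z_l^n(z_l^n)^* z_j^n$ when $l > j$ and $z_j^n\, z_l^n(z_l^n)^* = q^{-2}\,z_l^n(z_l^n)^* z_j^n$ when $l < j$. The diagonal term $z_j^n z_j^n (z_j^n)^*$, which only appears in the sum when $m \leq j$, is handled by substituting the identity $(z_j^n)^* z_j^n = q^2 z_j^n(z_j^n)^* + (1-q^2) A_{j+1}$ from the first paragraph, producing exactly the missing factor of $q^2$. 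Summing then gives~\eqref{123} for $j < m$ and~\eqref{1234} for $m \leq j \leq n$.

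Now fix a $*$-representation $\Pi$ with $\ker \Pi(z_m^n)^* = \{0\}$. Taking adjoints in~\eqref{1234} at $j = m$ (and using $A_m^* = A_m$) yields
\[
\Pi(z_m^n)^* \Pi(A_m) = q^2\,\Pi(A_m)\, \Pi(z_m^n)^*.
\]
Iterating to powers of $\Pi(A_m)$, passing to polynomials, and invoking the bounded Borel functional calculus of the self-adjoint operator $\Pi(A_m)$, this intertwining shows that the spectral projections $E_B$ of $\Pi(A_m)$ satisfy $\Pi(z_m^n)^* E_B = E_{q^{-2}B}\,\Pi(z_m^n)^*$ for every Borel $B \subseteq \mathbb{R}$. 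Since $\Pi(z_m^n)^*$ has trivial kernel, $E_B \ne 0$ forces $E_{q^{-2k}B} \ne 0$ for every $k \geq 0$. If any nonzero $\lambda$ lay in $\mathrm{spec}\,\Pi(A_m)$, choosing a small open neighborhood of $\lambda$ disjoint from $0$ would produce spectral points at $|q^{-2k}\lambda| \to \infty$, contradicting the boundedness of $\Pi(A_m)$. Hence $\mathrm{spec}\,\Pi(A_m) = \{0\}$, and self-adjointness gives $\Pi(A_m) = 0$.

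Claim (1) then follows immediately from the second form of $A_m$, since $\Pi((z_m^n)^* z_m^n - z_m^n (z_m^n)^*) = (1-q^2)\Pi(A_m) = 0$. For claim (2), the identity $\Pi(A_m) = 0$ reads $\sum_{l=m}^n \Pi(z_l^n)\Pi(z_l^n)^* = I$, so for $1 \leq j < m$ we obtain $\Pi(A_{j+1}) = -\sum_{l=j+1}^{m-1} \Pi(z_l^n)\Pi(z_l^n)^* \leq 0$. Substituting this into the index-$j$ version of the identity from the first paragraph gives $\Pi((z_j^n)^* z_j^n) = q^2 \Pi(z_j^n(z_j^n)^*) + (1-q^2)\Pi(A_{j+1}) \leq q^2 \Pi(z_j^n(z_j^n)^*)$, so taking operator norms and using $\|X^*X\| = \|XX^*\| = \|X\|^2$ yields $\|\Pi(z_j^n)\|^2 \leq q^2 \|\Pi(z_j^n)\|^2$, which forces $\Pi(z_j^n) = 0$ since $q < 1$. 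The main obstacle is the spectral-theoretic argument in the third paragraph; the remainder is bookkeeping with the defining relations.
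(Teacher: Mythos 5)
Your proof is correct, and it diverges from the paper's in the two final steps in ways worth noting. For $\Pi(A_m)=0$, the paper works with the polar decomposition $(z_m^n)^* = U|(z_m^n)^*|$: since $z_m^n(z_m^n)^*$ commutes with $A_m$, the intertwining $(z_m^n)^*A_m = q^2 A_m (z_m^n)^*$ passes to $UA_m = q^2 A_m U$ on the (dense) range of $|(z_m^n)^*|$, and then $A_m = q^2 U^*A_m U$ plus $U^*U=I$ gives $\|A_m\|\le q^2\|A_m\|$, so $A_m=0$ in one line. Your spectral-projection argument reaches the same conclusion without invoking polar decomposition or the commutation of $A_m$ with $z_m^n(z_m^n)^*$, at the cost of the Borel functional calculus extension; both are sound, the paper's is slicker. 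For $\Pi(z_j^n)=0$ with $j<m$, the paper again uses the partial isometry $U$ and the relation $qz_j^n U = Uz_j^n$, whereas your argument is purely a positivity and norm estimate ($\Pi((z_j^n)^*z_j^n) \le q^2\,\Pi(z_j^n(z_j^n)^*)$ since $\Pi(A_{j+1})\le 0$), which is more elementary and arguably cleaner. The first two paragraphs — the identity for $A_m$ via \eqref{zaa44} and the $q$-commutations \eqref{123}, \eqref{1234} from \eqref{zaa1}, \eqref{zaa43} with the $A_{j+1}$ substitution handling the diagonal term — coincide in substance with the paper's, just organized a bit differently.
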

\begin{proof}
The equation~\eqref{an} follows by subtracting $z_{m}^{n}(z_{m}^{n})^{*}$ from both side of the equation
\begin{equation}\label{qqqq}
(z_{m}^{n})^{*}z_{m}^{n}=q^{2}z_{m}^{n}(z_{m}^{n})^{*}+(1-q^{2})(I-\sum_{m<j}z_{j}^{n}(z_{j}^{n})^{*})
\end{equation}
and dividing by $(1-q^{2}).$ When $j\neq m,$~\eqref{123} and~\eqref{1234} follow directly from equations~\eqref{zaa1} and~\eqref{zaa43}. When $j=m,$ we have (with $A_{n+1}=I$)
$$A_{m}z_{m}^{n}=-z_{m}^{n}(z_{m}^{n})^{*}z_{m}^{n}+A_{m+1}z_{m}^{n}=$$
$$
-z_{m}^{n}(q^{2}z_{m}^{n}(z_{m}^{n})^{*}+(1-q^{2})A_{m+1})+z_{m}^{n}A_{m+1}=
$$
$$
-q^{2}z_{m}^{n}z_{m}^{n}(z_{m}^{n})^{*}+q^{2}z_{m}^{n}A_{m+1}=q^{2}z_{m}^{n}A_{m}.
$$
Let $\Pi$ be a $*$-representation satisfying the conditions of the lemma and write simply $z_{i}^{j}$ for the image $\Pi(z_{i}^{j}),i,j=1,\dots,n.$ Let $(z_{m}^{n})^{*}=U |(z_{m}^{n})^{*}|$ be the polar decomposition of $(z_{m}^{n})^{*},$ here $|(z_{m}^{n})^{*}|=(z_{m}^{n}(z_{m}^{n})^{*})^{\frac{1}{2}}$ and $U$ is an isometry. As $$(z_{m}^{n}(z_{m}^{n})^{*})A_{m}=A_{m}(z_{m}^{n}(z_{m}^{n})^{*}),$$ we can easily deduce from~\eqref{1234} that 
$$
UA_{m}=q^{2}A_{m}U
$$ 
giving $q^{2}U^{*}A_{m}U=A_{m}$ and hence $A_{m}=0.$ From equation~\eqref{qqqq}, it now follows that $z_{m}^{n}$ is normal. As we also have $z_{j}^{n}(z_{m}^{n}(z_{m}^{n})^{*})=(z_{m}^{n}(z_{m}^{n})^{*})z_{j}^{n}$ for $1\leq j<m,$ we get similarly that $ q z_{j}^{n}U= U z_{j}^{n}$ giving $U^{*}z_{j}^{n}U=q^{-1}  z_{j}^{n}$ and hence $z_{j}^{n}=0.$
\end{proof}
In section $3,$ during our informal discussion, we indicated how to approach the problem of lifting a $*$-representation $\pi$ of $\mathrm{Pol}(\mathrm{Mat}_{n})_{q}$ to a $*$-representations of $\mathbb{C}[SU_{2n}]_{q}$ by reducing $\pi$ to a $*$-representation of $\mathrm{Pol}(\mathrm{Mat}_{n-1})_{q}.$ In the case $\textbf{A},$ this was done by isolating the paths not starting on the integer $k,$ satisfying the condition $\ker \pi(z_{k}^{n})^{*}=\{0\}.$ The next proposition is the general statement of this. However, while the heuristic picture is quite clear, the proof turns out to be somewhat arduous as we must check that we actually end up with a representation of $\mathrm{Pol}(\mathrm{Mat}_{n-1})_{q}$ and hence must verify that the equations~\eqref{zaa41}-~\eqref{zaa44} hold.
\begin{prop}\label{l2}
For the irreducible $*$-representation $\pi :\mathrm{Pol}(\mathrm{Mat}_{n})_{q}\rightarrow B(K)$ the following holds
\begin{enumerate}
\item we have $\pi(z_{k}^{n})|_{H}=e^{i \varphi} I |_{H}$ for some $\varphi \in [0,2\pi),$ 
\item
there is an irreducible $*$-representation $\pi' :\mathrm{Pol}(\mathrm{Mat}_{n-1})_{q}\rightarrow B(H)$ such that 
\begin{equation}\label{pii}
\pi'(z_{j}^{m})=
\begin{cases}
-q e^{-i\varphi}\pi(z_{j}^{m})|_{H} & \text{If $1\leq j< k$}\\           
\pi(z_{j+1}^{m})|_{H}  & \text{If $k\leq j\leq n-1 $}\\
\end{cases}
\end{equation}.
\end{enumerate}
\end{prop}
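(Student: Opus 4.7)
The plan is to construct a candidate representation $\tilde\pi'$ of $\mathrm{Pol}(\mathrm{Mat}_{n-1})_q$ on $H$ via the formulas of $(2)$ with $\varphi=0$, verify it is an irreducible $*$-representation, and then deduce $(1)$ from Schur's lemma; the final $\pi'$ is obtained by rephasing. First I invoke Lemma~\ref{miss} with the hypothesis $\ker \pi(z_k^n)^*=\{0\}$ to get $\pi(z_j^n)=0$ on $K$ for every $1\leq j<k$ together with normality of $\pi(z_k^n)$, and Lemma~\ref{polly} to see that $H=\bigcap_{j=k+1}^{n}\ker\pi(z_j^n)^*$ is non-trivial with $\pi(z_k^n)|_H$ unitary. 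A short check using the $*$-relations~\eqref{zaa1*}--\eqref{zaa3*} shows $H$ reduces $\pi(z_j^m)$ and $\pi(z_j^m)^*$ for every $(j,m)$ with $j\neq k$ and $m\neq n$: commuting $\pi(z_r^n)^*$ (for $r>k$) past these operators produces correction terms whose $\pi(z_s^n)$ factor either annihilates $H$ directly (when $s>k$) or vanishes on $K$ (when $s<k$, by Lemma~\ref{miss}).

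A critical vanishing I then establish is $\pi(z_k^l)|_H=0$ for $l<n$: since $\pi(z_k^l)$ also preserves $H$, the $q$-commutation $z_k^n z_k^l=q^{-1}z_k^l z_k^n$ from~\eqref{zaa1}, restricted to $H$ where $\pi(z_k^n)|_H$ is isometric, gives $\|\pi(z_k^l)|_H\|=q^{-1}\|\pi(z_k^l)|_H\|$, which forces $\pi(z_k^l)|_H=0$ since $q\in(0,1)$. In parallel, $\pi(z_k^n)$ commutes with every $\pi(z_j^m)$ having $j\neq k$ and $m<n$: for $j>k$ directly from~\eqref{zaa2}, and for $j<k$ because the correction $-(q-q^{-1})\pi(z_j^n)\pi(z_k^m)$ in~\eqref{zaa3} vanishes by Lemma~\ref{miss}. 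Normality of $\pi(z_k^n)$ then upgrades this to commutation with the adjoints.

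Next I define $\tilde\pi'(z_j^m)=-q\,\pi(z_j^m)|_H$ for $1\leq j<k$ and $\tilde\pi'(z_j^m)=\pi(z_{j+1}^m)|_H$ for $k\leq j\leq n-1$, and verify every defining relation of $\mathrm{Pol}(\mathrm{Mat}_{n-1})_q$ holds. The holomorphic relations~\eqref{zaa1}--\eqref{zaa3} reduce to their $\mathrm{Pol}(\mathrm{Mat}_n)_q$-analogues after absorbing unwanted terms via $\pi(z_j^n)=0$ for $j<k$. The $*$-relations~\eqref{zaa41}--\eqref{zaa44} are more delicate: the modulus-$q$ rescaling on rows $j<k$ reproduces the correct $(1-q^2)$-scalar, and the row-$k$ contribution $\pi(z_k^l)\pi(z_k^m)^*|_H$ that would otherwise appear is converted into the $j>k$-tail through the identity $\pi(z_k^l)\pi(z_k^m)^*|_H=(q^{-2}-1)\sum_{j>k}\pi(z_j^l)\pi(z_j^m)^*|_H$, itself obtained by rearranging~\eqref{zaa42} and applying $\pi(z_k^l)|_H=0$.

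The main obstacle will be proving $\tilde\pi'$ irreducible. My approach: any closed $\tilde\pi'$-invariant $L\subseteq H$ gives a $\pi$-invariant $\tilde L:=\overline{\pi(\mathrm{Pol}(\mathrm{Mat}_n)_q)L}\subseteq K$; irreducibility of $\pi$ yields $\tilde L\in\{0,K\}$, which forces $L=0$ or requires $L=H$. The latter amounts to showing $\tilde L\cap H=L$, the heart of the argument, which requires rewriting every monomial of $\pi$ acting on $L$ in terms of $\tilde\pi'$-monomials (using the relations and the vanishing identities above) whenever the result lies in $H$. Granting irreducibility, the commutation of $\pi(z_k^n)|_H$ with $\tilde\pi'$ and its adjoints established above, Schur's lemma, and unitarity together yield $\pi(z_k^n)|_H=e^{i\varphi}I$ for some $\varphi\in[0,2\pi)$, proving $(1)$. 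Finally, setting $\pi'(z_j^m)=e^{-i\varphi}\tilde\pi'(z_j^m)$ for $j<k$ and $\pi'(z_j^m)=\tilde\pi'(z_j^m)$ otherwise produces the formulas of $(2)$; the rephasing is by a unit scalar, so all relations remain satisfied and irreducibility is preserved.
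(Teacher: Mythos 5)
Your proposal shares most of the technical infrastructure with the paper's proof: the use of Lemma~\ref{polly} and Lemma~\ref{miss} to obtain that $H$ is nontrivial, that $\pi(z_j^n)=0$ for $j<k$, and that $\pi(z_k^n)|_H$ is unitary; the vanishing $\pi(z_k^l)|_H=0$ for $l<n$; and the verification of the defining relations for the candidate representation on $H$, including the substitution identity $\pi(z_k^l)\pi(z_k^m)^*|_H=(q^{-2}-1)\sum_{j>k}\pi(z_j^l)\pi(z_j^m)^*|_H$ (which is exactly the paper's~\eqref{etic1}). That much is fine.

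The gap is in the logical order. You want to prove irreducibility of $\tilde\pi'$ \emph{first}, then deduce $(1)$ by applying Schur's lemma on $H$. But your irreducibility argument requires showing that a nonzero $\tilde\pi'$-reducing subspace $L\subseteq H$ equals $H$, via the claim $\tilde L\cap H=L$ where $\tilde L=\overline{\pi(\mathrm{Pol}(\mathrm{Mat}_n)_q)L}$. This is where the argument breaks: a $\tilde\pi'$-reducing $L$ need not be invariant under $\pi(z_k^n)|_H$, since $\pi(z_k^n)|_H$ is a unitary that lies in the commutant of $\tilde\pi'$ but is not (yet known to be) a scalar. Without knowing $\pi(z_k^n)|_H L\subseteq L$, the ``rewrite every $\pi$-monomial hitting $H$ in terms of $\tilde\pi'$-monomials'' step fails for the monomial $z_k^n$ itself, and you cannot show $\tilde L\cap H\subseteq L$. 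So the irreducibility you want to invoke in Schur's lemma has a circular dependency on the very statement $(1)$ you are trying to derive from it.

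The paper avoids this entirely by proving $(1)$ \emph{before} defining $\pi'$, by working on $K$ rather than $H$: since $\ker\pi(z_k^n)^*=\{0\}$ and Lemma~\ref{miss} makes $\pi(z_k^n)$ normal, the unitary $U$ in its polar decomposition on $K$ is shown (via the Fuglede--Putnam theorem and the $q$-commutation relations) to commute with the \emph{entire} image of $\pi$ on $K$. Schur's lemma is then applied to $\pi$ — whose irreducibility on $K$ is a hypothesis, not something to be established — giving $U=e^{i\varphi}I$ on all of $K$, hence $\pi(z_k^n)|_H=e^{i\varphi}I$. Only afterwards does the paper define $\pi'$ and prove its irreducibility (postponed to after Lemma~\ref{molly}, and that proof uses $(1)$ in an essential way). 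You invoke normality and Fuglede--Putnam in passing, but you apply them only to get commutation of $\pi(z_k^n)|_H$ with the $\tilde\pi'$-operators on $H$, rather than to conclude that the unitary part commutes with all of $\pi(\mathrm{Pol}(\mathrm{Mat}_n)_q)$ on $K$. Redirecting the Schur step from $\tilde\pi'$ on $H$ to $\pi$ on $K$ removes the circularity and closes the gap.
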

\begin{proof}
The proof of the irreducibility of $\pi'$ is postponed until after Lemma~\ref{molly}.
Let again $\textbf{I}=\{z_{k+1}^{n},z_{k+1}^{n},\dots,z_{n}^{n}\}.$ By Lemma~\ref{polly}, $H=\cap_{j=k+1}^{n}\ker \pi(z_{j}^{n})^{*}.$ By Lemma~\ref{linj}, there is a linear functional $\gamma_{\textbf{I}}$ on $\mathcal{A}(\textbf{I}),$ such that for all $ v,u\in H,$ and $a\in\mathcal{A}(\textbf{I}),$ we have $\langle a v, u\rangle=\gamma_{\textbf{I}}(a)\langle v,u\rangle.$ Again, we suppress $\pi$ and write $z_{i}^{j}$ instead of $\pi(z_{i}^{j})$ for $i,j=1,\dots,n.$
\\

It follows from the proof of Lemma~\ref{polly}, that the restriction $z_{k}^{n}|_{H}$ is unitary. Now, from Lemma~\ref{miss}, we know that the elements $z_{1}^{n},\dots, z_{k-1}^{n}\in \mathrm{Pol}(\mathrm{Mat}_{n})_{q}$ are all mapped to zero by $\pi$ and this gives that relation~\eqref{zaa3} reduces to $z_{j}^{m}z_{k}^{n}=z_{k}^{n}z_{j}^{m}$ for $j<k$ and $m<n$. As also $z_{j}^{m}z_{k}^{n}=z_{k}^{n}z_{j}^{m}$ for $k<j$ and $m<n$ holds by~\eqref{zaa3}, we get that the elements in $\textbf{Z}_{k}^{n}$ commute with both $z_{k}^{n}$ and $(z_{k}^{n})^{*}$ (the latter follows from~\eqref{zaa4}) and hence $H$ is a reducing subspace for the $\pi$-images of elements in $\textbf{Z}_{k}^{n}.$
\\

By~\eqref{zaa4}, we have $z_{k}^{j}(z_{m}^{n})^{*}=(z_{m}^{n})^{*}z_{k}^{j}$ for $1\leq j\leq n-1$ and $k+1\leq m \leq n$ and hence $H$ is invariant under the operators $z_{k}^{1},z_{k}^{2},\dots,z_{k}^{n-1}.$ As also $z_{k}^{n}z_{k}^{j}=q z_{k}^{j}z_{k}^{n}$ and $z_{k}^{n}|_{H}$ is unitary, we must have
\begin{equation}\label{zero}
z_{k}^{j}|_{H}=0 \text{  for $1\leq j\leq n-1.$}
\end{equation}

Let $U$ be the unitary operator in the polar decomposition of $z_{k}^{n}$ (that $U$ actually is unitary follows from the fact that $z_{k}^{n}$ is normal and $\ker (z_{k}^{n})^{*}=\{0\}$). Then we claim that $U$ commutes with the image of $\mathrm{Pol}(\mathrm{Mat}_{n})_{q}$ under $\pi.$ As we already know that $z_{k}^{n}$ and $(z_{k}^{n})^{*}$ commutes with the elements in $\textbf{Z}_{k}^{n},$ it follows that so does also $U.$ So what is left to prove is that $U$ commutes with $z_{k}^{1},z_{k}^{2},\dots z_{k}^{n-1}$ and the elements in $\textbf{I}.$ However, as $z_{k}^{n}$ is normal and $q$-commutes with these operators, we can apply the Fuglede-Putnam Theorem (if $T,N,M\in B(\mathcal{H})$ and $N,M$ are normal, then $NT=TM$ implies $N^{*}T=T M^{*}$) with $N=z_{k}^{n},M=q z_{k}^{n}$ and $T=z_{k}^{j}$ to get
$$
(z_{k}^{n})^{*}z_{k}^{j}=q z_{k}^{j}(z_{k}^{n})^{*}
$$
and hence
$$
((z_{k}^{n})^{*}z_{k}^{n})z_{k}^{j}=q^{2} z_{k}^{j}((z_{k}^{n})^{*}z_{k}^{n}).
$$
This gives $|z_{k}^{n}|z_{k}^{j}=q z_{k}^{j}|z_{k}^{n}|$ and $$U z_{k}^{j}|z_{k}^{n}|= z_{k}^{j}U| z_{k}^{n}|U.$$ As $\ker|z_{k}^{n}|=\{0\}$ and hence $\overline{|z_{k}^{n}|K}=K,$ this implies $U z_{k}^{j}=z_{k}^{j}U.$
By Schur's Lemma, $U=e^{i\varphi}I.$
\\

So it is left to prove that $\pi'$ actually defines a $*$-representation of $\mathrm{Pol}(\mathrm{Mat}_{n-1})_{q}.$ It is easy to see that relations~\eqref{zaa1}-\eqref{zaa3*} hold for the $\pi$-images of the generators, so what is left to verify is~\eqref{zaa4}. We do this by splitting~\eqref{zaa4} into cases~\eqref{zaa41}-\eqref{zaa44}.
\\

It is clear that relation~\eqref{zaa41} holds.
\\

For $j,m\neq k,$ we have by~\eqref{zaa43}
$$
(z_{j}^{\alpha})^{*}z_{m}^{\alpha}|_{H}=q z_{m}^{a}(z_{j}^{a})^{*}|_{H}-(q^{-1}-q)\sum_{c=\alpha+1}^{n}z_{m}^{c}(z_{j}^{c})^{*}|_{H}=
$$
$$
q z_{m}^{\alpha}(z_{j}^{\alpha})^{*}|_{H}-(q^{-1}-q)\sum_{c=\alpha+1}^{n-1}z_{m}^{c}(z_{j}^{c})^{*}|_{H}
$$
as $z_{m}^{n}(z_{j}^{n})^{*}|_{H}=0$ when $m,j\neq k.$ Hence $\pi'(z_{j}^{\alpha}), \alpha,j=1,\dots, n-1,$ satisfy relation~\eqref{zaa43}.
\\

If $j\geq k$ then it is easy see that~\eqref{zaa42} and~\eqref{zaa44} hold for the images $$\pi'(z_{j}^{\beta})^{*}\pi'(z_{j}^{\alpha})=(z_{j+1}^{\beta})^{*}z_{j+1}^{\alpha}|_{H},a,b=1,\dots n-1$$ as they hold for $\pi$ and as we have $z_{k+1}^{n}(z_{k+1}^{n})^{*}|_{H},z_{k+2}^{n}(z_{k+2}^{n})^{*}|_{H},\dots,z_{n}^{n}(z_{n}^{n})^{*}|_{H}=0.$
\\

In order to verify the remaining relations, which are~\eqref{zaa42} and~\eqref{zaa44} for $\pi'(z_{j}^{\beta})^{*}\pi'(z_{j}^{\alpha})$ when $j<k,$ we observe first that as $z_{k}^{\beta}|_{H}=0$ for $1\leq \beta\leq n-1,$ we have $(z_{k}^{\alpha})^{*}z_{k}^{\beta}|_{H}=0$ and by \eqref{zaa42}
\begin{equation}\label{etic1}
0=q z_{k}^{\beta}(z_{k}^{\alpha})^{*}|_{H}-(q^{-1}-q)\sum_{k<l}^{n}z_{l}^{\beta}(z_{l}^{\alpha})^{*}|_{H} 
\end{equation}
 if $\alpha\neq \beta$ and by~\eqref{zaa44}
\begin{multline}\label{epic1}
0=q^{2} z_{k}^{\alpha}(z_{k}^{\alpha})^{*}|_{H}-(1-q^{2})\sum_{k<l}^{n}z_{l}^{\alpha}(z_{l}^{\alpha})^{*}|_{H}-
(1-q^{2})\sum_{\alpha<\beta}^{n}z_{k}^{\beta}(z_{k}^{\beta})^{*}|_{H}+\\
q^{-2}(1-q^{2})^{2}\sum_{k<l,\alpha<\beta}^{n}z_{l}^{\beta}(z_{l}^{\beta})^{*}|_{H}+(1-q^{2})I  
\end{multline}
if $\alpha=\beta.$ Assume now that $m=j<k$ and $\alpha\neq \beta$ then we have
$$
(z_{m}^{\alpha})^{*}z_{m}^{\beta}|_{H}=q z_{m}^{\beta}(z_{m}^{\alpha})^{*}|_{H}-(q^{-1}-q)\sum_{m<l}^{n}z_{l}^{\beta}(z_{l}^{\alpha})^{*}|_{H}=
$$
$$
=(z_{m}^{\alpha})^{*}z_{m}^{\beta}|_{H}=q z_{m}^{\beta}(z_{m}^{\alpha})^{*}|_{H}-(q^{-1}-q)\sum_{k<l}^{n}z_{l}^{\beta}(z_{l}^{\alpha})^{*}|_{H}-(q^{-1}-q)z_{k}^{\beta}(z_{k}^{\alpha})^{*}|_{H}-
$$
$$
-(q^{-1}-q)\sum_{m<l}^{k-1}z_{l}^{\beta}(z_{l}^{\alpha})^{*}|_{H}.
$$
By~\eqref{etic1}, we get

$$
(z_{m}^{\alpha})^{*}z_{m}^{\beta}|_{H}=q z_{m}^{\beta}(z_{m}^{\alpha})^{*}|_{H}-(q^{-1}-q)\sum_{k<l}^{n}z_{l}^{\beta}(z_{l}^{\alpha})^{*}|_{H}-
$$
$$
-(q^{-2}-1)(q-q^{-1})\sum_{k<l}^{n}z_{l}^{\beta}(z_{l}^{\alpha})^{*}|_{H}-(q^{-1}-q)\sum_{m<l}^{k-1}z_{l}^{\beta}(z_{l}^{\alpha})^{*}|_{H}=
$$
$$
=q z_{m}^{\beta}(z_{m}^{\alpha})^{*}|_{H}-(q^{-1}-q)\sum_{k<l}^{n}(-q)^{-2}z_{l}^{\beta}(z_{l}^{\alpha})^{*}|_{H}-
(q^{-1}-q)\sum_{m<l}^{k-1}z_{l}^{\beta}(z_{l}^{\alpha})^{*}|_{H}.
$$
Finally if $m=j<k$ and $\alpha=\beta,$ then
$$
(z_{m}^{\alpha})^{*}z_{m}^{\alpha}|_{H}=q^{2}z_{m}^{\alpha}(z_{m}^{\alpha})^{*}|_{H}-(1-q^{2})\sum_{m<l}^{n}z_{l}^{\alpha}(z_{l}^{\alpha})^{*}|_{H}-
(1-q^{2})\sum_{a<l}^{n}z_{k}^{l}(z_{k}^{l})^{*}|_{H}+
$$
$$
+q^{-2}(1-q^{2})^{2}\sum_{m<l,a<b}^{n}z_{l}^{b}(z_{l}^{b})^{*}|_{H}+(1-q^{2})I=
$$
$$
=q^{2}z_{m}^{a}(z_{m}^{a})^{*}|_{H}-(1-q^{2})\sum_{m<l}^{k-1}z_{l}^{a}(z_{l}^{a})^{*}|_{H}-(1-q^{2})z_{k}^{a}(z_{k}^{a})^{*}|_{H}-(1-q^{2})\sum_{k<l}^{n}z_{l}^{a}(z_{l}^{a})^{*}|_{H}+
$$
$$
+q^{-2}(1-q^{2})^{2}\sum_{m<l}^{k-1}\sum_{a<b}^{n-1}z_{l}^{b}(z_{l}^{b})^{*}|_{H}+q^{-2}(1-q^{2})^{2}\sum_{\alpha<\beta}^{n}z_{k}^{\beta}(z_{k}^{\alpha})^{*}|_{H}+
$$
$$
+q^{-2}(1-q^{2})^{2}\sum_{k<l}^{n}\sum_{\alpha<\beta}^{n}z_{l}^{\beta}(z_{l}^{\beta})^{*}|_{H}+(1-q^{2})I.
$$
If, in this last sum, we use~\eqref{epic1} to substitute $z_{k}^{\alpha}(z_{k}^{\alpha})^{*}|_{H},$ then a similar calculation as in the case $\alpha\neq \beta$ yields the final case.
\end{proof}

\begin{lem}\label{molly}
Assume $k<n.$ For any multi-index $\textbf{m}=(m_{1},m_{2},\dots,m_{n-k})\in \mathbb{Z}_{+}^{n-k},$ let $$z(\textbf{m}):=(z_{k+1}^{n})^{m_{1}}(z_{k+2}^{n})^{m_{2}}\dots (z_{n}^{n})^{m_{n-k}}$$ and $$H_{\textbf{m}}=\overline{\pi(z(\textbf{m}))H}.$$ Then $H_{\textbf{m}}\bot H_{\textbf{n}}$ for $\textbf{m}\neq \textbf{n}$ and 
\begin{equation}\label{kfull}
K=\bigvee_{\textbf{m}\in \mathbb{Z}_{+}^{n-k}}H_{\textbf{m}}.
\end{equation}
\end{lem}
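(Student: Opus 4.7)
The proof splits into two parts: pairwise orthogonality of the subspaces $H_{\textbf{m}}$, and density of their span in $K$.

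For orthogonality, set $\textbf{I}=\{z_{k+1}^{n},\dots,z_{n}^{n}\}$. By Lemma~\ref{polly}, $H=\bigcap_{j=k+1}^{n}\ker\pi(z_{j}^{n})^{*}$, so the hypothesis of Lemma~\ref{linj} is fulfilled. For $u,v\in H$ and $\textbf{m}\neq \textbf{n}$, the element $z(\textbf{n})^{*}z(\textbf{m})$ lies in $\mathcal{C}(\textbf{I})$, hence
\[
\langle\pi(z(\textbf{m}))u,\pi(z(\textbf{n}))v\rangle=\gamma_{\textbf{I}}(z(\textbf{n})^{*}z(\textbf{m}))\langle u,v\rangle=0,
\]
the last equality being immediate from Lemma~\ref{linj2}, since distinct monomials of $\mathcal{A}(\textbf{I})$ are orthogonal under $\langle\cdot,\cdot\rangle_{\textbf{I}}$.

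For totality, put $L=\bigvee_{\textbf{m}}H_{\textbf{m}}$. Since $H\subseteq L$ is nonzero and $\pi$ is irreducible, it suffices to prove $L$ is invariant under $\pi(a)$ and $\pi(a^{*})$ for every generator $a\in\textbf{Z}$, which I would verify case by case. The $z_{l}^{n}\in\textbf{I}$ act within $L$ by the $q$-commutativity of~\eqref{zaa1}; $\pi(z_{k}^{n})$ acts on each $H_{\textbf{m}}$ as the scalar $q^{|\textbf{m}|}e^{i\varphi}$ by Proposition~\ref{l2}(1), and similarly for its adjoint; $\pi(z_{j}^{n})=0$ for $j<k$ by Lemma~\ref{miss}. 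The adjoints $(z_{l}^{n})^{*}$ with $l>k$ are handled by commuting through $z(\textbf{m})$ via~\eqref{zaa43} and~\eqref{zaa44}: every surviving summand ends in either a factor $(z_{p}^{n})^{*}$ with $p>k$ (annihilating $H$) or in $A_{j}=I-\sum_{p\geq j}z_{p}^{n}(z_{p}^{n})^{*}$ with $j>k$ (acting as $I$ on $H$, as in the proof of Lemma~\ref{polly}). For $z_{j}^{m}$ and $(z_{j}^{m})^{*}$ with $m<n$, relations~\eqref{zaa1}--\eqref{zaa4} let one commute through $z(\textbf{m})$ with cross-terms that either terminate in $\textbf{Z}_{k}^{n}$-generators preserving $H$ (when $j\neq k$), or introduce extra $z_{k}^{n}$-factors absorbed by its scalar action on $H_{\textbf{m}}$ (when $j=k$).

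The subtlest case is $\pi((z_{k}^{m})^{*})$ with $m<n$: since $(z_{k}^{m})^{*}$ commutes freely with every $z_{p}^{n}\in\textbf{I}$, the invariance reduces to showing $\pi((z_{k}^{m})^{*})H\subseteq L$. I would exploit~\eqref{zaa42},
\[
(z_{k}^{m})^{*}z_{k}^{n}=qz_{k}^{n}(z_{k}^{m})^{*}-(q^{-1}-q)\sum_{j>k}z_{j}^{n}(z_{j}^{m})^{*};
\]
applied to $u\in H$ and using $\pi(z_{k}^{n})u=e^{i\varphi}u$ together with $\pi((z_{j}^{m})^{*})u\in H$ for $j>k$ (as $(z_{j}^{m})^{*}\in\textbf{Z}_{k}^{n}$), this rearranges to
\[
(q\pi(z_{k}^{n})-e^{i\varphi}I)\,\pi((z_{k}^{m})^{*})u=(q^{-1}-q)\sum_{j>k}\pi(z_{j}^{n})\pi((z_{j}^{m})^{*})u\in L.
\]
With $\pi(z_{k}^{n})=e^{i\varphi}|\pi(z_{k}^{n})|$ (from the polar structure of Proposition~\ref{l2}(1)), the prefactor $q\pi(z_{k}^{n})-e^{i\varphi}I=e^{i\varphi}(q|\pi(z_{k}^{n})|-I)$ has spectrum in $[-1,q-1]\subset(-\infty,0)$ and is therefore invertible; since $|\pi(z_{k}^{n})|$ acts on $H_{\textbf{m}}$ as $q^{|\textbf{m}|}I$, it preserves both $L$ and $L^{\perp}$, and so does its functional inverse. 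Decomposing $\pi((z_{k}^{m})^{*})u$ along $L\oplus L^{\perp}$ then forces the $L^{\perp}$-component to vanish. This spectral step, rather than direct commutation, is the real work of the lemma; once invariance is in hand, irreducibility of $\pi$ delivers $L=K$.
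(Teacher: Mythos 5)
Your proof is correct, and both the orthogonality argument (Lemmas~\ref{linj} and~\ref{linj2}) and the overall totality strategy (show $L:=\bigvee_{\textbf{m}}H_{\textbf{m}}$ reduces $\pi$, then invoke irreducibility) match the paper's. Where you genuinely diverge is the last and hardest case, $\pi((z_{k}^{m})^{*})H\subseteq L$. You rearrange~\eqref{zaa42} into
$(q\pi(z_{k}^{n})-e^{i\varphi}I)\,\pi((z_{k}^{m})^{*})u\in L$
and then extract the conclusion from the invertibility of $A:=q\pi(z_{k}^{n})-e^{i\varphi}I=e^{i\varphi}(q|\pi(z_{k}^{n})|-I)$: the self-adjoint operator $q|\pi(z_{k}^{n})|-I$ has spectrum in $[-1,q-1]\subset(-\infty,0)$ by the contraction estimate of Lemma~\ref{pol}, it reduces $L$ because $|\pi(z_{k}^{n})|$ acts as $q^{|\textbf{m}|}I$ on each $H_{\textbf{m}}$, hence $A^{-1}$ reduces $L$ as well, and the $L^{\perp}$-component of $\pi((z_{k}^{m})^{*})u$ is forced to vanish. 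The paper reaches the same conclusion by an iterated norm estimate: with $P$ the projection onto $L^{\perp}$ it establishes $\lVert P\pi(z_{k}^{j})^{*}v\rVert=q^{r}\lVert P\pi(z_{k}^{n})^{r}\pi(z_{k}^{j})^{*}v\rVert\leq q^{r}\lVert v\rVert$ for all $r$, then lets $r\to\infty$. Both arguments rest on exactly the same ingredients — the scalar action $e^{i\varphi}q^{|\textbf{m}|}$ of $\pi(z_{k}^{n})$ on $H_{\textbf{m}}$, relation~\eqref{zaa42}, and $\lVert\pi(z_{k}^{n})\rVert\leq 1$ — but yours packages them into a single application of the functional calculus, while the paper's stays at the level of inequalities and passes to a limit. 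One smaller organizational difference: for the $\textbf{Z}$-invariance of $L$ you commute each generator through $z(\textbf{m})$ case by case, whereas the paper gets this uniformly from Lemma~\ref{span1}, which writes $z_{j}^{m}z(\textbf{m})$ as a linear combination of elements $z(\textbf{m}_{i})b_{i}$ with $b_{i}$ monomials in $\textbf{Z}\setminus\textbf{I}$; both succeed, but the latter avoids tracking the cross-terms arising from~\eqref{zaa3}.
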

\begin{proof}
Let $u',v'\in H$ and $v=\pi(z(\textbf{n})) v'\in H_{\textbf{n}},u=\pi(z(\textbf{m})) u'\in H_{\textbf{m}}$ for $\textbf{n}\neq \textbf{m}.$
By Lemma~\ref{linj} and Lemma~\ref{linj2} applied to $\textbf{I}=\{z_{k+1}^{n},z_{k+1}^{n},\dots,z_{n}^{n}\}$ and $H,$ we have 
$$\langle v,u\rangle=\langle \pi(z(\textbf{n})) v',\pi(z(\textbf{m})) u'\rangle=\gamma_{\textbf{I}}(z(\textbf{m})^{*}z(\textbf{n}))\langle v',u'\rangle=0$$ and hence $H_{\textbf{m}}\bot H_{\textbf{n}}$ whenever $\textbf{m}\neq \textbf{n}$. 
\\

To prove~\eqref{kfull}, it is enough to prove that the right hand side is reducing $\textbf{Z},$ as then the equality follows from the irreducibility of $\pi.$ By Lemma~\ref{span1}, the elements of the form $z(\textbf{m})b,$ where $b$ is a monomial of generators in $\textbf{Z}\backslash \textbf{I},$ is a basis of $\mathbb{C}[\mathrm{Mat}_{n}]_{q}$ and thus for any $z_{j}^{l}\in\textbf{Z},$ we have $z_{j}^{l}z(\textbf{m})=\sum_{i}z(\textbf{m}_{i})b_{i},$ where $b_{i}$ is in the unital algebra generated by $\textbf{Z}\backslash \textbf{I}.$ As $H$ is invariant under the operators in $\textbf{Z}\backslash \mathbf{I},$ we then get
$$\pi(z_{j}^{l}) H_{\textbf{m}}=\pi(z_{j}^{l})\overline{\pi(z(\textbf{m}))H}\subseteq$$
$$
\bigvee_{i}\overline{\pi(z(\mathbf{m}_{i})b_{i}) H}\subseteq \bigvee_{i} \overline{\pi(z(\mathbf{m}_{i})) H}\subseteq \bigvee_{\textbf{m}\in \mathbb{Z}_{+}^{n-k}}H_{\textbf{m}}.
$$
Let us now prove the invariance under the operators in $\textbf{Z}^{*}.$
If $1\leq j\leq k-1$ and $k+1\leq l\leq n$ then $(z_{j}^{r})^{*}z_{l}^{n}=z_{l}^{n}(z_{j}^{r})^{*}$ if $1\leq r\leq n-1$ and $z_{l}^{n}(z_{j}^{n})^{*}=q (z_{j}^{n})^{*}z_{l}^{n}$ if $r=n,$ and hence the right-hand side of~\eqref{kfull} is invariant under these operators. Now let $k+1\leq j\leq n,$ if we define the norm $|\textbf{m}|=m_{1}+m_{2}+\dots+m_{n-k},$ then from the equations
$$
(z_{j}^{r})^{*}z_{l}^{n}=z_{l}^{n}(z_{j}^{r})^{*}\text{  if $j\neq l$ and $r\neq n$}
$$
$$
(z_{j}^{r})^{*}z_{l}^{n}=q z_{}^{n}(z_{j}^{r})^{*}\text{  if $j\neq l$ and $r= n$}
$$
$$
(z_{j}^{r})^{*}z_{j}^{n}=q z_{j}^{n}(z_{j}^{r})^{*}-(q^{-1}-q)\sum_{j<s}z_{s}^{n}(z_{s}^{r})^{*}\text{  if $r\neq n$}
$$
$$
(z_{j}^{n})^{*}z_{j}^{n}=q^{2} z_{j}^{n}(z_{j}^{n})^{*}+(1-q^{2})(I-\sum_{j<s}z_{s}^{n}(z_{s}^{n})^{*})
$$
and the fact that $z_{s}^{n}\in \textbf{I}$ if $j<s,$ it is easy to see by induction on $m\in \mathbb{Z}_{+},$ that the spaces $K_{m}:=\bigvee_{|\textbf{m}|\leq m} H_{\textbf{m}}$ are all invariant under the operators $\pi(z_{j}^{r})^{*}$, as $K_{0}=H$ is invariant with respect to them (hence establishing the case $m=0$). 
\\

The final case to consider is the column $ (z_{k}^{j})^{*}$ for $1\leq j\leq n.$ Notice that as $z_{k}^{n}z_{j}^{n}=q z_{j}^{n}z_{k}^{n}$ and we assumed $\pi(z_{k}^{n})|_{H}=e^{i\varphi}I,$ we see that $\pi(z_{k}^{n})|_{H_{\textbf{m}}}=e^{i\varphi}q^{|\textbf{m}|}I.$ As the set of operators $\pi(z_{k}^{1})^{*},\pi(z_{k}^{2}),\dots, \pi(z_{k}^{(n-1)})^{*}$ commute with the operators $\pi(z_{k+1}^{n}),\pi(z_{k+2}),\dots,\pi(z_{n}^{n}),$ (by~\eqref{zaa41}) we only need to prove that $\pi(z_{k}^{j})H\subset \bigvee_{\mathbb{Z}_{+}^{n-k}}H_{\textbf{m}},$ for $j=1,\dots,n-1.$ Let $P$ be the orthogonal projection onto the orthogonal complement of $\bigvee_{\textbf{m}\in \mathbb{Z}_{+}^{n-k}}H_{\textbf{m}}$ and let $v\in H.$ Then
$$||P  \pi(z_{k}^{j})^{*}v||=||P \pi(z_{k}^{j})^{*}\pi(z_{k}^{n}) v||=$$ 
$$
=||P(q\pi(z_{k}^{n})\pi(z_{k}^{j})^{*}-(q^{-1}-q)\sum_{j<s}\pi(z_{s}^{n})\pi(z_{s}^{j})^{*}) v||=q||P \pi(z_{k}^{n})\pi(z_{k}^{j})^{*} v||
$$
by what has already been proven. By induction, we have $$||P \pi(z_{k}^{j})^{*} v||=q^{r}||P \pi(z_{k}^{n})^{r}\pi(z_{k}^{j})^{*} v||.$$ 
By Lemma~\ref{pol}, $\pi(z_{k}^{j})^{*}$ is a contraction, and so we get

$$
\begin{array}{ccc}||P \pi(z_{k}^{j})^{*}v||\leq q^{r}||\pi(z_{k}^{j})^{*}||\cdot ||v|| & \text{for all $r\in \mathbb{Z}_{+}$}\end{array}
$$
and thus $||P \pi(z_{k}^{j})^{*} v||=0.$
\end{proof}
\begin{rem}
Notice that this lemma shows that by applying Lemma~\ref{linj} and Lemma~\ref{linj2} to $\textbf{I}=\{z_{k+1}^{n},\dots,z_{n}^{n}\}$ and $H,$ we get a natural unitary isometry $  \overline{\mathcal{A}(\textbf{I})\otimes H} \to K$, where $\mathcal{A}(\textbf{I})$ is equipped with the inner product $\langle\cdot,\cdot\rangle_{\textbf{I}}.$
This unitary isometry is the closure of the linear map determined by $a\otimes v\in\mathcal{A}(\textbf{I})\otimes H \mapsto \pi(a) v\in K.$ 
\end{rem}
We can now prove that $\pi'$ is irreducible. Assume that $H=H_{1}\oplus H_{2},$ where $H_{1}\bot H_{2}$ and both subspaces reducing $\pi'.$ Then $H_{1},H_{2}$ are actually invariant under $\textbf{Z}\backslash \textbf{I}$ as $\pi(z_{k}^{j})|_{H}=0$ for $1\leq j\leq n-1$ and $\pi(z_{k}^{n})|_{H}=e^{i\varphi}I$ and the invariance under the remaining operators follows from the way $\pi'$ is defined. Now, the same arguments that were used in Lemma~\ref{molly} to show invariance under $\textbf{Z}$ of the right hand side of~\eqref{kfull} can be used to show that the subspaces $\bigvee_{\textbf{m}\in\mathbb{Z}_{+}^{n-k}}\overline{\pi(z(\textbf{m}))H_{1}}$ and $\bigvee_{\textbf{m}\in\mathbb{Z}_{+}^{n-k}}\overline{\pi(z(\textbf{m})) H_{2}}$  are both invariant under $\textbf{Z}.$ We also have $\overline{\pi(z(\textbf{m}_{1}))H_{1}}\bot \overline{\pi(z(\textbf{m}_{2})) H_{2}}$ for all $\textbf{m}_{1},\textbf{m}_{2}\in \mathbb{Z}_{+}^{n-k}$ by similar arguments as in Lemma~\ref{molly}. By Lemma~\ref{molly}, we get $$K=\bigvee_{\textbf{m}\in\mathbb{Z}_{+}^{n-k}}\overline{\pi(z(\textbf{m})) H_{1}}\oplus\bigvee_{\textbf{m}\in\mathbb{Z}_{+}^{n-k}}\overline{\pi(z(\textbf{m})) H_{2}}$$ and therefore
$\left(\bigvee_{\textbf{m}\in\mathbb{Z}_{+}^{n-k}}\overline{\pi(z(\textbf{m})) H_{1}}\right)^{\bot}=\bigvee_{\textbf{m}\in\mathbb{Z}_{+}^{n-k}}\overline{\pi(z(\textbf{m})) H_{2}}.$
So both subspaces are actually invariant under $\textbf{Z}^{*}$ too, contradicting irreducibility of $\pi$.
\\

For $j+i\leq m-1,$ let $c_{ji}$ be the cycle $s_{j}s_{j+1}\dots s_{j+i}\in S_{m}.$ Let us denote the $*$-representation of $\mathbb{C}[SU_{m}]_{q}$ corresponding to $c_{ji}$ by $\pi_{ji}$ (see Definition~\ref{surep1}). In next part of the proof we are going to reconstruct the original $*$-representation $\pi$ using the lift of $\pi',$ so now we are working in $\mathbb{C}[SU_{2n}]_{q}$ instead. We are going to tensor our reduced and lifted representation $\pi'$  with a suitable $\pi_{ji},$ and the proof that this $*$-representation is isomorphic to $\pi$ relies strongly on being able to explicitly calculate the image $\pi_{ji}(t_{kl}),$ so we will now explain how to do this easily. By the definition of tensor product of representations of $\mathbb{C}[SU_{m}]_{q}$ we have
\begin{equation}\label{copout}
\pi_{ji}(t_{kl})=\sum_{k_{1},k_{2},\dots, k_{i}=1 }^{m}\pi_{j}(t_{k,k_{1}})\otimes \pi_{j+1}(t_{k_{1},k_{2}})\otimes \dots \otimes \pi_{j+i}(t_{k_{i},l})
\end{equation}
Notice that since $\pi_{a}(t_{rs})=\delta_{rs}I$ unless $(r,s)$ is one of the four pairs of integers $(a,a),(a+1,a),(a,a+1),(a+1,a+1),$ we have that a non-zero term in~\eqref{copout} corresponds to a sequence $k,k_{1},\dots,k_{i-1},l$ that can be visualized as paths on the grid
\begin{equation}\label{picture}
\begin{tikzpicture}[thick,scale=0.9]
\draw[step=1.0,black!50!white,thick] (1,1) grid (8,8);
\node at (0.5,1) {$m$};
\node at (0.5,2.2) {$\vdots$};
\node at (0,3) {$j+i$};
\node at (0.5,4.2) {$\vdots$};
\node at (0.5,5) {$j$};
\node at (0.5,6.2) {$\vdots$};
\node at (0.5,7) {$2$};
\node at (0.5,8) {$1$};
\node at (8.5,1) {$m$};
\node at (8.5,2.2) {$\vdots$};
\node at (9,3) {$j+i$};
\node at (8.5,4.2) {$\vdots$};
\node at (8.5,5) {$j$};
\node at (8.5,6.2) {$\vdots$};
\node at (8.5,7) {$2$};
\node at (1,0.5) {$0$};
\node at (8.5,8) {$1$};
\node at (2,0.5) {$1$};
\node at (3,0.5) {$2$};
\node at (4,0.5) {$3$};
\node at (5,0.5) {$\dots$};
\node at (6,0.5) {$\dots$};
\node at (7,0.5) {$i$};
\node at (8,0.5) {$i+1$};
\fill (1,1)  circle[radius=4pt];
\fill (2,1)  circle[radius=4pt];
\fill (3,1)  circle[radius=4pt];
\fill (4,1)  circle[radius=4pt];
\fill (5,1)  circle[radius=4pt];
\fill (6,1)  circle[radius=4pt];
\fill (7,1)  circle[radius=4pt];
\fill (8,1)  circle[radius=4pt];
\fill (1,2)  circle[radius=4pt];
\fill (1,3)  circle[radius=4pt];
\fill (1,4)  circle[radius=4pt];
\fill (1,5)  circle[radius=4pt];
\fill (1,6)  circle[radius=4pt];
\fill (1,7)  circle[radius=4pt];
\fill (1,8)  circle[radius=4pt];
\fill (8,2)  circle[radius=4pt];
\fill (8,3)  circle[radius=4pt];
\fill (8,4)  circle[radius=4pt];
\fill (8,5)  circle[radius=4pt];
\fill (8,6)  circle[radius=4pt];
\fill (8,7)  circle[radius=4pt];
\fill (8,8)  circle[radius=4pt];
\end{tikzpicture}
\end{equation}
starting at vertex $(k,0)$ on the left-hand side proceeding to $(k_{1},1)$ etc, and then ending at vertex $(l,i+1).$ These paths have the property that $k_{r}\neq k_{r+1}$ if and only if $k_{r}=j+r$ or $k_{r}=j+r+1,$ and in the first case, then $k_{r+1}=j+r+1$ and in the second case $k_{r+1}=j+r.$
As an example, when $m=8,j=2,i=4,$ a path for $t_{46}$ can be visualized as
\begin{equation}\label{exam}
\begin{tikzpicture}[thick,scale=0.9]
\draw[step=1.0,black!50!white,thick] (1,1) grid (6,8);
\node at (0.5,1) {$8$};
\node at (0.5,2) {$7$};
\node at (0.5,3) {$6$};
\node at (0.5,4) {$5$};
\node at (0.5,5) {$4$};
\node at (0.5,6) {$3$};
\node at (0.5,7) {$2$};
\node at (0.5,8) {$1$};
\node at (6.5,1) {$8$};
\node at (6.5,2) {$7$};
\node at (6.5,3) {$6$};
\node at (6.5,4) {$5$};
\node at (6.5,5) {$4$};
\node at (6.5,6) {$3$};
\node at (6.5,7) {$2$};
\node at (6.5,8) {$1$};
\node at (1,0.5) {$0$};
\node at (2,0.5) {$1$};
\node at (3,0.5) {$2$};
\node at (4,0.5) {$3$};
\node at (5,0.5) {$4$};
\node at (6,0.5) {$5$};
\filldraw[fill=black!20!white, draw=black](2,6) -- (1,6) -- (1,7)--(2,7)--(2,6)--(1,6);
\filldraw[fill=black!20!white, draw=black](3,5) -- (2,5) -- (2,6)--(3,6)--(3,5)--(2,5);
\filldraw[fill=black!20!white, draw=black](4,4) -- (3,4) -- (3,5)--(4,5)--(4,4)--(3,4);
\filldraw[fill=black!20!white, draw=black](5,3) -- (4,3) -- (4,4)--(5,4)--(5,3)--(4,3);
\filldraw[fill=black!20!white, draw=black](6,2) -- (5,2) -- (5,3)--(6,3)--(6,2)--(5,2);
\fill (1,1)  circle[radius=4pt];
\fill (2,1)  circle[radius=4pt];
\fill (3,1)  circle[radius=4pt];
\fill (4,1)  circle[radius=4pt];
\fill (5,1)  circle[radius=4pt];
\fill (6,1)  circle[radius=4pt];
\fill (1,2)  circle[radius=4pt];
\fill (1,3)  circle[radius=4pt];
\fill (1,4)  circle[radius=4pt];
\fill (1,5)  circle[radius=4pt];
\fill (1,6)  circle[radius=4pt];
\fill (1,7)  circle[radius=4pt];
\fill (1,8)  circle[radius=4pt];
\fill (6,2)  circle[radius=4pt];
\fill (6,3)  circle[radius=4pt];
\fill (6,4)  circle[radius=4pt];
\fill (6,5)  circle[radius=4pt];
\fill (6,6)  circle[radius=4pt];
\fill (6,7)  circle[radius=4pt];
\fill (6,8)  circle[radius=4pt];
\draw[ultra thick] (1,5) -- (2,5) -- (3,5) -- (4,4) -- (5,3) -- (6,3);
\end{tikzpicture}
\end{equation}
Here the light shadowed boxes indicate the $*$-representations $\pi_{j},\dots, \pi_{j+i}.$ They are drawn so that, if $\pi_{l}$ is the $m$'th tensor-factor in $\pi_{ji},$ then the box, that is $l$'th from the top and $m$'th from the left is shaded gray. As
$$
\pi_{24}(t_{46})=\sum_{k_{1},\dots, k_{4}}^{8}\pi_{2}(t_{2k_{1}})\otimes \pi_{3}(t_{k_{1}k_{2}})\otimes \pi_{4}(t_{k_{2}k_{3}})\otimes \pi_{5}(t_{k_{3}k_{4}})\otimes \pi_{6}(t_{k_{4}6})
$$
the path chosen in~\eqref{exam} corresponds to $k_{1}=4,k_{2}=4,k_{3}=5,k_{4}=6$ and to the operator 

$$ \pi_{2}(t_{44})\otimes \pi_{3}(t_{44})\otimes \pi_{4}(t_{45})\otimes \pi_{5}(t_{56})\otimes \pi_{6}(t_{66})=$$
$$I\otimes T_{22}\otimes T_{12}\otimes T_{12}\otimes T_{11}.$$ 
We define an admissible path in $\pi_{ji}(t_{ml})$ as a sequence of integers $\{m,k_{1},k_{2},\dots, k_{i},l\}$ such that
$$
\pi_{i}(t_{mk_{1}})\otimes \pi_{i+1}(t_{k_{1}k_{2}})\otimes \cdots \otimes \pi_{i+j}(t_{k_{i}l})\neq 0.
$$
In $\pi_{24},$ it is not hard to see that the path in~\eqref{exam} corresponding to $\{4,4,4,5,6,6\}$ is the only admissible path. We also see that there is no possible way to join $m$ with $m-2,$ as we can traverse at most once upwards in a diagonal. This holds also in general.
\begin{lem}\label{coopout}
For $j+i\leq m-1,$ let $\pi_{ji}$ (resp. $\pi_{ji}^{op}$) be the $*$-representation of $\mathbb{C}[SU_{m}]_{q}$ corresponding to $s_{j}s_{j+1}\dots s_{j+i}\in S_{m+1}$ (resp. $s_{j+i}s_{j+i-1}\cdots s_{j}\in S_{m}$). Then for any $1\leq k,l \leq m,$ there is at most one non-zero term in the sum $$\pi_{ji}(t_{ml})=\sum_{k_{1}k_{2},\dots, k_{i} =1}^{m}\pi_{j}(t_{mk_{1}})\otimes \pi_{j+1}(t_{k_{1}k_{2}})\otimes \dots \otimes \pi_{j+i}(t_{k_{i-1}l})$$
(resp. $$\pi_{ji}^{op}(t_{ml})=\sum_{k_{i},k_{i-1},\dots, k_{1}=1 }^{m}\pi_{j+i}(t_{mk_{i}})\otimes \pi_{j+i-1}(t_{k_{i}k_{i-1}})\otimes \dots \otimes \pi_{j}(t_{k_{i-1}l}) )$$ and we have $\pi_{ji}(t_{ml})=0$ if $m-l\geq 2$ (resp. $\pi_{ji}^{op}(t_{ml})=0$ if $l-m\geq 2$).
\end{lem}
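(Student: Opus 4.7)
The plan is to reduce the claim to a purely combinatorial analysis of the paths pictured in diagram~\eqref{picture}. By definition $\pi_{ji}=\pi_j\otimes\pi_{j+1}\otimes\cdots\otimes\pi_{j+i}$, so each summand in $\pi_{ji}(t_{ml})$ corresponds to a sequence $m=k_0,k_1,\ldots,k_i,k_{i+1}=l$ of row indices, and this summand is non-zero precisely when, at every step $r\in\{1,\dots,i+1\}$, either $k_{r-1}=k_r$ (the factor $\pi_{j+r-1}(t_{k_{r-1}k_r})$ is the scalar $I$ from the diagonal part of the embedding~\eqref{phi}) or $\{k_{r-1},k_r\}=\{j+r-1,j+r\}$ with $k_{r-1}\neq k_r$ (in which case the factor is one of $T_{12},T_{21}$). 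I will classify the admissible sequences and show that each endpoint $l$ determines the sequence uniquely.

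The key observation is that the ``active window'' $\{j+r-1,j+r\}$ advances upward by exactly one row as $r$ increases by one. Hence a path sitting at row $q$ can transition only at the two steps $r=q-j$ (an \emph{up} move $q\to q-1$) and $r=q-j+1$ (a \emph{down} move $q\to q+1$); at every other step it is forced to stay. Applied to the initial row $m$, the first non-trivial transition, if any, is either an up move at step $m-j$ ending at $m-1$, or a down move at step $m-j+1$ ending at $m+1$.

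Next I plan to verify the freezing behaviour. After an up move from $m$ to $m-1$ at step $m-j$, every subsequent window $\{j+r-1,j+r\}$ with $r>m-j$ consists of rows strictly greater than $m-1$, so no further transition is possible and the path terminates at $l=m-1$. After a down move to $m+1$ at step $m-j+1$, however, the immediately following window is $\{m+1,m+2\}$, allowing another down move; iterating, $s$ consecutive down moves produce the path ending at $m+s$, after which the window has slid past and the path freezes. Any ``stay'' inserted between or after down moves likewise disconnects the path from the window. Consequently the admissible endpoints reachable from $m$ are exactly $l=m-1$, $l=m$, and $l=m+s$ for $1\leq s\leq i+j+1-m$, and each endpoint determines the entire sequence of transition steps. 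This proves both the at-most-one-summand assertion and, since $l\geq m-1$ for every admissible path, the vanishing $\pi_{ji}(t_{ml})=0$ when $m-l\geq 2$.

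The statement for $\pi_{ji}^{op}$ follows by the mirror argument: the active window at step $r$ is now $\{j+i-r+1,j+i-r+2\}$, sliding \emph{downward} with $r$. The same analysis with the roles of up and down interchanged shows that up moves from $m$ can be iterated (yielding endpoints $m-s$ for $s\geq 0$) while down moves saturate after one occurrence (yielding endpoint $m+1$), so again each endpoint determines a unique admissible sequence and $\pi_{ji}^{op}(t_{ml})=0$ for $l-m\geq 2$. The only delicate point in the write-up is tracking when the candidate step indices $r=q-j$ and $r=q-j+1$ actually lie in the allowed range $\{1,\ldots,i+1\}$; this merely restricts the list of reachable endpoints and does not affect uniqueness.
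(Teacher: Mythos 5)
Your argument is correct, and it supplies the detail that the paper deliberately leaves to the reader. The paper states only that the lemma ``can be easily seen by drawing diagrams such as~\eqref{exam}'' and that the pictures can be converted to a formal proof ``by, for instance, using induction on~$i$.'' You instead give a direct, non-inductive combinatorial analysis of the admissible paths, built around the observation that the active window $\{j+r-1,j+r\}$ advances monotonically. That monotonicity immediately forces (i) at most one upward step before the window overtakes the path, (ii) consecutive downward steps with no interleaved stays, and (iii) $l\geq m-1$ for every admissible endpoint (and the mirror version for $\pi_{ji}^{op}$). Both uniqueness and the vanishing statement follow in one pass. The tradeoff is minor: the induction on $i$ the paper suggests would push the bookkeeping into the inductive hypothesis and avoid explicitly classifying all paths, whereas your direct argument produces the full classification of endpoints and the corresponding unique sequences, which is exactly the explicit information used later in the text (e.g., in the proof of Lemma~\ref{did}). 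Either route is sound; yours makes the key mechanism — that once the window passes a row, that row can never transition again — fully explicit rather than leaving it to pictorial intuition.

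One small point of care in a final write-up: the lemma statement has a notational slip, using $m$ both for the rank of $\mathbb{C}[SU_m]_q$ and for the first index in $t_{ml}$ (note the unused $k$ in ``for any $1\leq k,l\leq m$''). You correctly read the first index as a free parameter, which is consistent with the displayed equation~\eqref{copout} preceding the lemma, but it is worth flagging this to avoid confusing $m$ with the group size in the bound $j+i\leq m-1$. That bound constrains only where the window can live, not the starting row, and your remark at the end about tracking when $q-j$ and $q-j+1$ fall inside $\{1,\dots,i+1\}$ handles exactly this.
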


The above lemma can be easily seen to be true by drawing diagrams such as~\eqref{exam}, and it is not hard to make the pictorial intuition into a formal proof by, for instance, using induction on $i.$
\\

Hence by this lemma, it is very easy to calculate the action of $\pi_{ji}(t_{ml})$ explicitly as one only needs to draw the obvious path.
Another way to illustrate the action of $\pi_{ji}(t_{ml})$ is by using diagrams with arrows and hooks similar to the one that describes the action of the Fock representation of $\mathrm{Pol}(\mathrm{Mat}_{n})_{q}.$ Consider the row
$$
\begin{tikzpicture}[thick,scale=0.9]
\draw[step=1.0,black,thick] (1,1) grid (8,2);
\node at (1.5,0.5) {$2$};
\node at (2.5,0.5) {$\dots$};
\node at (3.5,0.5) {$j+1$};
\node at (4.5,0.5) {$\dots$};
\node at (5.5,0.5) {$j+i+1$};
\node at (6.5,0.5) {$\dots$};
\node at (7.5,0.5) {$m$};
\node at (0.5,1.5) {$1$};

\node at (8.5,1.5) {$m$};
\node at (1.5,2.4) {$1$};
\node at (2.5,2.4) {$\dots$};
\node at (3.5,2.4) {$j$};
\node at (4.5,2.4) {$\dots$};
\node at (5.5,2.4) {$j+i$};
\node at (6.5,2.4) {$\dots$};
\node at (7.5,2.4) {$m-1$};
\filldraw[fill=black!50!white, draw=black](1,2) -- (1,1) -- (2,1)--(2,2)--(1,2)--(1,1);
\filldraw[fill=black!50!white, draw=black](2,2) -- (2,1) -- (3,1)--(3,2)--(2,2)--(2,1);
\filldraw[fill=black!50!white, draw=black](6,2) -- (6,1) -- (7,1)--(7,2)--(6,2)--(6,1);
\filldraw[fill=black!50!white, draw=black](7,2) -- (7,1) -- (8,1)--(8,2)--(7,2)--(7,1);
\end{tikzpicture}
$$
where the dark gray squares again correspond to the trivial $*$-representation of $\mathbb{C}[SU_{2}]_{q}$ onto $\mathbb{C}$ and on the white squares the $*$-representation of $\mathbb{C}[SU_{2}]_{q}$ into $C^{*}(S)$ given by~\eqref{basrep}. Here the tensor factors are ordered from left to right, and we determine the image $\pi_{ji}(t_{kl})$ by connecting $k$ on the left or bottom side to the top or right side with the hooks and arrows.  So, in the previous example, the term corresponding to the admissible path $\pi_{24}(t_{t_{46}}),$ can be represented now as
$$
\begin{tikzpicture}[thick,scale=0.9]
\draw[step=1.0,black,thick] (1,1) grid (8,2);
\node at (1.5,0.5) {$2$};
\node at (2.5,0.5) {$3$};
\node at (3.5,0.5) {$4$};
\node at (4.5,0.5) {$5$};
\node at (5.5,0.5) {$6$};
\node at (6.5,0.5) {$7$};
\node at (7.5,0.5) {$8$};
\node at (0.5,1.5) {$1$};

\node at (8.5,1.5) {$8$};
\node at (1.5,2.4) {$1$};
\node at (2.5,2.4) {$2$};
\node at (3.5,2.4) {$3$};
\node at (4.5,2.4) {$4$};
\node at (5.5,2.4) {$5$};
\node at (6.5,2.4) {$6$};
\node at (7.5,2.4) {$7$};
\filldraw[fill=black!50!white, draw=black](1,2) -- (1,1) -- (2,1)--(2,2)--(1,2)--(1,1);
\filldraw[fill=black!50!white, draw=black](7,2) -- (7,1) -- (8,1)--(8,2)--(7,2)--(7,1);
\draw [->] (0.5+3,0.05+1)--(0.5+3,0.5+1)--(0.95+3,0.5+1);\draw [->] (0.05+4,0.5+1)--(0.95+4,0.5+1);\draw [->] (0.05+5,0.5+1)--(0.95+5,0.5+1);\draw [->] (0.05+6,0.5+1)--(0.5+6,0.5+1)--(0.5+6,0.95+1);
\end{tikzpicture}
$$
If we instead consider $s_{j+i}s_{j+i-1}\dots s_{j}\in S_{m},$ we can represent this similarly, but with a vertical block instead, and where the ordering of factors is made from top to bottom. If we apply $\tau_{\varphi}$ to one of the $C^{*}(S)$ factors, then as before, we can calculate the new $*$-representation by putting in a light gray square in that factor's place.
We leave it up to the readers to convince themselves that these two ways of doing calculations actually yield the same result.
\\

Some remarks about notations regarding tensor products: the Hilbert space $\ell^{2}(\mathbb{Z}_{+})^{\otimes j}$ will be given the basis $\{e_{\textbf{m}}\}_{\textbf{m}\in\mathbb{Z}_{+}^{j}},$ where $\textbf{m}=\{m_{1},m_{2},\dots,m_{j}\}\in\mathbb{Z}_{+}^{j}$ is a multi-index and $e_{\textbf{m}}=e_{m_{1}}\otimes e_{m_{2}}\otimes \dots\otimes e_{m_{j}}\in \ell^{2}(\mathbb{Z}_{+})^{\otimes j},$ where $\{e_{m}\}_{m=0}^{\infty}$ is the standard orthonormal basis of $\ell^{2}(\mathbb{Z}_{+}).$
For $m\in \mathbb{Z}_{+}$ and $1\leq l\leq j,$ let $(m)_{l}\in \mathbb{Z}_{+}^{k}$ be the multi-index with value $m$ at the $l$'th index and equal to zero otherwise. Let also $\textbf{0}=\{0,0,\dots,0\}\in \mathbb{Z}^{j}_{+},$ so that $e_{\textbf{0}}=e_{0}\otimes e_{0}\otimes \dots \otimes e_{0}\in\ell^{2}(\mathbb{Z}_{+})^{\otimes j}.$
\\

Let $\beta_{\varphi}:\mathrm{Pol}(\mathrm{Mat}_{n-1})_{q}\rightarrow \mathrm{Pol}(\mathrm{Mat}_{n-1})_{q}$ be the automorphism defined by
\begin{equation}\label{X}\beta_{\varphi}(z_{j}^{l})=\begin{cases} 
z_{j}^{i} & \text{ if $j\neq k-1$}\\
e^{i\varphi}z_{j}^{l} & \text{ if $j=k-1$}
\end{cases}
\end{equation}
By induction, we can now assume that $\pi'\circ \beta_{\varphi},$ where $\pi'$ is defined by~\eqref{pii}, can be lifted to a $*$-representation $\mu:\mathbb{C}[SU_{2(n-1)}]_{q}\rightarrow B(H)$ such that \begin{equation}\label{Y}\mu\circ \zeta=\pi'\circ \beta_{\varphi}.\end{equation}
We can then lift $\mu$ to a $*$-representation $\tilde{\pi}$ of $\mathbb{C}[SU_{2n}]_{q}$ using the map $\psi:\mathbb{C}[SU_{2n}]_{q}\rightarrow \mathbb{C}[SU_{2(n-1)}]_{q}$ given as
\begin{equation}\label{mappp}
\psi(t_{k,j})=
\begin{cases}
t_{k-1,j-1}& \text{ if $2\leq k,j\leq 2n-1$}\\
\delta_{k,j}& \text{otherwise}
\end{cases}
\end{equation}
so that \begin{equation}\label{Z}\tilde{\pi}:=\mu\circ \psi.\end{equation}
\begin{rem}
A sudden involvement of $\beta_{\varphi}$ may seem a bit unmotivated, but it is logical, given our informal discussion in section $3.$ If we look at the paths starting like
$$\begin{tikzpicture}[thick,scale=0.5]
\filldraw[fill=black!50!white, draw=black](1,0) -- (0,0) -- (0,1)--(1,1)--(1,0)--(0,0);
\filldraw[fill=black!50!white, draw=black](1,1) -- (0,1) -- (0,2)--(1,2)--(1,1)--(0,1);
\filldraw[fill=black!50!white, draw=black] (1,2) -- (0,2) -- (0,3)--(1,3)--(1,2)--(0,2);
\filldraw[fill=black!20!white, draw=black](2,0) -- (1,0) -- (1,1)--(2,1)--(2,0)--(1,0);
\draw (2,1) -- (1,1) -- (1,2)--(2,2)--(2,1)--(1,1);
\filldraw[fill=black!20!white, draw=black](2,2) -- (1,2) -- (1,3)--(2,3)--(2,2)--(1,2);
\draw (3,0) -- (2,0) -- (2,1)--(3,1)--(3,0)--(2,0);
\draw (3,1) -- (2,1) -- (2,2)--(3,2)--(3,1)--(2,1);
\draw (3,2) -- (2,2) -- (2,3)--(3,3)--(3,2)--(2,2);
\draw [->] (0.5,0.05)--(0.5,0.5)--(0.95,0.5);
\draw [->] (0.05+1,0.5)--(0.5+1,0.5)--(0.5+1,0.95);
\node at (0.5,-0.5) {$1$};
\node at (1.5,-0.5) {$2$};
\node at (2.5,-0.5) {$3$};
\node at (3.5,2.5) {$1$};
\node at (3.5,0.5) {$3$};
\node at (3.5,1.5) {$2$};
\end{tikzpicture}
$$
then when we use the scheme in Section $5.1$ to express the operators $\pi'(z_{1}^{1}),\pi'(z_{1}^{2})$ ($\pi'$ corresponding to the upper-right $2\times 2$ square) they get multiplied by $e^{-i\varphi}$ coming from they gray square at $(2,3)$. So if we want to get the $*$-representation corresponding to the upper right $2\times 2$ square, then we must multiply the first column in $\mathrm{Pol}(\mathrm{Mat}_{2})_{q}$ by $e^{i\varphi}$ in order to cancel this factor. 
\end{rem}
Now define a $*$-representation of $\mathbb{C}[\mathrm{SU}_{2n}]_{q}$ by the formula
\begin{equation}\label{diddi}
\lambda=(\tau_{\varphi}\circ \pi_{n+k-1})\otimes \pi_{n+k}\otimes\dots\otimes \pi_{2n-1}:\mathbb{C}[SU_{2n}]_{q}\rightarrow B(\ell^{2}(\mathbb{Z}_{+}))^{\otimes n-k}
\end{equation}
corresponding to the diagram
\begin{equation}\label{ulrik2}
\begin{tikzpicture}[thick,scale=0.9]
\draw[step=1.0,black,thick] (1,1) grid (8,2);
\node at (1.5,0.5) {$2$};
\node at (2.5,0.5) {$\dots$};
\node at (3.5,0.5) {$n+k$};
\node at (4.5,0.5) {$\dots$};
\node at (5.5,0.5) {$\dots$};
\node at (6.5,0.5) {$\dots$};
\node at (7.5,0.5) {$2n$};
\node at (0.5,1.5) {$1$};

\node at (8.5,1.5) {$2n$};
\node at (1.5,2.4) {$1$};
\node at (2.5,2.4) {$\dots$};
\node at (3.5,2.4) {$n+k-1$};
\node at (4.5,2.4) {$\dots$};
\node at (5.5,2.4) {$\dots$};
\node at (6.5,2.4) {$\dots$};
\node at (7.5,2.4) {$2n-1$};
\filldraw[fill=black!50!white, draw=black](1,2) -- (1,1) -- (2,1)--(2,2)--(1,2)--(1,1);
\filldraw[fill=black!50!white, draw=black](2,2) -- (2,1) -- (3,1)--(3,2)--(2,2)--(2,1);
\filldraw[fill=black!20!white, draw=black](3,2) -- (3,1) -- (4,1)--(4,2)--(3,2)--(3,1);
\end{tikzpicture}
\end{equation}
and consider the representation $$\lambda\otimes \tilde{\pi}:\mathbb{C}[SU_{2n}]_{q}\rightarrow B(\ell^{2}(\mathbb{Z}_{+}))^{\otimes n-k}\otimes B(H),$$
If we let $\Delta:=(\lambda\otimes \tilde{\pi})\circ\zeta,$ then the aim is to show that $\Delta\cong \pi.$
\begin{lem}\label{did}
We have $\ker \Delta(z_{k}^{n})^{*}=0$ and 
\begin{equation}\label{qqq}
\cap_{j=k+1}^{n}\ker \Delta(z_{j}^{n})^{*}=\langle e_{\textbf{0}}\rangle\otimes H
\end{equation}
and for $z_{m}^{j}\in \textbf{Z}\backslash\textbf{I}$ (recall $\textbf{I}=\{z_{k+1}^{n},\dots, z_{n}^{n}\}$) we have
\begin{equation}\label{qq}
\Delta(z_{m}^{j}) (e_{\textbf{0}}\otimes v)= e_{\textbf{0}}\otimes \pi(z_{m}^{j})v.
\end{equation}
Moreover, $\ell^{2}(\mathbb{Z}_{+})^{\otimes n-k}\otimes H$ is the closed linear span of vectors
$$
\begin{array}{ccc}\Delta(z(\textbf{m}))(e_{\textbf{0}}\otimes v),& v\in H, & \textbf{m}\in \mathbb{Z}_{+}^{n-k}.\end{array}  
$$
\end{lem}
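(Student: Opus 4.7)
The plan is to exploit the structure $\Delta = (\lambda \otimes \tilde\pi) \circ \zeta$ via the coproduct, together with the admissible-path calculus of Lemma~\ref{coopout}, to compute each $\Delta(z_m^j)$ explicitly enough to read off its action on $e_{\textbf{0}} \otimes H$. Concretely, I would first note that $\psi(t_{r, 2n}) = \delta_{r, 2n} I$ (since $2n \notin \{2, \dots, 2n-1\}$) collapses $\Delta(z_m^n)$ to $(-q)^{m-n} \lambda(t_{n+m, 2n}) \otimes I_H$, and that for $j < n$ one has $\tilde\pi(t_{r, n+j}) = 0$ at $r = 1, 2n$ while $\tilde\pi(t_{r, n+j}) = \mu(t_{r-1, n+j-1})$ otherwise; the inductive hypothesis $\mu \circ \zeta = \pi' \circ \beta_\varphi$ and Proposition~\ref{l2} then let me rewrite $\mu$ in terms of $\pi$. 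The key technical ingredient is that $\tau_\varphi$ annihilates $T_{12}$ and $T_{21}$ while sending $T_{11}, T_{22}$ to $e^{-i\varphi}, e^{i\varphi}$, so at the shaded first box of the grid~\eqref{ulrik2} only $T_{11}$ or $T_{22}$ paths survive.

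For the first two assertions I would determine the unique admissible path for $\lambda(t_{n+j, 2n})$: for $j = k$ it passes through $T_{22}$ at the shaded box and $T_{12}$'s thereafter, and for $k < j \le n$ it stays at row $n+j$ via identities up to the box for $\pi_{n+j-2}$, transitions via $T_{22}$, and continues with $T_{12}$'s. Using $T_{12} = -qD_q$ and multiplying by the $(-q)^{j-n}$ prefactor yields the explicit formulas
\[
\Delta(z_k^n) = e^{i\varphi} D_q^{\otimes n-k} \otimes I_H,\qquad \Delta(z_j^n) = I^{\otimes j-k-1} \otimes T_{22} \otimes D_q^{\otimes n-j} \otimes I_H\quad (j > k).
\]
Since $D_q$ is positive and injective, $\ker \Delta(z_k^n)^* = \{0\}$; and since $\ker T_{22}^* = \langle e_0 \rangle$, intersecting $\ker \Delta(z_j^n)^*$ over $j = k+1, \dots, n$ forces every $\ell^2$-factor into $\langle e_0 \rangle$, giving exactly $\langle e_{\textbf{0}} \rangle \otimes H$.

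For $z_m^j \in \textbf{Z} \setminus \textbf{I}$ a path analysis pinpoints, for each $m$, the (at most) one value of $r$ for which $\lambda(t_{n+m, r}) e_{\textbf{0}}$ lies in $\langle e_{\textbf{0}} \rangle$ and is nonzero: $r = n+m$ for $m < k$ (trivial path, scalar from $\tau_\varphi$), $r = 2n$ for $m = k$ (the full $T_{22}/T_{12}$ path), and $r = n+m-1$ for $m > k$ (backward-jump via $T_{21}$ at the appropriate position). All other nonzero contributions to $\lambda(t_{n+m, r}) e_{\textbf{0}}$ occur at $r = 2n$ with values outside $\langle e_{\textbf{0}} \rangle$, and those are killed by $\tilde\pi(t_{2n, n+j}) = 0$ for $j < n$. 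Collecting the surviving term and substituting the inductive formula for $\mu(t_{r-1, n+j-1})$ in terms of $\pi'(\beta_\varphi(\cdot))$ yields, after matching the $(-q)$-powers and phases built into $\pi'$ and $\beta_\varphi$, the required identity $\Delta(z_m^j)(e_{\textbf{0}} \otimes v) = e_{\textbf{0}} \otimes \pi(z_m^j) v$. The boundary cases $j = n, m \leq k$ are immediate: for $m = k$ from Step~1 combined with $\pi(z_k^n)|_H = e^{i\varphi} I$, and for $m < k$ from $\lambda(t_{n+m, 2n}) = 0$ together with $\pi(z_m^n)|_H = 0$ (Lemma~\ref{miss}).

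Finally, the spanning claim follows by normally-ordering $\Delta(z(\textbf{m}))$ via the commutation $D_q T_{22} = q T_{22} D_q$ and observing that, since $\{T_{22}^p e_0 : p \in \mathbb{Z}_+\}$ is a set of nonzero multiples of the orthonormal basis $\{e_p\}$ of $\ell^2(\mathbb{Z}_+)$, the vectors $\Delta(z(\textbf{m}))(e_{\textbf{0}} \otimes v)$ are nonzero multiples of $e_{m_1} \otimes \cdots \otimes e_{m_{n-k}} \otimes v$ and hence span a dense subspace as $\textbf{m}$ and $v$ vary. The main technical obstacle will be the third step: the path analysis for $\lambda(t_{n+m, r}) e_{\textbf{0}}$ must be carried out separately in the ranges $m < k-1$, $m = k-1$, $m = k$, $m = k+1$, and $m > k+1$, and the resulting scalars must be tracked through the inductive identification $\mu \circ \zeta = \pi' \circ \beta_\varphi$ to ensure perfect cancellation of the $(-q)$-powers and $e^{\pm i\varphi}$ phases so that only $\pi(z_m^j)v$ survives on the right.
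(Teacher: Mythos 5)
Your proposal is correct and follows essentially the same route as the paper: decompose $\Delta(z_m^j) = (-q)^{m-n}\sum_r \lambda(t_{m+n,r})\otimes\tilde\pi(t_{r,j+n})$, compute $\lambda(t_{m+n,r})e_{\textbf{0}}$ case by case via the path calculus on the grid~\eqref{ulrik2}, and feed the surviving term through $\tilde\pi=\mu\circ\psi$ and the inductive identity $\mu\circ\zeta=\pi'\circ\beta_\varphi$. Your explicit formulas $\Delta(z_k^n)=e^{i\varphi}D_q^{\otimes n-k}\otimes I$ and $\Delta(z_j^n)=I^{\otimes j-k-1}\otimes T_{22}\otimes D_q^{\otimes n-j}\otimes I$ (for $j>k$) are correct and in fact fix a small indexing typo in the paper's displayed version of the latter.
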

\begin{proof}
We start with~\eqref{qq}. We have $\zeta(z_{m}^{j})=(-q)^{m-n}t_{m+n,j+n}$ (see~\eqref{zetaa}) and hence 
\begin{equation}\label{ccc}
\Delta(z_{m}^{j})=(-q)^{m-n}\sum_{r=1}^{2n}\lambda(t_{m+n,r})\otimes \tilde{\pi}(t_{r,j+n})
\end{equation}
If $1\leq m\leq k-2,$ then $\lambda(t_{m+n,r})=\delta_{m+n,r}I$ so that, by~\eqref{X},~\eqref{Y} and~\eqref{Z}, the right hand side of~\eqref{ccc} becomes
$$
\Delta(z_{m}^{j})=(-q)^{m-n}I\otimes \tilde{\pi}(t_{m+n,j+n})=(-q)^{m-n}I\otimes (\mu\circ \psi)(t_{m+n,j+n})=
$$
$$
=(-q)^{m-n}I\otimes \mu(t_{m+n-1,j+n-1})=(-q)^{-1}I\otimes (\pi'\circ \beta_{\varphi})(z_{m}^{j})=
$$
$$
=(-q)^{-1}I\otimes \pi'(z_{m}^{j})=I\otimes \pi(z_{m}^{j})
$$
proving~\eqref{qq} in this case. If $m=k-1,$ then similar arguments give $\lambda(t_{k-1+n,r})=e^{-i\varphi}\delta_{k-1+n,r}I ,$ and so
$$
\Delta(z_{k-1}^{j})=e^{-i\varphi}(-q)^{k-1-n}I\otimes \tilde{\pi}(t_{k-1+n,j+n})=
$$
$$
=e^{-i\varphi}(-q)^{k-1-n}I\otimes (\mu\circ \psi)(t_{k-1+n,j+n})=e^{-i\varphi}(-q)^{k-1-n}I\otimes \mu(t_{k-1+n-1,j+n-1})=
$$
$$
=e^{-i\varphi}(-q)^{-1}I\otimes (\pi'\circ \beta_{\varphi})(z_{k-1}^{j})=(-q)^{-1}I\otimes \pi'(z_{k-1}^{j})=I\otimes \pi(z_{k-1}^{j})
$$
 proving~\eqref{qq}.
\\

When $m=k,$ then as $\tilde{\pi}(t_{r,2n})=\delta_{r,2n}I,$ we obtain by~\eqref{ccc} that
$$
\Delta(z_{k}^{n})=(-q)^{k-n}\lambda(t_{k+n,2n})\otimes I=e^{i\varphi}D_{q}\otimes D_{q}\otimes \dots \otimes D_{q}\otimes I
$$
(where $D_{q}$ is given by) proving~\eqref{qq} and that $\ker \Delta(z_{k}^{n})^{*}=0$. By Lemma~\ref{miss} we get $\Delta(z_{j}^{n})=0$ for $1\leq j \leq k-1.$ As $\pi(z_{j}^{n})=0$ for those values of $j,$ we obtain 
$$\Delta(z_{j}^{n})(e_{\textbf{0}}\otimes v) =0=e_{\textbf{0}}\otimes \pi(z_{j}^{n})v$$
and so~\eqref{qq} holds in this case.
\\

If $1\leq j<n$ then 
$$
\Delta(z_{k}^{j})=(-q)^{k-n}\sum_{r=1}^{2n}\lambda(t_{k+n,r})\otimes \tilde{\pi}(t_{r,j+n})=
$$
$$
(-q)^{k-n}\sum_{r=1}^{2n-1}\lambda(t_{k+n,r})\otimes \tilde{\pi}(t_{r,j+n})+\lambda(t_{k+n,2n})\otimes \tilde{\pi}(t_{2n,j+n})=
$$
$$
=(-q)^{k-n}\sum_{r=1}^{2n-1}\lambda(t_{k+n,r})\otimes \tilde{\pi}(t_{r,j+n}).
$$
A calculation using~\eqref{ulrik2} gives that unless $r=2n,$ the operator $\lambda(t_{k+n,r}),$ if non-zero, will contain $T_{11}$ as a factor, thus annihilates $e_{\textbf{0}}.$ Hence
$$
(-q)^{k-n}\sum_{r=2}^{n+k-1}\lambda(t_{k+n,r})\otimes \tilde{\pi}(t_{r,j+n})(e_{\textbf{0}}\otimes v)=0
$$
for all $v\in H,$ and thus equal to $e_{\textbf{0}}\otimes \pi(z_{k}^{j})v$ by~\eqref{zero}.
\\

Finally, if $z_{m}^{j}\in \textbf{Z}\backslash \{z_{k+1}^{n},\dots,z_{n}^{n}\} $ with $ m \geq k+1,$ then we have
$$
\Delta(z_{m}^{j})=(-q)^{k-n}\sum_{r=1}^{2n}\lambda(t_{m+n,r})\otimes \tilde{\pi}(t_{r,j+n})=
$$
$$
=(-q)^{m-n}\sum_{r=1}^{2n-1}\lambda(t_{m+n,r})\otimes \tilde{\pi}(t_{r,j+n}).
$$
Again, we see from~\eqref{ulrik2} that if $r\neq m+n-1,$ then we have a $S^{*}C_{q}$ factor in $\lambda(t_{m+n,r})$ if it is non-zero, and in the case $r=m+n-1,$ we have $\lambda(t_{m+n,r})e_{\textbf{0}}=e_{\textbf{0}}.$ We obtain
$$
(-q)^{m-n}\sum_{r=1}^{2n-1}\lambda(t_{m+n,r})\otimes \tilde{\pi}(t_{r,j+n})(e_{\textbf{0}}\otimes v)=
$$
$$
=(-q)^{m-n}\lambda(t_{m+n,m+n-1})\otimes\tilde{\pi}(t_{m+n-1,j+n})(e_{\textbf{0}}\otimes v)=
$$
$$
=e_{\textbf{0}}\otimes ((-q)^{m-n}\tilde{\pi}(t_{m+n-1,j+n}))v=e_{\textbf{0}}\otimes ((-q)^{m-n}\mu\circ\psi(t_{m+n-1,j+n}))v=
$$
$$
=e_{\textbf{0}}\otimes ((-q)^{m-n}\mu(t_{m+n-2,j+n-1})v)=e_{\textbf{0}}\otimes ((-q)^{(m-1)-(n-1)}\mu(t_{(m-1)+(n-1),j+(n-1)})v)=
$$
$$
=e_{\textbf{0}}\otimes (\pi\circ \beta_{\varphi})(z_{m-1}^{j}))v=e_{\textbf{0}}\otimes (\pi(z_{m-1}^{j}))v=
$$
$$
=e_{\textbf{0}}\otimes \pi(z_{m}^{j})v.
$$
and hence~\eqref{qq} is proven for all $z_{j}^{l}\in \textbf{Z}\backslash \textbf{I}.$
\\

Now we prove the second claim. We have
$$\Delta(z_{m}^{n})=(-q)^{m-n}\sum_{r=1}^{2n}\lambda(t_{m+n,r})\otimes \tilde{\pi}(t_{r,2n})=$$
$$
=(-q)^{m-n}\lambda(t_{m+n,2n})\otimes I
$$
as $\tilde{\pi}(t_{r,2n})=\delta_{r,2n}I.$ For $k+1\leq m\leq n$ we use~\eqref{ulrik2} to get $$(-q)^{m-n}\lambda(t_{m+n,2n})\otimes I=\underbrace{I\otimes \dots \otimes I}_{\text{$m-k+1$ times}}\otimes C_{q}S\otimes \underbrace{D_{q}\otimes  \dots \otimes D_{q}}_{\text{$n-m$ times}}\otimes I$$
and hence~\eqref{qqq}. As $\Delta(z(\textbf{m})) (e_{\textbf{0}}\otimes v)$ is a nonzero multiple of $e_{\textbf{m}}\otimes v,$ and these vectors obviously span $\ell^{2}(\mathbb{Z}_{+})^{\otimes n-k}\otimes H,$ we obtain the last statement.  
\end{proof}
\begin{prop}\label{robert}
 The $*$-representations $\Delta$ and $\pi$ are equivalent by the unitary isometry $U:K\rightarrow \ell^{2}(\mathbb{Z}_{+})^{\otimes n-k}\otimes H$ given by
\begin{equation}\label{bosch}
\pi(z(\textbf{m}))v\mapsto \Delta(z(\textbf{m}))(e_{\textbf{0}}\otimes v)
\end{equation}
for $v\in H$ and $\textbf{m}\in \mathbb{Z}_{+}^{n-k}.$
\end{prop}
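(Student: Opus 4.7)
The plan is to verify that $U$ is well-defined and isometric on a dense subspace, extends to a unitary, and intertwines $\pi$ and $\Delta$.

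First I would establish isometry and well-definedness by appealing to Lemmas~\ref{linj} and~\ref{linj2}. Both pairs $(\pi,H)$ and $(\Delta,\langle e_{\textbf{0}}\rangle\otimes H)$ satisfy the vacuum hypothesis of Lemma~\ref{linj} for the set $\textbf{I}=\{z_{k+1}^{n},\dots,z_{n}^{n}\}$: for $\pi$ this holds by definition of $H$, and for $\Delta$ it follows from Lemma~\ref{did}(2). The unique linear functional $\gamma_{\textbf{I}}$ on $\mathcal{C}(\textbf{I})$ therefore computes the relevant inner products on both sides, yielding
\begin{align*}
\langle \pi(z(\textbf{m}))v,\pi(z(\textbf{n}))w\rangle &=\gamma_{\textbf{I}}(z(\textbf{n})^{*}z(\textbf{m}))\langle v,w\rangle \\
&=\langle \Delta(z(\textbf{m}))(e_{\textbf{0}}\otimes v),\Delta(z(\textbf{n}))(e_{\textbf{0}}\otimes w)\rangle
\end{align*}
for all $v,w\in H$ and multi-indices $\textbf{m},\textbf{n}\in \mathbb{Z}_{+}^{n-k}$. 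Extending~\eqref{bosch} by linearity to finite sums shows that the assignment is well-defined and inner-product preserving. By Lemma~\ref{molly} such vectors span a dense subspace of $K$, while the last statement of Lemma~\ref{did} guarantees that their images span a dense subspace of $\ell^{2}(\mathbb{Z}_{+})^{\otimes n-k}\otimes H$, so $U$ extends to a unitary isomorphism of Hilbert spaces.

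Next I would verify that $U$ intertwines the action of the generators $z_{m}^{j}\in \textbf{Z}$. Fix a basis vector $\pi(z(\textbf{n}))v$ with $v\in H$. The element $z_{m}^{j}z(\textbf{n})$ lies in $\mathbb{C}[\mathrm{Mat}_{n}]_{q}$, so by Lemma~\ref{span1} it can be written as $\sum_{i} c_{i}\,z(\textbf{m}_{i})b_{i}$, where each $b_{i}$ is a monomial in $\textbf{Z}\setminus\textbf{I}$. As already observed in the proofs of Proposition~\ref{l2} and Lemma~\ref{molly}, $H$ is invariant under the $\pi$-image of every generator in $\textbf{Z}\setminus\textbf{I}$; moreover Lemma~\ref{did}(3) gives $\Delta(z_{r}^{l})(e_{\textbf{0}}\otimes u)=e_{\textbf{0}}\otimes \pi(z_{r}^{l})u$ for $z_{r}^{l}\in \textbf{Z}\setminus\textbf{I}$ and $u\in H$. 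Iterating this relation yields $\Delta(b_{i})(e_{\textbf{0}}\otimes v)=e_{\textbf{0}}\otimes \pi(b_{i})v$ for each monomial $b_{i}$, and hence
\begin{align*}
U\pi(z_{m}^{j}z(\textbf{n}))v &=\sum_{i}c_{i}\,\Delta(z(\textbf{m}_{i}))(e_{\textbf{0}}\otimes \pi(b_{i})v) \\
&=\Delta(z_{m}^{j}z(\textbf{n}))(e_{\textbf{0}}\otimes v)=\Delta(z_{m}^{j})U(\pi(z(\textbf{n}))v).
\end{align*}
The identity $U\pi(z_{m}^{j})=\Delta(z_{m}^{j})U$ then extends to all of $K$ by linearity and continuity.

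Finally, intertwining with the adjoint generators is automatic from unitarity of $U$: from $U\pi(z_{m}^{j})U^{-1}=\Delta(z_{m}^{j})$ one obtains $U\pi((z_{m}^{j})^{*})U^{-1}=\Delta((z_{m}^{j})^{*})$ by taking adjoints. The main technical obstacle is the second step, where one must combine the Bergman-basis rewriting of Lemma~\ref{span1} with an inductive application of Lemma~\ref{did}(3), using at each inductive step the invariance of $H$ under the $\pi$-images of all generators outside $\textbf{I}$ so that Lemma~\ref{did}(3) continues to apply to the newly produced vector $\pi(b')v\in H$.
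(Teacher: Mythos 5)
Your proposal is correct and follows essentially the same route as the paper's proof: Lemmas~\ref{linj} and~\ref{linj2} give the isometry, Lemma~\ref{molly} and Lemma~\ref{did} give density on both sides, and the intertwining with $\textbf{Z}$ is obtained by rewriting $z_m^j z(\textbf{n})$ via Lemma~\ref{span1} and pushing~\eqref{qq} through. The only cosmetic difference is that the paper treats $z_m^j\in(\textbf{Z}\backslash\textbf{I})\cap\{z_1^n,\dots,z_{k-1}^n\}$ as a separate case (where both sides vanish), whereas you absorb it into the general rewriting argument; both are valid.
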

\begin{proof}
By Lemma~\ref{linj} and Lemma~\ref{linj2}, for all $v\in H$ $$||\pi(z(\textbf{m})) v||^{2}=\langle z(\textbf{m}),z(\textbf{m}) \rangle_{\textbf{I}}||v||^{2}$$ and $\langle z(\textbf{m}),z(\textbf{m}) \rangle_{\textbf{I}}\neq 0.$ Hence $\pi(z(\textbf{m})) v=\pi(z(\textbf{m}))u$ $u,v\in H$ if and only if $v=u.$ It then follows from Proposition~\ref{molly} that $U$ is well defined on the dense subspace, and hence if we can show that $U$ is an isometry on this dense subspace, then it can be extended to an isometry on the whole of $K.$ By applying Lemma~\ref{linj} again to both $(H,\textbf{I})$ and $(e_{\textbf{0}}\otimes H,\textbf{I}),$ we see that
$$
\langle \pi(z(\textbf{m}))v,\pi(z(\textbf{m}')) u\rangle=\gamma_{\textbf{I}}(z(\textbf{m}')^{*}z(\textbf{m}))\langle v,u\rangle=
$$
$$
=\gamma_{\textbf{I}}(z(\textbf{m}')^{*}z(\textbf{m}))\langle e_{\textbf{0}}\otimes v,e_{\textbf{0}}\otimes u\rangle=
$$
$$
=\langle \Delta(z(\textbf{m}))(e_{\textbf{0}}\otimes v),\Delta(z(\textbf{m}')) (e_{\textbf{0}}\otimes u)\rangle=
$$
$$
=\langle U (\pi(z(\textbf{m}))v),U(\pi(z(\textbf{m}')) u)\rangle.
$$
Hence, by linearity, we can extend $U$ to an isometry. By Lemma~\ref{did}, we can now conclude that $U$ is surjective and thus it follows that $U$ is a unitary isometry.
\\

To prove that $U$ induces an isomorphism we only need to prove that $U\pi(z_{m}^{j})= \Delta(z_{m}^{j})U$ for all $z_{m}^{j}\in \textbf{Z}.$ Note that this is true by the construction of $U$ for the elements in $\textbf{I}.$ By Lemma~\ref{did}, for $z_{m}^{j}\in(\textbf{Z}\backslash\textbf{I})\cap \{z_{1}^{n},\dots z_{k-1}^{n}\}$ we have $\pi(z_{m}^{j})=0$ and $\Delta(z_{m}^{j})=0,$ so obviously in this case $U\pi(z_{m}^{j})=\Delta(z_{m}^{j})U.$ If $z_{m}^{j}\in\textbf{Z}\backslash(\textbf{I}\cup \{z_{1}^{n},\dots z_{k-1}^{n}\}),$ then first we apply the same arguments that were used in the proof of Lemma~\ref{molly} to see that $$z_{m}^{j}z(\textbf{m})=\sum_{i} z(\textbf{m}_{i})b_{i}$$ for some $b_{i}$ in the unital algebra generated by $\textbf{Z}\backslash\textbf{I}.$ Then
$$
U \pi(z_{m}^{j})\pi(z(\textbf{m})) v=U\sum_{i} \pi(z(\textbf{m}_{i}))\pi(b_{i}) v=
$$
$$
=\sum_{i} \Delta(z(\textbf{m}_{i})) (U\pi(b_{i}) v)=
$$
$$
=\sum_{i} \Delta(z(\textbf{m}_{i}))(e_{\textbf{0}}\otimes(\pi(b_{i})v))=\left(\sum_{i} \Delta(z(\textbf{m}_{i})b_{i})\right)\cdot(e_{\textbf{0}}\otimes v)=
$$
$$
=\Delta(z_{m}^{j} (z(\textbf{m})) (e_{\textbf{0}}\otimes v))= \Delta(z_{m}^{j}) U(\pi(z(\textbf{m}))v)
$$
where in the fourth equality we used~\eqref{qq}. This concludes the proof of case $\textbf{A}$.
\end{proof}
\subsection{The Case $\textbf{B}$}
Assume now that the irreducible $*$-representation $\pi:\mathrm{Pol}(\mathrm{Mat}_{n})_{q}\to B(K)$ has the property that $\ker \pi(z_{n}^{k})^{*}\neq \{0\}$ and $\ker \pi(z_{k}^{n})^{*}\neq \{0\}$ for $1\leq k\leq n.$ In this case the reduction from $n$ to $n-1$ is much more straightforward. We will also be re-using many of the arguments from the case $\textbf{A},$ so at some places in the proofs, we will simply refer to Section $5.3.$ 
\\

Throughout this section, we let 
$$
\textbf{I}=\{z_{1}^{n},\dots , z_{n-1}^{n},z_{n}^{n},\dots, z_{n}^{1}\}
$$
and, as before, we use the multi-index notation: for $\textbf{m}=(m_{1},\dots, m_{2n-1})\in \mathbb{Z}^{2n-1}$
$$
z(\textbf{m})=(z_{1}^{n})^{m_{1}}\cdot\dots \cdot (z_{n-1}^{n})^{m_{n-1}}(z^{n}_{n})^{m_{n}}\cdot \dots \cdot (z_{n}^{1})^{m_{2n-1}}.
$$
We start by proving the following:

\begin{lem}
The subspace $H:=(\cap_{k=1}^{n}\ker \pi(z_{n}^{k})^{*})\cap (\cap_{k=1}^{n}\ker \pi(z_{k}^{n})^{*})$ is non-trivial.
\end{lem}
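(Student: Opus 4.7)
The plan is to construct $H$ by a two-stage iterative vacuum-extraction procedure: first show that the ``column vacuum'' $V := \bigcap_{k=1}^n \ker\pi(z_k^n)^*$ is non-zero by an argument modeled on Lemma~\ref{polly}, and then, inside $V$, show that $H = V \cap \bigcap_{k=1}^{n-1}\ker\pi(z_n^k)^*$ is non-zero by the same argument applied to the row family. Concretely for the first stage: by hypothesis $\ker\pi(z_n^n)^* \neq \{0\}$, so there is a smallest $m \geq 1$ for which $L := \bigcap_{j=m}^n \ker\pi(z_j^n)^*$ is non-zero. If $m=1$ we are done, so suppose $m>1$. Following Lemma~\ref{polly}, $L$ reduces $\pi(z_{m-1}^n)$ and on $L$ the relation \eqref{zaa44} collapses to the $\mathrm{Pol}(\mathbb{C})_q$-relation $(z_{m-1}^n)^*z_{m-1}^n|_L = q^2 z_{m-1}^n(z_{m-1}^n)^*|_L + (1-q^2)I|_L$ because $(z_j^n)^*|_L = 0$ for $j \geq m$. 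The minimality of $m$ gives $L\cap \ker\pi(z_{m-1}^n)^* = \{0\}$, so Lemma~\ref{polclem} forces $\pi(z_{m-1}^n)|_L$ to be a unitary isometry, and the cascade argument of Lemma~\ref{polly} then yields $\pi(z_l^n)|_L = 0$ for $l < m-1$.

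The main obstacle is to exploit the case $\textbf{B}$ hypothesis $\ker\pi(z_{m-1}^n)^* \neq \{0\}$ to derive a contradiction. The natural strategy is to show that $L$, or a suitable enlargement of it, is a non-trivial proper $\pi$-reducing subspace, contradicting irreducibility of $\pi$: invariance of $L$ under $\pi(z_k^j)$ and $\pi((z_k^j)^*)$ for $j \neq n$ follows directly from \eqref{zaa41}--\eqref{zaa43} and \eqref{zaa1*}--\eqref{zaa2*}, since $(z_i^n)^*$ $q$-commutes past such generators and annihilates vectors of $L$. The delicate case is invariance under $\pi(z_l^n)$ for $l \geq m$: the relation \eqref{zaa44} gives $(z_l^n)^*z_l^n v = (1-q^2)v$ for $v \in L$, so $L$ is not invariant under $\pi(z_l^n)$ on the nose. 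To close the argument I would consider the closed linear span $\tilde L$ generated by $L$ together with all iterated images $\pi(z_{l_1}^n)\cdots\pi(z_{l_k}^n)L$ for $l_i \geq m$, and exploit the $q$-commutation $z_{m-1}^n z_l^n = q z_l^n z_{m-1}^n$ together with unitarity of $\pi(z_{m-1}^n)|_L$ to show $\tilde L$ is $\pi$-reducing and proper, the spectrum of $\pi(z_{m-1}^n)$ on the pieces $\pi(z_{l_1}^n)\cdots \pi(z_{l_k}^n)L$ being concentrated on the circle of radius $q^k$ and thus disjoint from the unit-circle spectrum on $L$. Irreducibility then gives the desired contradiction, so $m = 1$ and $V \neq \{0\}$.

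Once $V \neq \{0\}$ is in hand, the second stage proceeds inside $V$: this subspace is invariant under $\pi(z_n^k)$ and $\pi((z_n^k)^*)$ for $k < n$ (by \eqref{zaa41}--\eqref{zaa42} and \eqref{zaa1*}--\eqref{zaa2*}), and on $V$ the family $\{z_n^k : k < n\}$ satisfies the $\mathrm{Pol}(\mathbb{C}^{n-1})_q$-type relations because the correction $z_n^n(z_n^n)^*$ vanishes on $V$. Rerunning the argument of the previous two paragraphs with the row family inside $V$ yields $H = V \cap \bigcap_{k=1}^{n-1}\ker\pi(z_n^k)^* \neq \{0\}$, which is the desired conclusion. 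The main technical step throughout is the construction of the enlarged reducing subspace in the failure mode, where the $\mathrm{Pol}(\mathbb{C})_q$-structure on $L$ must be propagated through the $q$-commutation relations in a spectrally controlled way so that irreducibility of $\pi$ forces the contradiction.
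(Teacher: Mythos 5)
Your plan takes a genuinely different route from the paper, and it contains a gap at the crucial step. The paper does not attempt a two-stage construction or exhibit a proper reducing subspace $\tilde L$ directly. Instead, having found an extremal pair of intersections $J$ on which some generator $w$ (one of the $z_m^n$ or $z_n^m$) restricts to a unitary, it forms the self-adjoint commutator $R = ww^*-w^*w$, notes that $J\subseteq\ker R$ so $\ker R\neq\{0\}$, and then proves that $\ker R$ \emph{reduces} $\pi$ by checking that the generators commute (or $q$-commute) with $R$ — the self-adjointness of $R$ means an operator commuting with $R$ automatically reduces $\ker R$, sparing a separate check of $\mathbf{Z}$ and $\mathbf{Z}^*$. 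Irreducibility then gives $\ker R = K$, i.e.\ $R=0$, so $w$ is \emph{normal}; the Fuglede--Putnam theorem applied as in Proposition~\ref{l2} makes the polar isometry of $w$ central, and the decomposition $K=\ker U\oplus UK$ yields a contradiction with the case $\textbf B$ hypotheses. This normality-plus-Fuglede mechanism is the missing key idea in your proposal.

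The specific gap in your approach: you assert that $L$ (and later $\tilde L$) is invariant under $\pi(z_k^j)^*$ for $j\neq n$ because ``$(z_i^n)^*$ $q$-commutes past such generators and annihilates $L$,'' citing only \eqref{zaa1*} and \eqref{zaa2*}. This is false when $k=m-1$: for $l\geq m$, $k=m-1<l$, and $j<n$, the relation is \eqref{zaa3*}, which contains the correction term
\begin{equation*}
(z_l^n)^*(z_{m-1}^j)^* \;=\; (z_{m-1}^j)^*(z_l^n)^* + (q-q^{-1})\,(z_l^j)^*(z_{m-1}^n)^*.
\end{equation*}
On $v\in L$ the first summand vanishes, but $(z_{m-1}^n)^*v\in L$ is \emph{not} zero (since $z_{m-1}^n|_L$ is unitary), so $(z_l^n)^*(z_{m-1}^j)^*v$ has no reason to vanish. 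Hence $L$ need not be $(z_{m-1}^j)^*$-invariant, and since $\tilde L$ is generated from $L$ only by the $z_l^n$ ($l\geq m$), the same problem propagates: you have not established that $\tilde L$ is $\mathbf{Z}^*$-invariant, which is essential for your contradiction. Additionally, the cascade of Lemma~\ref{polly} gives $(z_l^n)^*|_L=0$, not $\pi(z_l^n)|_L=0$ as you write (the latter is the conclusion of Lemma~\ref{miss}, which requires $\ker\pi(z_{m-1}^n)^*=\{0\}$ globally, precisely what case $\textbf B$ denies). The spectral-separation idea for properness is attractive and could likely be made rigorous modulo an orthogonality lemma in the style of Lemma~\ref{linj}, but it is moot until the reducing property is secured. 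Rather than patching $\tilde L$, I'd suggest importing the paper's $R$-commutator argument: it both sidesteps the $\mathbf{Z}^*$-invariance difficulty and avoids the need to show properness at all.
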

\begin{proof}
For the simplicity, we suppress $\pi$ and write simply $a$ for the image $\pi(a),a\in \mathrm{Pol}(\mathrm{Mat}_{n})_{q}.$
Assume that $(\cap_{k=1}^{n}\ker (z_{n}^{k})^{*})\cap (\cap_{k=1}^{n}\ker (z_{k}^{n})^{*})=\{0\}$ and let $0\leq m,j < n$ be integers such that
$$(\cap_{k=m+1}^{n}\ker (z_{n}^{k})^{*})\cap (\cap_{k=j+1}^{n}\ker (z_{k}^{n})^{*})\neq \{0\}$$
but $$(\cap_{k=m}^{n}\ker (z_{n}^{k})^{*})\cap (\cap_{k=j+1}^{n}\ker (z_{k}^{n})^{*})=\{0\}$$ if $m>0$ and $$(\cap_{k=m+1}^{n}\ker (z_{n}^{k})^{*})\cap (\cap_{k=j}^{n}\ker (z_{k}^{n})^{*})=\{0\}$$ if $j>0.$
To be more precise, we assume that $m>0$ and let $$J:=(\cap_{k=m+1}^{n}\ker (z_{n}^{k})^{*})\cap (\cap_{k=j+1}^{n}\ker (z_{k}^{n})^{*}).$$
Then as $\ker (z_{m}^{n})^{*}\cap J=0$ we get, by the same argument as in Lemma~\ref{polly}, that $J$ reduces $z_{m}^{n}$ and $z_{m}^{n}|_{J}$ is unitary. This shows that the operator $$R:=z_{m}^{n}(z_{m}^{n})^{*}-(z_{m}^{n})^{*}z_{m}^{n}$$ has a non-trivial kernel, as it clearly contains $J.$ 
\\

\textit{Claim: $\ker R$ is reducing for $\pi.$} \\
As $\pi$ is irreducible, the claim would give $R=0$ and hence $z_{m}^{n}$ is normal. By using Fuglede-Putnam's theorem in a similar way as in the proof of Proposition~\ref{l2}, we can deduce that the normal partial isometry $U$ in the polar decomposition of $z_{m}^{n}$ is commuting with the image of $\mathrm{Pol}(\mathrm{Mat}_{n})_{q}$ under $\pi.$ This gives that $K=\ker U \oplus U K$ is a decomposition of $K$ into orthogonal subspaces, both reducing $\pi.$ Hence one of the subspaces must be trivial. But if $\ker U$ is trivial, then the kernel of $(z_{m}^{n})^{*}$ must be trivial, contradicting our assumption on $\pi.$ If $UK$ is trivial, then $U=0$ and hence $(z_{m}^{n})^{*}=0,$ but this contradicts $(\cap_{k=m+1}^{n}\ker (z_{n}^{k})^{*})\cap (\cap_{k=j+1}^{n}\ker (z_{k}^{n})^{*})\neq \{0\}.$ 
\\

Proof of the claim: It is easy to see that $z_{k}^{l}$ with $k>m$ and $l<n$ commutes with $R.$ As $R$ is self adjoint, then $z_{k}^{l}$ must reduce its kernel. Similarly, it is easy to see that $z_{k}^{n}$ with $k<m$ commutes with $R.$ We can then use the arguments from Lemma~\ref{miss} to see that the restriction of such $z_{k}^{n}$ to $\ker R$ is zero. Now consider $z_{m}^{k}$ with $k<n.$ Then
$$
R z_{m}^{k}=z_{m}^{n}(z_{m}^{n})^{*}z_{m}^{k}-(z_{m}^{n})^{*}z_{m}^{n}z_{m}^{k}=z_{m}^{n}(z_{m}^{n})^{*}z_{m}^{k}-q^{-1}(z_{m}^{n})^{*}z_{m}^{k}z_{m}^{n}=
$$
$$
=z_{m}^{n}\left(q z_{m}^{k}(z_{m}^{n})^{*}-(q-q^{-1})\sum_{l=m+1}^{n}z_{l}^{k}(z_{l}^{n})^{*}\right)-
$$
$$
-q^{-1}\left(q z_{m}^{k}(z_{m}^{n})^{*}-(q-q^{-1})\sum_{l=m+1}^{n}z_{l}^{k}(z_{l}^{n})^{*}\right)z_{m}^{n}=
$$
$$
=(qz_{m}^{n}z_{m}^{k}(z_{m}^{n})^{*}-z_{m}^{k}(z_{m}^{n})^{*}z_{m}^{n})-
$$
$$
-(q-q^{-1})\left(z_{m}^{n}\sum_{l=m+1}^{n}z_{l}^{k}(z_{l}^{n})^{*}-q^{-1}\sum_{l=m+1}^{n}z_{l}^{k}(z_{l}^{n})^{*}z_{m}^{n}\right).
$$
We consider these two terms separately. The first term reduces to $$qz_{m}^{n}z_{m}^{k}(z_{m}^{n})^{*}-z_{m}^{k}(z_{m}^{n})^{*}z_{m}^{n}=z_{m}^{k}z_{m}^{n}(z_{m}^{n})^{*}-z_{m}^{k}(z_{m}^{n})^{*}z_{m}^{n}=z_{m}^{k}R.$$ For the second one, we obtain
$$
z_{m}^{n}\sum_{l=m+1}^{n}z_{l}^{k}(z_{l}^{n})^{*}-q^{-1}\sum_{l=m+1}^{n}z_{l}^{k}(z_{l}^{n})^{*}z_{m}^{n}=
$$
$$
=z_{m}^{n}\sum_{l=m+1}^{n}z_{l}^{k}(z_{l}^{n})^{*}-\sum_{l=m+1}^{n}z_{l}^{k}z_{m}^{n}(z_{l}^{n})^{*}=
$$
$$
=z_{m}^{n}\sum_{l=m+1}^{n}z_{l}^{k}(z_{l}^{n})^{*}-\sum_{l=m+1}^{n}z_{m}^{n}z_{l}^{k}(z_{l}^{n})^{*}=0
$$
and thus $z_{m}^{k}R=R z_{m}^{k}.$ For $k=n,$ we get
$$R z_{m}^{n}=z_{m}^{n}((z_{m}^{n})^{*}z_{m}^{n})-((z_{m}^{n})^{*}z_{m}^{n})z_{m}^{n}=$$
$$
=z_{m}^{n}\left(q^{2}z_{m}^{n}(z_{m}^{n})^{*}+(1-q^{2})\left(I-\sum_{l=m+1}^{n}z_{l}^{n}(z_{l}^{n})^{*}\right)\right)-
$$
$$
-\left(q^{2}z_{m}^{n}(z_{m}^{n})^{*}+(1-q^{2})\left(I-\sum_{l=m+1}^{n}z_{l}^{n}(z_{l}^{n})^{*}\right)\right)z_{m}^{n}
$$
and as $z_{m}^{n}$ commutes with $z_{l}^{n}(z_{l}^{n})^{*}$ for $l=m+1,\dots,n$ (by~\eqref{zaa1} and~\eqref{zaa4}), this sum reduces to $q^{2}z_{m}^{n}z_{m}^{n}(z_{m}^{n})^{*}-q^{2}z_{m}^{n}(z_{m}^{n})^{*}z_{m}^{n}=q^{2}z_{m}^{n}R$ and hence $\ker R$ is also reducing $z_{m}^{n}.$ Finally, let $k<m$ and $l<n$ and let $v\in \ker R.$ We have $$R z_{k}^{l} v=z_{m}^{n}(z_{m}^{n})^{*}z_{k}^{l}v-(z_{m}^{n})^{*}z_{m}^{n}z_{k}^{l} v=(z_{m}^{n}z_{k}^{l})(z_{m}^{n})^{*} v-(z_{m}^{n})^{*}(z_{m}^{n}z_{k}^{l}) v=$$
$$
=(z_{k}^{l}z_{m}^{n}-(q^{-1}-q)z_{m}^{l}z_{k}^{n})(z_{m}^{n})^{*} v-(z_{m}^{n})^{*}(z_{k}^{l}z_{m}^{n}-(q^{-1}-q)z_{m}^{l}z_{k}^{n})  v
$$
$$
=(z_{k}^{l}z_{m}^{n})(z_{m}^{n})^{*} v-(z_{m}^{n})^{*}(z_{k}^{l}z_{m}^{n}) v=z_{k}^{l}(z_{m}^{n}(z_{m}^{n})^{*}) v-z_{k}^{l}((z_{m}^{n})^{*}z_{m}^{n})  v=
$$
$$
z_{k}^{l}A v=0
$$
where the fourth equality follows from $z_{k}^{n} v=0$ and $z_{k}^{n}(z_{m}^{n})^{*}v=(z_{m}^{n})^{*}z_{k}^{n} v=0.$ The case for $(z_{k}^{l})^{*}$ is treated similarly.
\end{proof}
\begin{lem}
$H$ is a reducing subspace for $\pi(z_{j}^{i}),z_{k}^{j}\in\textbf{Z}_{n}^{n}.$ Moreover, the map 
\begin{equation}\label{beans}
\begin{array}{cc}\pi':z_{j}^{i}\mapsto \pi(z_{j}^{i})|_{H},&i,j=1,\dots, n-1,\end{array}\end{equation} extends to an irreducible $*$-representation of $\mathrm{Pol}(\mathrm{Mat}_{n-1})_{q}.$
\end{lem}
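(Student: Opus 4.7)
The plan is to verify the reducing property of $H$ by a direct calculation with the commutation relations, then to observe that the restrictions automatically satisfy the defining relations of $\mathrm{Pol}(\mathrm{Mat}_{n-1})_q$, and finally to obtain irreducibility by a decomposition argument modelled on Lemma~\ref{molly}.

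For invariance, I would fix $u \in H$ and $z_j^i \in \mathbf{Z}_n^n$ (so $i,j\leq n-1$) and check that $\pi(z_\ell^n)^* \pi(z_j^i) u = 0$ and $\pi(z_n^\ell)^* \pi(z_j^i) u = 0$ for every $\ell$. Each of these is a one-line consequence of~\eqref{zaa41}--\eqref{zaa44}: the product on the left is rewritten as a sum whose rightmost factor in every summand is some $(z_r^n)^*$ or $(z_n^r)^*$, all of which annihilate $u$ by definition of $H$. For example, when $j=\ell$, relation~\eqref{zaa42} gives
\[
(z_\ell^n)^* z_\ell^i = q z_\ell^i (z_\ell^n)^* - (q^{-1}-q)\sum_{m=\ell+1}^{n} z_m^i (z_m^n)^*,
\]
and every term kills $u$. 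The analogous identity for $\pi(z_j^i)^*u$ uses~\eqref{zaa1*}--\eqref{zaa3*} in the same fashion. For the passage to $\mathrm{Pol}(\mathrm{Mat}_{n-1})_q$, I would note that the relations~\eqref{zaa1}--\eqref{zaa3} (and their conjugates) restricted to indices $i,j\leq n-1$ coincide with the corresponding relations in $\mathrm{Pol}(\mathrm{Mat}_{n-1})_q$; the Wick relations~\eqref{zaa41}--\eqref{zaa44} of the two algebras differ only in summands where an index equals $n$, and each such summand contains a factor $(z_n^\beta)^*$ or $(z_a^n)^*$ which vanishes on $H$. By universality, $\pi'$ then extends to a $*$-representation of $\mathrm{Pol}(\mathrm{Mat}_{n-1})_q$.

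For irreducibility I would set $\mathbf{I}=\{z_1^n,\ldots,z_{n-1}^n,z_n^n,z_n^{n-1},\ldots,z_n^1\}$, so that $\mathbf{Z}\setminus\mathbf{I}=\mathbf{Z}_n^n$, and for $\mathbf{m}\in\mathbb{Z}_+^{2n-1}$ let $z(\mathbf{m})$ denote the ordered monomial~\eqref{orderbasis} and $H_{\mathbf{m}}:=\overline{\pi(z(\mathbf{m}))H}$. Since $\mathbf{I}$ has the form required by Lemmas~\ref{linj}--\ref{span1}, the arguments of Lemma~\ref{molly} transport essentially verbatim to yield (i) $H_\mathbf{m}\perp H_{\mathbf{m}'}$ for $\mathbf{m}\neq\mathbf{m}'$, from the vanishing $\gamma_\mathbf{I}(z(\mathbf{m}')^*z(\mathbf{m}))=0$, and (ii) $K=\bigvee_\mathbf{m} H_\mathbf{m}$. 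Granted these, a nontrivial $\pi'$-reducing decomposition $H=H_1\oplus H_2$ would lift to an orthogonal decomposition $K=K_1\oplus K_2$ with $K_i=\bigvee_\mathbf{m}\overline{\pi(z(\mathbf{m}))H_i}$. Invariance of each $K_i$ under $\pi(\mathbf{Z})$ would follow from Lemma~\ref{span1} together with the fact that $H_i$ is preserved by $\pi(\mathbf{Z}\setminus\mathbf{I})=\pi(\mathbf{Z}_n^n)$; invariance under $\pi(\mathbf{Z}^*)$ is then automatic from $K_1\oplus K_2=K$, contradicting the irreducibility of $\pi$.

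The main obstacle is the Lemma~\ref{molly}-analogue establishing the spanning identity $K=\bigvee_\mathbf{m} H_\mathbf{m}$. As in Case $\mathbf{A}$, I would project $\pi(z_j^r)^*u$ (for $z_j^r \in \mathbf{Z}\setminus\mathbf{I}$) onto the orthogonal complement of $\bigvee_\mathbf{m} H_\mathbf{m}$ and iterate the $q$-commutations of $(z_j^r)^*$ with the distinguished generators $z_j^n$ and $z_n^r$; after $r$ iterations one collects a factor $q^r$, and the contraction bound $\lVert\pi(z_k^n)\rVert,\lVert\pi(z_n^k)\rVert\leq 1$ from Lemma~\ref{pol} forces the projection to vanish. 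The complication compared to Case $\mathbf{A}$ is that $\mathbf{I}$ now contains both the last column and the last row of generators, so both families of $q$-commutations must be tracked simultaneously, but no essentially new ingredient is needed beyond this bookkeeping.
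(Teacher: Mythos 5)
Your treatment of the invariance of $H$ and of the verification that the restricted operators satisfy the defining relations of $\mathrm{Pol}(\mathrm{Mat}_{n-1})_q$ matches the paper's argument: direct use of the Wick relations, with the observation that every summand touching an index $n$ carries a factor in $\mathbf{I}^*$ that annihilates $H$. The outer structure of the irreducibility argument -- decompose $H=H_1\oplus H_2$, lift to orthogonal $\pi$-invariant subspaces of $K$ via the span $\bigvee_{\mathbf{m}}\overline{\pi(z(\mathbf{m}))H_i}$, contradict irreducibility of $\pi$ -- is also the same as the paper's.

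The gap is in the mechanism you propose for the Lemma~\ref{molly}-analogue (Proposition~\ref{metabo} in the paper). You want to show $P\pi(z_j^r)^*v=0$ by ``iterating the $q$-commutations of $(z_j^r)^*$ with $z_j^n$ and $z_n^r$'' to collect a factor $q^r$ and then invoke the contraction bound. In Case $\mathbf{A}$ this step works only because of the identity $\pi(z_k^n)|_H=e^{i\varphi}I$: one writes $\|P\pi(z_k^j)^*v\|=\|P\pi(z_k^j)^*\pi(z_k^n)v\|$ (a genuine equality since $\pi(z_k^n)$ is a unimodular scalar on $H$), commutes $(z_k^j)^*$ past $z_k^n$ to pick up a $q$, and iterates. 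In Case $\mathbf{B}$ there is no element of $\mathbf{I}$ whose restriction to $H$ is a nonzero scalar -- indeed, every $\pi(z_j^n)^*$ and $\pi(z_n^\ell)^*$ annihilates $H$, which is exactly what puts you in Case $\mathbf{B}$. So the starting equality of the iteration has no analogue, and the $q^r$-contraction scheme simply does not launch.

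The paper avoids this entirely. In Proposition~\ref{metabo}, $\mathbf{Z}$-invariance of $\bigvee_{\mathbf{m}}H_{\mathbf{m}}$ is deduced from Lemma~\ref{span1} exactly as you describe, but for $\mathbf{Z}^*$-invariance it uses the PBW-type factorization: combining the tensor isomorphism~\eqref{isotensor} with Lemma~\ref{span1} (applied on both sides), any $a\in\mathrm{Pol}(\mathrm{Mat}_n)_q$ is a finite sum $a=\sum_i z(\mathbf{m}_i)\,a_i\,(a_i')^*\,z(\mathbf{m}_i')^*$ with $a_i,a_i'$ in the unital algebra generated by $\mathbf{Z}_n^n$. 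Applying this to $a=(z_k^j)^*z(\mathbf{m})$ and then to $u\in H$, the rightmost factor $\pi(z(\mathbf{m}_i')^*)u$ vanishes unless $\mathbf{m}_i'=\mathbf{0}$, and the surviving terms have the form $\pi(z(\mathbf{m}_i))w$ with $w=\pi(a_i(a_i')^*)u\in H$ (since $\mathbf{Z}_n^n$ reduces $H$), hence lie in $\bigvee_{\mathbf{m}}H_{\mathbf{m}}$. No contraction estimate or iteration is needed, and the argument treats all of $\mathbf{Z}^*$ uniformly. You should replace the contraction argument with this factorization argument; the rest of your proposal then goes through.
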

\begin{proof}
As before, we suppress $\pi$ and write simply $a$ for the image $a\in \mathrm{Pol}(\mathrm{Mat}_{n})_{q}.$
To see that $H$ is invariant under the action of $(\textbf{Z}_{n}^{n})^{*},$ pick $(z_{k}^{j})^{*}\in \textbf{Z}_{n}^{n}.$ Notice that by~\eqref{zaa1*}-\eqref{zaa3*}, any $(z_{m}^{l})^{*}\in\textbf{I}^{*}$ either commutes or $q$-commutes with $(z_{k}^{j})^{*},$ or $(z_{m}^{l})^{*}$ and $(z_{k}^{j})^{*}$ satisfy~\eqref{zaa3*}. In the first two cases, it follows that
$$(z_{m}^{l})^{*}(z_{k}^{j})^{*}H=(z_{k}^{j})^{*}(z_{m}^{l})^{*}H=\{0\}$$
$$(z_{m}^{l})^{*}(z_{k}^{j})^{*}H=q(z_{k}^{j})^{*}(z_{m}^{l})^{*}H=\{0\}$$
respectively. In the last case when $m<k$ and $l<j,$ we get
$$(z_{m}^{l})^{*}(z_{k}^{j})^{*}H=((z_{k}^{j})^{*}(z_{m}^{l})^{*}-(q-q^{-1})(z_{m}^{j})^{*}(z_{k}^{l})^{*})H=(q-q^{-1})(z_{m}^{j})^{*}(z_{k}^{l})^{*}H$$
and as $(z_{m}^{j})^{*}(z_{k}^{l})^{*}=(z_{k}^{l})^{*}(z_{m}^{j})^{*}$ and at least one of the integers $m$ or $i$ is equal to $n,$ we get $$(q-q^{-1})(z_{m}^{j})^{*}(z_{k}^{l})^{*}H=\{0\}.$$
From this it follows that $H$ is invariant with respect to $(z_{k}^{j})^{*}$. But in this case the action of the extra term also annihilates $H,$ as one of the factors will be in $\textbf{I}^{*}$ and the two factors commute. 
\\

To prove the invariance under $\textbf{Z}_{n}^{n},$ we pick again some $z_{k}^{j}\in \textbf{Z}_{n}^{n}.$ By~\eqref{zaa4}, the only two elements in $\textbf{I}^{*}$ not commuting with $z_{k}^{j}$ are $(z_{n}^{j})^{*}$ and $(z_{k}^{n})^{*}.$ By~\eqref{zaa4}
$$
(z_{k}^{n})^{*}z_{k}^{j}v=q z_{k}^{j}(z_{k}^{n})^{*}v-(q-q^{-1})\sum_{r=k+1}^{n}z_{r}^{j}(z_{r}^{n})^{*}v=0,v\in H
$$
giving $z_{k}^{j}H\subseteq \ker (z_{k}^{n})^{*}.$ Similarly $z_{k}^{j}H\subseteq \ker(z_{n}^{j})^{*}.$ As $z_{k}^{j}$ commutes with the rest of $\textbf{I}^{*},$ we obtain $z_{k}^{j}H\subseteq H.$
\\

It is easy to see that~\eqref{zaa1}-\eqref{zaa3} and~\eqref{zaa1*}-\eqref{zaa3*} hold for the operators $\pi(z_{k}^{j})|_{H},$ as these relations for the restriction is just the restriction of the original relations, so the only crux is~\eqref{zaa4}. However, if we look at the equations~\eqref{zaa42}-\eqref{zaa44}, then we see that under $\pi,$ the terms in the sums corresponding to $j=n$ or $m=n$ annihilate $H$ and disappear in the restriction, thus only leave the equations for $n-1.$ We will postpone the proof of the irreducibility of $\pi'$ untill after the proof of Proposition~\ref{metabo}. 
\end{proof}
\begin{prop}\label{metabo}
For $\textbf{m}\in \mathbb{Z}_{+}^{2n-1},$ let  $H_{\textbf{m}}:=\overline{\pi(z(\textbf{m})) H}.$ Then $H_{\textbf{m}}\bot H_{\textbf{m}'}$ for $\textbf{m}\neq \textbf{m}'$ and 
\begin{equation}\label{metabo1}
K=\bigvee_{\textbf{m}\in\mathbb{Z}_{+}^{2n-1}}H_{\textbf{m}}.
\end{equation}
\end{prop}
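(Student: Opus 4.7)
The plan is to parallel the structure of Lemma~\ref{molly} from case $\textbf{A}$, but applied to the larger set $\textbf{I}=\{z_{1}^{n},\dots,z_{n-1}^{n},z_{n}^{n},\dots,z_{n}^{1}\}$ of case $\textbf{B}$. By construction $H\subseteq \bigcap_{z\in \textbf{I}}\ker \pi(z)^{*}$, so Lemmas~\ref{linj} and~\ref{linj2} apply (the present $\textbf{I}$ is the one of Lemma~\ref{linj} with $k=m=1$). For $\textbf{m}\neq \textbf{m}'$ and $v,v'\in H$ these lemmas yield
\[
\langle \pi(z(\textbf{m}))v,\pi(z(\textbf{m}'))v'\rangle=\gamma_{\textbf{I}}\bigl(z(\textbf{m}')^{*}z(\textbf{m})\bigr)\langle v,v'\rangle=0,
\]
since distinct monomials in $\mathcal{A}(\textbf{I})$ are $\gamma_{\textbf{I}}$-orthogonal; this establishes the asserted orthogonality of the $H_{\textbf{m}}$.

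For the identity~\eqref{metabo1}, by irreducibility of $\pi$ it suffices to show that $L:=\bigvee_{\textbf{m}\in \mathbb{Z}_{+}^{2n-1}}H_{\textbf{m}}$ is invariant under both $\textbf{Z}$ and $\textbf{Z}^{*}$. Invariance under $\textbf{Z}$ follows immediately from Lemma~\ref{span1}: any $z_{k}^{j}z(\textbf{m})$ can be rewritten as $\sum_{i}z(\textbf{m}_{i})b_{i}$ with $b_{i}\in \mathcal{A}(\textbf{Z}\backslash \textbf{I})=\mathcal{A}(\textbf{Z}_{n}^{n})$, and since $H$ was shown invariant under $\pi(\textbf{Z}_{n}^{n})$ in the lemma immediately preceding the proposition, we obtain $\pi(z_{k}^{j})H_{\textbf{m}}\subseteq \bigvee_{i}H_{\textbf{m}_{i}}\subseteq L$.

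Invariance under $\textbf{Z}^{*}$ I would prove by simultaneous induction on the total degree $r=|\textbf{m}|$, showing that $K_{r}:=\bigvee_{|\textbf{m}|\leq r}H_{\textbf{m}}$ is invariant under every $(z_{k}^{j})^{*}\in \textbf{Z}^{*}$. The base case $K_{0}=H$ is immediate, since $\textbf{I}^{*}$ annihilates $H$ while $\textbf{Z}_{n}^{n*}$ reduces $H$. For the inductive step, write $z(\textbf{m})=z_{p}^{q}\cdot z(\textbf{m}')$ with $z_{p}^{q}\in \textbf{I}$ and $|\textbf{m}'|=r-1$, and commute $(z_{k}^{j})^{*}$ past $z_{p}^{q}$ using~\eqref{zaa41}--\eqref{zaa44}. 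In every case $(z_{k}^{j})^{*}z_{p}^{q}$ equals a scalar multiple of $z_{p}^{q}(z_{k}^{j})^{*}$ plus correction sums $\sum z_{a}^{b}(z_{c}^{e})^{*}$ in which each leading factor $z_{a}^{b}$ again belongs to $\textbf{I}$. Applying the inductive hypothesis to $(z_{k}^{j})^{*}z(\textbf{m}')v$ and to each $(z_{c}^{e})^{*}z(\textbf{m}')v$ places both in $K_{r-1}$, and then left-multiplication by $z_{p}^{q}$ or $z_{a}^{b}$, reordered via the $q$-commutation inside $\mathcal{A}(\textbf{I})$, lands inside $K_{r}$.

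The main obstacle is the bookkeeping in this inductive step: because $\textbf{I}$ here contains both the full last column and the full last row, one must inspect all four relations~\eqref{zaa41}--\eqref{zaa44} separately for each pairing of $(z_{k}^{j})^{*}\in \textbf{Z}^{*}$ with a column generator $z_{l}^{n}$ or a row generator $z_{n}^{l}$, and verify that the leading $z$-factor of every correction term is still in $\textbf{I}$. Once this case analysis is completed the induction closes, and irreducibility of $\pi$ forces $L=K$, proving~\eqref{metabo1}.
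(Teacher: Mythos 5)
Your argument is correct, and the first two parts (orthogonality via Lemmas~\ref{linj} and~\ref{linj2}, and $\textbf{Z}$-invariance via Lemma~\ref{span1}) match the paper essentially verbatim. Where you diverge is in establishing $\textbf{Z}^{*}$-invariance. The paper avoids your case analysis entirely by combining the vector-space isomorphism~\eqref{isotensor} with Lemma~\ref{span1} to write any $a\in\mathrm{Pol}(\mathrm{Mat}_{n})_{q}$ in the normal form $\sum_{i}z(\textbf{m}_{i})\,a_{i}\,(a_{i}')^{*}\,z(\textbf{m}_{i}')^{*}$ with $a_{i},a_{i}'$ monomials in $\textbf{Z}_{n}^{n}$; applying this to $a=(z_{k}^{j})^{*}z(\textbf{m})$ and acting on $u\in H$ then kills $z(\textbf{m}_{i}')^{*}$ unless $\textbf{m}_{i}'=0$, leaves $H$ inside itself under $a_{i}(a_{i}')^{*}$, and finally lands in $H_{\textbf{m}_{i}}$ — so every term is in $L$ in one stroke, with no induction. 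Your degree induction works too: I checked that for every $z_{p}^{q}\in\textbf{I}$ and $(z_{k}^{j})^{*}\in\textbf{Z}^{*}$ the corrections from~\eqref{zaa41}--\eqref{zaa44} either vanish (several sums are over indices $>n$) or have their holomorphic factor $z_{a}^{b}$ in $\textbf{I}$, with the only other term a scalar multiple of $I$, and left-multiplication by an element of $\textbf{I}$ sends $K_{r-1}$ into $K_{r}$ after $q$-reordering. What the paper's route buys you is exactly the elimination of the bookkeeping you rightly flag as the main obstacle: it trades the per-relation case check for a single global appeal to the normal-form basis. Either way the proof closes.
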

\begin{proof}
The claim that $H_{\textbf{m}}\bot H_{\textbf{m}'}$ if $\textbf{m}\neq \textbf{m}'$ can be deduced in the same way as the analogous claim in Proposition~\ref{molly}. Similarly, we prove the equality in~\eqref{metabo1} as before, by showing that the right hand side is reducing $\pi.$ Invariance under $\textbf{Z}$ is once again deduced from Lemma~\ref{span1}. Invariance under $\textbf{Z}^{*}$ can be seen by combining~\eqref{isotensor} with Lemma~\ref{span1}. We get the result that any $a\in\mathrm{Pol}(\mathrm{Mat}_{n})_{q} $ can be written as a sum 
$$
a=\sum_{i}z(\textbf{m}_{i})a_{i}(a_{i}')^{*}z(\textbf{m}_{i}')^{*}
$$
with $a_{i},b_{i}'$ in the unital algebra generated by $\textbf{Z}_{n}^{n}.$ For $v\in H_{\textbf{m}}$ of the form $\pi(z(\textbf{m})) u$ with $u\in H,$ we then have that for any $(z_{k}^{j})^{*}\in \textbf{Z}^{*}$
$$
\pi(z_{k}^{j})^{*} v=\pi(z_{k}^{j})^{*}\pi(z(\textbf{m}))u=\sum_{i}\pi(z(\textbf{m}_{i})b_{i}(b_{i})^{*}z(\textbf{m}_{i}')^{*}) u=
$$
$$
=\sum_{i}\pi(z(\textbf{m}_{i}))(\pi(b_{i}(b_{i})^{*}z(\textbf{m}_{i}')^{*})u)\in \bigvee_{\textbf{m}\in\mathbb{Z}_{+}^{2n-1}}H_{\textbf{m}}
$$
as $\pi(b_{i}(b_{i})^{*}z(\textbf{m}_{i}')^{*}) u\in H.$
\end{proof}
We now prove that $\pi'$ is irreducible.
Notice that in the above proof, the only property of $H$ that is used to prove that the right hand side of~\eqref{metabo1} is reducing $\pi$, is that $H$ is reducing $\textbf{Z}_{n}^{n}$ and is invariant under $\textbf{I}^{*}.$ If $\pi'$ is not irreducible and $H$ can be decomposed into non-trivial orthogonal subspaces $H_{1}\oplus H_{2},$ both reducing $\pi'.$ Then as $H_{1},H_{2}$ are both annihilated by the elements in $\textbf{I}^{*}$ (since they are subspaces of $H$) and reducing $\textbf{Z}_{n}^{n}$ by the definition of $\pi',$ it follows by the proof of Proposition~\ref{metabo} that the subspaces $\bigvee_{\textbf{m}\in\mathbb{Z}_{+}^{2n-1}}\overline{\pi(z(\textbf{m}))H_{1}}$ and $\bigvee_{\textbf{m}\in\mathbb{Z}_{+}^{2n-1}}\overline{\pi(z(\textbf{m}))H_{2}}$ are both reducing $\pi$ and as they are not equal, and in fact even orthogonal to each other, we derive a contradiction to the assumption that $\pi$ is irreducible.
\\

We can now use the induction hypothesis on $\pi'$ that it can be lifted to a $*$-representation $\Pi':\mathbb{C}[SU_{2n-2}]_{q}\rightarrow B(H)$  such that 
$\pi'=\Pi'\circ \zeta.$ 
\\

We now lift $\Pi'$ to a $*$-representation $\Pi$ of $\mathbb{C}[SU_{2n}]_{q}$ by first defining a $*$-homomorphism \begin{equation}\label{delta}\delta:\mathbb{C}[SU_{2n}]_{q}\rightarrow \mathbb{C}[SU_{2n-2}]_{q}\end{equation} determined by 
$$\delta(t_{k,j})=
\begin{cases}
t_{kj} & \text{ if $1\leq k,j\leq 2n-2$}\\ \delta_{k,j}I & \text{ otherwise}
\end{cases}$$ (this corresponds to putting the matrix $(t_{kj})_{k,j=1}^{2n-2}$ in the upper left corner of the matrix $(t_{kj})_{k,j=1}^{2n}$) and then letting 
$\Pi=\Pi'\circ \delta.$
\\

Let us now consider $d=s_{n}s_{n+1}\dots s_{2n-1}$ and $b=s_{2n-2}s_{2n-3}\dots s_{n}$ and make the $*$-representation $\Delta$ of $\mathrm{Pol}(\mathrm{Mat}_{n})_{q}$ given by the formula $\Delta=(\pi_{d}\otimes \Pi \otimes \pi_{b})\circ \zeta.$ Again we can use the diagram representation of $\pi_{d}$ and $\pi_{b}$ to calculate easily the images of the generators in $\mathbb{C}[SU_{2n}]_{q}.$ We have that $\pi_{d}$ and $\pi_{b}$ correspond to the diagrams
$$
\begin{array}{cc}
\begin{tikzpicture}[thick,scale=0.9]
\draw[step=1.0,black,thick] (1,1) grid (6,2);
\node at (1.5,0.5) {$2$};
\node at (2.5,0.5) {$\dots$};
\node at (3.5,0.5) {$n$};
\node at (4.5,0.5) {$\dots$};
\node at (5.5,0.5) {$2n$};
\node at (0.5,1.5) {$1$};
\node at (6.5,1.5) {$2n$};
\node at (1.5,2.4) {$1$};
\node at (2.5,2.4) {$\dots$};
\node at (3.5,2.4) {$n-1$};
\node at (4.5,2.4) {$\dots$};
\node at (5.5,2.4) {$2n-1$};
\filldraw[fill=black!50!white, draw=black](1,2) -- (1,1) -- (2,1)--(2,2)--(1,2)--(1,1);
\filldraw[fill=black!50!white, draw=black](2,2) -- (2,1) -- (3,1)--(3,2)--(2,2)--(2,1);
\end{tikzpicture}
&
\begin{tikzpicture}[thick,scale=0.9]
\draw[step=1.0,black,thick] (1,1) grid (2,7);
\node at (0.5,6.5) {$1$};
\node at (0.5,5.5) {$\vdots$};
\node at (0.3,4.5) {$n-1$};
\node at (0.5,3.5) {$\vdots$};
\node at (0.3,2.5) {$2n-2$};
\node at (0.3,1.5) {$2n-1$};
\node at (1.5,0.5) {$2n$};
\node at (1.5,7.5) {$1$};
\node at (2.5,6.5) {$2$};
\node at (2.5,5.5) {$\vdots$};
\node at (2.5,4.5) {$n$};
\node at (2.5,3.5) {$\vdots$};
\node at (2.7,2.5) {$2n-1$};
\node at (2.5,1.5) {$2n$};
\filldraw[fill=black!50!white, draw=black](1,2) -- (1,1) -- (2,1)--(2,2)--(1,2)--(1,1);
\filldraw[fill=black!50!white, draw=black](1,7) -- (1,6) -- (2,6)--(2,7)--(1,7)--(1,6);
\filldraw[fill=black!50!white, draw=black](1,6) -- (1,5) -- (2,5)--(2,6)--(1,6)--(1,5);
\end{tikzpicture}
\end{array}
$$
respectively, where in the vertical diagram we calculate the image of $\pi_{b}(t_{kj})$ by connecting $k$ on the left (if $k\neq 2n$) or bottom side (if $k=2n$) with $j$ on the top (if $j=1$) or right side (if $j\neq 1$).
\begin{lem}\label{visa}
We have 
$$
(\cap_{k=1}^{n}\ker \Delta(z_{n}^{k})^{*})\cap (\cap_{k=1}^{n}\ker \Delta(z_{k}^{n})^{*})=
e_{\textbf{0}}\otimes H \otimes e_{\textbf{0}}
$$
and for all $v\in H,$ we have
$$
\Delta(z_{j}^{n})^{l}(e_{\textbf{0}}\otimes v \otimes e_{\textbf{0}})=
$$
$$
a(l)e_{\textbf{0}+(l)_{j}}\otimes v \otimes  e_{\textbf{0}}
$$
for $1\leq  j \leq n,$ where $\textbf{0}+(l)_{j}$ means that $l$ is added to the $j$'th cordinate. Also
$$
\Delta(z_{n}^{j})^{l}e_{\textbf{0}}\otimes v\otimes e_{\textbf{0}}=
$$
$$
a(l)e_{\textbf{0}}\otimes v\otimes e_{\textbf{0}+(l)_{n-j}}
$$
for $1\leq j\leq n-1$ and $a(l)=\prod_{m=1}^{l-1}\sqrt{1-q^{2m}}.$
\end{lem}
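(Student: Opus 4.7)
My approach is to write $\Delta$ explicitly on generators using the coproduct. From $\zeta(z_k^j) = (-q)^{k-n} t_{n+k, n+j}$ and the iterated coproduct we have
$$
\Delta(z_k^j) = (-q)^{k-n} \sum_{r, s} \pi_d(t_{n+k,r}) \otimes \Pi(t_{r,s}) \otimes \pi_b(t_{s,n+j}).
$$
Because $\delta$ from~\eqref{delta} sends $t_{r,s}$ to $\delta_{r,s} I$ whenever $r$ or $s$ lies in $\{2n-1, 2n\}$, and because the cycles $d$ and $b$ fix $2n$ (indeed the simple reflections making up $b$ only permute $\{n,\dots,2n-1\}$), most summands drop out. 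Combining this with Lemma~\ref{coopout} to trace the unique admissible path in each nonvanishing $\pi_d(t_{n+k, r})$ and $\pi_b(t_{s, n+j})$, I would establish
$$
\Delta(z_j^n) = (-q)^{j-n} \bigl[I^{\otimes(j-1)} \otimes T_{22} \otimes T_{12}^{\otimes(n-j)}\bigr] \otimes I_H \otimes I^{\otimes(n-1)}
$$
for $1 \leq j \leq n$, and, for $1 \leq j \leq n-1$,
$$
\Delta(z_n^j) = \bigl[I^{\otimes(n-1)} \otimes T_{21}\bigr] \otimes I_H \otimes \bigl[T_{21}^{\otimes(n-1-j)} \otimes T_{22} \otimes I^{\otimes(j-1)}\bigr],
$$
together with $\Delta(z_n^n) = [I^{\otimes(n-1)} \otimes T_{22}] \otimes I_H \otimes I^{\otimes(n-1)}$.

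Once these formulas are in hand the lemma becomes routine. For the inclusion $e_{\textbf{0}} \otimes H \otimes e_{\textbf{0}} \subseteq \bigcap_k \ker \Delta(z_k^n)^* \cap \bigcap_k \ker \Delta(z_n^k)^*$, I would observe that in each adjoint there is exactly one $T_{22}^* = S^*C_q$ factor sitting in a tensor slot occupied by an $e_0$, and $T_{22}^* e_0 = 0$. For the reverse inclusion I would exploit that the remaining operators in the tensor products, namely $T_{12}^* = -qD_q$ and $T_{21}^* = D_q$, are injective on $\ell^2(\mathbb{Z}_+)$; hence $\Delta(z_k^n)^* w = 0$ forces the support of $w$ in the $k$-th factor of the first tensor slot to lie in $\langle e_0 \rangle$, and varying $k$ from $1$ to $n$ collapses the first slot to $\langle e_{\textbf{0}} \rangle$. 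The analogous argument with $\Delta(z_n^k)^*$ for $k=1,\dots,n-1$ collapses the third slot to $\langle e_{\textbf{0}} \rangle$; the $I_H$ in the middle slot imposes no constraint on $v \in H$.

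Finally, the iterated action formulas follow from raising each tensor factor to the $l$-th power and using $T_{22}^l e_0 = \bigl(\prod_{m=1}^{l}\sqrt{1-q^{2m}}\bigr) e_l$, $T_{12} e_0 = -q e_0$, $T_{21} e_0 = e_0$; the prefactor $(-q)^{l(j-n)}$ in $\Delta(z_j^n)^l$ cancels exactly the $(-q)^{l(n-j)}$ arising from $T_{12}^{\otimes(n-j)}$ acting on $e_0$'s, producing the scalar $a(l)$, and similarly on the $\pi_b$ side the $T_{21}$'s contribute trivially. The main technical obstacle is the clean identification of $\pi_d(t_{n+j,2n})$ and $\pi_b(t_{2n-1,n+k})$ with the explicit tensor products above; uniqueness of the admissible path (Lemma~\ref{coopout}) renders this straightforward but requires careful bookkeeping of where the $T_{22}$, $T_{12}$, $T_{21}$ factors are placed in the diagram.
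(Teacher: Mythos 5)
Your proof takes essentially the same route as the paper: expand $\Delta(z_k^j)$ through the iterated coproduct, kill most summands using the fact that $\Pi=\Pi'\circ\delta$ vanishes on $t_{r,s}$ with $r$ or $s$ in $\{2n-1,2n\}$ together with the unique-path Lemma~\ref{coopout}, and then read off the explicit tensor factors. Your formulas for $\Delta(z_j^n)$, $\Delta(z_n^j)$ and $\Delta(z_n^n)$ are correct (the paper leaves the $\pi_b(t_{2n-1,n+j})$ computation implicit; you make it explicit, which is a small improvement), and your reverse-inclusion argument via injectivity of $D_q$ and $\ker(S^*C_q)=\langle e_0\rangle$ is exactly the right way to fill in the paper's terse ``easy to see.'' Two small remarks: the parenthetical claim that the cycle $d=s_n\cdots s_{2n-1}$ fixes $2n$ is false (one has $d(2n)=n$); this does not affect your argument, which really only needs $b$ to fix $2n$ and Lemma~\ref{coopout} for $\pi_d$, but the phrasing should be corrected. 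Also, you correctly use $T_{22}^l e_0 = \bigl(\prod_{m=1}^{l}\sqrt{1-q^{2m}}\bigr)e_l$, which means the constant in the lemma should read $a(l)=\prod_{m=1}^{l}\sqrt{1-q^{2m}}$; the paper's $\prod_{m=1}^{l-1}$ is an off-by-one typo, and the paper's own proof likewise produces the product up to $m=l$.
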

\begin{proof}
Evidently, if we prove the claims made on the action of the operators on $e_{\textbf{0}}\otimes H\otimes e_{0},$ then the claims about intersections of the kernels are easy to see.
For $1\leq k\leq n-1,$ we have
$$
\Delta(z_{n}^{k})=\sum_{r,s=1}^{2n}\pi_{d}(t_{2n,r})\otimes \Pi(t_{r,s})\otimes \pi_{b}(t_{s,n+k})=
$$
$$
\sum_{s=1}^{2n}\sum_{r=2n-1}^{2n}\pi_{d}(t_{2n,r})\otimes \Pi(t_{r,s})\otimes \pi_{b}(t_{s,n+k})
$$
as $\pi_{d}(t_{2n,r})=0$ unless $r=2n-1,2n.$ If $r=2n,$ then $\Pi(t_{r,s})=\delta_{r,s}I$ so that $s=2n,$ but then $\pi_{b}(t_{2n,n+k})=\delta_{2n,n+k}I=0,$ by the bounds on $k.$ So the sum reduces to   
$$
\sum_{s=1}^{2n}\pi_{d}(t_{2n,2n-1})\otimes \Pi(t_{2n-1,s})\otimes \pi_{b}(t_{s,n+k})=\sum_{s=1}^{2n}\pi_{d}(t_{2n,2n-1})\otimes \delta_{2n-1,s}I\otimes \pi_{b}(t_{s,n+k})=
$$
$$
=\pi_{d}(t_{2n,2n-1})\otimes I\otimes \pi_{b}(t_{2n-1,n+k})
$$
and since $\pi_{d}(t_{2n,2n-1})e_{\textbf{0}}=e_{\textbf{0}},$ and $T_{22}^{l}e_{0}=a(l)e_{l},$ it is not hard to see by calculating the action of $\pi_{b}(t_{2n-1,n+k})$ that the lemma holds in this case.
On the other hand, if $1\leq m\leq n,$ then 
$$
\Delta(z_{m}^{n})=(-q)^{m-n}\sum_{r,s=1}^{2n}\pi_{d}(t_{n+m,r})\otimes \Pi(t_{r,s})\otimes \pi_{b}(t_{s,2n})=
$$
$$
=(-q)^{m-n}\sum_{r,s=1}^{2n}\pi_{d}(t_{n+m,r})\otimes \Pi(t_{r,s})\otimes \delta_{s,2n}I=
(-q)^{m-n}\sum_{r=1}^{2n}\pi_{d}(t_{n+m,r})\otimes \Pi(t_{r,2n})\otimes I=
$$
$$
=(-q)^{m-n}\sum_{r=1}^{2n}\pi_{d}(t_{n+m,r})\otimes \delta_{r,2n}I\otimes I=
$$
$$
=(-q)^{m-n}\pi_{d}(t_{n+m,2n})\otimes I\otimes I
$$
and a calculation using the diagram for $\pi_{d}$ confirms the lemma also in this case.
\end{proof}

\begin{cor}\label{metabo2}
The linear span of the vectors in $\ell^{2}(\mathbb{Z}_{+})^{\otimes n}\otimes H\otimes \ell^{2}(\mathbb{Z}_{+})^{\otimes n-1}$ of the form $$\Delta(z(\textbf{m}))(e_{\textbf{0}}\otimes v\otimes e_{\textbf{0}})$$ is dense in $\ell^{2}(\mathbb{Z}_{+})^{\otimes n}\otimes H\otimes \ell^{2}(\mathbb{Z}_{+})^{\otimes n-1}$.
\end{cor}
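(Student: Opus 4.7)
The plan is to establish that for every $\textbf{m}=(m_1,\dots,m_{2n-1})\in\mathbb{Z}_+^{2n-1}$ and every $v\in H$, the vector $\Delta(z(\textbf{m}))(e_\textbf{0}\otimes v\otimes e_\textbf{0})$ is a \emph{nonzero} scalar multiple of $e_{\textbf{p}(\textbf{m})}\otimes v\otimes e_{\textbf{q}(\textbf{m})}$, where $\textbf{p}(\textbf{m})=(m_1,\dots,m_n)\in\mathbb{Z}_+^n$ and $\textbf{q}(\textbf{m})=(m_{n+1},\dots,m_{2n-1})\in\mathbb{Z}_+^{n-1}$ (the indexing follows from tracking which coordinate is incremented according to Lemma~\ref{visa}). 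Since the assignment $\textbf{m}\mapsto(\textbf{p}(\textbf{m}),\textbf{q}(\textbf{m}))$ is a bijection onto $\mathbb{Z}_+^n\times\mathbb{Z}_+^{n-1}$ and the standard product basis vectors $e_\textbf{p}\otimes v\otimes e_\textbf{q}$, with $v$ ranging over an orthonormal basis of $H$, span a dense subspace of $\ell^2(\mathbb{Z}_+)^{\otimes n}\otimes H\otimes\ell^2(\mathbb{Z}_+)^{\otimes n-1}$, density of the $\Delta(z(\textbf{m}))(e_\textbf{0}\otimes v\otimes e_\textbf{0})$ follows immediately.

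The first step is to observe that the generators in $\textbf{I}=\{z_1^n,\dots,z_n^n,z_n^{n-1},\dots,z_n^1\}$ pairwise commute or $q$-commute: any two elements of $\{z_1^n,\dots,z_n^n\}$, and any two of $\{z_n^n,\dots,z_n^1\}$, $q$-commute by~\eqref{zaa1}, while $z_j^n$ and $z_n^k$ commute for $j,k<n$ by~\eqref{zaa2}. Consequently, the monomial $z(\textbf{m})$ equals a nonzero scalar multiple of $B\cdot A$, where $B=(z_n^{n-1})^{m_{n+1}}\cdots(z_n^1)^{m_{2n-1}}$ and $A=(z_1^n)^{m_1}\cdots(z_n^n)^{m_n}$, and indeed $z(\textbf{m})$ can be rewritten in any chosen order up to nonzero scalars.

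Next, using the explicit forms derived inside the proof of Lemma~\ref{visa}, namely $\Delta(z_j^n)=(-q)^{j-n}\pi_d(t_{n+j,2n})\otimes I\otimes I$ and $\Delta(z_n^k)=\pi_d(t_{2n,2n-1})\otimes I\otimes \pi_b(t_{2n-1,n+k})$, I would apply $\Delta(B)$ first to $e_\textbf{0}\otimes v\otimes e_\textbf{0}$. The first-factor part is a power of $\pi_d(t_{2n,2n-1})$, which fixes $e_\textbf{0}$; thus only the third factor is modified. An induction on $|B|=m_{n+1}+\dots+m_{2n-1}$, using the $q$-commutations among the $\Delta(z_n^k)$'s and applying Lemma~\ref{visa} to each single power in the appropriate order, shows that the third factor becomes a nonzero scalar multiple of $e_{\textbf{q}(\textbf{m})}$. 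Then $\Delta(A)$, whose third-factor component is the identity, acts only on the first factor; a parallel induction on $|A|$, using the $q$-commutations among the $\Delta(z_j^n)$'s, shows the first factor becomes a nonzero scalar multiple of $e_{\textbf{p}(\textbf{m})}$.

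The main obstacle is the inductive step in the last paragraph: verifying that the joint action of multiple operators $\pi_d(t_{n+j,2n})$ on $\ell^2(\mathbb{Z}_+)^{\otimes n}$, and likewise of $\pi_b(t_{2n-1,n+k})$ on $\ell^2(\mathbb{Z}_+)^{\otimes n-1}$, is diagonal on the natural tensor basis up to nonzero scalars. The uniqueness of admissible paths guaranteed by Lemma~\ref{coopout} is the key tool here: each such operator corresponds to a single diagonal path, producing $T_{22}$-type shift behaviour on one specific tensor factor and $D_q$-type scalar multiplication on the others, so that their composition in the presence of the $q$-commutation relations produces exactly the pure shift to $e_{\textbf{p}(\textbf{m})}$ and $e_{\textbf{q}(\textbf{m})}$ with a nonzero prefactor. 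Once this bookkeeping is completed, the density statement is immediate from the bijection $\textbf{m}\leftrightarrow(\textbf{p}(\textbf{m}),\textbf{q}(\textbf{m}))$.
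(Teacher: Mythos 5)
Your proposal is correct and supplies the bookkeeping that the paper leaves unwritten: Corollary~\ref{metabo2} is stated without proof, as an immediate consequence of Lemma~\ref{visa}, and your argument (decompose $z(\textbf{m})$ into the commuting chunks of $z_{j}^{n}$'s and $z_{n}^{k}$'s, then observe via Lemma~\ref{coopout} that the first- and third-factor operators are compositions of a single $T_{22}$-shift with diagonal $D_{q}$-type factors, so the nonvanishing of the coefficient persists at every step and the resulting vector is a nonzero multiple of $e_{\textbf{p}(\textbf{m})}\otimes v\otimes e_{\textbf{q}(\textbf{m})}$) is exactly the intended route. One small slip: by the definition in Section $7$, $z(\textbf{m})$ is literally $A\cdot B$ rather than $B\cdot A$, but since the two chunks commute up to a nonzero scalar by~\eqref{zaa1} and~\eqref{zaa2} this is harmless, and your choice to let the $\Delta(z_{n}^{k})$'s act first is the convenient one since it keeps the first tensor slot at $e_{\textbf{0}}$ so that $\pi_{d}(t_{2n,2n-1})$ acts trivially.
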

\begin{lem}\label{utanmyra}

For $z_{k}^{j}\in \textbf{Z}_{n}^{n}$ and $v\in H,$ we have
\begin{equation}\label{final}
\Delta(z_{k}^{j}) (e_{\textbf{0}}\otimes v \otimes e_{\textbf{0}})=e_{\textbf{0}}\otimes  \pi(z_{k}^{j}) v \otimes e_{\textbf{0}}.
\end{equation}
\end{lem}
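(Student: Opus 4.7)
The plan is to expand $\Delta(z_k^j)$ through the comultiplication, identify the unique pair of indices $(r,s)$ contributing non-trivially on the vector $e_{\textbf{0}} \otimes v \otimes e_{\textbf{0}}$, and reduce the surviving term via the induction hypothesis $\pi' = \Pi' \circ \zeta$ together with the identification $\pi'(z_k^j) = \pi(z_k^j)|_H$ for $z_k^j \in \textbf{Z}_n^n$. Since $\zeta(z_k^j) = (-q)^{k-n} t_{n+k, n+j}$, we have
\[
\Delta(z_k^j) = (-q)^{k-n} \sum_{r,s=1}^{2n} \pi_d(t_{n+k, r}) \otimes \Pi(t_{r,s}) \otimes \pi_b(t_{s, n+j}),
\]
so the core of the argument is a case analysis over $r$ and $s$.

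First I would establish, by combining Lemma~\ref{coopout} with the diagrammatic reading of $\pi_d$, that $\pi_d(t_{n+k, r}) e_{\textbf{0}}$ is a non-zero scalar multiple of $e_{\textbf{0}}$ precisely when $r = n+k-1$, in which case the unique admissible path carries $T_{21}$ in the $k$-th tensor factor and identities elsewhere, giving $\pi_d(t_{n+k, n+k-1}) e_{\textbf{0}} = e_{\textbf{0}}$. For $r < n+k-1$ the operator vanishes by Lemma~\ref{coopout}; for $r \in \{n+k, \ldots, 2n-1\}$ the unique admissible path is forced to include $T_{11}$ at some step and hence annihilates $e_{\textbf{0}}$; and for $r = 2n$ the path starts with $T_{22}$ and produces a vector with an $e_1$ in the $k$-th slot, but then the factorization $\Pi = \Pi' \circ \delta$ together with $\delta(t_{2n, s}) = \delta_{2n, s} I$ forces $s = 2n$, and a symmetric path argument yields $\pi_b(t_{2n, n+j}) = 0$, so this contribution vanishes regardless. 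An analogous analysis for $\pi_b$ shows that $\pi_b(t_{s, n+j}) e_{\textbf{0}}$ is non-zero only for $s = n+j-1$, where the unique admissible path has $T_{12}$ in the $(n-j)$-th tensor factor and gives $\pi_b(t_{n+j-1, n+j}) e_{\textbf{0}} = -q \, e_{\textbf{0}}$.

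Once only the pair $(r,s) = (n+k-1, n+j-1)$ survives, I would note that the indices satisfy $n+k-1, n+j-1 \leq 2n-2$, so $\delta$ acts as the identity and $\Pi(t_{n+k-1, n+j-1}) = \Pi'(t_{n+k-1, n+j-1})$. Applying the induction hypothesis to the map $\zeta:\mathrm{Pol}(\mathrm{Mat}_{n-1})_q \to \mathbb{C}[SU_{2(n-1)}]_q$, under which $\zeta(z_k^j) = (-q)^{k-n+1} t_{n+k-1, n+j-1}$, gives
\[
\Pi'(t_{n+k-1, n+j-1}) = (-q)^{n-k-1} \pi'(z_k^j) = (-q)^{n-k-1} \pi(z_k^j)|_H.
\]
Substituting back and collecting the scalar prefactors $(-q)^{k-n}$, $(-q)^{n-k-1}$, and $-q$ produces the factor $(-q)^{(k-n)+(n-k-1)+1} = 1$, yielding $e_{\textbf{0}} \otimes \pi(z_k^j) v \otimes e_{\textbf{0}}$ as desired.

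The main obstacle is the diagrammatic bookkeeping in the first two steps: one must verify systematically that for each $r \notin \{n+k-1, 2n\}$ the unique admissible path in $\pi_d(t_{n+k, r})$ is forced through a $T_{11}$-factor (and analogously for $\pi_b$), and that the exceptional $r = 2n$ case is killed by the composition with $\Pi = \Pi' \circ \delta$ and the vanishing of $\pi_b(t_{2n, n+j})$. Once this path analysis is done, the rest is a routine cancellation of powers of $-q$.
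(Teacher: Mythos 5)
Your proposal is correct and follows essentially the same line of argument as the paper's own proof: expand $\Delta(z_k^j)$ through the tensor comultiplication, use the diagrammatic description of $\pi_d$ and $\pi_b$ together with the factorization $\Pi = \Pi' \circ \delta$ to show that only the index pair $(r,s)=(n+k-1,n+j-1)$ survives on $e_{\mathbf{0}}\otimes v\otimes e_{\mathbf{0}}$, and then invoke $\pi'=\Pi'\circ\zeta$ to convert $\Pi'(t_{n+k-1,n+j-1})$ into $(-q)^{n-k-1}\pi(z_k^j)|_H$, so the powers of $-q$ cancel. One small remark: your statement that ``$\pi_b(t_{s,n+j})e_{\mathbf{0}}$ is non-zero only for $s=n+j-1$'' is not quite literally true, since $\pi_b(t_{2n-1,n+j})e_{\mathbf{0}}\neq 0$ (it produces an $e_1$ in one slot, as is used in Lemma~\ref{visa}); what kills the $s\in\{2n-1,2n\}$ terms is instead that the middle factor $\Pi(t_{n+k-1,s})=\Pi'(\delta(t_{n+k-1,s}))=0$ because $n+k-1\leq 2n-2<s$. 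The paper's own proof has the symmetric imprecision on the $\pi_d$ side (it asserts only $r=n+k-1$ avoids a $T_{11}$ hook, overlooking that $r=2n$ also does, though that term is killed by $\Pi\circ\delta$ and $\pi_b$ exactly as you note for the $r=2n$ case), so on balance you handle the edge cases at least as carefully as the source. The final bookkeeping $(-q)^{(k-n)+(n-k-1)+1}=1$ is correct.
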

\begin{proof}
By the definition of $\Delta$
$$\Delta(z_{k}^{j})(e_{\textbf{0}}\otimes v\otimes e_{\textbf{0}})=(-q)^{k-n}\sum_{r,s=1}^{2n}\pi_{d}(t_{n+k,r})\otimes \Pi(t_{r,s})\otimes \pi_{b}(t_{s,n+j})(e_{\textbf{0}}\otimes v\otimes e_{\textbf{0}})$$
Using the diagram for $\pi_{d},$ it is easy to see that the only value of $r$ such that the path for $\pi_{d}(t_{n+k,r})$ does not contain the hook 
$\begin{tikzpicture}[line width=0.21mm,scale=0.4]
\draw (1,0) -- (0,0) -- (0,1)--(1,1)--(1,0)--(0,0);
\draw [->] (0.1,0.5)--(0.5,0.5)--(0.5,0.9);
\end{tikzpicture}$ (i.e. a $T_{11}$ factor, annihilating $e_{\textbf{0}}$), is $r=n+k-1$ and hence the sum reduces to
$$
(-q)^{k-n}\sum_{s=1}^{2n}\pi_{d}(t_{n+k,n+k-1})\otimes \Pi(t_{n+k-1,s})\otimes \pi_{b}(t_{s,n+j})(e_{\textbf{0}}\otimes v\otimes e_{\textbf{0}})=
$$
$$
(-q)^{k-n}\sum_{s=1}^{2n}I\otimes \Pi(t_{n+k-1,s})\otimes \pi_{b}(t_{s,n+j})(e_{\textbf{0}}\otimes v\otimes e_{\textbf{0}}).
$$
as $\pi_{d}(t_{n+k,n+k-1})e_{\textbf{0}}=e_{\textbf{0}}.$
Likewise, as $1\leq j\leq n-1,$ it is easy to see that the only value of $s$ such that $\pi_{b}(t_{s,n+j})$ does not annihilate $e_{\textbf{0}}$ is $s=n+j-1.$ Thus the sum contains the only non-zero term
$$
(-q)^{k-n}I\otimes \Pi(t_{n+k-1,n+j-1})\otimes \pi_{b}(t_{n+j-1,n+j})(e_{\textbf{0}}\otimes v\otimes e_{\textbf{0}})=
$$
$$
(-q)^{k+1-n}(I\otimes \Pi(t_{n+k-1,n-j-1})\otimes I)(e_{\textbf{0}}\otimes v\otimes e_{\textbf{0}})=
$$
$$
(I\otimes (\pi(t_{k,j})|_{H})\otimes I)(e_{\textbf{0}}\otimes v\otimes e_{\textbf{0}})=e_{\textbf{0}}\otimes \pi(z_{k}^{j})v\otimes e_{\textbf{0}}
$$
as $\pi_{b}(t_{n+j-1,n+j})e_{\textbf{0}}=(-q)e_{\textbf{0}}$ and $(-q)^{k+1-n}\Pi(t_{n+k-1,n-j-1})=\pi(t_{k,j})|_{H}$ by the definition of $\Pi.$
\end{proof}
\begin{prop}
$\Delta$ is unitary equivalent to $\pi$ by the unitary isometry $U:K\rightarrow \ell^{2}(\mathbb{Z}_{+})^{n}\otimes H\otimes \ell^{2}(\mathbb{Z}_{+})^{n-1}$ determined by $$\pi(z(\textbf{m}))v\mapsto \Delta(z(\textbf{m}))( e_{\textbf{0}}\otimes v\otimes e_{\textbf{0}})$$ for $v\in H.$
\end{prop}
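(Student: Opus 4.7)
The plan is to mirror, step by step, the argument of Proposition~\ref{robert} from case $\textbf{A}$, with the ``vacuum'' vector $e_{\textbf{0}}\otimes v$ replaced by $e_{\textbf{0}}\otimes v\otimes e_{\textbf{0}}$ and the set $\textbf{I}=\{z_{k+1}^n,\dots,z_n^n\}$ replaced by the present $\textbf{I}=\{z_1^n,\dots,z_n^n,z_n^{n-1},\dots,z_n^1\}$. First I would show $U$ is well defined and isometric on the dense subspace $\operatorname{span}\{\pi(z(\textbf{m}))v\mid v\in H,\,\textbf{m}\in\mathbb{Z}_+^{2n-1}\}$. Well-definedness follows from Lemma~\ref{linj2} together with Proposition~\ref{metabo}: the monomials $z(\textbf{m})$ are pairwise orthogonal with nonzero norms under $\gamma_{\textbf{I}}$, so each value $\pi(z(\textbf{m}))v$ determines $v$ uniquely.

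For the isometry step, I would apply Lemma~\ref{linj} twice: once to the pair $(H,\textbf{I})$ in $K$, and once to the pair $(e_{\textbf{0}}\otimes H\otimes e_{\textbf{0}},\textbf{I})$ in $\ell^2(\mathbb{Z}_+)^{\otimes n}\otimes H\otimes \ell^2(\mathbb{Z}_+)^{\otimes n-1}$. The latter is legitimate because Lemma~\ref{visa} identifies $e_{\textbf{0}}\otimes H\otimes e_{\textbf{0}}$ as exactly $\bigcap_{z_j^l\in \textbf{I}}\ker\Delta(z_j^l)^{*}$. Both sides of the equality
\[
\langle \pi(z(\textbf{m}))v,\pi(z(\textbf{m}'))u\rangle=\langle \Delta(z(\textbf{m}))(e_{\textbf{0}}\otimes v\otimes e_{\textbf{0}}),\Delta(z(\textbf{m}'))(e_{\textbf{0}}\otimes u\otimes e_{\textbf{0}})\rangle
\]
reduce to $\gamma_{\textbf{I}}(z(\textbf{m}')^{*}z(\textbf{m}))\langle v,u\rangle$. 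Hence $U$ extends by continuity to an isometry on all of $K$; surjectivity is then Proposition~\ref{metabo} paired with Corollary~\ref{metabo2}, so $U$ is unitary.

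It remains to verify $U\pi(z_k^j)=\Delta(z_k^j)U$ on each generator. I would split into two cases. If $z_k^j\in \textbf{I}$, a direct inspection of~\eqref{zaa1}--\eqref{zaa2} shows that all pairs of elements in $\textbf{I}$ commute or $q$-commute, so $z_k^jz(\textbf{m})=q^cz(\textbf{m}')$ for a suitable scalar and multi-index, and the intertwining relation is immediate. If $z_k^j\in \textbf{Z}\setminus\textbf{I}=\textbf{Z}_n^n$, use Lemma~\ref{span1} to expand $z_k^jz(\textbf{m})=\sum_i z(\textbf{m}_i)b_i$ with each $b_i$ a monomial in $\textbf{Z}_n^n$; since $H$ is invariant under $\pi(\textbf{Z}_n^n)$, we have $\pi(b_i)v\in H$, and Lemma~\ref{utanmyra} (applied iteratively to the generators forming $b_i$) gives $\Delta(b_i)(e_{\textbf{0}}\otimes v\otimes e_{\textbf{0}})=e_{\textbf{0}}\otimes \pi(b_i)v\otimes e_{\textbf{0}}$. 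Chaining these identities yields
\[
U\pi(z_k^j)\pi(z(\textbf{m}))v=\sum_i\Delta(z(\textbf{m}_i))\Delta(b_i)(e_{\textbf{0}}\otimes v\otimes e_{\textbf{0}})=\Delta(z_k^j)U\pi(z(\textbf{m}))v.
\]
Since $U$ is unitary, intertwining of the holomorphic generators automatically extends to their adjoints.

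The main technical point will be the iterative application of Lemma~\ref{utanmyra}: I need the invariance of $e_{\textbf{0}}\otimes H\otimes e_{\textbf{0}}$ under $\Delta(\textbf{Z}_n^n)$, so that the identity $\Delta(b)(e_{\textbf{0}}\otimes v\otimes e_{\textbf{0}})=e_{\textbf{0}}\otimes \pi(b)v\otimes e_{\textbf{0}}$ propagates along products. This is where the specific structure of the tensor factors $\pi_d$ and $\pi_b$ on the vector $e_{\textbf{0}}$ is essential, and it should be read off directly from Lemma~\ref{utanmyra} combined with the diagrammatic description of $\pi_d,\pi_b$.
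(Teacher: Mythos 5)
Your proposal follows essentially the same route as the paper's own (very terse) proof: well-definedness on the dense span from Proposition~\ref{metabo}, isometry via Lemma~\ref{linj}/\ref{linj2} applied to both $(H,\textbf{I})$ and $(e_{\textbf{0}}\otimes H\otimes e_{\textbf{0}},\textbf{I})$, surjectivity via Corollary~\ref{metabo2}, intertwining on $\textbf{I}$ by construction, and intertwining on $\textbf{Z}_n^n$ by the $z_k^jz(\textbf{m})=\sum_i z(\textbf{m}_i)b_i$ expansion from Lemma~\ref{span1} plus iterated application of Lemma~\ref{utanmyra}. Your explicit remark that iteration of Lemma~\ref{utanmyra} is justified precisely because $e_{\textbf{0}}\otimes H\otimes e_{\textbf{0}}$ is $\Delta(\textbf{Z}_n^n)$-invariant (itself a consequence of Lemma~\ref{utanmyra} and $H$ being $\pi(\textbf{Z}_n^n)$-invariant) fills in a detail the paper leaves implicit.
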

\begin{proof}
This is shown in a completely analogous way to Proposition~\ref{robert}, by first proving that $U$ defines an isometry. For this one uses Proposition~\ref{metabo} to show that $U$ is well defined on a dense subset and then using lemmas~\ref{linj} and~\ref{linj2} to see that it is isometric and hence can be extended to the whole space. Corollary~\ref{metabo2} then implies that $U$ is onto and the definition of $U$ shows that it intertwines the images of the operators in $\textbf{I}.$ We can then use Lemma~\ref{utanmyra} and a similar calculation as the one that was made in Proposition~\ref{robert}, to see that $U$ also intertwines the images of the operators in $\textbf{Z}_{n}^{n}.$
\end{proof}
This completes the proof of the first item of Theorem~\ref{main}. 
\section{Irreducible $*$-representations of $\mathrm{Pol}(\mathrm{Mat}_{n})_{q}$}
\subsection{Elements of Minimal Length in Certain Subsets of  $S_{2n}$}
Let $\sigma\in S_{n}.$ By $\ell(\sigma)$ we denote the length of $\sigma,$ that is, the number of elements in the minimal decomposition of $\sigma$ as the product of adjacent transpositions $\sigma=s_{j_{1}}s_{j_{2}}\dots s_{j_{\ell(\sigma)}}.$ It is known that
\begin{equation}
\ell(\sigma)=\#\{1\leq j<i\leq n|\sigma(i)<\sigma(j)\},
\end{equation} 
where $\#$ denotes the number of elements in the set.
\\

Let $S\subseteq S_{2n}$ be the subgroup consisting of permutations that only permutes the first $n$ elements and leave the rest unchanged. It is clear that $S\cong S_{n}.$ For $\sigma\in S_{2n}$, let
$$
O_{\sigma}:=\{g_{1} \sigma g_{2}|g_{1},g_{2}\in S\}.
$$
We have the following:
\begin{prop}\label{1}
There is a unique element $w\in O_{\sigma}$ of minimal length $\ell(w)$ and with minimal decomposition $w=s_{j_{1}}s_{j_{2}}\cdots s_{j_{\ell(w)}},$ such that for any other $t\in O_{\sigma}$ there are $h=s_{k_{1}}s_{k_{2}}\cdots s_{k_{\ell(h)}} $ and $g=s_{l_{1}}s_{l_{2}}\cdots s_{l_{\ell(g)}}$ in $S$ such that 
$$t=h\cdot w\cdot g=(s_{k_{1}}s_{k_{2}}\cdots s_{k_{\ell(h)}})(s_{j_{1}}s_{j_{2}}\cdots s_{j_{\ell(w)}})(s_{l_{1}}s_{l_{2}}\cdots s_{l_{\ell(g)}})
$$
is a minimal decomposition of $t.$ 
\end{prop}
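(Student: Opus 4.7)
The plan is to treat $S\cong S_n$ as a parabolic subgroup of $S_{2n}$ and apply the standard length-criterion for the symmetric group: $\ell(s_i\tau)<\ell(\tau)$ iff $\tau^{-1}(i)>\tau^{-1}(i+1)$, and $\ell(\tau s_i)<\ell(\tau)$ iff $\tau(i)>\tau(i+1)$. First I would fix any minimal-length element $w$ of $O_\sigma$. Since $s_iw,\ ws_i\in O_\sigma$ for every $1\leq i\leq n-1$, minimality forces $\ell(s_iw)\geq\ell(w)$ and $\ell(ws_i)\geq\ell(w)$, so that
$$
w(1)<w(2)<\cdots<w(n),\qquad w^{-1}(1)<w^{-1}(2)<\cdots<w^{-1}(n).
$$

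The heart of the argument is to show that these two ascending conditions pin $w$ down uniquely inside $O_\sigma$. The key invariant is that the partial map $\sigma|_{\{j>n\,:\,\sigma(j)>n\}}\to\{n+1,\dots,2n\}$ is unchanged by both left and right multiplication by elements of $S$: left multiplication only rearranges values in $\{1,\dots,n\}$, while right multiplication only moves positions in $\{1,\dots,n\}$. Writing $C=\{j>n:w(j)>n\}$ and $k=|C|$, a direct bookkeeping with the ascending conditions forces $w(i)=i$ for $1\leq i\leq k$, the restriction $w|_{\{k+1,\dots,n\}}$ to be the order-preserving bijection onto $\{n+1,\dots,2n\}\setminus w(C)$, and $w|_{\{n+1,\dots,2n\}\setminus C}$ to be the order-preserving bijection onto $\{k+1,\dots,n\}$. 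Together with the fixed data $(C,\,w|_C)$ coming from the invariant, this determines $w$ uniquely.

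With uniqueness in hand, the decomposition statement follows by descent induction on $\ell(t)-\ell(w)$. If $\ell(t)>\ell(w)$ then by uniqueness $t$ cannot satisfy both ascending conditions, so there is $s_i\in S$ (some $1\leq i\leq n-1$) with $\ell(s_it)<\ell(t)$ or $\ell(ts_i)<\ell(t)$; the shortened element is again in $O_\sigma$ since $s_i\in S$. Applying the inductive hypothesis to $t':=s_it$ (or $ts_i$), writing $t'=h'wg'$ with $\ell(t')=\ell(h')+\ell(w)+\ell(g')$ realised by concatenation of reduced expressions, and multiplying back by $s_i$, a triangle-inequality squeeze of the form $\ell(h')+1+\ell(w)+\ell(g')=\ell(t)\leq\ell(s_ih')+\ell(w)+\ell(g')\leq\ell(h')+1+\ell(w)+\ell(g')$ forces $\ell(s_ih')=\ell(h')+1$, so prepending $s_i$ to the chosen reduced word of $h'$ and concatenating with those of $w$ and $g'$ yields a reduced expression for $t=hwg$ with $\ell(t)=\ell(h)+\ell(w)+\ell(g)$. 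The main technical obstacle is the uniqueness step; once the two ascending conditions are shown to pin down a single element of $O_\sigma$, the remaining descent argument is routine Coxeter-group bookkeeping.
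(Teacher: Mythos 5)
Your proof is correct, and it takes a genuinely different route from the paper's. The paper proves everything by explicit inversion counting: it splits $\{1,\dots,2n\}$ into the four sets $N_1^t,\dots,N_4^t$, decomposes $\ell(t)$ into nine inversion counts $m_{lk}^t$, identifies which of these are constant on $O_\sigma$ (namely $m_{23},m_{24},m_{43},m_{44}$), derives the lower bound $\ell(t)\geq m_{23}^\sigma+m_{24}^\sigma+m_{43}^\sigma+m_{44}^\sigma$, constructs $g,h\in S$ explicitly so that $h\sigma g$ attains the bound, and then verifies the additivity $\ell(\sigma)=\ell(w)+\ell(g)+\ell(h)$ by matching up inversion counts term by term. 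You instead appeal to the Coxeter descent criterion for $S_{2n}$ with $S\cong S_n$ as a standard parabolic: minimality forces absence of left- and right-descents by $s_1,\dots,s_{n-1}$, which gives the two ascending conditions, and the descent-by-descent induction on $\ell(t)-\ell(w)$ handles the additivity claim. The core uniqueness bookkeeping you outline — $w$ fixes $\{1,\dots,k\}$ pointwise with $k=|N_1^w|$, is order-preserving from $\{k+1,\dots,n\}$ to its image, and is order-preserving from $N_3^w$ onto $\{k+1,\dots,n\}$, while $w|_{N_4^w}$ is the double-coset invariant — is the same content as the paper's argument that $m_{11}^w=m_{13}^w=m_{21}^w=m_{22}^w=m_{33}^w=0$ determines $w$. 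The trade-off: your version is shorter and cleaner and makes the connection to standard minimal double-coset representative theory for parabolic subgroups transparent, at the cost of invoking the descent criterion; the paper's version is longer and more elementary, carrying out the inversion arithmetic from scratch, which also produces the explicit formula for $\ell(w)$ in terms of inversion counts that is reused downstream.

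One small point worth stating explicitly in a polished write-up: in your descent induction you silently use that the product of reduced words $h'\cdot w\cdot g'$ given by the inductive hypothesis is itself reduced (i.e. $\ell(h'wg')=\ell(h')+\ell(w)+\ell(g')$); this is part of what the inductive hypothesis asserts, and your triangle-inequality squeeze uses it, so it should be flagged as an invariant of the induction rather than left implicit.
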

\begin{proof}
For any $t\in O_{\sigma}$ we split $\{1,2,\dots,n,n+1,\dots ,2n\}$ into the four sets
$$
N_{1}^{t}=\{1\leq j\leq n|1\leq t(j)\leq n\}
$$
$$
N_{2}^{t}=\{1\leq j\leq n|n+1\leq t(j)\leq 2n\}
$$
$$
N_{3}^{t}=\{n+1\leq j\leq 2n|1\leq t(j)\leq n\}
$$
$$
N_{4}^{t}=\{n+1\leq j\leq 2n|n+1\leq t(j)\leq 2n\}.
$$
By decomposing $\{1,2,\dots,2n\}=N_{1}^{t}\cup N_{2}^{t}\cup N_{3}^{t}\cup N_{4}^{t}$ we can for any $t\in O_{\sigma}$ calculate the length $\ell(t)$ as 
$$\ell(t)=\#\{1\leq i<j\leq 2n | t(j)<t(i)\}=
$$
$$
\sum_{m,l=1}^{4}\#\{i\in N_{m}^{t},j\in N_{l}^{t}, i<j | t(j)<t(i)\}=$$
\begin{equation}\label{one}
\begin{array}{cc}
=\#\{i\in N_{1}^{t},j\in N_{1}^{t}, i<j | t(j)<t(i)\}+\#\{i\in N_{1}^{t},j\in N_{3}^{t}, i<j | t(j)<t(i)\}+\\
+\#\{i\in N_{2}^{t},j\in N_{1}^{t}, i<j | t(j)<t(i)\}+\#\{i\in N_{2}^{t},j\in N_{2}^{t}, i<j | t(j)<t(i)\}+\\
+\#\{i\in N_{2}^{t},j\in N_{3}^{t}, i<j | t(j)<t(i)\}+\#\{i\in N_{2}^{t},j\in N_{4}^{t}, i<j | t(j)<t(i)\}+\\
+\#\{i\in N_{3}^{t},j\in N_{3}^{t}, i<j | t(j)<t(i)\}+\#\{i\in N_{4}^{t},j\in N_{3}^{t}, i<j | t(j)<t(i)\}+\\
+\#\{i\in N_{4}^{t},j\in N_{4}^{t}, i<j | t(j)<t(i)\}.\\
\end{array}
\end{equation}
In~\eqref{one}, we have removed the terms we know to be zero. For example, the term $\#(\{i\in N_{3}^{t},j\in N_{2}^{t}, i<j | t(j)<t(i)\})$ is equal to zero because $j<i$ holds for all $i\in N_{3}^{t}$ and $j\in N_{2}^{t}.$
\\

Notice that $ |N_{1}^{t} |= | N_{4}^{t}|$ and $ | N_{2}^{t}|= |N_{3}^{t} |=n-|N_{1}^{t} |.$ It is not hard to see that if $g,h\in S$ then
$|N_{i}^{\sigma}|=|N_{i}^{h\sigma g}|$ for $i=1,2,3,4,$ and also $N_{2}^{h\sigma}=N_{2}^{\sigma},N_{2}^{\sigma g}=g^{-1}(N_{2}^{\sigma}),N_{3}^{h\sigma g}=N_{3}^{\sigma}$ and $N_{4}^{h\sigma g}=N_{4}^{\sigma}.$ For all $h,g\in S,$ we have
\begin{equation}\label{23}
\#(\{i\in N_{2}^{h\sigma g},j\in N_{3}^{h\sigma g}|h\sigma g(j)<h\sigma g(i)\})=
\frac{|N_{2}^{h\sigma g} |\left(|N_{2}^{h\sigma g} |+1 \right)}{2}=\frac{|N_{2}^{\sigma } |\left(|N_{2}^{\sigma} |+1 \right)}{2}
\end{equation}
 For any $g,h\in S,$ the inequality $i<j $ always holds for $i\in N_{2}^{h\sigma g},j\in N_{4}^{h\sigma g}$ and as $N_{2}^{h\sigma g}=N_{2}^{\sigma g}=g^{-1}(N_{2}^{\sigma })$ and $N_{4}^{h\sigma g}=N_{4}^{\sigma}$ we get 
$$
\#\{i\in N_{2}^{h\sigma g},j\in N_{4}^{h\sigma g}, i<j | h\sigma g(j)<h\sigma g(i)\}=
$$
$$
\#\{i\in N_{2}^{h\sigma g},j\in N_{4}^{h\sigma g} | h\sigma g(j)<h\sigma g(i)\}=
$$
$$
\#\{g(i)\in N_{2}^{\sigma},j\in N_{4}^{\sigma} | \sigma(j)<\sigma(g(i))\}=\#\{i\in N_{2}^{\sigma},j\in N_{4}^{\sigma} | \sigma(j)<\sigma(i)\}
$$
and hence for any $t\in O_{\sigma},$ we have
\begin{equation}\label{24}
\#\{i\in N_{2}^{t},j\in N_{4}^{t}, i<j | t(j)<t(i)\}=\#\{i\in N_{2}^{\sigma},j\in N_{4}^{\sigma}, i<j | \sigma(j)<\sigma(i)\}.
\end{equation}
A very similar reasoning gives that for all $t\in O_{\sigma},$
\begin{equation}\label{43}
\#\{i\in N_{4}^{t},j\in N_{3}^{t}, i<j | t(j)<t(i)\}=\#\{i\in N_{4}^{\sigma},j\in N_{3}^{\sigma}, i<j | \sigma(j)<\sigma(i)\}.
\end{equation}
For $t\in O_{\sigma},$ let $$m_{lk}^{t}=\#\{i\in N_{l}^{t},j\in N_{k}^{t}, i<j|t(j)<t(i)\},$$ then we can rewrite~\eqref{one} as 
\begin{equation}\label{sum}
\ell(t)=m_{11}^{t}+m_{13}^{t}+m_{21}^{t}+m_{22}^{t}+m_{23}^{t}+m_{24}^{t}+m_{33}^{t}+m_{43}^{t}+m_{44}^{t}.
\end{equation}

From the equalities~\eqref{23},~\eqref{24},~\eqref{43} we get, for any $t\in O_{\sigma},$ the lower bound 
\begin{equation}\label{2}
m_{44}^{\sigma}+m_{43}^{\sigma}+m_{24}^{\sigma}+m_{23}^{\sigma}\leq \ell(t).
\end{equation}
We claim that there is a unique element $w\in O_{\sigma}$ such that equality holds in~\eqref{2}. We show the existence first. We are going to do this by showing that there exist $g,h\in S$ such that \begin{equation}\label{asss}m_{11}^{h\sigma g}=m_{13}^{h\sigma g}=m_{21}^{h\sigma g}=m_{22}^{h\sigma g}=m_{33}^{h\sigma g}=0.\end{equation}
If we then let $w:=h\sigma g$, then as~\eqref{2} holds for any $t\in O_{\sigma},$ it follows from~\eqref{sum} that the element $w$ would be of minimal length in $O_{\sigma}.$
\\
  
Consider $\{\sigma(1),\sigma(2),\dots ,\sigma(n)\}=N^{\sigma}_{1}\cup N^{\sigma}_{2}.$ There is a unique permutation $g\in S$ such that 
$$
\sigma g(1)<\sigma g(2)<\dots<\sigma g(n)
$$ 
and hence $m_{11}^{\sigma g}=m_{21}^{\sigma g}=m_{22}^{\sigma g}=0.$ Now, consider $$\{(\sigma g)^{-1}(1),(\sigma g)^{-1}(2),\dots ,(\sigma g)^{-1}(n)\}=N^{\sigma g}_{1}\cup N^{\sigma g}_{3}.$$
There is $h\in S$ such that for all $1\leq j,k\leq n$ we have $(\sigma g)^{-1}(j)<(\sigma g)^{-1}(k)$ if and only if $h(j)<h(k).$ We claim that~\eqref{asss} holds for $h\sigma g$. To see this, first notice that $N_{2}^{\sigma g}=N_{2}^{h\sigma g},$ since $h$ fixes $n+1,\dots,2n.$ Hence $m_{22}^{h\sigma g}=m_{22}^{\sigma g}=0.$ Also, we have $m_{11}^{h\sigma g}=m_{13}^{h\sigma g}=m_{33}^{h\sigma g}=0$ by the definition of $h.$ We need to check that $h\sigma g$ preserves the ordering of $\{1,2,\dots,n\},$ so that $m_{21}^{h\sigma g}=0.$
Let $1\leq i<j\leq n,$ then by the definition of $g,$ we have $\sigma g(i)<\sigma g(j),$ If $n+1\leq \sigma g(j)\leq 2n,$ then $h$ fixes $\sigma g(j)$ and $\sigma g(i)$ can at most be mapped by $h$ to another integer less than $n,$ and so we have $h\sigma g(i)<h\sigma g(j).$ If $1\leq \sigma g(j)\leq n,$ then $$\{i,j\}=\{(\sigma g)^{-1}(\sigma g(i)),(\sigma g)^{-1}(\sigma g(j))\}\subseteq N^{\sigma g}_{1}\cup N^{\sigma g}_{3},$$ and hence $h(\sigma g(i))<h(\sigma g(j))$ by the definition of $h.$ This shows that also $m_{12}^{h\sigma g}=0$ and we can now let $w=h\sigma g.$ Hence, there exists an elements $w\in O_{\sigma},$ such that equality holds in~\eqref{2}.
\\

To prove the uniqueness of $w$, we will show that the equality in~\eqref{2} determines $w$ completely. If 
$$
m_{13}^{w}=\#\{i\in N_{1}^{w},j\in N_{3}^{w}, i<j | w(j)<w(i)\}=0,
$$
then as $w(N_{1}^{w}\cup N_{3}^{w})=\{1,2,\dots,n\},$ it follows that the image of $N_{1}^{w}$ under $w$ must be $\{1,2,\dots,|N_{1}^{w}| \}.$ Furthermore, as $w(j)<w(i)$ holds automatically for $j\in N_{1}^{w}$ and $i\in N_{2}^{w},$ we have that $$m_{21}^{w}=\#\{i\in N_{2}^{w},j\in N_{1}^{w}, i<j | w(j)<w(i)\}=0$$ implies \begin{equation}\label{eqe}\#\{i\in N_{2}^{w},j\in N_{1}^{w}, i<j\}=0.\end{equation} From~\eqref{eqe}, it follows that $w(N_{1}^{w})=N_{1}^{w}=\{1,\dots, |N_{1}^{w}| \}.$ As $m_{11}^{w}=0,$ we can then deduce that $w(j)=j$ for $j\in N_{1}^{w}.$ Moreover, we have $N_{2}^{w}=\{N_{1}^{w}+1,\dots n\}.$ Thus $w:N_{3}^{w}\to N_{2}^{w}$ and $w:N_{2}^{w}\to \{n+1,n+2,\dots, 2n\}\backslash w(N_{4}^{w})$ are bijective maps that preserves the ordering i.e they are uniquely determined. Finally, the action of $w$ on $N_{4}^{w}$ are the same for all elements in $O_{\sigma}.$
\\

To prove the second part, it is clear that we only need to prove this claim for $\sigma,$ as $O_{t}=O_{\sigma}$ for any $t\in O_{\sigma}.$ We will show that \begin{equation}\label{4}\ell(\sigma)=\ell(w)+\ell(g)+\ell(h),\end{equation} with $g,h\in S$ as above. Notice that by the definition of $g,$ we have
$$
\ell(g)=\ell(g^{-1})=\#\{1\leq i<j\leq n | \sigma(j)<\sigma(i)\}
$$
and as $\{1,2,\dots,n\}=N_{1}^{\sigma}\cup N_{2}^{\sigma},$ we get that 
$$\ell(g)=m_{11}^{\sigma}+m_{12}^{\sigma}+m_{21}^{\sigma}+m_{22}^{\sigma}=m_{11}^{\sigma}+m_{21}^{\sigma}+m_{22}^{\sigma}$$
as $m_{12}^{\sigma}=0.$ While, by the definition of $h,$ we have
$$
\ell(h)=\#\{i,j\in N_{1}^{\sigma g}\cup N_{3}^{\sigma g}, i<j | \sigma g(j)<\sigma g(i)\}=
$$
$$
m_{11}^{\sigma g}+m_{13}^{ \sigma g}+m_{31}^{\sigma g}+m_{33}^{\sigma g}=m_{11}^{\sigma g}+m_{13}^{\sigma g}+m_{33}^{\sigma g}
$$
as $m_{31}^{\sigma g}=0.$ By the definition of $g,$ we have $\#\{i\in N_{1}^{\sigma g},j\in N_{1}^{\sigma g},  i<j  | \sigma g(j)<\sigma g(i)\}=0.$ Moreover, we have $$\#\{i\in N_{3}^{\sigma g},j\in N_{3}^{\sigma g},  i<j  | \sigma g(j)<\sigma g(i)\}=\#\{i\in N_{3}^{\sigma},j\in N_{3}^{\sigma},  i<j  | \sigma(j)<\sigma(i)\}$$ as $g$ does not permute any of the integers in $N_{3}^{\sigma}.$ We will now show the equality 
$$
\#\{i\in N_{1}^{\sigma g},j\in N_{3}^{\sigma g},  i<j  | \sigma g(j)<\sigma g(i)\}=\#\{i\in N_{1}^{\sigma},j\in N_{3}^{\sigma},  i<j  | \sigma(j)<\sigma(i)\}.
$$
We have that  $i<j$ already holds for all $i\in N_{1}^{\sigma g},j\in N_{3}^{\sigma g}$ and hence
$$
\#\{i\in N_{1}^{\sigma g},j\in N_{3}^{\sigma g},  i<j  | \sigma g(j)<\sigma g(i)\}=\#\{i\in N_{1}^{\sigma g},j\in N_{3}^{\sigma g}  | \sigma g(j)<\sigma g(i)\}=
$$
$$
\#\{i\in N_{1}^{\sigma g},j\in N_{3}^{\sigma}  | \sigma(j)<\sigma g(i)\}
$$
as, again, $\sigma g(j)=\sigma(j)$ and $N_{3}^{\sigma g}=N_{3}^{\sigma}.$ If $i\in N_{1}^{\sigma g},$ then $g(i)\in N_{1}^{\sigma}$ and hence
$$
\#\{i\in N_{1}^{\sigma g},j\in N_{3}^{\sigma}  | \sigma(j)<\sigma (g(i))\}=\#\{i\in N_{1}^{\sigma},j\in N_{3}^{\sigma}  | \sigma(j)<\sigma(i)\}=
$$
$$
\#\{i\in N_{1}^{\sigma},j\in N_{3}^{\sigma},  i<j  | \sigma(j)<\sigma(i)\}=m_{13}^{\sigma}.
$$
This shows that $\ell(\sigma)=\ell(w)+\ell(g)+\ell(h)$ and hence if $w=s_{j_{2}}\cdots s_{j_{\ell(w)}},g=s_{l_{1}}s_{l_{2}}\cdots s_{l_{\ell(g)}},h=s_{k_{1}}s_{k_{2}}\cdots s_{k_{\ell(h)}}$ are minimal decompositions, then so is 
$$
\sigma=(s_{k_{1}}s_{k_{2}}\cdots s_{k_{\ell(h)}})(s_{j_{1}}s_{j_{2}}\cdots s_{j_{\ell(w)}})(s_{l_{1}}s_{l_{2}}\cdots s_{l_{\ell(g)}}).
$$
\end{proof}
For integers $1\leq j\leq n$ and $0\leq k\leq n$ we define the cycles in $S_{2n}$
\begin{equation}\label{cycles}
c_{k,j}=\begin{cases}s_{j+n-k}s_{j+n-k+1}\cdots s_{j+n-1}& \text{if $1\leq k\leq n$}\\ e & \text{if $k=0$}\end{cases}
\end{equation}
where $e\in S_{2n}$ is the identity element. We shall now prove the following result, that gives a decomposition of the element of minimal length in $O_{\sigma}$ in terms of such cycles. 
\begin{prop}\label{decomp}
If $w\in O_{\sigma}\subseteq S_{2n}$ is of minimal length $\ell(w)$, then there is a multi-index $\textbf{k}=[k_{n},k_{n-1},\dots,k_{1}]\in \mathbb{Z}_{+}^{n}$ with the property that \begin{equation}\label{didi}k_{i}\leq \max_{i<j\leq n}(k_{j}+i-j+1,i)\end{equation}(i.e. $\textbf{k}$ is admissible) and such that 
$$
w=c_{k_{n},n}c_{k_{n-1},n-1}\cdots c_{k_{1},1}
$$
is a minimal decomposition of $w$ and $\ell(w)=\sum_{j=1}^{n}k_{j}.$
\end{prop}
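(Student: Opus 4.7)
The approach is by induction on $n$. The case $n = 1$ is immediate: $S_2 = \{e, s_1\}$, and the admissible sequences $(k_1) \in \{0, 1\}$ yield $c_{k_1, 1} = e$ or $s_1$ respectively, with $\ell(c_{k_1, 1}) = k_1$.

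For the inductive step, I exploit the structural characterization of the minimum-length $w$ from Proposition~\ref{1}. Since $w|_{N_3^w}$ is strictly order-preserving with image $\{|N_1^w|+1, \ldots, n\}$, the value $w(2n)$ either equals $n$ (when $2n \in N_3^w$, forced by maximality of $2n$ in $N_3^w$) or lies in $\{n+1, \ldots, 2n\}$ (when $2n \in N_4^w$). I set $k_n := 2n - w(2n) \in \{0, 1, \ldots, n\}$; this choice is forced, because every cycle $c_{k_j, j}$ with $j < n$ uses only transpositions $s_1, \ldots, s_{2n-2}$ and hence fixes $2n$, so any decomposition of the required form yields $w(2n) = c_{k_n, n}(2n) = 2n - k_n$. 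Setting $w' := c_{k_n, n}^{-1} w$, a direct verification gives $w'(2n) = 2n$, so $w'$ belongs to the subgroup $\langle s_1, \ldots, s_{2n-2}\rangle \subseteq S_{2n}$.

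To apply the inductive hypothesis to $w'$, the main subtlety is that $w' \in S_{2n-1}$ whereas the $(n-1)$-case of the proposition is stated in $S_{2(n-1)}$, and the $(n-1)$-case cycles $c'_{k,j} = s_{j+n-1-k} \cdots s_{j+n-2}$ differ by an index shift from the $n$-case cycles $c_{k,j} = s_{j+n-k}\cdots s_{j+n-1}$. I handle this by a direct argument that $w'$ is itself the minimum-length element in its own $(S, S)$-orbit and admits a decomposition $w' = c_{k_{n-1}, n-1} \cdots c_{k_1, 1}$ using the same $n$-indexed cycles. Minimality of $w'$ is preserved because for $k_n < n$ the cycle $c_{k_n, n}$ uses only transpositions with indices $\geq n+1$, hence disjoint from $\{1, \ldots, n\}$, so left-multiplication by $c_{k_n, n}^{-1}$ commutes with the right $S$-action; the case $k_n = n$ requires a separate but elementary check using that $2n \in N_3^w$ implies $w|_{\{1, \ldots, n\}}$ has a prescribed form.

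Prepending $c_{k_n, n}$ to the inductive decomposition produces $w = c_{k_n, n} \cdots c_{k_1, 1}$. Admissibility of the full sequence follows from admissibility of the tail $(k_{n-1}, \ldots, k_1)$ together with the extra inequalities $k_j \leq k_n + j + 1 - n$ (whenever these are the binding constraints), which I obtain from order-preservation of $w|_{N_3^w}$ and $w|_{N_2^w}$ by comparing $w(n+j)$ with $w(2n)$. Finally, the length equality $\ell(w) = \sum k_j$ follows from $\ell(w) \leq \ell(c_{k_n, n}) + \ell(w') = k_n + \sum_{j<n} k_j$ combined with the reverse inequality, obtained by checking that no braid simplification can shorten the concatenated word under the admissibility constraint, so the word is reduced. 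The principal obstacle throughout is the reconciliation between the cycle indexings in the $n$- and $(n-1)$-cases; once this bookkeeping is in place, the remaining steps are direct combinatorial verifications.
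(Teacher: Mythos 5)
Your overall outline — induction on $n$, set $k_{n}=2n-w(2n)$, peel off $c_{k_{n},n}$, and reduce to $w'=c_{k_{n},n}^{-1}w$ — coincides with the paper's strategy, and your handling of the case $k_{n}<n$ is essentially the paper's case (1) (where they verify minimality of $w'$ by directly showing $m_{11}^{w'}=m_{13}^{w'}=m_{21}^{w'}=m_{22}^{w'}=m_{33}^{w'}=0$; your commutativity argument would still need the additive length relation $\ell(c_{k_{n},n}t')=\ell(c_{k_{n},n})+\ell(t')$, which you do not establish).

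The serious gap is your treatment of $k_{n}=n$, i.e.\ $w(2n)=n$. You describe it as ``a separate but elementary check,'' but the naive reduction $w'=c_{n,n}^{-1}w$ is \emph{not} the minimal-length element in its orbit in general. Concretely, writing $m=\#N_{3}^{w}$, one has $w'(n-m+1)=c_{n,n}^{-1}(w(n-m+1))=c_{n,n}^{-1}(n+1)=n$, so $n-m+1$ moves from $N_{2}^{w}$ into $N_{1}^{w'}$; since $v(N_{3}^{w'})=\{n-m+1,\dots,n-1\}$, every $j\in N_{3}^{w'}$ contributes an inversion against $i=n-m+1$, giving $m_{13}^{w'}=m-1>0$ as soon as $m\geq 2$. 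Thus $w'$ fails the minimality characterization of Proposition~\ref{1}, and the inductive hypothesis cannot be applied to it. The paper avoids this by conjugating: it passes to $t^{-1}c_{n,n}^{-1}wt$ for the long cycle $t=(1,2n,2n-1,\dots,2)$, verifies \emph{that} element is minimal, applies an induction which also decreases $m$, shows the resulting $k_{1}$-term vanishes, and then conjugates back using $ts_{i}t^{-1}=s_{i-1}$. None of this is captured by ``a prescribed form of $w|_{\{1,\dots,n\}}$.'' The closing claim that the length equality follows because ``no braid simplification can shorten the concatenated word'' is likewise not an argument; the paper instead computes $\ell(w)=(2n-w(2n))+\ell(w')$ (resp.\ $\ell(w)=n+\ell(v)$) directly from the inversion count and threads that identity through the induction.
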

\begin{proof}
 Consider pairs $(m,n)$ with $n\in \mathbb{N}$ and $0\leq m\leq n.$ We order these as
\begin{equation}\label{ordering}
\begin{array}{ccc}(m_{1},n_{1})< (m_{2},n_{2}), & \text{if $n_{1}<n_{2},$ or if $n_{1}=n_{2}$ and $m_{1}<m_{2}.$}
\end{array}
\end{equation}
Clearly, this gives a well-ordering on the set of such pairs $(m,n)$.
\\

To every $w\in O_{\sigma}\subseteq S_{2n}$ of minimal length $\ell(w),$ we can associate a pair $(m,n),$ where $0\leq m\leq n$ is the number of integers $n+1\leq j\leq 2n$ such that $1\leq w(j)\leq n.$ It follows that $0\leq m\leq n.$  We prove the proposition by induction on $(m,n)$ with the ordering~\eqref{ordering}. In the case $n=1,$ it follows that $w$ is either the identity permutation, associated to the pair $(0,1),$ or $s_{1}=\left(\begin{array}{cc}1 & 2 \\ 2 & 1\end{array}\right),$ associated to $(1,1).$ In both cases it is clear that the proposition holds.
\\

Assume now that the statement holds for all minimal length elements associated to pairs $(m_{1},n_{1}) <(m,n).$ Let $w\in O_{\sigma}\subseteq S_{2n}$ be of minimal length in $O_{\sigma}$ and associated to $(m,n).$ Consider the integer $w(2n).$ We claim that $n\leq w(2n)\leq 2n.$ In fact, it follows from the proof of Proposition~\ref{1} that
$$
\#\{1\leq i<j\leq 2n|1\leq w(j)<w(i)\leq n\}=m_{11}^{w}+m_{13}^{w}+m_{33}^{w}=0+0+0
$$
and a simple argument shows that if $1\leq w(2n)\leq n,$ then we must have $w(2n)=n.$\\

We will split the proof into the two cases 
\begin{enumerate}
\item[(1)] $n+1\leq w(2n)\leq 2n.$
\item[(2)] $w(2n)=n.$
\end{enumerate}
We start with the case $(1).$ If $w(2n)=2n,$ let $v=w.$ Otherwise, consider the cycle $$c:=s_{2n-1}\cdots s_{w(2n)+1} s_{w(2n)}=c_{2n-w(2n),n}^{-1}=$$
\begin{equation}
\left(\begin{array}{cccccccccc}1 & 2 &  \dots & w(2n) & w(2n)+1 &\dots & 2n\\1 & 2 &\dots & 2n &w(2n)&\dots & 2n-1\end{array}\right)
\end{equation}
and let $v=cw.$
\\

We claim that $v$ is the element of minimal length in $O_{v}.$ We prove this by showing that
$$m_{11}^{v}=m_{13}^{v}=m_{21}^{v}=m_{22}^{v}=m_{33}^{v}=0.$$
The latter follows directly from the formula
\begin{equation}\label{v1}
v(j)=\begin{cases}w(j), & \text{if $1\leq w(j)<  w(2n)$}\\ w(j)-1, & \text{if $w(2n)<  w(j)< 2n$}\\ 2n, & \text{if $j=2n$} \end{cases}
\end{equation}
or the observation that $c$ does not change the order of the integers in $$\{1,2,\dots,n\}\cup w(\{1,2,\dots,n\}).$$
We have also
$$
\ell(w)=\#\{1\leq i<j\leq 2n | w(j)<w(i)\}=$$ \begin{equation}\label{split} =\#\{1\leq i<j < 2n | w(j)<w(i)\}+\#\{1\leq i <2n | w(2n)<w(i)\}.
\end{equation}
Since $\#\{1\leq i <2n : w(2n)<w(i)\}=2n-w(2n),$ it follows from~\eqref{v1} that $$\#\{1\leq i<j < 2n | w(j)<w(i)\}=\ell(v).$$
Hence from~\eqref{split} we obtain \begin{equation}\label{l1}\ell(v)=\ell(w)-(2n-w(2n)).\end{equation}
As $m_{11}^{v}=0$ and $m$ is not equal to $n,$ it follows from the proof of Proposition~\ref{1} that $v$ fixes $\{1\}.$ In $S_{2n},$ consider the subgroup of elements that fix $\{1,2n\}.$ It is clearly isomorphic to $S_{2(n-1)}$ with the isomorphism given by \begin{equation}\label{phis}\phi:s_{i}\in S_{2n-2}\mapsto s_{i+1}\in S_{2n}.\end{equation}
It is easy to see that $\phi^{-1}(v)\in O_{\phi^{-1}(v)}\subseteq S_{2(n-1)}$ is of minimal length and $\phi^{-1}(v)$ is associated to the pair $(m,n-1)<(m,n)$. By induction, we have a decomposition $$\phi^{-1}(v)=c_{k_{n-1},n-1}c_{k_{n-2},n-2}\cdots c_{k_{1},1}\in S_{2(n-1)}$$ such that \begin{equation}\begin{array}{ccc}\label{in-1}k_{i}\leq \max_{i<j\leq n-1}(k_{j}+i+1-j,i),&  j=1,\dots, n-1.\end{array}\end{equation} Notice that if $c_{k,j}$ is the cycle~\eqref{cycles} in $S_{2(n-1)},$ then the image of $c_{k,j}$ under $\phi$ is $\phi(c_{k,j})= c_{k,j},$ the latter is now the cycle~\eqref{cycles} in $S_{2n}.$ It follows that 
$$
w=c^{-1} v=c_{k_{n},n}c_{k_{n-1},n-1}c_{k_{n-2},n-2}\cdots c_{k_{1},1} 
$$
with $k_{n}=2n-w(2n).$ Moreover, by~\eqref{in-1}
$$\begin{array}{ccc}k_{i}\leq \max_{i<j\leq n-1}(k_{j}+i+1-j,i)\leq \max_{i<j\leq n}(k_{j}+i+1-j,i), & i=1,\dots , n-1\end{array}$$  and $k_{n}=2n-w(2n)<n, $ and hence it follows that $$\begin{array}{cc}k_{i}\leq \max_{i<j\leq n}(k_{j}+i+1-j,i), & i=1,\dots,n\end{array}$$ and that~\eqref{didi} holds. Moreover, by~\eqref{l1} and $\ell(c)=2n-w(2n)=k_{n},$ it follows by induction that $$\ell(w)=2n-w(2n)+\ell(v)=k_{n}+\ell(v)=k_{n}+\sum_{j=1}^{n-1}k_{j}=\sum_{j=1}^{n}k_{j}.$$ 
\\

Let us now assume that $(2)$ holds, so that $w(2n)=n.$ In this case we have to argue slightly differently. Let 
\begin{equation}\label{t}
t=\left(\begin{array}{cccccc}1 &2&\dots &2n\\ 2n&1&\dots& 2n-1\end{array}\right)
\end{equation}
and consider $$v=t^{-1} c_{n,n}^{-1}w t\in S_{2n}.$$
We claim that $v\in O_{v}$ has minimal length. Notice that $c_{n,n}^{-1}w(2n)=2n,$ which gives $v(1)=1$ and as
$$c_{n,n}^{-1}=\left(\begin{array}{ccccccc} 1 & \dots & n & n+1 & \dots & 2n\\ 1 & \dots & 2n & n & \dots & 2n-1\end{array} \right)$$
we can, for $2\leq j\leq 2n,$ calculate
\begin{equation}\label{v2}
v(j)=t^{-1} c_{n,n}^{-1}w(t(j))=t c_{n,n}^{-1}(w(j-1))=\begin{cases} w(j-1)+1 & \text{if $2\leq w(j-1)\leq n-1$}\\ w(j-1) & \text{if $n+1\leq w(j-1)\leq 2n$} \end{cases}
\end{equation}
(as $w(2n)=n,$ these are the only cases). From this it is not hard to see that $m_{11}^{v}=m_{13}^{v}=m_{21}^{v}=m_{22}^{v}=m_{33}^{v}=0$ and that $$\ell(v)=\ell(w)-n.$$
It also follows from~\eqref{v2} that the number of $n+1\leq j\leq 2n$ such that $1\leq v(j)\leq n$ is $m-1,$ i.e. $v$ is associated to $(m-1,n)<(m,n).$ By induction, we have a minimal decomposition $$v=c_{j_{n},n}c_{j_{n-1},n-1}\cdots c_{j_{1},1}$$ such that $j_{i}\leq \max_{i+1<r\leq n}(j_{r}+i+1-r,i).$ However, we claim that $j_{1}=0.$ To see this, notice that if $j_{1}>0,$ then as 
$$
c_{j_{1},1}=\left(\begin{array}{ccccccccc}1&\dots&n-j_{1}+1&\dots & n&n+1 &\dots& 2n\\1&\dots&n-j_{1}+2 &\dots&n+1&n-j_{1}+1&\dots &2n \end{array}\right)
$$
it follows that $c_{j_{1},1}(n+1)<c_{j_{1},1}(n).$ It is not hard to see that this inequality persists when applying the other cycles, but it follows from~\eqref{v2} that $v(n)<v(n+1)$ and this contradiction gives that $j_{1}=0.$ For $2\leq i\leq n-1,$ we have $ts_{i}t^{-1}=s_{i-1},$ and hence 
$$
c_{n,n}^{-1}w=t v t^{-1}=(t c_{j_{n},n}t^{-1})(t c_{j_{n-1},n-1}t^{-1})\cdots (t c_{j_{2},2}t^{-1})= c_{j_{n},n-1} c_{j_{n-1},n-1}\cdots c_{j_{2},2}
$$
as no of the cycles $c_{j_{i}}$, $i=2,\dots,n$ contains $s_{1}$ and thus $t c_{j_{i},i}t^{-1}=c_{j_{i},i-1}.$ So we have the decomposition $$w=c_{k_{n},n}c_{k_{n-1},n-1}\cdots c_{k_{1},1}$$ with $k_{n}:=n$ and $k_{i}:=j_{i+1}$, $i=1,\dots,n-1.$ To show that the sequence $k_{i}$ has the property~\eqref{didi}, we again notice that the inequality holds trivially for $k_{n}=n$ and also for $k_{n-1},$ as
$$
\max_{n-1<j\leq n}(k_{j}+n-1+1-j,n-1)=\max_{n-1<j\leq n}(k_{n}+n-n,n-1)=\max(n,n-1)=n.
$$
Otherwise, for $1\leq i\leq n-2$ we get
$$
k_{i}=j_{i+1}\leq \max_{i+1<r\leq n}(j_{r}+(i+1)+1-r,i+1)= \max_{i<r\leq n-1}(k_{r}+i+1-r,i+1)=
$$
$$
 \max_{i<r\leq n-1}(k_{r}+i+1-r,n+i+1-n)= \max_{i<r\leq n}(k_{r}+i+1-r)\leq \max_{i<r\leq n}(k_{r}+i+1-r,i)
$$
and hence~\eqref{didi} is true for $k_{n}.$ We now calculate the length as
$$
\ell(w)=n+\ell(v)=n+\sum_{i=2}^{n}j_{i}=n+\sum_{i=1}^{n-1}k_{n}=\sum_{i=1}^{n}k_{n}
$$
and from this it follows that the decomposition $w=c_{k_{n},n}c_{k_{n-1},n-1}\cdots c_{k_{1},1}$ is a minimal one.
\end{proof}

\begin{prop}\label{decomp1}
If we have an admissible sequence $\textbf{k}=[k_{n},k_{n-1},\dots,k_{1}]\in \mathbb{Z}_{+}^{n},$ then the element $$w=c_{k_{n},n}c_{k_{n-1},n-1}\cdots c_{k_{1},1}\in S_{2n}$$ is the unique element of minimal length in $O_{w}$ and the admissible sequence from Proposition~\ref{decomp} coincides with $\textbf{k}.$
\end{prop}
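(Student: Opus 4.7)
The plan is to prove by induction on $n$ that $w := c_{k_n,n}\cdots c_{k_1,1}$ has length $\sum_j k_j$ and satisfies the five vanishing conditions $m_{11}^w = m_{13}^w = m_{21}^w = m_{22}^w = m_{33}^w = 0$ from the proof of Proposition~\ref{1}. Minimality of $w$ in $O_w$ then follows from that proof, and consequently the uniqueness of the admissible multi-index associated to the minimal element of $O_w$ (implicit in Proposition~\ref{decomp}) forces this multi-index to be exactly $\textbf{k}$. The base case $n=1$ is immediate: the admissible sequences are $(0)$ and $(1)$, giving $w = e$ or $w = s_1$, both trivially minimal.

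For the inductive step with $k_n < n$, observe that the contribution $k_n - n + i + 1 \leq i$ does not affect the admissibility maximum, so $(k_{n-1},\dots,k_1)$ is admissible for $n-1$. A short chain argument---if $k_1 = n$ then admissibility at $i=1$ forces $k_2 = n$, which forces $k_3 = n$, and so on up to $k_n = n$---rules out $k_1 = n$ in this case. Consequently $w_0 := c_{k_n,n}^{-1}w = c_{k_{n-1},n-1}\cdots c_{k_1,1}$ fixes $\{1,2n\}$ pointwise and equals $\phi(\tilde w_0)$ for some $\tilde w_0 \in S_{2n-2}$, where $\phi(s_i) = s_{i+1}$ is the standard embedding. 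Applying the inductive hypothesis to $\tilde w_0$, and using that $\phi$ preserves length and that the counts $m_{ij}$ transfer (since $w_0(1)=1$ and $w_0(j) = \tilde w_0(j-1)+1$ for $2 \leq j \leq 2n-1$), we obtain $\ell(w_0) = \sum_{j<n}k_j$ and $m_{ij}^{w_0} = 0$. Multiplication by $c_{k_n,n}$ acts on $\{2,\dots,2n-1\}$ as an order-preserving injection missing the single value $2n-k_n$, and sends $2n \mapsto 2n-k_n > n$; elementary inversion counting yields $\ell(w) = \ell(w_0) + k_n$, and the identities $N_r^w = N_r^{w_0}$ for all $r$ then give $m_{ij}^w = 0$.

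The case $k_n = n$ subdivides into the boundary case (all $k_j = n$, where $w$ is the block-swap permutation with $\ell(w) = n^2 = \sum k_j$, directly verified to be minimal) and the generic case (some $k_j < n$, where the same chain argument gives $k_1 < n$ and $w_0$ fixes $\{1,2n\}$ as before). In the generic subcase, $c_{n,n}$ sends $2n$ to $n$ and acts on $\{1,\dots,2n-1\}$ as the identity on $\{1,\dots,n-1\}$ and as shift-by-one on $\{n,\dots,2n-1\}$, still order-preserving; the inversion count gives $\ell(w) = \ell(w_0) + n = \sum k_j$. The sets $N_r^w$ are now obtained from $N_r^{w_0}$ by moving $2n$ from $N_4^{w_0}$ to $N_3^w$ and by reclassifying the index $w_0^{-1}(n)$. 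The main obstacle of the proof is the verification of $m_{ij}^w = 0$ in this subcase: it requires a careful case analysis on the location of $w_0^{-1}(n) \in \{2,\dots,2n-1\}$, combined with the minimality of $w_0$ and the order-preservation of $c_{n,n}$ on the remaining indices.
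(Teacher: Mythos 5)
Your overall strategy—showing directly that $w$ satisfies the vanishing conditions $m_{11}^{w}=m_{13}^{w}=m_{21}^{w}=m_{22}^{w}=m_{33}^{w}=0$ from the proof of Proposition~\ref{1} and then invoking uniqueness—is a genuinely different and cleaner route than the paper's (which compares $w$ with the minimal element $v\in O_{w}$ by showing the associated multi-indices agree). Your handling of the $k_n<n$ case is correct: the chain argument does force all $k_j<n$, the sequence $(k_{n-1},\dots,k_1)$ is then admissible for $n-1$, and the $m_{ij}$-transfer under $\phi$ and under left multiplication by $c_{k_n,n}$ works as you describe.

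However, there is a genuine gap in the $k_n=n$, ``generic'' subcase. You claim that after stripping off $c_{n,n}$ one can apply the inductive hypothesis to $w_0 = c_{k_{n-1},n-1}\cdots c_{k_1,1}$, and you invoke ``the minimality of $w_0$'' in the final case analysis. But when $k_n=n$, the truncated sequence $(k_{n-1},\dots,k_1)$ need \emph{not} be admissible for $n-1$: the admissibility bound for $k_{n-1}$ in $\mathbf{k}$ allows $k_{n-1}=n$, which is out of range in $S_{2(n-1)}$. Concretely, take $n=3$ and $\mathbf{k}=(3,3,1)$; this is admissible, but $(k_2,k_1)=(3,1)$ violates $k_2\leq 2$. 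In that example $w_0 = c_{3,2}c_{1,1} = s_2s_3s_4s_3$ satisfies $m_{13}^{w_0}=1\neq 0$ (with $i=2\in N_1^{w_0}$, $j=5\in N_3^{w_0}$), so $w_0$ is not minimal in $O_{w_0}$ and the inductive hypothesis simply does not apply. The paper circumvents exactly this obstruction by conjugating with the long cycle $t$ of~\eqref{t}: $t^{-1}c_{n,n}^{-1}wt = c_{k_{n-1},n}\cdots c_{k_1,2}$ shifts the second indices of the cycles up by one, and the resulting string $[k_{n-1},\dots,k_1,0]$ \emph{is} admissible (this is established in the proof of Proposition~\ref{decomp}). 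Without some analogue of that re-indexing step, your case analysis on the location of $w_0^{-1}(n)$ cannot lean on any property of $w_0$, and the argument does not close.
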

\begin{proof}
First, notice that for any sequence of positive integers $[j_{n},\dots,j_{1}]$ such that $1\leq j_{m}\leq n,$ the product
$$
t=c_{j_{n},n}c_{j_{n-1},n-1}\cdots c_{j_{1},1}
$$
has the property that for $1\leq i<m\leq n,$ we have $t(i)<t(m).$ In fact, as
$$
c_{k,j}=\left(\begin{array}{cccccccccc}1&\dots &j+n-k &j+n-k+1 &\dots & j+n &\dots & 2n \\ 1 & \dots &j+n-k+1 &j+n-k+2 &\dots & j+n-k &\dots & 2n\end{array}\right)
$$
it follows that $c_{k,j}$ does not change the ordering of $(1,\dots , j+n-1)$ and as $$c_{k,j}((1,\dots , j+n-1))\subseteq (1,\dots ,n+j),$$
we can deduce that the composition $t=c_{j_{n},n}c_{j_{n-1},n-1}\cdots c_{j_{1},1}$ does not change the ordering of $(1,\dots,n).$
\\

Let $v\in O_{w}$ be the minimal element and let $\textbf{k}'$ be its associated admissible sequence. We are going to show $w=v$ by proving $\textbf{k}=\textbf{k}'.$ From Proposition~\ref{1}, we know that there exists $h,g\in S,$ such that $w=hvg.$ Since $w$ does not change the ordering of $(1,\dots, n),$ it follows from the construction of $g$ that we can assume $g=e.$ Hence $w=hv.$
\\

Notice that we have $w(2n)=v(2n),$ since if $n+1\leq w(2n)\leq 2n,$ then $w(2n)=(hv)(2n)=v(2n)$ and if $1\leq w(2n)\leq n,$ then as $w(2n)=c_{k_{n},n}(2n)$ it follows that $k_{n}=n$ and thus $w(2n)=n,$ and this implies that also $v(2n)=c_{k_{n}',n}(2n)=n.$ Hence $$k_{n}=2n-w(2n)=2n-v(2n)=k_{n}'.$$
We can proceed now in the same manner as in the proof of Proposition~\eqref{decomp}, by considering $c_{k_{n},n}^{-1}w$ and $c_{k_{n},n}^{-1}v.$
Again, we split the proof into the two cases $1\leq k_{n}<n$ and $k_{n}=n.$
\\

Notice that if $1\leq k_{n}< n,$ then the condition on $\textbf{k}$ gives $1\leq k_{1}< n$ and thus \begin{equation}\label{1=1}w(1)=v(1)=1.\end{equation} From~\eqref{1=1} it follows that, as $w=hv,$ we have $h(1)=1$ and that $c_{k_{n},n}^{-1}w,c_{k_{n},n}^{-1}v$ are in the image of $\phi:S_{2(n-1)}\to S_{2n}$  given by~\eqref{phis}.
\\

The proof can be completed by induction on $n.$ If $n=1,$ then it is easy to see that the proposition holds. We get that \begin{equation}\label{member}\phi^{-1}(c_{k_{n},n}^{-1}v)\in O_{\phi^{-1}(c_{k_{n},n}^{-1}w)}\end{equation} is of minimal length (as $h(1)=1,$ the inclusion follows, and the minimal length of $\phi^{-1}(c_{k_{n},n}^{-1}v)$ follows from the proof of Propostion~\ref{decomp}). Also
$$
\phi^{-1}(c_{k_{n},n}^{-1}w)=c_{k_{n-1},n-1}\cdots c_{k_{1},1}\in S_{2n-2}
$$
and $k_{i}\leq \max_{i<j\leq n}(k_{j}+i-j+1,i)\leq n-1.$ By induction, it follows that $\phi^{-1}(c_{k_{n},n}^{-1}w)=\phi^{-1}(c_{k_{n},n}^{-1}v).$
\\

If $k_{n}=n,$ let $t$ be as in~\eqref{t} and consider the permutations $t^{-1} c_{n,n}^{-1}w t$ and $t^{-1}c_{n,n}^{-1}v t.$ If is not hard to see that $$t^{-1} c_{n,n}^{-1}=(c_{n,n}t)^{-1}=g^{-1},$$ where 
$$
g=\left(\begin{array}{cccccccc}1&2&\dots &n&n+1&\dots&2n\\ n&1&\dots& n-1&n+1 &\dots &2n \end{array}\right)\in S.
$$
It follows that $t^{-1}c_{n,n}^{-1}v\in O_{t^{-1} c_{n,n}^{-1}w t}$ as 
$$
t^{-1} c_{n,n}^{-1}v t=gvt=gh^{-1}wt=ghg^{-1} gwt=(ghg^{-1})t^{-1} c_{n,n}^{-1}w t.
$$
By the proof of Proposition~\ref{decomp}, we have that $t^{-1}c_{n,n}^{-1}v t$ is the element of minimal length in $O_{t^{-1} c_{n,n}^{-1}w t}$ and 
$$
t^{-1} c_{n,n}^{-1}w t=c_{k_{n-1},n}\cdots c_{k_{1},2}.
$$
From the proof of Proposition~\ref{decomp}, we get that $[k_{n-1},\dots ,k_{1},0]$ is an admissible sequence. If we continue in this way, we get either $$k_{n}=k'_{n}=k_{n-1}=k'_{n-1}=\dots = k_{1}=k'_{1}=n$$ 
or we will eventually end up in the first case, if some $1\leq k_{j}<n,$ and then it will follow by induction that the remaining integers are equal.
\end{proof}
\begin{prop}\label{switch}
Let $w\in O_{\sigma}$ be of minimal length and $w=c_{k_{n},n}c_{k_{n-1},n-1}\cdots c_{k_{1},1}$ the decomposition from Proposition~\ref{decomp}. For an $1\leq i\leq n,$ we have  \begin{equation}\label{equ}k_{i}= \max_{i<j\leq n}(k_{j}+i+1-j,i)\end{equation} if and only if $1\leq w(n+i)\leq n.$
\end{prop}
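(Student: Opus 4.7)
The plan is to proceed by induction on $n$, mirroring the inductive construction used in Proposition~\ref{decomp}. The base case $n=1$ is immediate: either $k_1=0$ and $w=e$ with $w(2)=2$, or $k_1=1$ and $w=s_1$ with $w(2)=1$, agreeing with the convention that the max over the empty set equals $i$. For the inductive step, I would split according to whether $k_n<n$ or $k_n=n$, exactly the two cases (1) and (2) in the proof of Proposition~\ref{decomp}. Before entering the cases, I would verify the statement for the top index $i=n$ directly: the cycle formula gives $w(2n)=c_{k_n,n}(2n)=2n-k_n$ if $k_n<n$ (so $w(2n)>n$), and $w(2n)=n$ if $k_n=n$ (so $w(2n)\leq n$), while $M_n:=\max_{n<j\leq n}(\cdot,n)=n$; thus $k_n=M_n\Leftrightarrow k_n=n\Leftrightarrow w(2n)\leq n$.

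In case $k_n<n$, I would set $v:=c_{k_n,n}^{-1}w$, which (by the proof of Proposition~\ref{decomp}) lies in the image of the embedding $\phi:S_{2n-2}\hookrightarrow S_{2n}$ from~\eqref{phis}, and $\phi^{-1}(v)=c_{k_{n-1},n-1}\cdots c_{k_1,1}$ is of minimal length with the admissible multi-index $(k_{n-1},\ldots,k_1)$. The first observation is that for $1\leq i\leq n-1$ one has $k_n+i+1-n\leq i\leq M_i':=\max_{i<j\leq n-1}(k_j+i+1-j,i)$, so $M_i=M_i'$; hence the condition $k_i=M_i$ is literally the same condition obtained from the reduced sequence. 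Next, using the explicit description of $c_{k_n,n}$ (identity below $2n-k_n$, a shift by $+1$ from $2n-k_n$ up to $2n-1$, and $2n\mapsto 2n-k_n$), I would show that for $n+1\leq n+i\leq 2n-1$ one has $w(n+i)\leq n\iff v(n+i)\leq n$: the only way $w$ differs from $v$ at $n+i$ is in the shift region, where the values are already $\geq 2n-k_n>n$. Finally, $v(n+i)\leq n\iff\phi^{-1}(v)((n-1)+i)\leq n-1$, and the inductive hypothesis applied to $\phi^{-1}(v)$ yields the equivalence with $k_i=M_i'$.

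In case $k_n=n$, I would use the conjugation reduction from case (2) of Proposition~\ref{decomp}: write $w=c_{n,n}\,tvt^{-1}$ where $t$ is the cycle in~\eqref{t} and $v=c_{j_n,n}c_{j_{n-1},n-1}\cdots c_{j_2,2}$ is the element of minimal length in $O_v$, with admissible $(j_n,\ldots,j_2,0)$ and $k_i=j_{i+1}$ for $1\leq i\leq n-1$. A direct index computation gives $M_i=\max_{i+1<j'\leq n}(j_{j'}+(i+1)+1-j',i+1)=:M_{i+1}^{(\textbf{j})}$, so $k_i=M_i$ is exactly the condition $j_{i+1}=M_{i+1}^{(\textbf{j})}$, to which the inductive hypothesis applies and yields $v(n+i+1)\leq n$. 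It remains to check $w(n+i)\leq n\iff v(n+i+1)\leq n$, which follows from a short case analysis on $v(n+i+1)$ using the explicit formulas $t^{-1}(n+i)=n+i+1$, $t(1)=2n$, $t(j)=j-1$ for $j\geq 2$, and $c_{n,n}(2n)=n$, $c_{n,n}(j)=j+1$ for $n\leq j\leq 2n-1$: values $\leq n$ of $v(n+i+1)$ land in $\{1,\ldots,n\}$ after the composition, while values $\geq n+1$ land in $\{n+1,\ldots,2n\}$ (the borderline $v(n+i+1)=n+1$ goes to $n+1$ via $j\mapsto j-1\mapsto j$, confirming it stays above $n$).

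The main obstacle, as usual with these arguments, is not conceptual but bookkeeping: one must confirm that adjoining $k_n$ to the max does not spuriously alter $M_i$ in case $k_n<n$, and that the index shift $k_i=j_{i+1}$ in case $k_n=n$ aligns the reduced max correctly and that the conjugation by $t$ plus multiplication by $c_{n,n}$ preserves the $\{1,\ldots,n\}$-membership condition at exactly the shifted position $n+i+1\mapsto n+i$. Once these alignments are verified, the induction closes cleanly in both branches.
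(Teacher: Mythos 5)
Your proposal is correct and takes essentially the same approach as the paper: a double induction on $n$ and $i$ (starting at $i=n$), reducing via $v=c_{k_n,n}^{-1}w\in\phi(S_{2(n-1)})$ when $k_n<n$ and via the conjugation $v=t^{-1}c_{n,n}^{-1}wt$ when $k_n=n$, with the same bookkeeping identities $M_i=M_i'$ and $M_i=M_{i+1}^{(\mathbf{j})}$ relating the maxima before and after reduction. The one substantive difference is that you run the same induction for both implications, whereas the paper proves the ``only if'' direction by a short direct computation: the inequality $k_j+i+1-j\leq k_i$ for $i<j\leq n$ implies $i+n-k_i<j+n-k_j$, so every factor $s_r$ of $c_{k_j,j}$ for $j>i$ has index $r>i+n-k_i$ and hence $c_{k_j,j}$ fixes $n+i-k_i$; combined with $c_{k_i,i}(n+i)=n+i-k_i$ and $c_{k_m,m}(n+i)=n+i$ for $m<i$ one gets $w(n+i)=n+i-k_i\leq n$ directly. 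That shortcut is a bit cleaner for that direction, but your uniform biconditional induction closes just as well, since the IH is the full equivalence at each step and the chain of equivalences ($w(n+i)\leq n\Leftrightarrow v(n+i)\leq n\Leftrightarrow\cdots\Leftrightarrow k_i=M_i$) is reversible. One small presentational point: you describe the argument as ``induction on $n$,'' but in the $k_n=n$ branch the reduction keeps $n$ fixed and increments $i$; you should state the induction explicitly as a double induction on $(n,i)$ (decreasing $n$, or keeping $n$ and increasing $i$ toward the base case $i=n$), as the paper does.
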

\begin{proof}
Let $1\leq i\leq n$ be such that~\eqref{equ} holds. The equality~\eqref{equ} means that for all $i<j\leq n,$ the indices of the factors in $c_{k_{j},j}=s_{j+n-k_{j}}s_{j+n-k_{j}+1}\cdots s_{j+n-1}$ are all larger than $i+n-k_{i}$ as by~\eqref{equ}, for all $i<j\leq n,$ $$k_{j}+i+1-j\leq k_{i}\Rightarrow k_{j}+i-j<k_{i}\Rightarrow$$
\begin{equation}\label{ineq}
i+n-k_{i}<j+n-k_{j}.
\end{equation}
As $c_{k_{i},i}(n+i)=n+i-k_{i}$ and $c_{m}(n+i)=n+i$ for $1\leq m<i$, it follows from~\eqref{ineq} that $$w(n+i)=c_{k_{n},n}c_{k_{n-1},n-1}\cdots c_{k_{i},i}(n+i)=
$$
$$
c_{k_{n},n}c_{k_{n-1},n-1}\cdots c_{k_{i+1},i+1}(n+i-k_{i})=n+i-k_{i}$$ and as $k_{i}\geq i$ by~\eqref{equ}, we get $1\leq n+i-k_{i}\leq n.$
\\

Let us turn to the other direction. We are now going to prove that $1\leq w(n+i)\leq n$, implies $k_{i}= \max_{i<j\leq n}(k_{j}+i+1-j,i).$ This follows a scheme similar to the one used to prove Proposition~\ref{decomp}; we associate to a minimal length $w\in O_{\sigma}\subseteq S_{2n}$ the pair $(m,n),$ where $ m,$ is the number of integers $n+1\leq j\leq 2n$ such that $1\leq w(j)\leq n$ ($0\leq m\leq n$), and prove the statement by induction on $(m,n),$ using the ordering~\eqref{ordering}. The claim is easy to see by direct inspection for the identity $e\in S_{2},$ associated to $(0,1)$ and for $s_{1}=\left(\begin{array}{cc}1 & 2 \\ 2 & 1\end{array}\right),$ associated to $(1,1).$ Assume the proposition hold for minimal length elements associated to pairs $(m',n')<(m,n).$ For the minimal length element $w\in O_{\sigma}\subseteq S_{2n}$ associated to $(m,n),$ consider $w(2n).$ As in Proposition~\ref{decomp}, there are two options
\begin{enumerate}
\item[(1)] $n+1\leq w(2n)\leq 2n.$
\item[(2)] $w(2n)=n.$
\end{enumerate}
If $n+1\leq w(2n)\leq 2n,$ then $k_{n}=2n-w(2n)<n$ and we can, as in the proof of Proposition~\ref{decomp}, consider $$v:=\phi^{-1}(c_{k_{n},n}^{-1}w)=c_{k_{n-1},n-1}\cdots c_{k_{1},1}\in S_{2(n-1)}.$$
If is easy to see that $1\leq v(n-1+i)\leq n-1$ and, as $v$ is associated with $(m,n-1)<(m,n),$ by induction, $$k_{i}= \max_{i<j\leq n-1}(k_{j}+i+1-j,i)=\max_{i<j\leq n}(k_{j}+i+1-j,i)$$
as $k_{n}=2n-w(2n)<n$ and thus $k_{n}+i+1-n\leq i.$ 
\\

If $ w(2n)= n,$ we consider $$v:=t^{-1}c_{n,n}^{-1}w t=c_{k_{n-1},n}c_{k_{n-2},n-1}\cdots c_{k_{1},2}\in S_{2n}.$$
As in Proposition~\ref{decomp}, it follows that $v$ is of minimal length in $O_{v}$ and that $v$ is associated to $(m-1,n)<(m,n).$ Moreover, is it easy to see that $1\leq v(n+i+1)\leq n.$ By induction, if $m_{j}=k_{j-1},$ then
$$
k_{i}=m_{i+1}= \max_{i+1<j\leq n}(m_{j}+(i+1)+1-j,i+1)=\max_{i<j<n}(k_{j}+i+1-j,i+1)=
$$
$$
 \max_{i+1<j\leq n}(m_{j}+(i+1)+1-j,i+1)=\max_{i<j<n}(k_{j}+i+1-j,n+i+1-n)= 
$$
$$
\max_{i<j\leq n}(k_{j}+i+1-j)=\max_{i<j\leq n}(k_{j}+i+1-j,i).
$$
\end{proof}
\subsection{Irreducible $*$-representations of $\mathrm{Pol}(\mathrm{Mat}_{n})_{q}$}

In this section we will complete the proof on the classification of irreducible $*$-representations of $\mathrm{Pol}(\mathrm{Mat}_{n})_{q}$ described by Theorem~\ref{mama} and the second part of Theorem~\ref{main}.
\begin{lem}\label{sunandmoon}
Let $\sigma,\mu\in S_{2n},$ $\sigma\neq \mu,$ and let $\chi_{\varphi},\chi_{\psi}$ be one-dimensional representations of $\mathbb{C}[SU_{n}]_{q},$ for $\varphi,\psi\in [0,2\pi)^{2n}.$ Then 
\begin{enumerate}[(1)]
\item $(\pi_{\sigma}\otimes \chi_{\varphi})\circ \zeta$ is an irreducible $*$-representation of $\mathrm{Pol}(\mathrm{Mat}_{n})_{q}$ if and only if $\sigma$ is the element of minimal length in $O_{\sigma}.$  
\item If $\sigma\in O_{\sigma},\mu\in O_{\mu},$ $\sigma \neq \mu$ are elements of minimal length, then 
$$
(\pi_{\sigma}\otimes \chi_{\varphi})\circ\zeta\not \cong  (\pi_{\mu}\otimes \chi_{\psi})\circ \zeta.
$$
\end{enumerate}
\end{lem}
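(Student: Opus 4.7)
The plan is to reduce both parts to a structural collapse of $\pi_\sigma$ based on the factorization $\sigma=hwg$ produced by Proposition~\ref{1}, combined with a matrix coefficient identification.

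First I would set up the reduction. By Proposition~\ref{1}, any $\sigma\in S_{2n}$ admits a reduced factorization $\sigma=hwg$ with $w\in O_\sigma$ the unique minimal element and $h,g\in S$; the equivalence-class statement after Definition~\ref{surep1} then yields $\pi_\sigma\cong\pi_h\otimes\pi_w\otimes\pi_g$ on $H_h\otimes H_w\otimes H_g$, where $H_s:=\ell^2(\mathbb{Z}_+)^{\otimes\ell(s)}$. Any $s_\ell$ in a reduced word for $h$ or $g$ has $\ell\le n-1$, so by~\eqref{phi} we have $\phi_\ell(t_{ab})=\delta_{ab}I$ whenever $a>n$ or $b>n$; iterating the coproduct gives $\pi_h(t_{ab})=\pi_g(t_{ab})=\delta_{ab}I$ for all $a,b\ge n+1$. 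Since $\zeta$ maps into the $*$-subalgebra generated by such $t_{ab}$, the triple coproduct collapses to
\begin{equation*}
(\pi_\sigma\otimes\chi_\varphi)\circ\zeta(a)=I_{H_h}\otimes\bigl[(\pi_w\otimes\chi_\varphi)\circ\zeta(a)\bigr]\otimes I_{H_g},\qquad a\in\mathrm{Pol}(\mathrm{Mat}_n)_q.
\end{equation*}

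For part (1), the direction ``$\sigma$ not minimal $\Rightarrow$ reducible'' is then immediate: the image sits in $\mathbb{C}I\otimes B(H_w)\otimes\mathbb{C}I$, whose commutant contains $B(H_h)\otimes\mathbb{C}I\otimes B(H_g)$, which is non-scalar as soon as $\ell(h)+\ell(g)>0$. For the converse I would invoke Theorem~\ref{main} (proved in Sections~6--7), which expresses every irreducible $*$-representation of $\mathrm{Pol}(\mathrm{Mat}_n)_q$ as $(\pi_\tau\otimes\chi_\psi)\circ\zeta$; by the preceding paragraph we may assume $\tau$ is minimal. Combining Propositions~\ref{decomp},~\ref{decomp1}, and~\ref{switch}, the minimal elements together with their permissible phases are in bijection with admissible sequences $\mathbf{k}_\varphi$, and reading off the recursive parameters constructed in Sections~6--7 against the cycle decomposition $w=c_{k_n,n}\cdots c_{k_1,1}$ identifies $(\pi_w\otimes\chi_\varphi)\circ\zeta$ up to equivalence with the diagrammatic representation $\pi_{\mathbf{k}_{\varphi'}}$ of Section~3. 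Irreducibility of the latter follows from its construction: the distinguished tensor vector is cyclic, and the rigidity of the arrow/hook commutation relations forces any intertwiner to be scalar.

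For part (2), suppose $\sigma$ and $\mu$ are both minimal in their orbits and the two representations are unitarily equivalent. I would distinguish them via matrix coefficients against the vector $e_{\mathbf{0}}=e_0\otimes\cdots\otimes e_0\in H_\sigma$. Using~\eqref{matrix} together with $\zeta(z_k^j)=(-q)^{k-n}t_{n+k,n+j}$, one obtains
\begin{equation*}
\bigl\langle(\pi_\sigma\otimes\chi_\varphi)\circ\zeta(z_k^j)\,e_{\mathbf{0}},e_{\mathbf{0}}\bigr\rangle=(-q)^{k-n+l^\sigma_{n+j}}\,e^{i\varphi_{n+j}}\,\delta_{n+k,\sigma(n+j)},
\end{equation*}
so the support of this $n\times n$ matrix recovers the map $j\mapsto\sigma(n+j)$ on all $j$ with $1\le\sigma(n+j)\le n$. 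To promote this to an invariant of the equivalence class I would characterize $\mathbb{C}e_{\mathbf{0}}$ intrinsically as the intersection of the joint kernel of the non-trivial $\pi(z_k^j)^*$ with the one-dimensional eigenlines of the diagonal generators that restrict to unitary scalars (as identified recursively in Sections~6--7). Any intertwining unitary preserves this distinguished line, so the two support patterns must coincide, pinning down $\sigma(n+j)$ on the set where it lands in $\{1,\dots,n\}$. Proposition~\ref{switch} then recovers the remaining values of $\sigma$ on $\{n+1,\ldots,2n\}$ from the admissible sequence, and the minimality analysis inside Proposition~\ref{1} shows that a minimal element is completely determined by its restriction to this set; hence $\sigma=\mu$.

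The main obstacle is the ``minimal $\Rightarrow$ irreducible'' direction of part~(1): Theorem~\ref{mama} itself cannot be quoted because the present lemma is part of its proof. The delicate step is matching the phases produced recursively in Sections~6--7 with the free phases allowed by the condition $k_j<\max_{j<i\le n}(k_i+j+1-i,j)$ of Proposition~\ref{switch}; this requires careful bookkeeping of how the shifts $\psi$ and $\delta$ from~\eqref{mappp} and~\eqref{delta}, together with the twist~\eqref{twist}, interact with the cycle decomposition $w=c_{k_n,n}\cdots c_{k_1,1}$ at each stage of the induction.
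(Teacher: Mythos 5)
Your reduction for the ``$\sigma$ not minimal $\Rightarrow$ reducible'' direction is exactly what the paper does and is fine. The rest has two genuine gaps.

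For the converse of part~(1), citing Theorem~\ref{main} goes the wrong logical direction: Theorem~\ref{main} tells you that every \emph{irreducible} representation lifts as $(\pi_\tau\otimes\chi_\psi)\circ\zeta$, but it does not tell you that $(\pi_w\otimes\chi_\varphi)\circ\zeta$ is irreducible for a given minimal $w$ --- it could a priori decompose as a direct sum of several irreducibles, each with its own (different) lift. The subsequent claim that irreducibility of $\pi_{\mathbf{k}_\varphi}$ ``follows from its construction'' because ``the rigidity of the arrow/hook commutation relations forces any intertwiner to be scalar'' is not an argument; cyclicity of the distinguished tensor vector by itself certainly does not give irreducibility, and no mechanism is offered for why an intertwiner must be scalar. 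In the paper this is carried by an explicit induction on $n$: after splitting according to whether $n+1\leq\sigma(2n)\leq2n$ (case A) or $\sigma(2n)=n=\sigma^{-1}(2n)$ (case B), the formula~\eqref{casies} shows a commutant operator has the form $I\otimes A$, its restriction to the subspace $H=\langle e_{\textbf{0}}\rangle\otimes\ell^2(\mathbb{Z}_+)^{\otimes\ell(s)}$ is identified via~\eqref{toosoon}--\eqref{riget} with a commutant operator for $(\pi_s\otimes\chi_{\bar\alpha})\circ\zeta$ with $s\in S_{2(n-1)}$ minimal, and then the inductive hypothesis forces $A$ to be scalar. You would need to supply something with this structure; the diagrammatic rigidity claim does not substitute for it.

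For part~(2), the matrix-coefficient idea against $e_{\mathbf{0}}$ is a genuinely different route from the paper's, and there is a small but real error in it: with $\zeta(z_k^j)=(-q)^{k-n}t_{n+k,n+j}$ the matrix coefficient is supported where $\sigma(n+j)=n+k$ for some $1\leq k\leq n$, i.e.\ where $n+1\leq\sigma(n+j)\leq 2n$, not $1\leq\sigma(n+j)\leq n$ as you wrote. The more serious obstacle is that the argument requires the line $\mathbb{C}e_{\mathbf{0}}$ to be preserved by any intertwiner, hence to be characterized intrinsically in terms of the operators $\pi(z_k^j)$ alone (you only have the restriction $\pi=\Pi\circ\zeta$ of $\Pi$, not all matrix coefficients of $\mathbb{C}[SU_{2n}]_q$). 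You gesture at ``joint kernels'' and ``one-dimensional eigenlines of the diagonal generators,'' but no such one-dimensional characterization is established, and in fact the paper deliberately works with a larger subspace $H$ (infinite-dimensional in general), shows that an intertwiner restricts to an intertwiner on $H$ via~\eqref{nosence}, and then recurses. Your plan and the paper's converge only if you can carve out $\mathbb{C}e_{\mathbf{0}}$ invariantly; without that, the reduction to the uniqueness argument in Proposition~\ref{1} does not get off the ground.
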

\begin{rem}
Lemma~\ref{sunandmoon} implies the second part of Theorem~\ref{main}.
\end{rem}
\begin{proof}
$(1)$ Assume that $(\pi_{\sigma}\otimes \chi_{\varphi})\circ \zeta$ is irreducible. Let $w\in O_{\sigma}$ be of minimal length and assume that $\sigma\neq w.$ Then we know from Proposition~\ref{1} that there are $g,h\in S$ so that $\sigma=gwh$ is a minimal decomposition. This gives that $\pi_{\sigma}=\pi_{g}\otimes \pi_{w}\otimes \pi_{h}.$ As minimal decompositions of $g$ and $h$ contain only $s_{i}$ with $1\leq i\leq n-1,$ it follows that  $$\pi_{g}(t_{jk})=\pi_{h}(t_{jk})=\delta_{jk}I$$ if either $n+1\leq j\leq 2n$ or $n+1\leq k\leq 2n.$
From this, it is not hard to see that for $n+1\leq j,k\leq 2n$ we have
$$
\pi_{\sigma}(t_{jk})=(\pi_{g}\otimes \pi_{w}\otimes \pi_{h})(t_{jk})=I\otimes \pi_{w}(t_{jk})\otimes I
$$
and hence $(\pi_{\sigma}\otimes \chi_{\varphi})\circ \zeta=\pi$ is not irreducible.
\\

We will now prove the converse to $(1)$ as well as $(2).$
\\

First, the properties of the minimal element $\sigma\in O_{\sigma}$ implies that $n\leq \sigma(2n)\leq 2n.$ This can be deduced from the expansion $\sigma=c_{k_{n},n}\cdots c_{k_{1},1},$ since $\sigma(2n)=c_{k_{n},n}(2n)=2n-k_{n}.$
Notice also that $(\pi_{\sigma}\otimes \chi_{\alpha})\circ \zeta$ composed with the $*$-automorphism $z_{m}^{j}\mapsto z_{j}^{m}$ is equivalent to $(\chi_{\alpha}\otimes \pi_{\sigma^{-1}})\circ \zeta.$ As $\{s^{-1}|s\in O_{\sigma}\}=O_{\sigma^{-1}} $ and $\ell(s)=\ell(s^{-1}),$ it follows that if $\sigma$ is of minimal length in $O_{\sigma},$ then $\sigma^{-1}$ is of minimal length in $O_{\sigma^{-1}}.$ This makes it possible to reduce the proof to the cases when $\sigma$ has the property that either 
\begin{itemize}
\item $n+1\leq\sigma(2n)\leq 2n$ or 
\item $\sigma(2n)=n$ and $\sigma^{-1}(2n)=n.$
\end{itemize}
We remark that these two cases correspond to the classes $\textbf{A}$ and $\textbf{B}$ respectively.
\\

The proof is by induction on $n\in \mathbb{N}.$ For $n=1,$ it is easily seen to be true as there are only two options $\sigma=\left(\begin{array}{cc}1 & 2 \\ 2 & 1\end{array}\right)$ and the identity element $\sigma=e;$ both are of minimal length and correspond to the Fock representation and the one-dimensional representation respectively. Assume now that the statement holds for $n-1\geq 1.$ Suppose first that $n+1\leq\sigma(2n)\leq 2n.$ If $[k_{n},\dots,k_{1}]$ is an admissible string such that 
$$
\sigma=c_{k_{n},n}\cdots c_{k_{1},1},
$$
then it follows from Proposition~\ref{switch} that $k_{n}<n,$ as $n+1\leq\sigma(2n)=c_{k_{n},n}(2n)=2n-k_{n}.$ By the proof of Proposition~\ref{decomp}, we have a reduced decomposition $\sigma =c_{k_{n}}\phi(s)$ for some $s\in S_{2(n-1)}$ of minimal length in $O_{s}$ and the homomorphism $\phi: S_{2(n-1)}\to S_{2n}$ determined by $\phi(s_{i})= s_{i+1},$ $i=1, \dots ,2n-3$ (see~\eqref{phis}).
It follows that \begin{equation}\label{silverspoon}\pi_{\sigma}\cong \pi_{c_{k_{n},n}}\otimes \pi_{\phi(s)}\end{equation} and hence for $j=1,\dots, n,$ we can determine
$$
\pi(z_{j}^{n}):=(\pi_{c_{k_{n},n}}\otimes \pi_{\phi(s)}\otimes \chi_{\alpha})\circ \zeta(z_{j}^{n})=\alpha_{2n}\pi_{c_{k_{n}}}(t_{n+j,2n})\otimes I=
$$
\begin{equation}\label{casies}
=e^{i\alpha_{2n}}\times
\begin{cases}
\underbrace{I\otimes \cdots \otimes I}_{\text{$k_{n}-n+j-1$ factors}} \otimes T_{22}\otimes \underbrace{T_{12}\otimes \cdots \otimes T_{12}}_{\text{$n-j$ factors}} \otimes  I, & \text{if $n-k_{n}< j \leq 2n$}\\
\underbrace{T_{12}\otimes \cdots \otimes T_{12}}_{\text{$k_{n}$ factors}} \otimes  I, & \text{if $j=n-k_{n}$}\\
0& \text{otherwise.} 
\end{cases}
\end{equation}
From~\eqref{casies}, we see that 
\begin{equation}\label{childsballoon}\langle e_{\textbf{0}}\rangle \otimes \ell^{2}(\mathbb{Z}_{+})^{\otimes\ell(\phi(s))}=\left(\ell^{2}(\mathbb{Z}_{+})^{\otimes k_{n}}\otimes \ell^{2}(\mathbb{Z}_{+})^{\otimes\ell(\phi(s))}\right)\cap\left(\cap_{j=k+1}^{n}\ker\pi(z_{j}^{n})^{*}\right)=:H.\end{equation}
We can explicitly determine $\pi_{c_{k_{n},n}}(t_{mj}),$ using Lemma~\ref{coopout} and the diagram
\begin{equation}
\begin{tikzpicture}[thick,scale=1.6]
\draw[step=1.0,black,thick] (1,1) grid (7,2);
\node at (1.5,0.5) {$\dots$};
\node at (2.5,0.5) {$n-k_{n}$};
\node at (3.5,0.5) {$n-k_{n}+1$};
\node at (4.5,0.5) {$\dots$};
\node at (5.5,0.5) {$\dots$};
\node at (6.5,0.5) {$n$};
\node at (0.5,1.5) {$1$};

\node at (7.5,1.5) {$n$};
\node at (1.5,2.4) {$1$};
\node at (2.5,2.4) {$\dots$};
\node at (3.5,2.4) {$n-k_{n}$};
\node at (4.5,2.4) {$n-k_{n}+1$};
\node at (5.5,2.4) {$\dots$};
\node at (6.5,2.4) {$n-1$};
\filldraw[fill=black!50!white, draw=black](1,2) -- (1,1) -- (2,1)--(2,2)--(1,2)--(1,1);
\filldraw[fill=black!50!white, draw=black](2,2) -- (2,1) -- (3,1)--(3,2)--(2,2)--(2,1);
\end{tikzpicture}
\end{equation}
We have that for $e_{\textbf{0}}\in \ell^{2}(\mathbb{Z}_{+})^{\otimes k_{n}}$ and $j\neq 2n$ 
\begin{equation}\label{break}
\pi_{c_{k_{n},n}}(t_{n+m,j})e_{\textbf{0}}=
\begin{cases}
\delta_{n+m,j} e_{\textbf{0}} ,& \text{if $1\leq m<n-k_{n}$ and $j\neq 2n$,}\\
\delta_{n+m,j+1}e_{\textbf{0}},& \text{if $n-k_{n}< m \leq n$ and $j\neq 2n$,}\\
0,& \text{if $ m=n-k_{n}$ and $j\neq 2n$.}\\
\end{cases}
\end{equation}
The action of $\pi_{c_{k_{n},n}}(t_{k,2n})e_{\textbf{0}}$ is easily identified from~\eqref{casies}. 
We can define the $*$-representation $\pi':\mathrm{Pol}(\mathrm{Mat}_{n-1})_{q}\to B(H)$ by the formula
\begin{equation}\label{toosoon}
\pi'(z_{m}^{j})=\begin{cases}
-q \pi(z_{m}^{j})|_{H} & \text{If $1\leq m< n-k_{n}$}\\           
\pi(z_{m+1}^{j})|_{H}  & \text{If $n-k_{n}\leq m\leq n-1 $}\\
\end{cases}\end{equation}

(see~\eqref{pii}). If we let 
$$
\psi(t_{k,j})= 
\begin{cases}
t_{k-1,j-1}& \text{ if $2\leq k,j\leq 2n-1$}\\
\delta_{k,j}& \text{otherwise},
\end{cases}
$$  then it is easy to see that for any $t\in S_{2(n-1)}$
\begin{equation}\label{phipsi}
\pi_{\phi(t)}\cong \pi_{t}\circ \psi.
\end{equation}
It then follows from~\eqref{break} that for $z_{m}^{j}\in \mathrm{Pol}(\mathrm{Mat}_{n})_{q},$ $m,j=1,\dots n-1$
$$
\pi'(z_{m}^{j})=-q \pi(z_{m}^{j})|_{H}
$$
$$
=(-q)^{m-n+1}e^{i\alpha_{n+j}}\sum_{l=1}^{n}\pi_{c_{k_{n},n}}(t_{n+m,l})\otimes \pi_{\phi(s)}(t_{l,n+j})|_H=
$$
$$
e^{i\alpha_{n+j}}(-q)^{m-(n-1)}I\otimes \pi_{\phi(s)}(t_{(n-1)+(m+1),(n-1)+(j+1)})|_{H}=
$$
$$
e^{i\alpha_{n+j}}(-q)^{m-(n-1)}I\otimes \pi_{s}(\psi(t_{(n-1)+(m+1),(n-1)+(j+1)}))|_{H}=
$$
$$
e^{i\alpha_{n+j}}(-q)^{m-(n-1)}I\otimes \pi_{s}(t_{(n-1)+m,(n-1)+j)}))|_{H},
$$
if $1\leq m< n-k_{n}$ and 
$$
\pi'(z_{m}^{j})=\pi(z_{m+1}^{j})|_{H}
$$
$$
=(-q)^{m+1-n}e^{i\alpha_{n+j}}\sum_{l=1}^{n}\pi_{c_{k_{n},n}}(t_{n+m+1,l})\otimes \pi_{\phi(s)}(t_{lj})|_H=
$$
$$
e^{i\alpha_{n+j}}(-q)^{m-(n-1)}I\otimes \pi_{\phi(s)}(t_{(n-1)+(m+1),(n-1)+(j+1)})|_{H}=
$$
$$
e^{i\alpha_{n+j}}(-q)^{m-(n-1)}I\otimes \pi_{s}(\psi(t_{(n-1)+(m+1),(n-1)+(j+1)}))|_{H}=
$$
\begin{equation}\label{A}
e^{i\alpha_{n+j}}(-q)^{m-(n-1)}I\otimes \pi_{s}(t_{(n-1)+m,(n-1)+j)}))|_{H},
\end{equation}
if $n-k_{n}\leq m \leq n-1.$ It follows that \begin{equation}\label{riget}\pi'\cong(\pi_{s}\otimes \chi_{\bar\alpha})\circ \zeta.\end{equation} Let now $(\pi_{\sigma}\otimes \chi_{\alpha})\circ \zeta$ and $(\pi_{\mu}\otimes \chi_{\beta})\circ\zeta$ be equivalent $*$-representations via a unitary operator $U$. Then if 
$$
\begin{array}{cc}\sigma=c_{k_{n},n}\cdots c_{k_{1},1},& \mu=c_{m_{n},n}\cdots c_{m_{1},1},\end{array}
$$
it follows from~\eqref{casies} that $k_{n}=m_{n}.$ Thus if $k:=k_{n}=m_{n}<n,$ it follows that $\sigma(2n)=2n-k=\mu(2n)$ and hence $n+1\leq \sigma(2n)=\mu(2n)\leq 2n.$ Hence
$$\begin{array}{cc}\sigma=c_{k,n}\phi(s),& \mu=c_{k,n}\phi(t)\end{array},$$
for $s,t\in S_{2(n-1)}$ of minimal lengths. As $U$ will map 
\begin{equation}\label{nosence}
\ker ((\pi_{\sigma}\otimes \chi_{\alpha})\circ \zeta)(z_{m}^{j})^{*}\to \ker((\pi_{\mu}\otimes \chi_{\beta})\circ\zeta)(z_{m}^{j})^{*}
\end{equation}
it follows from~\eqref{riget} that there are $\chi_{\bar\alpha},\chi_{\bar\beta}$ such that
$$
(\pi_{s}\otimes \chi_{\bar\alpha})\circ \zeta\cong(\pi_{t}\otimes \chi_{\bar\beta})\circ \zeta.
$$
By induction $s=t$ and hence $\sigma=\mu.$
\\

 To see that $\pi$ is irreducible, we observe that by~\eqref{casies}, any non-zero operator in $B(\ell^{2}(\mathbb{Z}_{+})^{\otimes k}\otimes \ell^{2}(\mathbb{Z}_{+})^{\ell(s)})$ commuting with the range of $\pi=(\pi_{c_{k,n}}\otimes \pi_{\phi(s)}\otimes \chi_{\beta})\circ \zeta$ can be written in the form $I\otimes A,$ for $A\in B(\ell^{2}(\mathbb{Z}_{+})^{\otimes\ell(s)}).$ Restricting $I\otimes A$ to $$H=\langle e_{\textbf{0}}\rangle \otimes \ell^{2}(\mathbb{Z}_{+})^{\otimes\ell(s)}\cong \ell^{2}(\mathbb{Z}_{+})^{\otimes\ell(s)}$$ gives that $A$ commutes with the range of $\pi'\cong(\pi_{s}\otimes \chi_{\bar\alpha})\circ \zeta$ and hence by induction, $A$ must be a constant multiple of $I.$
\\

Assume now that $\sigma(2n)=n$ and $\sigma^{-1}(2n)=n.$ We claim that in this case we have 
\begin{equation}\label{sorry}\begin{array}{cccc}\sigma=c_{k_{n},n}\cdots c_{k_{1},1},& k_{n}=n, & k_{j}\geq 1,& j=1,\dots,n-1\end{array}.\end{equation}
That $k_{n}=n$ follows from $n=\sigma(2n)=c_{k_{n},n}(2n)=2n-k_{n}.$ To see that all $k_{j}$ are non-zero, notice that $$\begin{array}{ccc}c_{j,k}(m)\leq m+1,&j,k=1,\dots,n,&m=1,\dots , 2n\end{array}$$ and hence, as $c_{k_{n},n}\cdots c_{k_{1},1}(n)=n+n,$ we obtain the claim.
\\

Recall the $*$-homomorphism $\delta:\mathbb{C}[SU_{2n}]_{q}\to \mathbb{C}[SU_{2(n-1)}]_{q}$ from Section $5.4$ given by
$$\delta(t_{kj})=
\begin{cases}
t_{kj} & \text{ if $1\leq k,j\leq 2n-2$}\\ \delta_{k,j}I & \text{ otherwise}
\end{cases}$$
and let $\gamma:S_{2(n-1)}\to S_{2n}$ be the homomorphism determined by $s_{i}\to s_{i},$ $i=1,\dots , 2(n-1).$ For $s\in S_{2(n-1)},$ it follows that
\begin{equation}\label{fine}
\pi_{s}\circ \delta\cong \pi_{\gamma(s)}.
\end{equation}
Let now $s\in S_{2(n-1)}$ be of minimal length in $O_{s}.$ with decomposition
$$
s=c_{k_{n-1},n-1}\cdots c_{k_{1},1}.
$$
Notice that by~\eqref{cycles}, the cycle $c_{k,j}\in S_{2(n-1)}$ has image $$\gamma(c_{k,j})=\gamma(s_{j+n-1-k}\cdots s_{j+n-1-1})=s_{j+n-1-k}\cdots s_{j+n-1-1}=c_{k+1,j}s_{j+n-1}\in S_{2n}.$$
Thus, we can write \begin{equation}\label{cain}\gamma(s)=(c_{k_{n-1}+1,n-1}s_{2n-2})\cdots (c_{k_{1}+1,1}s_{n})=(c_{k_{n-1}+1,n-1}\cdots c_{k_{1}+1,1})\cdot (s_{n}\cdots s_{2n-2})
\end{equation} as $$c_{k_{j}+1,j}s_{j+n-1}=s_{j+n-k_{j}-1}\cdots s_{j+n-2}$$ commutes with $s_{m}$ if $m>j+n-1.$
\\

Next we note that if $[k_{n-1},\dots,k_{1}]$ is an admissible string, then so is $$[n,k_{n-1}+1,\dots , k_{1}+1].$$In fact, if not, let $$m_{j}=\begin{cases}k_{j}+1, &\text{ for $1\leq j< n$}\\ n, & \text{ if $j=n$.}\end{cases}$$ Then there exists $1\leq i<n$ (note that for $i=n$~\eqref{didi} reads as $k_{n}\leq n$), such that
$$
m_{i}>\max_{i<j\leq n}(m_{j}+i+1-j,i)
$$
and as $m_{n}+i+1-n=i+1,$ it follows that for $m_{i}=k_{i}+1$
$$
\begin{array}{cc}k_{i}+1>\max_{i<j<n}(k_{j}+2+i-j,i), & k_{i}+1>1+i\end{array}
$$
i.e. for all integers $i<j < n$ we have

\begin{enumerate}[(1)]\item $k_{i}+1>k_{j}+2+ i-j$ and \item $k_{i}+1> i+1.$ \end{enumerate}
As $[k_{n-1},\dots, k_{1}]$ is an admissible string, we have $k_{i}\leq \max_{i<j\leq n-1}(k_{j}+1+i-j,i),$ and since $(1)$ holds, we must have $k_{i}\leq i.$ But this contradicts $k_{i}+1>i+1.$ Hence $[n,k_{n-1}+1,\dots , k_{1}+1]$ is an admissible string. 
\\

Conversely, if $[k_{n},k_{n-1},\dots,k_{1}]$ is an admissible string such that $k_{n}=n$ and $k_{j}\geq 1$ for $j=1,\dots,n-1,$ then also $[k_{n-1}-1,\dots,k_{1}-1]$ is an admissible string. This follows from
$$
k_{i}-1 \leq \max_{i<j\leq n}(k_{j}+i+1-j,i)-1=\max_{i<j < n}(k_{j}+i+1-j,i+1)-1=
$$
$$
=\max_{i<j < n}(k_{j}+i+1-j-1,i+1-1)=\max_{i<j \leq  n-1}((k_{j}-1)+i+1-j,i)
$$
for $1\leq j <n.$
\\

This gives that every $\sigma\in S_{2n}$ of minimal length, such that the decomposition $\sigma=c_{k_{n},n}\cdots c_{k_{1},1}$ satisfies~\eqref{sorry}, can be written in a reduced form as
\begin{equation}\label{darkness}
\sigma=c_{n,n}\gamma(s)(s_{n}\cdots s_{2n-2})^{-1}=c_{n,n}\gamma(s)c_{n-1,n-1}^{-1}\end{equation}
for a unique $s\in S_{2(n-1)}$ of minimal length. Conversely, for any such $s\in S_{2(n-1)},$ the element on the right-hand side of~\eqref{darkness} is of minimal length. As 
$$
c_{n,n}\gamma(s)c_{n-1,n-1}^{-1}(2n)=c_{n,n}(2n)=n
$$
$$
c_{n,n}\gamma(s)c_{n-1,n-1}^{-1}(n)=c_{n,n}\gamma(s)(2n-1)=c_{n,n}(2n-1)=2n
$$
it thus follows that $\sigma(2n)=n$ and $\sigma^{-1}(2n)=n$ hold if and only if~\eqref{darkness} holds.
\\

Let now $\sigma=c_{n,n}\gamma(s)c_{n-1,n-1}^{-1}\in S_{2n}$ be such element, then for any $\chi_{\alpha}$ we let
$$\pi:\mathrm{Pol}(\mathrm{Mat}_{n})_{q}\to B(\ell^{2}(\mathbb{Z}_{+})^{\otimes n}\otimes \ell^{2}(\mathbb{Z}_{+})^{\otimes \ell(\gamma(s))}\otimes \ell^{2}(\mathbb{Z}_{+})^{\otimes n-1})$$ be the $*$-representation
\begin{equation}\label{knightmare}
(\pi_{\sigma}\otimes \chi_{\alpha})\circ \zeta\cong (\pi_{c_{n,n}}\otimes \pi_{\gamma(s)}\otimes \pi_{c_{n-1,n-1}^{-1}}\otimes \chi_{\alpha})\circ \zeta=:\pi.
\end{equation}
A similar calculation as in~\eqref{casies} gives that
\begin{equation}\label{Bspace}(\cap_{k=1}^{n}\ker \pi(z_{n}^{k})^{*})\cap (\cap_{k=1}^{n}\ker \pi(z_{k}^{n})^{*})=
\langle e_{\textbf{0}}\rangle\otimes \ell^{2}(\mathbb{Z}_{+})^{\otimes \ell(\gamma(s))} \otimes \langle e_{\textbf{0}}\rangle=:H.\end{equation}
We can then define a $*$-representation $\pi'$ of $\mathrm{Pol}(\mathrm{Mat}_{n-1})_{q}$ by 
\begin{equation}\label{handmade}
\begin{array}{cc}\pi':z_{j}^{i}\mapsto \pi(z_{j}^{i})|_{H},&i,j=1,\dots, n-1.\end{array}\end{equation}
In fact,~\eqref{break} together with
\begin{equation}\label{ofnoon}
\begin{array}{ccc}\pi_{c_{n-1,n-1}^{-1}}(t_{m,n+j})e_{\textbf{0}}=
(-q)\delta_{m+1,n+j}e_{\textbf{0}},& \text{if $1\leq j <n$ and $m\neq 2n-1$,}
\end{array}
\end{equation}
and $$\begin{array}{ccc}\pi_{\gamma(s)}(t_{ij})=\delta_{ij}I, & \text{if either $i\in \{2n-1,2n\}$ or $j\in \{2n-1,2n\}$} \end{array}$$ give for $m,j=1,\dots ,n-1$
$$
\pi(z_{m}^{j})|_{H}=e^{i\alpha_{n+j}}(-q)^{m-n}\sum_{l,i=1}^{2n}\pi_{c_{n,n}}(t_{n+m,l})\otimes \pi_{\gamma(s)}(t_{li})\otimes \pi_{c_{n-1,n-1}^{-1}}(t_{i,n+j})|_{H}=
$$
$$
e^{i\alpha_{n+j}}(-q)^{m-n}\sum_{l,i=1}^{2n-2}\pi_{c_{n,n}}(t_{n+m,l})\otimes \pi_{\gamma(s)}(t_{li})\otimes \pi_{c_{n-1,n-1}^{-1}}(t_{i,n+j})|_{H}+
$$
$$
+\underbrace{e^{i\alpha_{n+j}}(-q)^{m-n}\sum_{l=2n-1}^{2n}\pi_{c_{n,n}}(t_{n+m,l})\otimes I\otimes \pi_{c_{n-1,n-1}^{-1}}(t_{l,n+j})|_{H}}_{\text{$=0$ by~\eqref{break} and~\eqref{ofnoon}}}=
$$
$$
=e^{i\alpha_{n+j}}(-q)^{m-n+1}\sum_{l,i=1}^{2n-2}\delta_{n+m,l+1}I\otimes \pi_{\gamma(s)}(t_{li})\otimes \delta_{i+1,n+j}I|_{H}=
$$
$$
=e^{i\alpha_{n+j}}(-q)^{m-n+1}I\otimes \pi_{\gamma(s)}(t_{n+m-1,n+j-1})\otimes I|_{H}=
$$
\begin{equation}\label{B}
e^{i\alpha_{n+j}}(-q)^{m-n+1}I\otimes \pi_{s}(t_{(n-1)+m,(n-1)+j})\otimes I|_{H}.
\end{equation}
Hence \begin{equation}\label{trying}\pi'\cong (\pi_{s}\otimes\chi_{\bar\alpha})\circ \zeta\end{equation} for some $\chi_{\bar\alpha}.$ We can then again use induction, as in the first case, to get that $$(\pi_{\sigma}\otimes \chi_{\alpha})\circ \zeta\cong(\pi_{\mu}\otimes \chi_{\beta})\circ\zeta$$ implies that $\sigma=\mu$ and also that $(\pi_{\sigma}\otimes \chi_{\alpha})\circ \zeta$ is irreducible for every $\chi_{\alpha}.$
\end{proof}

\begin{prop}\label{makon}
Let $\sigma\in O_{\sigma}$ be the element of minimal length and for $\varphi,\theta\in[0,2\pi)^{2n},$ let $\chi_{\varphi},\chi_{\theta}$ be one-dimensional representations of $\mathbb{C}[SU_{2n}]_{q}.$ Then 
$$
(\pi_{\sigma}\otimes \chi_{\varphi})\circ \zeta \cong (\pi_{\sigma}\otimes\chi_{\theta})\circ \zeta
$$
if and only if $\varphi_{i}=\theta_{i}$ for those $i$ such that 
$$
n+1\leq \sigma(i)\leq 2n.
$$
\end{prop}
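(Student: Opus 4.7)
The forward (``if'') direction is immediate from the twist relation~\eqref{twist}. Composing $\pi_\sigma\otimes\chi_\varphi\cong\chi_{\sigma^{-1}(\varphi)}\otimes\pi_\sigma$ with $\zeta$ and evaluating on generators gives
$$(\chi_{\sigma^{-1}(\varphi)}\otimes\pi_\sigma)(\zeta(z_k^j))=(-q)^{k-n}e^{i\varphi_{\sigma^{-1}(n+k)}}\pi_\sigma(t_{n+k,n+j}),$$
which depends on $\varphi$ only through the coordinates $\varphi_i$ with $i=\sigma^{-1}(n+k)$ for some $k=1,\dots,n$, i.e.\ precisely those $i$ with $n+1\leq\sigma(i)\leq 2n$. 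If $\varphi_i=\theta_i$ on this set, then $(\chi_{\sigma^{-1}(\varphi)}\otimes\pi_\sigma)\circ\zeta$ and $(\chi_{\sigma^{-1}(\theta)}\otimes\pi_\sigma)\circ\zeta$ agree as $*$-homomorphisms out of $\mathrm{Pol}(\mathrm{Mat}_n)_q$; applying the twist equivalence on both sides yields $(\pi_\sigma\otimes\chi_\varphi)\circ\zeta\cong(\pi_\sigma\otimes\chi_\theta)\circ\zeta$.

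For the converse I would argue by induction on $n$, reusing the Case~$\mathbf{A}$/Case~$\mathbf{B}$ split and reduction machinery from the proof of Lemma~\ref{sunandmoon}. Fix a unitary $U$ intertwining $(\pi_\sigma\otimes\chi_\varphi)\circ\zeta$ and $(\pi_\sigma\otimes\chi_\theta)\circ\zeta$. In Case~$\mathbf{A}$ ($\sigma=c_{k_n,n}\phi(s)$ with $k_n<n$), the subspace $H=\bigcap_{j=n-k_n+1}^{n}\ker\pi(z_j^n)^*$ depends only on $\sigma$, not on $\varphi$ or $\theta$, since kernels of operators are invariant under multiplication by unit-modulus scalars; hence $U$ restricts to a unitary $U|_H$. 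The analysis of~\eqref{casies} yields that $(\pi_\sigma\otimes\chi_\varphi)(\zeta(z_{n-k_n}^n))|_H$ is a scalar multiple of the identity with phase $e^{i\varphi_{2n}}$, so intertwining by $U|_H$ forces $\varphi_{2n}=\theta_{2n}$; since $\sigma(2n)=2n-k_n$ lies in $\{n+1,\dots,2n\}$, this accounts for $2n\in R$.

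The restriction $U|_H$ next intertwines the reduced representations of $\mathrm{Pol}(\mathrm{Mat}_{n-1})_q$ constructed in~\eqref{pii}, which by~\eqref{A} are unitarily equivalent to $(\pi_s\otimes\chi_{\bar\varphi})\circ\zeta$ and $(\pi_s\otimes\chi_{\bar\theta})\circ\zeta$ with $\bar\varphi_{(n-1)+j}=\varphi_{n+j}$ and $\bar\theta_{(n-1)+j}=\theta_{n+j}$ for $j=1,\dots,n-1$ (the remaining coordinates of $\bar\varphi,\bar\theta$ being free and chosen to agree). Applying the inductive hypothesis to $s$, which is of minimal length in its orbit $O_s\subset S_{2(n-1)}$, yields $\bar\varphi_i=\bar\theta_i$ for every $i$ in the relevant set $R_s':=\{i:n\leq s(i)\leq 2(n-1)\}$. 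Propositions~\ref{decomp} and~\ref{switch} give the cycle-level identity $R=\{i+1:i\in R_s'\}\cup\{2n\}$ in Case~$\mathbf{A}$, so translating back via $\bar\varphi_{(n-1)+j}=\varphi_{n+j}$ recovers $\varphi_i=\theta_i$ for every remaining index in $R$.

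Case~$\mathbf{B}$ ($\sigma(2n)=\sigma^{-1}(2n)=n$, $\sigma=c_{n,n}\gamma(s)c_{n-1,n-1}^{-1}$) is handled analogously, using the symmetric subspace $H=\bigl(\bigcap_k\ker\pi(z_n^k)^*\bigr)\cap\bigl(\bigcap_k\ker\pi(z_k^n)^*\bigr)$ of Section~7 together with the reduction identity~\eqref{B}; here $2n\notin R$ (because $\sigma(2n)=n$), so no scalar-on-$H$ step is needed and one applies the inductive step directly via $\bar\varphi_{(n-1)+j}=\varphi_{n+j}$. The main obstacle throughout is the combinatorial bookkeeping at each induction: one must verify that the reduction $\sigma\leadsto s$ (via $\phi$ in Case~$\mathbf{A}$ or via $\gamma$ together with conjugation by $t$ in Case~$\mathbf{B}$) sends the relevant set of $\sigma$ bijectively onto a shifted relevant set of $s$ in $S_{2(n-1)}$, together with possibly one extra index extracted directly from the scalar action on $H$, so that the inductive conclusion reassembles to the full set $R$ without gaps.
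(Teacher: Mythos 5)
Your proof matches the paper's structure in both directions: the forward direction via the twist relation~\eqref{twist}, and the converse by induction on $n$, using the Case~$\mathbf{A}$/Case~$\mathbf{B}$ reduction to $\mathrm{Pol}(\mathrm{Mat}_{n-1})_q$ and the index translation coming from $\sigma = c_{k_n,n}\phi(s)$ (Case~$\mathbf{A}$) and $\sigma = c_{n,n}\gamma(s)c_{n-1,n-1}^{-1}$ (Case~$\mathbf{B}$), just as in the paper. Your forward argument is presented a bit more directly -- you read off the scalar $e^{i\varphi_{\sigma^{-1}(n+k)}}$ from $(\chi_{\sigma^{-1}(\varphi)}\otimes\pi_\sigma)\circ\zeta$, whereas the paper first factors $\chi_\varphi=\chi_\beta\otimes\chi_\alpha$ with $\beta$ supported off the relevant indices and pushes $\chi_\beta$ across $\pi_\sigma$ -- but the content is the same application of the twist.

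One small correction to your bookkeeping. Since $(\pi_\sigma\otimes\chi_\varphi)(\zeta(z_k^j))=e^{i\varphi_{n+j}}(-q)^{k-n}\pi_\sigma(t_{n+k,n+j})$ depends only on $\varphi_{n+1},\dots,\varphi_{2n}$, the "only if" conclusion can only be asserted for indices $i\in\{n+1,\dots,2n\}$ with $n+1\leq\sigma(i)\leq 2n$, never for the indices $i\leq n$ in $N_2^\sigma$ (which also satisfy $n+1\leq\sigma(i)\leq 2n$ for the minimal $\sigma$). Accordingly your cycle identity $R=\{i+1:i\in R_s'\}\cup\{2n\}$ should be read as an identity between $R\cap\{n+1,\dots,2n\}$ and the analogous restriction of $R_s'$ to $\{n,\dots,2n-2\}$; that is exactly what the inductive step (via $\bar\varphi_{(n-1)+j}=\varphi_{n+j}$, $j=1,\dots,n-1$) delivers, and it is also all the paper proves. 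The proposition's statement shares this imprecision, so your argument is consistent with the paper's intended claim.
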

\begin{proof}
First, in the case of the Fock representation, we have by its uniqueness that $$\pi_{F,n}\cong \pi_{s}\circ \zeta\cong (\pi_{s}\otimes \chi_{\varphi})\circ \zeta$$ for any one-dimensional $*$-representation $\chi_{\varphi}.$ As in this case \begin{equation}\label{blade}s=\left(\begin{array}{cccccccc}1  &\dots& n & n+1  &\dots  & 2n\\ n+1  & \dots & 2n & 1 & \dots  &n\end{array}\right)\end{equation} we therefore have
$$\begin{array}{cc}1\leq s(n+j)\leq n, &\text{for all $j=1,\dots ,n$.}\end{array}$$
Assume now that $\sigma\neq s.$ As $\sigma$ is the minimal length element in $O_{\sigma},$ we have from the proof of uniqueness in Proposition~\ref{1} that $\sigma(1)=1.$ Let $\{i_{1},\dots,i_{m}\}$ be the integers such that $n+1\leq i_{j}\leq 2n$ and $1\leq \sigma(i_{j})\leq n,$ $j=1,\dots ,m$ and let $\beta\in [0,2\pi)^{2n}$ be defined by the formula
\begin{equation}
\beta_{i}=\begin{cases}\varphi_{i}, & \text{ if $i\in \{i_{1},\dots,i_{m}\}$}\\ 0, & \text{ if $i\notin \{i_{1},\dots,i_{m}\}\cup \{1\}$}\\ -\sum_{j=1}^{m}\varphi_{i_{j}}\pmod{2\pi}, & \text{ if $i=1$} \end{cases}
\end{equation}
It follows that $\chi_{\varphi}= \chi_{\beta}\otimes \chi_{\alpha}$ for some $\alpha\in [0,2\pi)^{2n}$ with $\alpha_{i_{j}}=0$ for $j=1,\dots, m,$ and thus by~\eqref{twist}
$$
\pi\cong (\pi_{\sigma}\otimes \chi_{\varphi})\circ \zeta =(\pi_{\sigma}\otimes \chi_{\beta}\otimes \chi_{\alpha})\circ \zeta\cong (\chi_{\sigma^{-1}(\beta)}\otimes \pi_{\sigma}\otimes \chi_{\alpha})\circ \zeta=(\pi_{\sigma}\otimes \chi_{\alpha})\circ \zeta
$$
where the last equality follows from the fact that $\sigma^{-1}(\beta)_{i}=0$ for $i=n+1,\dots, 2n.$ Clearly by~\eqref{chi}, the $*$-representation $(\pi_{\sigma}\otimes \chi_{\alpha})\circ \zeta$ does not depend on $\alpha_{i}$ for $i=1,\dots, n.$ Hence, by Proposition~\ref{switch}, if $\sigma=c_{k_{n},n}\cdots c_{k_{1},1}$ then $(\pi_{\sigma}\otimes \chi_{\alpha})\circ \zeta$ only depends on $\alpha_{n+i}$ such that $k_{i}<\max_{i<j\leq n}(k_{j}+i+1-j,i).$
\\

We now prove that if two $*$-representations
$$
\begin{array}{cc}\pi_{1}:=(\pi_{\sigma}\otimes \chi_{\varphi})\circ \zeta, & \pi_{2}:=(\pi_{\sigma}\otimes \chi_{\theta})\circ \zeta\end{array}
$$
are equivalent, then $\varphi_{n+i}= \theta_{n+i}$ for any $n+i$ such that $n+1\leq \sigma(n+i)\leq 2n.$ 
\\

Again, we prove this by induction. This can be seen to hold for $n=1,$ simply by inspection. Let $U$ be an unitary intertwining  $\pi_{1}$ and $ \pi_{2}.$ Let us first assume that $n+1\leq \sigma(2n)\leq 2n$ and let $H_{1},H_{2}$ be given by~\eqref{childsballoon} for $\pi_{1}$ and $ \pi_{2}$ respectively. By~\eqref{nosence}, the restriction of $U$ is then a unitary map $H_{1}\to H_{2}.$ It follows from~\eqref{casies} that we must have $\varphi_{2n}=\theta_{2n}.$ Moreover, if $(\pi_{s}\otimes \chi_{\bar\varphi})\circ \zeta$ and $(\pi_{s}\otimes \chi_{\bar\theta})\circ\zeta$ are the restrictions of $\pi_{1}$ and $\pi_{2}$ to $H_{1}$ and $H_{2}$ respectively, given by~\eqref{toosoon}, if follows from~\eqref{A} that we can assume \begin{equation}\begin{array}{ccc}\bar\alpha_{n-1+j}=\alpha_{n+j},& \bar\theta_{n-1+j}=\theta_{n+j},& j=1,\dots, n-1.\end{array}\end{equation}
By induction, it follows that $$\alpha_{n+j}=\bar\alpha_{n-1+j}=\bar\theta_{n-1+j}=\theta_{n+j}$$ if $n\leq s(n-1+j)\leq 2n-2.$ But since $\sigma=c_{k_{n},n}\phi(s),$ by the proof of Proposition~\ref{decomp} and~\eqref{A}, it follows from $c_{k_{n},n}(\{n+1,\dots,2n\})=\{n+1,\dots,2n\}$ and the definition of $\phi:S_{2(n-1)}\to S_{2n},$ that for $j=1,\dots ,n-1$ we have $$n+1\leq \sigma(n+j)\leq 2n $$ if and only if $$ n\leq s(n+j)\leq 2n-2.$$

Consider now the case when $\sigma(2n)=n$ and $\sigma^{-1}(2n)=n.$ Denote by $H_{1}$ and $H_{2}$ the subspaces defined by~\eqref{Bspace} for $\pi_{1}$ and $\pi_{2}$ respectively. As before, the restriction of $U$ is a unitary from $H_{1}$ to $H_{2}$ intertwining the restricted representations of $\pi_{1}$ and $\pi_{2},$ the latter equivalent to $(\pi_{s}\otimes \chi_{\bar\varphi})\circ \zeta$ and $(\pi_{s}\otimes \chi_{\bar\theta})\circ\zeta$ respectively. By~\eqref{B}, we can assume that $\bar\alpha_{n-1+j}=\alpha_{n+j}$ and $\bar\theta_{n-1+j}=\theta_{n+j}$ for $j=1,\dots,n-1.$ It follows again by induction that $$\alpha_{n+j}=\bar\alpha_{n-1+j}=\bar\theta_{n-1+j}=\theta_{n+j}$$ if $n\leq s(n-1+j)\leq 2n-2.$ By~\eqref{B} and~\eqref{darkness}, we have $\sigma=c_{n,n}\gamma(s)c_{n-1,n-1}^{-1}.$ For $j=1,\dots n-1,$ we have $c_{n-1,n-1}^{-1}(n+j)=n-1+j$ and $c_{n,n}(n-1+j)=n+j.$ Hence it follows from the definition of $\gamma$ that for $j=1,\dots,n-1$ we have
$$
n+1\leq \sigma(n+j)\leq 2n
$$ 
if and only if
$$
n\leq s(n-1+j)\leq 2n-2.
$$
This completes the proof.
\end{proof}
\begin{proof}[Proof of Theorem~\ref{mama}]
For each admissible $\textbf{k}_{\varphi}$ we have that $\pi_{\textbf{k}_{\varphi}}$ is irreducible, which follows from Lemma~\ref{sunandmoon}. We now prove that the map $\textbf{k}_{\varphi}\to [\pi_{\textbf{k}_{\varphi}}]$ is a bijection.
\\

Recall the $C^{*}$-algebra homomorphism $\tau_{\varphi}:C^{*}(S)\to \mathbb{C}$ determined by $\tau_{\varphi}(S)=e^{i\varphi},$ for $\varphi\in [0,2\pi).$ It is not hard to see that for all $i=1,\dots,2n-1,$ the composition $\tau_{0}\circ \pi_{i}=\epsilon:\mathbb{C}[SU_{2n}]_{q}\to \mathbb{C}$ is the co-unit of $\mathbb{C}[SU_{2n}]_{q}$. We can thus write  $\pi_{\sigma}$ as
$$
\pi_{\sigma}=\pi_{c_{k_{n},n}}\otimes \pi_{c_{k_{n-1},n-1}}\otimes \cdots \otimes \pi_{c_{k_{1},1}}=
$$
$$
\begin{array}{cc}
\Bigg(\left(\underbrace{\tau_{0}\otimes \cdots \otimes\tau_{0}}_{\text{$n-k_{n}$ times}}\otimes \underbrace{I\otimes \cdots\otimes I}_{\text{$k_{n}$ times}}\right)\otimes \left(\underbrace{\tau_{0}\otimes \cdots \otimes\tau_{0}}_{\text{$n-k_{n-1}$ times}}\otimes \underbrace{I\otimes \cdots\otimes I}_{\text{$k_{n-1}$ times}}\right)\otimes \cdots \otimes \\
\left(\underbrace{\tau_{0}\otimes \cdots \otimes\tau_{0}}_{\text{$n-k_{1}$ times}}\otimes \underbrace{I\otimes \cdots\otimes I}_{\text{$k_{1}$ times}}\right)\Bigg)\circ (\pi_{c_{n,n}}\otimes \pi_{c_{n,n-1}}\otimes \cdots \otimes \pi_{c_{n,1}})=\\
\Bigg(\left(\underbrace{\tau_{0}\otimes \cdots \otimes\tau_{0}}_{\text{$n-k_{n}$ times}}\otimes \underbrace{I\otimes \cdots\otimes I}_{\text{$k_{n}$ times}}\right)\otimes \left(\underbrace{\tau_{0}\otimes \cdots \otimes\tau_{0}}_{\text{$n-k_{n-1}$ times}}\otimes \underbrace{I\otimes \cdots\otimes I}_{\text{$k_{n-1}$ times}}\right)\otimes \cdots \otimes \\
\left(\underbrace{\tau_{0}\otimes \cdots \otimes\tau_{0}}_{\text{$n-k_{1}$ times}}\otimes \underbrace{I\otimes \cdots\otimes I}_{\text{$k_{1}$ times}}\right)\Bigg)\circ  (\pi_{c_{n,n}}\otimes \pi_{c_{n,n-1}}\otimes \cdots \otimes \pi_{c_{n,1}})
\end{array}
$$
and $\pi_{c_{n,j}}$ corresponds to the $j$'th row of the grid~\eqref{boxer}. As it was explained in section $3.1,$ the application of $\tau_{0}$ to a box is visualized by coloring that box dark gray. Assume now that $\pi\cong (\pi_{\sigma}\otimes \chi_{\beta})\circ \zeta,$ then, as we showed above, we can assume $\beta_{i}\neq 0$ if and only if $i$ is such that $k_{i}<\max_{i<j\leq n}(k_{j}+i+1-j,i).$ In particular, we will have $k_{i}<n$ for such $i$'s and hence for this row we can write 
\begin{equation}\label{inheaven}
(\tau_{\alpha_{i}}\circ \pi_{n+i-k_{i}-1})\otimes \pi_{c_{k_{i},i}}=\left(\underbrace{\tau_{0}\otimes \cdots \otimes\tau_{0}}_{\text{$n-k_{i}-1$ times}}\otimes  \tau_{\alpha_{i}} \otimes \underbrace{I\otimes \cdots\otimes I}_{\text{$k_{i}$ times}}\right)\circ \pi_{c_{n,i}}.
\end{equation}
Visually, this row corresponds to the $1\times n$ block
$$
\begin{tikzpicture}[thick,scale=0.9]
\draw[step=1.0,black,thick] (1,1) grid (7,2);
\node at (1.5,0.5) {$1$};
\node at (2.5,0.5) {$\dots$};
\node at (3.5,0.5) {$n-k_{i}$};
\node at (4.5,0.5) {$\dots$};
\node at (5.5,0.5) {$\dots$};
\node at (6.5,0.5) {$n$};
\node at (7.5,1.5) {$i$};

\filldraw[fill=black!50!white, draw=black](1,2) -- (1,1) -- (2,1)--(2,2)--(1,2)--(1,1);
\filldraw[fill=black!50!white, draw=black](2,2) -- (2,1) -- (3,1)--(3,2)--(2,2)--(2,1);
\filldraw[fill=black!20!white, draw=black](3,2) -- (3,1) -- (4,1)--(4,2)--(3,2)--(3,1);
\end{tikzpicture}
$$
The $*$-homomorphism $\tau_{\varphi}\circ \pi_{i}$ is easily seen to be $\chi_{\varphi^{(i)}},$ where $$\varphi^{(i)}_{j}=\begin{cases}-\varphi, & \text{ if $j=i$}\\ \varphi, & \text{ if $j=i+1$}\\ 0, & \text{ otherwise} \end{cases}$$

By using~\eqref{twist}, we see that we can write~\eqref{inheaven} as $\chi_{\varphi^{(n+i-k_{i}-1)}}\otimes \pi_{c_{k_{i},i}}$ and \begin{equation}\label{constance}\begin{array}{ccc}\chi_{\varphi^{(n+i-k_{i}-1)}}\otimes \pi_{c_{k_{i},i}}(t_{k,n+ i})=e^{i\varphi}\pi_{c_{k_{i},i}}(t_{k,n+ i}),& k=1,\dots, 2n.\end{array}\end{equation}
If we now have an irreducible $*$-representation $(\pi_{\sigma}\otimes \chi_{\beta})\circ \zeta,$ with $\sigma=c_{k_{n},n}\cdots c_{k_{1},1},$ then consider the $*$-representation of $\mathrm{Pol}(\mathrm{Mat}_{n})_{q}$ defined as
\begin{equation}\label{creative}
\begin{array}{cc}
\Bigg(\left(\underbrace{\tau_{0}\otimes \cdots \otimes\tau_{0}}_{\text{$n-k_{n}-1$ times}}\otimes  \tau_{\alpha_{n}} \otimes \underbrace{\iota\otimes \cdots\otimes \iota}_{\text{$k_{n}$ times}}\right)\otimes \left(\underbrace{\tau_{0}\otimes \cdots \otimes\tau_{0}}_{\text{$n-k_{n-1}-1$ times}}\otimes  \tau_{\alpha_{n-1}} \otimes \underbrace{\iota\otimes \cdots\otimes \iota}_{\text{$k_{n-1}$ times}}\right)\otimes  \\
\cdots \otimes\left(\underbrace{\tau_{0}\otimes \cdots \otimes\tau_{0}}_{\text{$n-k_{1}-1$ times}}\otimes  \tau_{\alpha_{1}} \otimes \underbrace{\iota\otimes \cdots\otimes \iota}_{\text{$k_{1}$ times}}\right)\Bigg)\circ (\pi_{c_{n,n}}\otimes \pi_{c_{n,n-1}}\otimes \cdots \otimes \pi_{c_{n,1}}),
\end{array}
\end{equation}
For $j>i$ we have $\pi_{c_{n,i}}(t_{k,n+j})=\delta_{k,n+j}I$ and hence for $j=1,\dots, n$
$$
(\pi_{c_{n,n}}\otimes \pi_{c_{n,n-1}}\otimes \cdots \otimes \pi_{c_{n,1}})(t_{k,n+j})=
$$
\begin{equation}\label{starrep}
=(\pi_{c_{n,n}}\otimes \pi_{c_{n,n-1}}\otimes \cdots \otimes \pi_{c_{n,j}})(t_{k,n+j})\otimes I.
\end{equation}
If we combine~\eqref{constance} with~\eqref{starrep}, then it follows that~\eqref{creative} applied to $t_{k,n+j}$ becomes
$$
\begin{array}{cc}
\Bigg(\left(\underbrace{\tau_{0}\otimes \cdots \otimes\tau_{0}}_{\text{$n-k_{n}-1$ times}}\otimes  \tau_{\alpha_{n}} \otimes \underbrace{\iota\otimes \cdots\otimes \iota}_{\text{$k_{n}$ times}}\right)\otimes \left(\underbrace{\tau_{0}\otimes \cdots \otimes\tau_{0}}_{\text{$n-k_{n-1}-1$ times}}\otimes  \tau_{\alpha_{n-1}} \otimes \underbrace{\iota\otimes \cdots\otimes \iota}_{\text{$k_{n-1}$ times}}\right)\otimes  \\
\cdots \otimes\left(\underbrace{\tau_{0}\otimes \cdots \otimes\tau_{0}}_{\text{$n-k_{j}-1$ times}}\otimes  \tau_{\alpha_{j}} \otimes \underbrace{\iota\otimes \cdots\otimes \iota}_{\text{$k_{j}$ times}}\right)\Bigg)\circ (\pi_{c_{n,n}}\otimes \pi_{c_{n,n-1}}\otimes \cdots \otimes \pi_{c_{n,j}})(t_{k,n+j})=
\end{array}
$$
$$
\begin{array}{cc}
=e^{i\alpha_{j}}\Bigg(\left(\underbrace{\tau_{0}\otimes \cdots \otimes\tau_{0}}_{\text{$n-k_{n}-1$ times}}\otimes  \tau_{\alpha_{n}} \otimes \underbrace{\iota\otimes \cdots\otimes \iota}_{\text{$k_{n}$ times}}\right)\otimes \left(\underbrace{\tau_{0}\otimes \cdots \otimes\tau_{0}}_{\text{$n-k_{n-1}-1$ times}}\otimes  \tau_{\alpha_{n-1}} \otimes \underbrace{\iota\otimes \cdots\otimes \iota}_{\text{$k_{n-1}$ times}}\right)\otimes  \\
\cdots \otimes\left(\underbrace{\tau_{0}\otimes \cdots \otimes\tau_{0}}_{\text{$n-k_{j+1}-1$ times}}\otimes  \tau_{\alpha_{j+1}} \otimes \underbrace{\iota\otimes \cdots\otimes \iota}_{\text{$k_{j+1}$ times}}\right)\Bigg)\circ (\pi_{c_{n,n}}\otimes \pi_{c_{n,n-1}}\otimes \cdots \otimes \pi_{c_{k_{j},j}})(t_{k,n+j})
\end{array}
$$
and thus we can determine the constants $\alpha_{j}$ inductively, starting at the largest index, in such way that~\eqref{creative} is equivalent to $\pi_{\sigma}\otimes \chi_{\lambda}$ where $\lambda_{n+i}$ coincides with $\beta_{n+i}$ if $n+1\leq \sigma(n+i)\leq 2n.$ This, together with Proposition~\ref{sunandmoon} and Proposition~\ref{makon} shows that every irreducible $*$-representation of $\mathrm{Pol}(\mathrm{Mat}_{n})_{q}$ can be written uniquely in the way~\eqref{creative} and hence can be represented uniquely as an admissible string $$[(k_{n},\alpha_{n}),(k_{n-1},\alpha_{n-1}),\dots , (k_{1},\alpha_{1})]$$ (subject to the conditions~\eqref{seq} and ~\eqref{seq2}) as was claimed in Theorem~\ref{mama}. 
\end{proof}

\subsection{Irreducible $*$-Representations Annihilating The Shilov Boundary}
In~\cite{ool_preprint}, the author, together with O. Bernstein and L. Turowska, determined the Shilov boundary ideal $\bar{J}_{n}$ for the closed sub-algebra (note, \textit{not} a $*$-algebra) $$A(\mathbb{D}_{n})\subseteq C_{F}(\mathbb{D}_{n})$$ generated by the images
$$
\begin{array}{ccc}
\pi_{F,n}(z_{k}^{j}),& k.j=1,2,\dots ,n
\end{array}
$$
to be the closure, under $\pi_{F,n},$ of the two-sided ideal $J_{n}\subseteq \mathrm{Pol}(\mathrm{Mat}_{n})_{q}$ generated by
$$
\begin{array}{ccc}
\sum_{j=1}^n q^{2n-\alpha-\beta}z_j^\alpha(z_j^\beta)^*-\delta_{\alpha\beta}, &\alpha,\beta=1,\ldots,n.
\end{array}
$$
From~\cite{vaksman_shilov1}, we have the result
$$
\mathrm{Pol}(\mathrm{Mat}_{n})_{q}/J_{n}\cong \mathbb{C}[U_{n}]_{q},
$$
where $\mathbb{C}[U_{n}]_{q}$ is the $*$-algebra of functions on the quantum $U_{n}$ (see~\cite{koelnik}). By Lemma $12$ in~\cite{ool_preprint}, every irreducible $*$-representation that annihilates the Shilov boundary ideal is equivalent to a $*$-representation $\pi$ of the form $\pi(z_{k}^{j})= e^{i\lambda_{j}}(-q)^{k-n}\pi_{\sigma}(t_{k,j})$, $k,j=1,\dots,n$ for an irreducible $*$-representation $\pi_{\sigma}:\mathbb{C}[SU_{n}]_{q}\to \ell^{2}(\mathbb{Z_{+}})^{\otimes \ell(\sigma)},$ where $\sigma\in S_{n},$ and some $\lambda_{j}\in [0,2\pi).$ If we let $\tilde{\sigma}$ be the image of $\sigma$ under the homomorphism $S_{n}\to S_{2n}$ defined as $s_{j}\mapsto s_{n+j}$ on the adjacent transpositions, then the $*$-representation $\pi_{\tilde{\sigma}}:\mathbb{C}[SU_{2n}]\to B(\ell^{2}(\mathbb{Z}_{+})^{\otimes \ell(\sigma)})$ (since $\ell(\tilde{\sigma})=\ell(\sigma)$) satisfies $$\pi_{\tilde{\sigma}}(t_{n+k,n+j})=\pi_{\sigma}(t_{k,j}).$$
By letting 
$$\varphi_{1}=\dots=\varphi_{n-1}=0$$
$$\varphi_{n}\equiv-\sum_{k=1}^{n}\lambda_{k} \pmod{2\pi}$$
$$\begin{array}{cc}\varphi_{n+j}=\lambda_{j}, & j=1,\dots,n\end{array}$$
it follows that the $*$-representation $(\pi_{\tilde{\sigma}}\otimes \chi_{\varphi})\circ \zeta$ coincides with $\pi.$ If we determine the admissible string $[(k_{n},\beta_{n}),\dots,(k_{1},\beta_{1})]$ corresponding to $\pi,$ then as $\tilde{\sigma}$ leaves $\{n+1,\dots , 2n\}$ invariant, it follows from Proposition~\ref{switch} that  
\begin{equation}\label{lower}
\begin{array}{cc}k_{i}<\max_{i<j\leq n}(k_{j}+i+1-j,i),& i=1,\dots, n\end{array}.
\end{equation}
We claim that from~\eqref{lower}, it follows that \begin{equation}\label{lessthan}\begin{array}{cc} k_{i}<i, & i=1,\dots ,n.\end{array}\end{equation} In fact when $i=n,$ we have~\eqref{lessthan} from~\eqref{lower}. Assume now that~\eqref{lessthan} holds for indices $j$ such that $n\geq j>i,$ then 
$$k_{i}<\max_{i<j\leq n}(k_{j}+i+1-j,i)=\max_{i<j\leq n}(k_{j}-(j-1)+i,i)=i $$ as $k_{j}-(j-1)+i\leq i$ holds for $n\geq j>i $ by induction. Thus~\eqref{lessthan} holds also for $i.$ It follows that the diagrams of irreducible $*$-representations that annihilates the Shilov boundary only contains white squares in the the strictly lower right sub-triangle
$$
\begin{tikzpicture}[thick,scale=0.5]
\filldraw[fill=black!50!white, draw=black]  (1,1) -- (1,7)--(7,7)--(7,1)--(1,1);
\draw[step=1.0,black,thick] (1,1) grid (7,7);
\node at (1.5,0.5) {$1$};
\node at (2.5,0.5) {$2$};
\node at (3.5,0.5) {$3$};
\node at (4.5,0.5) {$4$};
\node at (5.5,0.5) {$\dots$};
\node at (6.5,0.5) {$n$};
\node at (7.5,1.5) {$n$};
\node at (7.5,2.5) {$\vdots$};
\node at (7.5,3.5) {$4$};
\node at (7.5,4.5) {$3$};
\node at (7.5,5.5) {$2$};
\node at (7.5,6.5) {$1$};
\filldraw[fill=white, draw=black]   (1,1) -- (1,2)--(2,2)--(2,3)--(3,3)--(3,4)--(4,4)--(4,5)--(5,5)--(5,6)--(6,6)--(6,7)--(7,7)--(7,1)--(1,1);
\draw [thick,pattern=crosshatch, pattern color=black!50!white]   (1,1) -- (1,2)--(2,2)--(2,3)--(3,3)--(3,4)--(4,4)--(4,5)--(5,5)--(5,6)--(6,6)--(6,7)--(7,7)--(7,1)--(1,1);
\filldraw[fill=white, draw=black]  (2,1) -- (2,2)--(3,2)--(3,3)--(4,3)--(4,4)--(5,4)--(5,5)--(6,5)--(6,6)--(7,6)--(7,7)--(7,1)--(2,1);
\draw [thick,pattern=north west lines, pattern color=black!20!white]   (2,1) -- (2,2)--(3,2)--(3,3)--(4,3)--(4,4)--(5,4)--(5,5)--(6,5)--(6,6)--(7,6)--(7,7)--(7,1)--(2,1);
\draw [ultra thick]   (2,1) -- (2,2)--(3,2)--(3,3)--(4,3)--(4,4)--(5,4)--(5,5)--(6,5)--(6,6)--(7,6);
\draw[step=1.0,black,thick] (1,1) grid (7,7);
\end{tikzpicture}
$$


\begin{thebibliography}{99}

\bibitem{Ar} {\sc W. B. Arveson}, {\it Subalgebras of C$^*$-algebras},{\rm Acta Math.,
{\bf 123} (1969), 122 -- 141}

\bibitem{ool_preprint} {\sc O. Bershtein, O. Giselsson, L. Turowska}, {\it Maximum modulus priciple for "holomorphic functions" on the quantum matrix ball}, {\rm ArXiv:1711.05981}





\bibitem{jsw} {\sc P.E.T. J\o rgensen, L.M. Schmitt, R.F. Werner},
	{\it Positive representations of general commutation relations allowing Wick ordering},
	{\rm J. Funct. Anal. 134 (1995), no. 1, 33-99}

\bibitem{KlSh} {\sc A. Klimyk, K. Schm\"udgen}, {\it Quantum Groups and Their
Representations}, {\rm Springer-Verlag, 1997}

\bibitem{koelnik} {\sc H.T. Koelnik}, {\it On $*$-representations of the Hopf $*$-algebra associated with quantum group $U_q(n)$}, {\rm Compositio Math.,  77 (1991), 199-231}

\bibitem{KorS} {\sc L.I. Korogodski, Y.S. Soibelman}, {\it Algebra of Functions on Quantum
Groups: Part I}, {\rm  American Mathematical Society, Providence, RI, 1998}
\bibitem{pet} {\sc D. Proskurin, Y. Samoilenko}, {\it Stability of the $C^*$-algebra associated with twisted CCR}, {\rm Algebras and Representation Theory 5 (4), 433-444}
\bibitem{pro_tur} {\sc D. Proskurin and L. Turowska}, {\it On the $C^*$-algebra associated with $Pol(Mat_{2,2})_q$}, {\rm Methods Funct. Anal. Topology  7  (2001),  no. 1, 88--92}

\bibitem{pro_tur_shilov} {\sc D. Proskurin and L. Turowska}, {\it Shilov boundary for "holomorphic functions'' on a quantum matrix ball}, {\rm Algebr. Represent. Theory  18  (2015),  no. 4, 917–-929}

\bibitem{ssv} {\sc D. Shklyarov, S. Sinel’shchikov, L. Vaksman}, {\it Fock representations and quantum matrices}, {\rm Internat. J. Math. 15, (2004), no. 9, 855--894}

\bibitem{ssvs1} {\sc D. Shklyarov, S. Sinel’shchikov, and L. Vaksman}, {\it Quantum matrix ball: Differential and integral calculi}, {\rm math.QA/9905035.}

\bibitem{SV2} {\sc S. Sinel'shchikov, L. Vaksman}, {\it On q-analogues of bounded
symmetric domains and Dolbeault complexes}, {\rm Mat. Fiz. Anal. Geom. 1 (1998), 75--100}
\bibitem{Lyudmyla} {\sc L. Turowska}, {\it On representations of a q-analogue of the $*$-algebra $Pol(Mat_{2,2})$ }, {\rm J. Phys.
A 34 (2001), no. 10, 2063-2070}

\bibitem{vaksman_shilov1} {\sc L. Vaksman}, {\it Quantum matrix ball: the Cauchy-Szeg\"o kernel and the Shilov boundary},  {\rm Mat. Fiz. Anal. Geom.  8  (2001),  no. 4, 366–-384}
\bibitem{vaksman-boundary}{\sc L. Vaksman}, {\it Maximum principle for "holomorphic functions" in the quantum ball}, {\rm Mat. Fiz. Anal. Geom. 10 (2003), no. 1,  12–-28}
\bibitem{vaksman-book} {\sc L. Vaksman}, {\it Quantum Bounded Symmetric Domains}, {\rm American Mathematical Society, Providence, RI, 2010}
 
\end{thebibliography}
\end{document}